\definecolor{darkblue}{rgb}{0,0,0.6}
\DeclareRobustCommand*{\mfaktor}[3][]
{
   { \mathpalette{\mfaktor@impl@}{{#1}{#2}{#3}} }
}
\newcommand*{\mfaktor@impl@}[2]{\mfaktor@impl#1#2}
\newcommand*{\mfaktor@impl}[4]{
   \settoheight{\faktor@zaehlerhoehe}{\ensuremath{#1#2{#3}}}%
   \settoheight{\faktor@nennerhoehe}{\ensuremath{#1#2{#4}}}%
      \raisebox{-0.5\faktor@zaehlerhoehe}{\ensuremath{#1#2{#3}}}%
      \mkern-4mu\diagdown\mkern-5mu%
      \raisebox{0.5\faktor@nennerhoehe}{\ensuremath{#1#2{#4}}}%
}
\newtheorem*{rep@theorem}{\rep@title}
\newcommand{\newreptheorem}[2]{%
	\newenvironment{rep#1}[1]{%
		\def\rep@title{#2 \ref{##1}}%
		\begin{rep@theorem}}%
		{\end{rep@theorem}}}
\newcommand{\immto}{\looparrowright}
\newcommand{\wt}{\widetilde}
\newcommand{\wh}{\widehat}
\renewcommand{\H}{\mathbb{H}}
\newcommand{\R}{\mathbb{R}}
\newcommand{\C}{\mathbb{C}}
\newcommand{\Z}{\mathbb{Z}}
\newcommand{\N}{\mathbb{N}}
\newcommand{\Q}{\mathbb{Q}}
\newcommand{\CP}{\mathbb{CP}}
\newcommand{\RP}{\mathbb{RP}}
\newcommand{\Op}{\mathcal{O}}
\newcommand{\U}{\mathrm{U} }
\newcommand{\abs}[1]{\left\lvert #1 \right\rvert}
\DeclareMathOperator{\QCA}{QCA}
\DeclareMathOperator{\Sq}{Sq}
\newcommand{\SO}{\mathrm{SO}}
\newcommand{\Spin}{\mathrm{Spin}}
\DeclareMathOperator{\SL}{SL} 
\DeclareMathOperator{\hCh}{hCh}
\DeclareMathOperator{\sCh}{sCh}
\DeclareMathOperator{\End}{End}
\DeclareMathOperator{\supp}{supp}
\DeclareMathOperator{\range}{range}
\newcommand{\lbar}[1]{\overline{#1}}
\DeclareMathOperator{\Ext}{Ext}
\DeclareMathOperator{\Mono}{Mono}
\DeclareMathOperator{\Imm}{Imm}
\DeclareMathOperator{\Hom}{Hom}
\DeclareMathOperator{\Sub}{Sub}
\DeclareMathOperator{\Sur}{Sur}
\newcommand{\proj}{\mathrm{pr}}
\newcommand{\pr}{\prime}
\newcommand{\fr}{\mathrm{fr}}
\newcommand{\id}{\mathrm{id}}
\DeclareMathOperator{\coker}{coker}
\DeclareMathOperator{\Aut}{Aut}
\DeclareMathOperator{\im}{im}
\DeclareMathOperator{\shift}{shift}
\newcommand{\imra}{\looparrowright}
\newcommand{\ra}{\longrightarrow}
\newcommand{\hra}{\hookrightarrow}
\newcommand{\sra}{\twoheadrightarrow}
\newcounter{commentcounter}
\numberwithin{equation}{section}
\newtheorem{thm}[equation]{Theorem}
\newtheorem{lem}[equation]{Lemma}
\newtheorem{cor}[equation]{Corollary}
\newtheorem{prop}[equation]{Proposition}
\theoremstyle{definition}
\newtheorem{defi}[equation]{Definition}
\newtheorem{rem}[equation]{Remark}
\crefname{thm}{Theorem}{Theorems}
\crefname{lem}{Lemma}{Lemmas}
\crefname{cor}{Corollary}{Corollaries}
\crefname{prop}{Proposition}{Propositions}
\crefname{defi}{Definition}{Definitions}
\crefname{rem}{Remark}{Remarks}
\crefname{ex}{Example}{Examples}
\renewcommand{\phi}{\varphi}
\begin{document}

\title[Immersions of punctured $4$-manifolds]{Immersions of punctured $4$-manifolds: with applications to Quantum Cellular Automata}

\author{Michael Freedman}
\address{
	Michael Freedman \\
	Microsoft Research, Station Q, and
	Department of Mathematics\\
	University of California \\
	Santa Barbara, CA 93106
}
\email{michaelf@microsoft.com}

\author{Daniel Kasprowski}
\address{\hskip-\parindent
	Daniel Kasprowski \\
	School of Mathematical Sciences\\
	University of Southampton \\
	Southampton SO17 1BJ, United Kingdom
}
\email{d.kasprowski@soton.ac.uk}

\author{Matthias Kreck}
\address{\hskip-\parindent
	Matthias Kreck \\
	Hausdorff Center for Mathematics \\
	53115 Bonn, Germany
}
\email{kreck@math.uni-bonn.de}

\author{Peter Teichner}
\address{\hskip-\parindent
	Peter Teichner \\
	Max Planck Institute for Mathematics \\
	53111 Bonn, Germany
}
\email{teichner@mpim-bonn.mpg.de}

\makeatletter
\let\@wraptoccontribs\wraptoccontribs
\makeatother
\contrib[with an appendix by]{Alan W. Reid}
\address{\hskip-\parindent
	Alan W. Reid\\
	Department of Mathematics\\
	Rice University\\
	Houston, TX 77005, USA.\\
	Max-Planck-Insititut f\"ur Mathematik\\
	Vivatsgasse 7\\
	D-53111 Bonn, Germany.}
\email{alan.reid@rice.edu, areid@mpim-bonn.mpg.de}
	\begin{abstract}
	Motivated by applications to pulling back quantum cellular automata from one manifold to another, we study the existence of immersions between certain smooth $4$-manifolds. We show that they lead to a very interesting partial order on closed $4$-manifolds. 
		\end{abstract}
	\maketitle


\section{Introduction}

In this paper manifolds are closed, connected and smooth (ccs), unless otherwise stated. A notable exception is the punctured manifold $M_* := (M \smallsetminus$point) of a ccs manifold $M$.  
Given ccs manifolds $M,N$ of the same dimension, we consider the question whether $M_* $ immerses into $N$. To answer this question, we use a result of Phillips \cite{phillips} who constructed an immersion $M_* \imra N$, in the spirit of Smale--Hirsch theory \cites{smale1,smale2,hirsch2}, from a fibrewise isomorphism of tangent bundles (that is a priori not the differential of an immersion). We give a detailed formulation at the end of the introduction in \cref{thm:phillips}.  Besides some simple observations in dimensions 2 and 3 (see \cref{sec:23}) we study this question in dimension 4. In Section~\ref{sec:general-intro} we give a complete answer in terms of certain invariants. We apply this to exhibit the resulting partial order for $4$-manifolds with cyclic fundamental groups in Section~\ref{sec:cyclic-intro} and - a particularly interesting order - when the fundamental group is $\mathbb Z^4$ in \cref{sec:Z4}. 

An immersion $M_* \imra N$ can be used to construct quantum cellular automata (QCA) on $M$ out of QCAs on $N$. The first author will explain QCAs in the \hyperref[sec:appendix]{appendix} and also discuss how to pull them back. This construction, due to Hastings \cite{hastings13}, motivated our entire study.

\begin{defi} Let $M,N$ be css manifolds of the same dimension.
\begin{enumerate}
	\item Write $M\le N$ if and only if there is an immersion $M_* \imra N$. 
	\item If $M\leq N$ and $N\leq M$ we write $M \bowtie N$ and say that $M,N$ are \emph{immersion equivalent}. 
\end{enumerate}
\end{defi}

The relation $\leq$ is transitive (\cref{lem:open}) and by construction, it becomes a partial order on immersion equivalence classes. Clearly $S^n \leq M$ for all $n$-manifolds $M$.

The Stiefel-Whitney classes of the stable \emph{normal bundle} $\nu (M)$ of $M$ are abbreviated by $w_i(M):= w_i(\nu (M))\in H^i(M;\Z/2)$ as is common in surgery theory (and translates to classes for the tangent bundle $TM$ via $w_1(TM)=w_1(M), w_2(TM) = w_2(M)+w_1(M)^2$). The relevant classes for $4$-manifolds are only $w_1,w_2$ as explained in Remark~\ref{rem:wi}. An immersion $j\colon M\imra N$ gives $j^*(w_i(N) ) = w_i(M)$ and hence any relation among the $w_i(N)$ are mirrored among the $w_i(M)$ via $j$, explaining the 'only if' direction in all items below. 

\begin{prop} \label{prop:easy} 
If $M$ is a $4$-dimensional css manifold then
\begin{enumerate}
\item\label{it:easy1} $M\leq S^4 \iff M$ is spin, i.e.\ $w_1(M)=w_2(M)=0$.

\item\label{it:easy2} $\CP^2 \le M \iff$ $M$ is not almost spin, i.e.\ $w_2(\widetilde M)\neq 0$.

\item\label{it:easy3} $M \le \CP^2 \iff M$ is orientable, i.e.\ $w_1(M)=0$. 

\item\label{it:easy4} $S^1\wt \times S^3\leq M \iff$ $w_1(M)\neq 0$, where $S^1\wt\times S^3$ is the non-trivial $S^3$-bundle over $S^1$.

\item\label{it:easy5} $M\leq S^1\wt\times S^3 \iff$ $w_2(M)=0$ and $w_1(M)$ admits an integral lift.
\end{enumerate}
\end{prop}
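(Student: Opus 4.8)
The plan is to apply Phillips' immersion theorem (recalled at the end of the introduction) throughout: an immersion $M_* \imra N$ exists if and only if there is a bundle monomorphism $TM_* \to TN$ covering some map $M_* \to N$, equivalently (since $TM$ is stably trivial after puncturing in the relevant sense) a classifying map $M_* \to N$ pulling back the (stable) tangent, or dually normal, bundle. The easy directions of each item are already supplied by the observation in the excerpt that $j^*w_i(N) = w_i(M)$; so in each case I only need the converse, i.e.\ to \emph{construct} the required bundle data and then invoke Phillips. The general strategy for each construction is: (i) identify the Stiefel--Whitney (or integral) obstruction that the hypothesis kills; (ii) build a map $M_* \to N$ on the 3-skeleton realizing the desired normal bundle, using that $M_*$ has a handle decomposition with handles of index $\le 3$; (iii) check that the stable normal bundles are fibrewise isomorphic, which over a 3-complex is detected exactly by $w_1$ and $w_2$ (the next possible obstruction $w_3$ being determined by the Wu formula as noted, and higher cohomology of $M_*$ vanishing).

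For item~\eqref{it:easy1}: if $M$ is spin then $\nu(M_*)$ is trivial over the 3-skeleton (its $w_1, w_2$ vanish, hence stably it is classified by a nullhomotopic map into $BO$ up to dimension $3$), so $TM_*$ is stably trivial, hence admits a bundle mono into $TS^4$ (which is also stably trivial), and Phillips gives $M_* \imra S^4$. For \eqref{it:easy3}: if $w_1(M)=0$ then $M$ is orientable; $\nu(M_*)$ is then classified by a map $M_* \to BSO$, and over the 3-complex $M_*$ this lifts through the 3-skeleton of $BSU \simeq$ (classifying space of complex bundles), realizing $\nu(M_*)$ as (stably) pulled back from $\nu(\CP^2)$ via some $M_* \to \CP^2$ — here one uses that $\CP^2$ carries a generator of $H^2(-;\Z)$ and its normal bundle detects $w_2$, plus that any class in $H^2(M_*;\Z/2)$ with no further constraint lifts to $H^2(\CP^2;\Z)$-pullback after a suitable map. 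For \eqref{it:easy2}: $\CP^2 \le M$ means $(\CP^2)_* \imra M$; since $(\CP^2)_*$ is, up to the puncture, a 2-complex whose normal bundle has $w_2 \ne 0$ pulled back from the universal $\widetilde M$, one needs $w_2(\widetilde M) \ne 0$ to find the bundle mono on the relevant skeleton, and conversely $w_2(\widetilde M)\ne 0$ lets one map a 2-cell of $(\CP^2)_*$ into $M$ hitting that class. Items \eqref{it:easy4} and \eqref{it:easy5} are the orientation-reversing analogues: $S^1 \wt\times S^3$ is characterized by $w_1 \ne 0$ and $w_2 = 0$, with $w_1$ being the pullback of the generator of $H^1(S^1;\Z/2)$, which lifts to $H^1(S^1;\Z)$; so $M_* \imra S^1\wt\times S^3$ forces $w_2(M)=0$ and $w_1(M)$ integrally liftable, and conversely these conditions let one build $M_* \to S^1 \wt\times S^3$ with the right normal bundle, while $S^1\wt\times S^3 \imra M$ needs only $w_1(M)\ne 0$ to place the essential circle.

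The main obstacle I expect is step (ii)--(iii) in the converse directions: verifying that the \emph{stable} normal bundle of $M_*$ is not merely abstractly classified by $w_1, w_2$ but is genuinely fibrewise isomorphic to the pullback of $\nu(N)$ under an honest map $M_* \to N$ — i.e.\ that the partial map defined on the 3-skeleton extends and that there is no secondary obstruction. Because $M_*$ is homotopy equivalent to a 3-complex and $w_3$ is forced by the Wu formula while $H^{\ge 4}(M_*;\Z/2)=0$, the obstruction theory does collapse, but this requires care: one must check the target $N$ actually receives the relevant skeleton, which is why $\CP^2$ (with cells in dimensions $0,2,4$) and $S^1\wt\times S^3$ (cells in $0,1,3,4$) are exactly the right "test" manifolds for the $w_2$ and $w_1$ conditions respectively. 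Once the fibrewise bundle isomorphism is in hand, Phillips' theorem closes each case immediately.
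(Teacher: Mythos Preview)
Your overall framework matches the paper's: both reduce via Phillips (packaged in the paper as \cref{thm:main}) to the existence of a continuous map $f\colon M_*\to N$ with $f^*w_i(N)=w_i(M)$ for $i=1,2$, using that $M_*$ has the homotopy type of a 3-complex so $w_1,w_2$ determine the bundle. Items \eqref{it:easy1}, \eqref{it:easy2}, \eqref{it:easy4}, \eqref{it:easy5} are handled along the same lines as the paper, if somewhat more loosely stated (e.g.\ for \eqref{it:easy2} the clean statement is $(\CP^2)_*\simeq S^2$, so one needs a sphere in $M$ with $w_2\neq 0$, i.e.\ $w_2(\widetilde M)\neq 0$).

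There is, however, a genuine gap in your treatment of \eqref{it:easy3}. A map $M_*\to\CP^2$ from a 3-complex is the same as a class in $H^2(M_*;\Z)$, and pulling back $w_2(\CP^2)$ correctly means this class reduces mod~2 to $w_2(M)$. So the question is exactly whether $w_2(M)$ admits an \emph{integral lift}. Your claim that ``any class in $H^2(M_*;\Z/2)$ with no further constraint lifts'' is false: the obstruction is the Bockstein $\beta(w_2)\in H^3(M;\Z)$, and its vanishing for orientable closed 4-manifolds is a nontrivial theorem (due to Teichner--Vogt, see \cite[Remark~5.7.5]{GS}), not a formal consequence of $w_1=0$. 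The route through $BSU$ does not help either, since $BSU$ is 3-connected and lifting there would force $w_2=0$. The paper invokes this integral-lift result explicitly; you need to do the same, or supply the Wu-formula/Poincar\'e-duality argument that proves it.
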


In particular, every spin manifold is a minimum for this order, $\CP^2$ is a maximum among orientable $4$-manifolds, and $S^1\wt\times S^3$ is a minimum among non-orientable $4$-manifolds. It is an open question whether there is a maximum among all $4$-manifolds. However, there are further ``local'' maxima and minima for certain classes of $4$-manifolds. For example, $M\leq \RP^4$ if and only if $M$ (or more precisely $\nu (M)$) is pin$^-$, i.e.\ $w_2(M)=(w_1(M))^2$.

Let $c\colon M\to B\pi_1M$ be a map that is the identity on fundamental groups. If $M$ is \emph{almost spin}, i.e.\ $w_2(\widetilde M)=0$, then there is a unique group cohomology class $w_2^{\pi} (M)\in H^2(\pi_1M;\Z/2)$ with $c^*(w_2^\pi (M))=w_2(M)$ since $H^2(\pi;\Z/2)\to H^2(M;\Z/2)\to H^2(\wt M;\Z/2)$ is exact. There is always a unique element $w_1^\pi (M)$ with $c^*(w_1^\pi (M))=w_1(M)$. To have a unified notation, we set $w_2^\pi (M):=\infty$ if $M$ is not almost spin. Recall from \cite{teichnerthesis}*{Theorem~2.1.1 and~2.2.1} that the resulting triple $(\pi_1M, w_1^\pi (M), w_2^\pi (M))$  is equivalent to the \emph{normal 1-type} as introduced in \cite{surgeryandduality} for the purpose of $S^2 \times S^2$-stable classification of $4$-manifolds. 

We let $\pm [M]\in H_4(M;\Z^{w_1(M)})\cong \Z$ be the (unsigned) twisted fundamental class. Note that a homomorphism $w:\pi\to\{\pm 1\}$ induces an automorphism of the group ring $\Z[\pi]$ by sending $g\mapsto w(g)\cdot g$. For a $\Z[\pi]$-module $P$, we denote by $P^w$ the module twisted by this automorphism.
\begin{prop}
	\label{cor:stable}
	Assume that  $\varphi\colon \pi_1M\overset{\cong}{\ra} \pi_1N $ is an isomorphism such that
	\begin{enumerate}
		\item $\varphi^*(w_1^\pi (N)) = w_1^\pi (M)$,
		\item $\varphi^*(w_2^\pi (N) )= w_2^\pi (M) \in H^2(\pi_1M;\Z/2) \cup \{\infty\}$, where we set $\varphi^*(\infty):=\infty$, and
		\item $\varphi_*c_*[M]=\pm c_*[N]\in H_4(\pi_1N;\Z^{w_1^\pi (N)})$.
	\end{enumerate}
	Then $M$ and $N$ are immersion equivalent with $M_*\imra N$ inducing $\phi$ on fundamental groups.
\end{prop}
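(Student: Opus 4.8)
The plan is to construct a smooth map $f\colon M_*\to N$ inducing $\varphi$ on fundamental groups and admitting a bundle isomorphism $f^*TN\cong TM_*$; composing such an isomorphism with the canonical map $f^*TN\to TN$ produces a fibrewise isomorphism of tangent bundles, so Phillips' theorem \cite{phillips} (in the form recalled in the introduction) then yields an immersion $M_*\imra N$ homotopic to $f$, hence inducing $\varphi$. Because $M$ has a handle decomposition with a single $4$-handle, $M_*$ is homotopy equivalent to a $3$-dimensional complex, and over such a complex a rank-$4$ bundle is determined by its stable class (the map $BO(4)\to BO$ is $4$-connected), so it is enough to find $f$ inducing $\varphi$ with $f^*\nu_N\cong\nu_{M_*}$ stably. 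To arrange this I would pass to normal $1$-types: by (1) and (2) the triples $(\pi_1M,w_1^\pi M,w_2^\pi M)$ and $(\pi_1N,w_1^\pi N,w_2^\pi N)$ are identified by $\varphi$, so (see \cite{surgeryandduality}) $M$ and $N$ admit normal $1$-smoothings $\bar\nu_M\colon M\to B$ and $\bar\nu_N\colon N\to B$ into one common normal $1$-type $\xi\colon B\to BO$, with $\pi_1B=\pi_1N$, where $\bar\nu_N$ induces the identity and $\bar\nu_M$ induces $\varphi$. Any lift $f\colon M_*\to N$ of $\bar\nu_M|_{M_*}$ through $\bar\nu_N$ (i.e.\ with $\bar\nu_N f\simeq\bar\nu_M|_{M_*}$) then induces $\varphi$ on $\pi_1$ and satisfies $\nu_N\circ f\simeq\nu_M|_{M_*}\colon M_*\to BO$, so $f^*\nu_N\cong\nu_{M_*}$ stably, as required. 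Thus the whole statement reduces to the existence of this lift.

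Next I would run the obstruction theory for lifting $\bar\nu_M|_{M_*}$ through the $2$-connected map $\bar\nu_N$. In the Moore--Postnikov tower of $\bar\nu_N$ the first stage is a principal fibration over $B$ with fibre $K(K,2)$, where $K:=\ker(\pi_2N\to\pi_2B)$ as a $\pi_1N$-module, classified by a $k$-invariant $\kappa\in H^3(B;K)$ (twisted coefficients); all later fibres are Eilenberg--MacLane spaces of degree $\ge3$. Since $M_*$ is (homotopy) $3$-dimensional, the only potential obstruction to lifting a map out of $M_*$ is the pullback of $\kappa$, everything higher lying in $H^{\ge4}(M_*;-)=0$. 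Writing $\bar\nu_M|_{M_*}=\bar\nu_M\circ j$ for the inclusion $j\colon M_*\hookrightarrow M$, the lift $f$ therefore exists provided $\bar\nu_M^*\kappa=0$ in $H^3(M;K)$.

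The vanishing of $\bar\nu_M^*\kappa$ is where hypothesis (3) enters. First, $\bar\nu_N$ lifts over the first Moore--Postnikov stage (it is itself a stage of its own tower), so $\bar\nu_N^*\kappa=0$; and $\kappa$ is pulled back along the projection $p\colon B\to B\pi_1N$ from some $\bar\kappa\in H^3(B\pi_1N;K)$, because $p^*$ is an isomorphism on $H^3(-;K)$ (in the totally non-spin case $B\simeq B\pi_1N\times BSO$ and $H^i(BSO;\Z)=0$ for $1\le i\le3$; in the almost spin case $\pi_2B=0$ and the homotopy fibre of $p$ is $3$-connected). Since $p\circ\bar\nu_N\simeq c_N$, this gives $c_N^*\bar\kappa=\bar\nu_N^*\kappa=0$. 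Now $c_M$ and $c_N$ induce isomorphisms on $H_1$ with arbitrary local coefficients (as $H_1(X;L)\cong H_1(\pi_1X;L)$), so, using $p\circ\bar\nu_M\simeq B\varphi\circ c_M$ to rewrite $\bar\nu_M^*\kappa=c_M^*\varphi^*\bar\kappa$ and invoking twisted Poincar\'e duality on the closed manifold $M$ together with naturality of the cap product, one has $\bar\nu_M^*\kappa=0$ if and only if $\varphi^*\bar\kappa\cap c_*[M]=0$; by the isomorphism $\varphi_*$ and the projection formula this holds iff $\bar\kappa\cap\varphi_*c_*[M]=0$. By (3), $\varphi_*c_*[M]=\pm c_*[N]$, and $\bar\kappa\cap c_*[N]=c_{N*}\big(c_N^*\bar\kappa\cap[N]\big)=0$; hence $\bar\nu_M^*\kappa=0$, the lift $f$ exists, and Phillips' theorem gives $M_*\imra N$ inducing $\varphi$. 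Finally, the hypotheses are symmetric under $(M,N,\varphi)\mapsto(N,M,\varphi^{-1})$ (the $\pm$ in (3) is unsigned), so the same construction produces $N_*\imra M$, whence $M\bowtie N$.

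The step I expect to carry the essential content — and the reason (3) need only pin down the image of $c_*[M]$ in $H_4(\pi_1N;\Z^{w_1})$ rather than in the bordism group of the normal $1$-type — is the reduction to the single class $\bar\nu_M^*\kappa$. The ``signature-type'' information in $\bar\nu_{M*}[M]\in H_4(B;\Z^{w_1})$ lying over $H_4(B\pi_1N;\Z^{w_1})$ is an $H^4$-phenomenon, precisely the obstruction to an $S^2\times S^2$-stable diffeomorphism between $M$ and $N$; it is invisible to maps out of the homotopy $3$-complex $M_*$, which is exactly what makes the lifting problem collapse to the equation $\bar\kappa\cap\varphi_*c_*[M]=0$ supplied by (3). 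The care then needed is entirely in identifying the relevant $k$-invariant, verifying that it is pulled back from $B\pi_1N$ (the two-case argument above), and tracking the twisted coefficient systems through Poincar\'e duality and the projection formula.
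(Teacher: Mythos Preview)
Your approach in the almost spin case is correct and coincides with the paper's: there $\pi_2B=0$, so $K=\pi_2N$ and the fibre of $p\colon B\to B\pi_1N$ is $BSpin$, which is $3$-connected; hence $p^*$ is an isomorphism on $H^3(-;K)$, the obstruction reduces to $c_M^*\varphi^*k_N$, and Poincar\'e duality together with hypothesis~(3) finishes exactly as you wrote. The paper phrases this via Postnikov $2$-types and Theorem~\ref{thm:post}, but the content is identical.

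The not almost spin case, however, has a genuine gap. Your two justifications for ``$\kappa$ is pulled back from $B\pi_1N$'' are both faulty: first, $B\simeq B\pi_1N\times BSO$ only when $w_1=0$ (in general $B$ is the homotopy pullback of $BO\to K(\Z/2,1)\leftarrow B\pi_1N$ along $w_1^\pi$, which need not split); second, and more seriously, $H^3(BSO;\Z)\cong\Z/2$, generated by the integral class $W_3=\beta w_2$, so your vanishing claim fails even when $w_1=0$. One can show $P_2B\simeq B\pi_1N\times K(\Z/2,2)$ (since $w_1\times w_2\colon BO\to K(\Z/2,1)\times K(\Z/2,2)$ is $4$-connected, the $k$-invariant of $P_2BO$ vanishes), but then $H^3(P_2B;K)$ decomposes as $H^3(B\pi_1N;K)$ plus contributions from $H^1(B\pi_1N;K[2])$ and $(K/2K)^\pi$, and there is no evident reason $\kappa$ should land in the first summand. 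Without this, the reduction to $\bar\kappa\cap\varphi_*c_*[M]$ breaks down: the classes $(\bar\nu_M)_*[M]$ and $\pm(\bar\nu_N)_*[N]$ can genuinely differ in $H_4(B;\Z^{w_1})$ (for instance by signature), and you have not shown that $\kappa$ annihilates this difference.

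The paper avoids this obstacle entirely by taking a different route in the not almost spin case: hypotheses (1)--(3) say precisely that $M$ and $N$ share a normal $1$-type and have equal image in $H_4(\pi_1N;\Z^{w_1})$, so Kreck's modified surgery \cite{surgeryandduality} (see also \cite[Theorem~1.2]{KPT}) gives that $M$ and $N$ are $\CP^2$-stably diffeomorphic; Lemma~\ref{lem:stable} then converts this into immersion equivalence with control of $\varphi$. Your unified obstruction-theoretic strategy is appealing, but to make it work here you would need an independent argument pinning down the image of $\kappa$ under restriction to the $K(\Z/2,2)$-factor, and this does not appear to be straightforward.
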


\begin{defi}\label{def:imm}
 A \emph{$\phi$-immersion} is an immersion $j:M_*\imra N$ inducing $\phi\colon \pi_1M\to\pi_1N $ on fundamental groups, i.e.\ with $\pi_1(j)=\phi$.
\end{defi}

It follows that the question whether $M_1\leq M_2$ only depends on the two quadruples $(\pi_1(M_k), w_1^\pi (M_k), w_2^\pi (M_k), c_*[M_k])$ for $k=1,2$, up to the actions of $\Aut(\pi_1(M_k))$ and up to sign in the last entry. Hence we call
\[
(\pi_1M, w_1^\pi (M), w_2^\pi (M), \pm c_*[M])
\]
the \emph{immersion type} of $M$. Given a finitely presented group $\pi$ every pair $(w_1,w_2)$ can be realised as $(w_1^\pi (M),w_2^\pi (M))$ for some $4$-manifold $M$, see \cref{sec:james}. We will also see that a finite index subgroup of $H_4(\pi;\Z^{w_1})$ is realised as images $c_*[M]$ of $4$-manifolds $M$ with given $(w_1,w_2)$. If $w_2=\infty$, this is the entire group but in general the index is an arbitrary power of~2.

\subsection{$4$-manifolds with cyclic fundamental group} \label{sec:cyclic-intro}

To show that the immersion order is indeed interesting, we now describe it for all manifolds with cyclic fundamental group. For this we introduce some \emph{order graphs}, where we use the notation that an arrow $\begin{tikzcd}[every arrow/.append style={no head,"\blacktriangleright" marking}]{}\ar[r]&{}\end{tikzcd}$ represents $<$ which means \emph{strictly} smaller in the immersion order.
In \cref{sec:cyclic}, we give representatives with minimal Euler characteristic for each immersion equivalence class that will appear below. In particular, the oriented immersion types $M(2^m)$ used below will be represented by rational homology 4-spheres with fundamental group $\Z/2^m$. 

\begin{thm}\label{thm:cyclic-or}
	\begin{enumerate}
		\item Any two orientable $4$-manifolds with fundamental group $\Z/2^m$ that are almost spin but not spin are immersion equivalent. We denote this immersion equivalence class by $M(2^m)$.
		\item Any orientable $4$-manifold $M$ with fundamental group $\Z/2^mk$, $k$ odd, is immersion equivalent to $S^4, M(2^m)$ or $\CP^2$ depending on $w_2^\pi (M)$.
		\item For infinite cyclic fundamental group, there are exactly two immersion equivalence classes represented by $S^1\times S^3$, which is immersion equivalent to $S^4$, and $S^1\times S^3 \#\CP^2$, which is immersion equivalent to $\CP^2$.
	\end{enumerate}
	Furthermore, the immersion equivalence classes of orientable $4$-manifolds with cyclic fundamental group form the following infinite chain.
	\[S^4< M(2)< M(4)<\ldots< M(2^m)< \ldots< \CP^2\]
	In terms of order graphs this look as follows.
	\[\begin{tikzcd}[every arrow/.append style={no head,"\blacktriangleright" marking}]
		S^4\ar[r]&M(2)\ar[r]&M(4)\ar[r]&\ldots \ar[r]&M(2^m)\ar[r]&\ldots\ar[r]&\CP^2
	\end{tikzcd}\]
\end{thm}

As explained in \cref{rem:realization}, the non-oriented immersion types $N(2^m,1,1)$ used below are represented by rational homology 4-balls with fundamental group $\Z/2^m$, with $\RP^4$ coming up for $m=1$. All cases with $w_2=\infty$ are realized by a connected sum with $\CP^2$.

\begin{thm}\label{thm:cyclic-nor}
	Let $N(2^m,w_2,c)$ denote a non-orientable $4$-manifold with fundamental group $\pi=\Z/2^m$, $w_2\in\{0,1,\infty\}$ and image of the fundamental class $c$ in $H_4(\pi;\Z^{w_1})=\Z/2$ such that $c=0$ or $w_2\neq 0$. Note that in particular no such manifold exists with $c=1$ and $w_2=0$. 
	Then immersion equivalence classes of non-orientable $4$-manifolds with cyclic fundamental group form the following order graph.
 {\scriptsize
 	\[\begin{tikzcd}[column sep=small,every arrow/.append style={no head,"\blacktriangleright" marking}]
 		S^1\wt\times S^3 \ar[rr]\ar[d]	 &&\dots \ar[ddrr, near start,end anchor={[xshift=4ex]}]\ar[rrr]&&&N(2^{m+1},0,0)\ar[ddrr, near start,end anchor={[xshift=4ex]}]\ar[d]\ar[rrr]&&&N(2^m,0,0)\ar[d]\ar[rrr]\ar[rrdd, near start, end anchor={[yshift=1ex]}]&&&\ldots&\\
 		S^1\wt\times S^3\#\CP^2\ar[rr]&&\dots\ar[rrr, crossing over]&&&N(2^{m+1},\infty,0)\ar[dr,end anchor={[xshift=-3ex,yshift=0.5ex]}]\ar[rrr, crossing over]&&&N(2^m,\infty,0)\ar[dr,end anchor={[xshift=-3ex,yshift=0.5ex]}]\ar[rrr, crossing over]&&&\ldots&\\
 		&\makebox[1em][l]{$\dots$}&&\makebox[1em][r]{$\dots$}&\makebox[1em][l]{$N(2^{m+1},1,0)$}\ar[ur,start anchor={[xshift=3ex,yshift=0.5ex]}]\ar[dr,start anchor={[xshift=3ex,yshift=-0.5ex]}]&&\makebox[1em][r]{$N(2^{m+1},\infty,1)$}&\makebox[1em][l]{$N(2^m,1,0)$}\ar[ur,start anchor={[xshift=3ex,yshift=0.5ex]}]\ar[dr,start anchor={[xshift=3ex,yshift=-0.5ex]}]&&\makebox[1em][r]{$N(2^m,\infty,1)$}&\makebox[1em][l]{$\dots$}&&\makebox[1em][r]{$\dots$}\\
 		&&\dots&&&N(2^{m+1},1,1)\ar[ur,end anchor={[xshift=-3ex,yshift=-0.5ex]}]&&&N(2^m,1,1)\ar[ur,end anchor={[xshift=-3ex,yshift=-0.5ex]}]&&&\ldots&
 	\end{tikzcd}\]}
 	Our order graph ends on the right hand side with the four manifolds $N(2,-,-)$ but repeats infinitely on the left (with increasing $m$) before encountering the manifolds with infinite fundamental group.
\end{thm}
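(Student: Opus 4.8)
The plan is to feed the complete classification of the immersion order from \cref{sec:general-intro} into the family of fundamental groups $\Z/2^m$ and then read off every covering relation. The first step is a reduction to $2$-groups: the groups $H^1(\Z/2^mk;\Z/2)$, $H^2(\Z/2^mk;\Z/2)$ and $H_4(\Z/2^mk;\Z^{w_1})$ are all $2$-primary, and the quotient $\Z/2^mk\to\Z/2^m$ (with $k$ odd) induces isomorphisms on each of them, so a $4$-manifold with fundamental group $\Z/2^mk$ is immersion equivalent to one with fundamental group $\Z/2^m$; this reduces everything to $\pi=\Z/2^m$. For such $\pi$ and a non-orientable $M$, the class $w_1^\pi(M)$ is forced to be the unique nontrivial homomorphism $\Z/2^m\to\{\pm1\}$, $w_2^\pi(M)\in H^2(\pi;\Z/2)\cup\{\infty\}=\{0,1,\infty\}$, and $c_*[M]\in H_4(\pi;\Z^{w_1})=\Z/2$; since $\Aut(\Z/2^m)$ acts trivially on all three groups, the immersion type is exactly the triple $(m,w_2^\pi,c)$. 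The realizability statement---only the triples with $c=0$ or $w_2^\pi\neq0$ occur, represented by the $N(2^m,w_2,c)$, with $\RP^4=N(2,1,1)$---I would take from \cref{rem:realization} and the construction of minimal representatives in \cref{sec:cyclic}.

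Next I would anchor the diagram using \cref{prop:easy}. Each $N(2^m,w_2,c)$ has $w_1\neq0$, so $S^1\wt\times S^3\le N(2^m,w_2,c)$, and when $w_2=\infty$ also $\CP^2\le N(2^m,\infty,c)$, hence $S^1\wt\times S^3\#\CP^2\le N(2^m,\infty,c)$. Conversely, since $\pi$ is finite cyclic, $H^1(M;\Z)=0$ and so $w_1(M)\neq0$ admits no integral lift; therefore no $N(2^m,w_2,c)$ lies below $S^1\wt\times S^3$ or below $S^1\wt\times S^3\#\CP^2$, which puts the two $\pi_1=\Z$ manifolds strictly at the bottom (a direct application of \cref{prop:easy} also shows these are the only two immersion classes with $\pi_1=\Z$). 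Moreover $N(2^m,w_2,c)\le\RP^4$ iff it is pin$^-$, i.e.\ $w_2=(w_1^\pi)^2$; since $(w_1^\pi)^2$ is the nonzero class of $H^2(\Z/2^m;\Z/2)$ precisely when $m=1$ and is $0$ for $m\ge2$, this already indicates why the $m=1$ end of the graph is special.

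The bulk of the work is then to run the criterion of \cref{sec:general-intro} on every ordered pair of immersion types. For the positive relations (the arrows) I would produce an admissible homomorphism $\phi$---in practice an identity, an inclusion, or a quotient between cyclic $2$-groups---and verify compatibility of $w_1^\pi$, of $w_2^\pi$ (using the computation of $\phi^*$ on $H^2(-;\Z/2)$ for such $\phi$: an isomorphism along an index-$2$ inclusion, the zero map along a proper quotient, together with the behaviour of the $\infty$-type under the induced covering) and of the twisted fundamental classes, appealing to \cref{cor:stable} for the immersion equivalences that are needed (connected sums with $\CP^2$ and the identifications of the chosen representatives). For the negative relations (the absent arrows and the incomparabilities within each level) I would obstruct $M\le N$ either by the impossibility of matching $w_2^\pi$ under any admissible $\phi$---an immersion $M_*\imra N$ forces $N$ to be not almost spin once $M$ is not almost spin, and $\phi^*$ vanishes on $H^2(-;\Z/2)$ for every non-injective $\phi$ of cyclic $2$-groups---or by the fundamental-class condition, whose one-sidedness (the realizable classes in $H_4(\pi;\Z^{w_1})$ form a proper subgroup exactly when $w_2^\pi\ne\infty$) separates the $c=0$ slots from the $c=1$ slots and, layered with $w_2^\pi$, forces every arrow to be strict. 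Assembling these yields, at each level, the displayed sub-poset on the five manifolds $N(2^m,0,0)$, $N(2^m,\infty,0)$, $N(2^m,1,0)$, $N(2^m,\infty,1)$ and $N(2^m,1,1)$---collapsing to the four $N(2,-,-)$ at $m=1$, where $(w_1^\pi)^2\ne0$ identifies two of the slots---together with the inter-level arrows $N(2^{m+1},-,-)\to N(2^m,-,-)$, the stabilization of the level pattern for all $m\ge2$ (so that the graph repeats indefinitely to the left) and, below everything, the two $\pi_1=\Z$ manifolds, which are reached only past all the finite levels.

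I expect the main obstacle to be precisely this last bookkeeping: deciding which homomorphisms between cyclic $2$-groups are admissible for a $\phi$-immersion, computing their effect on $w_2^\pi$ and on $H_4(-;\Z^{w_1})$, and extracting from the general classification the sharp fundamental-class inequality---this is what pins down the $c=0$ versus $c=1$ dichotomy and the strictness of every inequality, and it is also where the exceptional $m=1$ corner (where $(w_1^\pi)^2\ne0$) has to be treated separately.
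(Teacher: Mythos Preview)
Your overall architecture---reduce to $2$-groups, list the immersion types $(m,w_2,c)$, then verify or obstruct each ordered pair---matches the paper's, and your use of \cref{prop:easy} to anchor the $\pi_1=\Z$ manifolds is correct. But the proposal has a genuine gap at exactly the place you flag as ``the main obstacle.''

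First, \cref{cor:stable} only applies when $\varphi$ is an \emph{isomorphism} of fundamental groups, so it cannot produce or obstruct immersions between different $\Z/2^m$'s. For $c=0$ this is harmless: the obstruction of \cref{thm:post} vanishes trivially, so every surjection $\Z/2^{k}\to\Z/2^{n}$ lifts to a map of Postnikov $2$-types. But for $c=1$ you need a concrete model of $P_2N(2^k,1,1)$ to do anything. The paper supplies this in \cref{prop:stable2type} as an explicit $\Z[\Z/2k]$-chain complex built from the norm and twisted norm elements, and then uses it twice: once to write down the chain map realizing $N(2^{m}k,w_2,1)\le N(2^m,w_2,1)$ for odd $k$ (your ``isomorphisms on cohomology'' alone do not give this), and once---this is the crux---to obstruct $N(2^k,1,1)\le N(2^n,\infty,0)$.

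That last obstruction is the step your proposal does not see. Knowing that realizable fundamental classes form a proper subgroup when $w_2^\pi\neq\infty$ says nothing about immersions into a target with $w_2^\pi=\infty$; and \cref{cor:stable2} is unavailable because the surjection $\Z/2^k\to\Z/2^n$ is not an isomorphism. The paper instead uses \cref{lem:stable} and \cref{prop:stable1} to convert the question into whether $w_2(N(2^k,1,1))$ lifts to $H^2(N(2^k,1,1);\Z[\Z/2])$, and then computes with the explicit chain complex that $H^2(-;\Z[\Z/2])\to H^2(-;\Z/2)$ is zero, so no lift exists. Without this computation your separation of the $c=0$ and $c=1$ slots is unproven. (Incidentally, note that for non-orientable $M,N$ any admissible $\varphi$ must pull back the nontrivial $w_1$, hence must be \emph{surjective}; inclusions between cyclic $2$-groups are never admissible here, so your list of candidate $\varphi$'s should drop them. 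Also, nothing collapses at $m=1$: there are still five immersion types there, and $(w_1^\pi)^2\neq0$ does not identify any of them.)
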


In \cref{sec:cyclic} we also determine the combined immersion order for orientable and non-orientable $4$-manifolds with cyclic fundamental group.

\subsection{$4$-manifolds with free abelian fundamental group}\label{sec:Z4}

Another class of $4$-manifolds representing an interesting suborder is that of orientable manifolds with free abelian fundamental group $\Z^4$. For almost spin manifolds, $w_2^\pi (M)$ lies in  
\[
H^2(\Z^4;\Z/2)/\!\Aut(\Z^4) = \{0, e_1 \cup e_2, e_1 \cup e_2+e_3 \cup e_4\},
\]
where $e_i\in H^1(\Z^4;\Z/2)$ are the four standard pullbacks of the generator in $H^1(\Z;\Z/2)$. 

\begin{thm}
	\label{prop:Z4}
	Let $M, N$ be oriented, almost spin $4$-manifolds with fundamental group $\Z^4$. 
	\begin{enumerate}
		\item If $[w_2^\pi (M)] = e_1 \cup e_2$, then $M\le N$ if and only if $N$ is non-spin.
		\item If $[w_2^\pi (M)] = e_1 \cup e_2+ e_3 \cup e_4$, then $M\le N$ if and only if $[w_2^\pi (N)]=[w_2^\pi (M)]$  and $c_*[M]$ is a multiple of $c_*[N]$ in $H_4(\Z^4)/\!\Aut(\Z^4) \cong \Z/\!\{\pm 1\} = \N_0$.
	\end{enumerate}
Moreover, all immersion types $(\Z^4,0,e_1\cup e_2+e_3\cup e_4,2c)$ are realized.
\end{thm}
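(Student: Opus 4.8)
The plan is to reduce \cref{prop:Z4} to the general analysis of the immersion order carried out in \cref{sec:general-intro}, together with two $\Z/2$-cohomological facts about $\pi=\Z^4$ and one realisation statement. Recall the mechanism: by Phillips' theorem a $\phi$-immersion $M_*\imra N$, for a homomorphism $\phi\colon\pi_1M\to\pi_1N$, exists if and only if the map $B\phi\circ c_M\colon M_*\to B\Z^4=T^4$ lifts through $c_N\colon N\to T^4$. Indeed a lift $f$ automatically induces $\phi$ on $\pi_1$ and has $f^*w_i(N)=c_M^*\phi^*w_i^\pi(N)$; so once $\phi^*w_i^\pi(N)=w_i^\pi(M)$ holds for $i=1,2$, the bundles $f^*\nu(N)$ and $\nu(M_*)$ have equal Stiefel--Whitney classes, hence are stably isomorphic over the $3$-complex $M_*$, equivalently $TM_*\cong f^*TN$, and Phillips produces the immersion. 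The homotopy fibre of $c_N$ is the simply connected open $4$-manifold $\wt N$, so the only obstruction to the lift lies in $H^3(M_*;\pi_2\wt N)$ --- the next would sit in $H^4(M_*;-)=0$ --- and it is the vanishing of that class which \cref{sec:general-intro} turns into a divisibility condition relating $\phi_*c_*[M]$ and $c_*[N]$. I will also use \cref{cor:stable} freely to replace a manifold by another of the same immersion type.

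\textbf{Two algebraic inputs.} First, $c_M^*\colon H^2(T^4;\Z/2)\hookrightarrow H^2(M;\Z/2)$ is injective: in the spectral sequence of $\wt M\to M\to B\Z^4$ one has $H^1(\wt M;\Z/2)=0$, so $H^2(B\Z^4;\Z/2)$ injects --- equivalently, this is the uniqueness of $w_2^\pi$. Consequently a $\phi$-immersion forces $\phi^*w_2^\pi(N)=w_2^\pi(M)$ exactly. Second, if $w_2^\pi(M)\neq0$ then $c_*[M]\in H_4(\Z^4)=\Z$ is even: applying the oriented Wu identity $\langle w_2(M)\cup y,[M]\rangle=\langle y^2,[M]\rangle$ to $y=c_M^*z$ gives $c_*[M]\cdot\langle w_2^\pi(M)\cup z,[T^4]\rangle=c_*[M]\cdot\langle z^2,[T^4]\rangle=0$, since squares vanish in the exterior algebra $H^*(T^4;\Z/2)$; nondegeneracy of the cup product and $w_2^\pi(M)\neq0$ then force $c_*[M]\equiv0\pmod 2$. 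This is the ``$2c$'' of the last clause, and gives $c_*[M]\in2\Z$ in parts (1)--(2).

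\textbf{Assembling the three statements}, using that alternating forms on $(\Z/2)^4$ have rank $0$, $2$ or $4$ --- exactly the three $\Aut(\Z^4)$-orbits of $w_2^\pi$ --- and that $\phi$ pulls a rank-$r$ form back to rank $\le r$, with equality iff $\phi\otimes\Z/2$ is invertible. In (1): if $M\le N$ then $w_2(N)$ pulls back to $w_2(M)=c_M^*(e_1\cup e_2)\neq0$, so $N$ is non-spin; conversely, if $N$ is non-spin choose a primitive rank-$2$ sublattice $L\subset\Z^4$ whose reduction is a hyperbolic plane of the form $w_2^\pi(N)$ and a surjection $\phi\colon\Z^4\twoheadrightarrow L$ killing $e_3,e_4$, so (after precomposing with an automorphism) $\phi^*w_2^\pi(N)=e_1\cup e_2=w_2^\pi(M)$ while $\phi$ has rational rank $2$, hence $\phi_*=\Lambda^4\phi=0$ on $H_4(\Z^4)$ and the divisibility condition is vacuous, giving $M\le N$. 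In (2): a $\phi$-immersion gives $\phi^*w_2^\pi(N)=w_2^\pi(M)$ nondegenerate, so $\phi\otimes\Z/2$ is invertible, forcing $[w_2^\pi(N)]=[w_2^\pi(M)]$ and $\det\phi$ odd; lifting the immersion to the cover $N_\phi\to N$ corresponding to $\im\phi$ --- of degree $|\det\phi|$ and with $c_*[N_\phi]=c_*[N]$ by a degree computation in $H_4$ --- reduces to the case $\phi$ an automorphism, whence $\pm c_*[M]=\phi_*c_*[M]$ is a multiple of $c_*[N]$. Conversely, given the two conditions, pick $\phi\in\SL_4(\Z)$ whose reduction mod $2$ carries $w_2^\pi(N)$ to $w_2^\pi(M)$; then $\phi^*w_2^\pi(N)=w_2^\pi(M)$ and $\phi_*c_*[M]=c_*[M]$ is a multiple of $c_*[N]$ by hypothesis, so $M\le N$. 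In (3): the parity input gives $\subseteq2\Z$, while the realisation methods of \cref{sec:james} realise a finite-index subgroup of $H_4(\Z^4)$ for $(w_1,w_2)=(0,e_1\cup e_2+e_3\cup e_4)$, which an explicit model with $c_*[M]=2$ pins down as $2\Z$.

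\textbf{Main obstacle.} I expect the crux to be the input from \cref{sec:general-intro}: recognising the lifting obstruction in $H^3(M_*;\pi_2\wt N)$ as precisely the stated condition relating $c_*[M]$ and $c_*[N]$. This is where ``punctured'' is essential --- $M_*$ being homotopy equivalent to a $3$-complex kills all higher obstructions. The secondary delicacy, special to $\Z^4$, is that the homomorphism produced in (2) need only have odd determinant rather than $\pm1$; the passage to the finite cover $N_\phi$ repairs this without disturbing $c_*[N]$. Finally, exhibiting the explicit oriented almost spin $4$-manifold with $\pi_1=\Z^4$, $w_2^\pi=e_1\cup e_2+e_3\cup e_4$ and $c_*[M]=2$ used in (3) is a small construction in its own right.
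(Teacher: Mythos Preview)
Your strategy matches the paper's: for (2) and the forward direction of (1) you argue exactly as the paper does (pass to the finite cover to make $\phi$ an isomorphism, then apply \cref{cor:multiple}; the paper notes $\phi$ has finite index, you sharpen this to odd index via $\phi\otimes\Z/2$ invertible, which is fine). Your Wu formula computation showing $c_*[M]$ is even when $w_2^\pi(M)\neq 0$ is a pleasant direct argument --- the paper obtains the same conclusion from the $d_2$-differential in the James spectral sequence, which is essentially the dual calculation.

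There is, however, a gap in your converse for (1). You produce a $\phi:\Z^4\to\Z^4$ factoring through a rank-$2$ sublattice and then say ``$\phi_*=0$ on $H_4$, so the divisibility condition is vacuous, giving $M\le N$.'' But the divisibility criterion you are invoking is \cref{cor:multiple}, and that corollary has the hypothesis that $\phi_*\colon H_1(\pi_1M;\pi_2N_\phi)\to H_1(\pi_1N;\pi_2N)$ be injective. For a $\phi$ of rank $2$ there is no reason for this to hold, so you cannot conclude that ``$\phi_*c_*[M]$ a multiple of $c_*[N]$'' suffices. (Tracing the proof of \cref{cor:multiple}, the direction you need --- multiple $\Rightarrow$ immersion --- is precisely the one that uses injectivity.)

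The fix is immediate once you notice \emph{why} your $\phi$ works: since $\phi$ factors through $\Z^2$, the obstruction $\phi^*k_N\in H^3(\Z^4;\pi_2N_\phi)$ is pulled back from $H^3(\Z^2;\pi_2N)=0$, so $\phi^*(k_N)\cap c_*[M]=0$ and \cref{thm:post} applies directly, no divisibility needed. The paper phrases the same idea geometrically: realise $\phi$ as $M\xrightarrow{f_1}T^2\xrightarrow{f_2}N$, where $f_1$ exists because $T^2=K(\Z^2,1)$ and $f_2$ exists because $T^2$ is $2$-dimensional, so there is no obstruction to mapping it into $N$ realising $\phi_2$. Either formulation closes the gap.
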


This implies that the order $(\N_0,\cdot)$ given by ($m\le n\Leftrightarrow m$ is a multiple of $n$) is realized in the immersion order on orientable $4$-manifolds. It would be interesting to know whether the division order on $\N$ also arises, where $m\le n$ if and only if $m$ divides $n$.

\subsection{$4$-manifolds with arbitrary fundamental groups} \label{sec:general-intro}

The following result follows from obstruction theory (for basics about obstruction theory see \cite{Baues}) and a version of Smale--Hirsch theory by Phillips \cite{phillips} that works for open manifolds of the same dimension. 
\begin{rem}
	We use the \emph{Postnikov 2-type} of $X$. It is a 3-connected map $c_2\colon X\to P_2X$ with target a 3-coconnected CW-complex $P_2X$, i.e.\ $\pi_k(P_2X)$ for $k > 2$. The Postnikov 2-type of a CW-complex can be constructed as an inclusion, by attaching cells of dimension $>3$ to $X$ that kill all homotopy groups of dimension $>2$. Whenever we consider $P_2X$ in the following we also fix a 3-connected map $c_2$.
	In particular $H^i(X;\Z/2)$ and $H^i(P_2X;\Z/2)$ are isomorphic via $c_2^*$ for $i=0,1,2$, uniquely in the following sense.
	
	For any other choice $c_2\colon X \to (P_2X)'$ there exists a homotopy equivalence $\phi\colon P_2X\to (P_2X)'$ such that $\phi\circ c_2\simeq c_2'$ and such a $\phi$ is unique up to homotopy by \cite{Baues}*{Theorem~1.5.8}. 
	 In the following we will abuse notation and denote $(c_2^*)^{-1}(w_i(M))$ again by $w_i(M)\in H^i(P_2M;\Z/2)$.	
\end{rem}

\begin{thm}\label{thm:main-short} For $4$-manifolds $M,N$, we have $M\leq N$ if and only if there is a map $f_2\colon P_2M \to P_2N$ with $f_2^*(w_i(N))=w_i(M), i=1,2$. Moreover, given $f_2$, there exists an immersion $j:M_*\imra N$ such that the induced maps on fundamental groups satisfy $\pi_1(f_2)=\pi_1(j)$. 
\end{thm}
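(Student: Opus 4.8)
The plan is to translate the immersion question into a bundle-theoretic lifting problem via Phillips' theorem, and then run obstruction theory, reducing the target of the obstruction problem from $N$ (or its classifying space) to the Postnikov $2$-type $P_2N$ because $M_*$ has the homotopy type of a $3$-complex.

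First I would recall Phillips' theorem in the precise form needed: for an open manifold $M_*$ and a closed manifold $N$ of the same dimension, an immersion $M_* \imra N$ exists (and is regularly homotopic to one inducing a prescribed map on tangent bundles) if and only if there is a bundle monomorphism $TM_* \to TN$ covering some map $g\colon M_* \to N$; equivalently, a fibrewise isomorphism since the fibres have equal dimension. Such a fibrewise isomorphism is the same data as a map $g\colon M_* \to N$ together with a stable trivialization of $g^*\nu(N) \ominus \nu(M_*)$, i.e.\ an equality $g^*(\nu N) = \nu(M_*)$ in reduced $\widetilde{KO}$ after stabilization together with a lift of $g$ to the relevant bundle data. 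The cleanest packaging: a map $g\colon M_* \to N$ admits such a lift iff the stable normal bundles match, $g^*\nu(N) \simeq \nu(M_*)$ as stable bundles over $M_*$.

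Next I would set up the obstruction theory. Because $M_*$ is a $4$-manifold with a puncture, it is homotopy equivalent to a $3$-dimensional CW complex; hence any map from $M_*$ factors up to homotopy through the $3$-skeleton, and maps out of $M_*$ into a space $Y$ depend only on the Postnikov $2$-type... more carefully, on $P_3 Y$, but the normal bundle data is detected in low degrees. The key observation is that the stable normal bundle $\nu(N)$, restricted to its behavior relevant for maps from a $3$-complex, is classified by its first two Stiefel--Whitney classes together with the map to $B\pi_1 N$ — precisely the data encoded in $P_2 N$ once we remember $w_1(N), w_2(N)$ as classes on $P_2 N$ (which makes sense since $H^1, H^2$ are $2$-type invariants). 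So: giving a map $M_* \to N$ with matching $w_1, w_2$ is, after passing to $3$-skeleta and obstruction theory, equivalent to giving $f_2\colon P_2 M \to P_2 N$ with $f_2^*(w_i(N)) = w_i(M)$ for $i=1,2$. The forward direction ($M \le N \Rightarrow$ existence of $f_2$) is immediate: an immersion $j$ gives $j^*w_i(N) = w_i(M)$ as noted in the excerpt, and the composite $M_* \imra N \to P_2 N$ together with $M_* \to P_2 M_* = P_2 M$ yields $f_2$ by the universal property of the $2$-equivalence $c_2\colon M_* \to P_2 M_*$, using that $M_*$ is $2$-connected to $P_2 M_*$ and $P_2 N$ has no homotopy above degree $2$.

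For the converse, given $f_2\colon P_2 M \to P_2 N$ with $f_2^* w_i(N) = w_i(M)$, I would compose with $c_2\colon M_* \to P_2 M_* \simeq P_2 M$ to get $g_0\colon M_* \to P_2 N$, then lift $g_0$ through $N \to P_2 N$ over the $3$-skeleton of $M_*$: the obstructions to lifting lie in $H^{k+1}(M_*; \pi_k(\text{fibre}))$ with $k \ge 3$, and since $\dim M_* = 4$ only the $k=3$ obstruction in $H^4(M_*;\pi_3)$ could survive — but $M_*$ is open, so $H^4(M_*;-) = 0$, and the lift $g\colon M_* \to N$ exists. It induces $\pi_1(g) = \pi_1(f_2)$ by construction. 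Now $g^* w_i(N) = w_i(M)$ for $i = 1,2$, hence also $w_3$ by the Wu formula quoted in the excerpt, and $w_i(M_*) = 0$ for $i > 3$ for dimension reasons; so $g^*\nu(N)$ and $\nu(M_*)$ have the same Stiefel--Whitney classes. To upgrade this to a genuine stable bundle isomorphism $g^*\nu(N) \simeq \nu(M_*)$ — which is what Phillips needs — I would again use obstruction theory: two stable bundles over the $3$-complex $M_*$ with equal $w_1, w_2$ differ by a class in $H^3(M_*; \pi_3(BO)) = H^3(M_*;\Z)$, coming from a difference of stable trivializations... here one must check this obstruction vanishes or can be killed by modifying $g$ within its homotopy class; the spin/pin refinement and the $3$-connectivity of the relevant fibre make the only potential obstruction live in $H^3(M_*;\Z/2)$ or $H^3(M_*;\Z)$, which I expect to kill by a further application of the fact that $M_*$ retracts to a $3$-complex and the bundle data on a $4$-manifold is determined through degree $3$. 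With the stable isomorphism in hand, Phillips' theorem produces the immersion $j\colon M_* \imra N$ with the prescribed $\pi_1$.

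The main obstacle I anticipate is the last step: passing from "same Stiefel--Whitney classes" to "isomorphic stable normal bundles over $M_*$", and ensuring the resulting bundle monomorphism is fibrewise an isomorphism of the right rank so that Phillips applies. Equivalently, one must verify that the vertical homotopy fiber of $N \to P_2 N$ is sufficiently connected that $w_1, w_2$ (together with $\pi_1$) really do classify the normal-bundle data relevant to $3$-complexes — i.e.\ that no exotic $H^3$-obstruction obstructs the bundle map even though it does not obstruct the mere map $g$. I would handle this by working with the fibration $B\mathrm{Spin}$ (or $B\mathrm{Pin}^{\pm}$, or the appropriate normal $1$-type fibration $B \to B\mathrm{O}$) and checking that the Postnikov tower of the fibre has its first nonzero $k$-invariant above the range that a $4$-manifold-with-puncture can see, reducing everything to the $\pi_1$, $w_1$, $w_2$ data that $P_2$ retains.
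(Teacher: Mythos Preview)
Your approach is essentially the same as the paper's, and it is correct---but the ``main obstacle'' you anticipate at the end is a phantom, and your uncertainty there stems from a miscomputation.

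You write that two stable bundles over the $3$-complex $M_*$ with equal $w_1,w_2$ might differ by a class in $H^3(M_*;\pi_3(BO)) = H^3(M_*;\Z)$. But $\pi_3(BO)=0$ (it is $\pi_4(BO)$ that is $\Z$, detected by $p_1$); likewise $\pi_3(BO(4))=\pi_2(SO(4))=0$. So there is no $H^3$ obstruction whatsoever: over a $3$-complex, rank-$4$ (or stable) real vector bundles are completely classified by $w_1$ and $w_2$. The paper states this in the equivalent form that $w_1\times w_2\colon BO(4)\to K(\Z/2,1)\times K(\Z/2,2)$ is $4$-connected. Once you observe this, your argument is complete: $g^*TN$ and $TM_*$ are genuinely isomorphic rank-$4$ bundles, and Phillips applies directly.

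Two minor stylistic differences from the paper: (i) where you lift $g_0\colon M_*\to P_2N$ to $N$ via obstruction theory for the fibration replacement of $N\to P_2N$, the paper instead builds $P_2N$ from $N$ by attaching cells of dimension $\geq 4$ and invokes cellular approximation to push the image of the $3$-complex $M_*$ into $N$---same content, different packaging; (ii) the paper works with rank-$4$ tangent bundles rather than stable normal bundles, which lets it invoke Phillips without any destabilization step.
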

We give more equivalent conditions in \cref{thm:main}.
In the `only if' direction one can use $f_2:=P_2(j)$ because $P_2M_*=P_2M$. 

We next study the existence of maps $f_2\colon P_2M \to P_2N$, regardless of their behaviour on the Stiefel-Whitney classes.  We use the fibration sequence $K(\pi_2M,2)\to P_2M \to B\pi_1M$ that is classified by the $k$-invariant $k_M\in H^3(\pi_1M;\pi_2M)$. The $k$-invariant is the obstruction for the existence of a section $B\pi_1M\to P_2M$.

\begin{thm}\label{thm:post}
There is a map $f_2\colon P_2M\to P_2N$ inducing $\varphi\colon \pi_1M \to \pi_1N$ on fundamental groups if and only if
		 $\varphi^*(k_N)\cap c_*[M] = 0 \in H_1(\pi_1M;\pi_2N_\varphi^{w_1(M)})$.
\end{thm}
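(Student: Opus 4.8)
The plan is to recast the existence of $f_2$ as a lifting problem over $B\pi_1N$, push the resulting obstruction from $P_2M$ down to the $4$-manifold $M$ by a connectivity estimate, and then convert ``obstruction $=0$'' into the stated cap-product condition via Poincaré duality. Throughout, write $\pi := \pi_1M$ and let $A := \pi_2N_\varphi$ denote $\pi_2N$ regarded as a $\Z[\pi]$-module via $\varphi$. Let $p_N\colon P_2N\to B\pi_1N$ be the first Postnikov fibration, classified by $k_N\in H^3(\pi_1N;\pi_2N)$ (the class represented by the extension in the statement), let $p_M\colon P_2M\to B\pi$ be the corresponding fibration for $M$, and let $u_M\colon M\to P_2M$ be the $2$-equivalence, so that $c = p_M\circ u_M\colon M\to B\pi$ is the classifying map of $M$.

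First I would observe that a map $f_2\colon P_2M\to P_2N$ induces $\varphi$ on fundamental groups if and only if $p_N\circ f_2\simeq B\varphi\circ p_M$: one direction is immediate on $\pi_1$, and the other holds because $B\pi_1N$ is aspherical, so maps into it are determined up to homotopy by their effect on $\pi_1$. Hence such an $f_2$ is exactly a lift of $B\varphi\circ p_M\colon P_2M\to B\pi_1N$ through $p_N$. Since $P_2N$ is the pullback along $k_N\colon B\pi_1N\to K(\pi_2N,3)$ of the contractible path fibration over $K(\pi_2N,3)$, such a lift exists if and only if $k_N\circ B\varphi\circ p_M$ is nullhomotopic, i.e.\ if and only if the single obstruction class $p_M^*\varphi^*(k_N)$ vanishes in $H^3(P_2M;A)$; see \cite{Baues} for this obstruction-theoretic input in the twisted setting.

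Next I would reduce this obstruction to $M$. Because $M$ is $4$-dimensional, $P_2M$ is obtained from $M$ by attaching cells of dimension $\ge 4$ only, so $u_M$ is $3$-connected; consequently the relative twisted cohomology $H^3(P_2M,M;A)$ vanishes and the restriction $u_M^*\colon H^3(P_2M;A)\to H^3(M;A)$ is injective. Therefore the obstruction vanishes if and only if its image $u_M^*p_M^*\varphi^*(k_N)=c^*\varphi^*(k_N)$ is zero in $H^3(M;A)$. Now Poincaré duality for the closed $4$-manifold $M$ gives an isomorphism $-\cap[M]\colon H^3(M;A)\xrightarrow{\ \cong\ }H_1(M;A^{w_1(M)})$, so $c^*\varphi^*(k_N)=0$ if and only if $c^*\varphi^*(k_N)\cap[M]=0$. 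By the projection formula, $c_*\bigl(c^*\varphi^*(k_N)\cap[M]\bigr)=\varphi^*(k_N)\cap c_*[M]$ in $H_1(\pi;A^{w_1(M)})$, and $c_*\colon H_1(M;A^{w_1(M)})\to H_1(\pi;A^{w_1(M)})$ is an isomorphism since $c$ is a $2$-equivalence. Chaining these equivalences together yields exactly the stated criterion $\varphi^*(k_N)\cap c_*[M]=0\in H_1(\pi_1M;\pi_2N_\varphi^{w_1(M)})$.

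The main work is not cleverness but careful bookkeeping of local coefficient systems: $\pi_2N$ must be carried throughout with its $\varphi$-twisted $\pi$-action, the Poincaré duality step introduces the additional $w_1(M)$-twist, and one has to check that the lifting criterion, the injectivity $H^3(P_2M;A)\hookrightarrow H^3(M;A)$, and the projection formula are all invoked with the correct twisted coefficients. A small point requiring attention is the description of $P_2M$ as $M$ with cells of dimension $\ge 4$ attached — this is where the dimension hypothesis enters; everything else is formal. (One could alternatively run the obstruction-theoretic step by choosing a $\Z[\pi]$-homomorphism $\psi\colon\pi_2M\to A$ and analysing the equation $\psi_*(k_M)=\varphi^*(k_N)$ in $H^3(\pi;A)$, in line with the realization discussion following \cref{def:imm}; but the lifting formulation above is the most direct route to the cap-product condition.)
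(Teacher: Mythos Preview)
Your argument is correct and is essentially the same as the paper's proof of the equivalence (1)$\Leftrightarrow$(3) in \cref{thm:post-general}. The only organizational difference is the order of the two reductions: the paper first passes from $P_2M$ to $M$ (using that $P_2N$ is $3$-coconnected, so a map $M\to P_2N$ extends over the $\ge 4$-cells added to form $P_2M$) and then identifies the lifting obstruction as $c^*\varphi^*(k_N)\in H^3(M;\pi_2N_\varphi)$, whereas you first compute the obstruction on $P_2M$ and then restrict, using injectivity of $u_M^*$ on $H^3$. You are also more explicit than the paper about the Poincar\'e duality and projection-formula step converting $c^*\varphi^*(k_N)=0$ into $\varphi^*(k_N)\cap c_*[M]=0$; the paper simply records the two conditions as ``equivalently'' in item~(3).
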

Here $P_\phi$ is the $\Z[\pi_1M]$-module structure induced by $\phi$ on a $\Z[\pi_1N]$-module $P$, while for a $\Z[\pi_1M]$-module $Q$, the $\Z[\pi_1M]$-module $Q^{w_1(M)}$ is given by changing the sign of the group action by $w_1(M)$. One important consequence of the above result is that the existence of $f_2$ only depends on $(\pi_1M, c_*[M])$, not on all of $M$. In fact, it also just depends on $(\pi_1N, c_*[N])$ and we spell this out carefully in \cref{thm:shift}. Here we only mention the most striking special case where $c_*[N]=0$. Note that \cref{thm:post} gives a map $P_2N\to P_2N'$ for any $N'$ with the same normal 1-type as $N$ (but possibly non-trivial $c_*[N']$) and one can pre-compose it with a map $P_2M\to P_2N$.

\begin{thm}
	\label{thm:trivialclass}
	Fix a group $\pi'$, a $4$-manifold $M$ and a homomorphism $\phi\colon \pi_1M\to \pi'$. A free resolution $(C_*,d_*)$ of $\Z$ as trivial $\Z[\pi']$-module
gives a, in general non-free, $\Z[\pi_1M]$-resolution $((C_*)_\phi^{w_1(M)},d_*)$ of $\Z^{w_1(M)}$. 
It induces a shift homomorphism
	\[
	H_4(\pi_1M;\Z^{w_1(M)})\xrightarrow{\shift}H_1(\pi_1M;(\ker d_2)_\phi^{w_1(M)}).
	\]
This shift sends $c_*[M]$ to zero if and only if for all $4$-manifolds $N$ with $\pi_1N=\pi'$ and $c_*[N] =0$
there exists a map $f_2:P_2M\to P_2N$ inducing $\phi$ on fundamental groups.
\end{thm}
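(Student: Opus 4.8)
The plan is to reduce the statement to \cref{thm:post} and then, in the harder direction, to build a concrete $4$-manifold $N$ witnessing that the right-hand condition fails. By \cref{thm:post}, for a $4$-manifold $N$ with $\pi_1N=\pi'$ there is a map $f_2\colon P_2M\to P_2N$ inducing $\phi$ if and only if $\phi^*(k_N)\cap c_*[M]=0$ in $H_1(\pi_1M;(\pi_2N)_\phi^{w_1(M)})$. So I must show that $\shift(c_*[M])=0$ holds if and only if this cap product vanishes for every $N$ with $\pi_1N=\pi'$ and $c_*[N]=0$.

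The bridge between the two sides is the identity
\[
\phi^*(k_N)\cap c_*[M]\;=\;(\bar\kappa_N)_{\phi,*}\bigl(\shift(c_*[M])\bigr),
\]
which I claim holds for \emph{every} $N$ with $\pi_1N=\pi'$. Here $\bar\kappa_N\colon\ker d_2\to\pi_2N$ is the $\Z[\pi']$-homomorphism representing the first $k$-invariant: refine $C_2\to C_1\to C_0\to\Z$ to a free resolution (so $\ker d_2=\im d_3$), lift the augmentation to a chain map into $C_*(\wt N)$, and compose its degree-$2$ part, restricted to $\ker d_2$, with the quotient $\ker\!\bigl(C_2(\wt N)\to C_1(\wt N)\bigr)\twoheadrightarrow\pi_2N$; a standard argument shows this homomorphism is independent of the choices. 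To prove the identity I would use that $k_N=(\bar\kappa_N)_*(\alpha_{\pi'})$, where $\alpha_{\pi'}\in\Ext^3_{\Z[\pi']}(\Z,\ker d_2)$ is the Yoneda class of the exact sequence $0\to\ker d_2\to C_2\to C_1\to C_0\to\Z\to 0$; by naturality of $\phi^*$ and of the cap product in the coefficient module this gives $\phi^*(k_N)\cap c_*[M]=(\bar\kappa_N)_{\phi,*}\bigl(\phi^*\alpha_{\pi'}\cap c_*[M]\bigr)$; and $\phi^*\alpha_{\pi'}$ is represented by the restriction along $\phi$ of that four-term sequence, so capping it with the twisted class $c_*[M]$ applies the $w_1(M)$-twist to each term and is then the composite of the three connecting homomorphisms of the resulting sequence, which is exactly $\shift(c_*[M])$. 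Granting the identity, $\shift(c_*[M])=0$ immediately forces $\phi^*(k_N)\cap c_*[M]=0$ for all such $N$, hence the existence of $f_2$; this is one direction.

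For the converse, suppose $\shift(c_*[M])\neq 0$; I would exhibit a single $N$ admitting no $f_2$. (Unless $\pi'$ is finitely presented there are no $4$-manifolds $N$ with $\pi_1N=\pi'$ and the claim is vacuous, so fix a finite presentation.) Choose the resolution $C_\bullet$ so that $C_0\to\Z$, $C_1$, $C_2$ with $d_1,d_2$ is the $\Z[\pi']$-cellular chain complex of the associated presentation $2$-complex $L$; then $\ker d_2=\pi_2L$. Embed $L$ in $\R^5$, let $W$ be a regular neighbourhood, and set $N:=\partial W$, a closed smooth orientable $4$-manifold. Since $L$ has codimension $\ge 3$, the inclusion $j\colon N\hookrightarrow W\simeq L$ is an isomorphism on $\pi_1$, so $\pi_1N=\pi'$; and the classifying map of $N$ factors through the $2$-complex $L$, so $c_*[N]=0$ because $H_4(L)=0$. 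Now use that $N=\partial W$ has a handle decomposition $N=A\cup_\Sigma B$ in which $A$ is a regular neighbourhood in $N$ of a copy of $L$ whose inclusion into $W$ is homotopic to the homotopy equivalence $L\simeq W$; the induced map $(k_L)_*\colon\pi_2L=\ker d_2\to\pi_2N$ therefore satisfies that $j_*\circ(k_L)_*\colon\pi_2L\to\pi_2W$ is an isomorphism, so $(k_L)_*$ is a split $\Z[\pi']$-monomorphism, and comparing chain-level descriptions identifies $(k_L)_*$ with $\bar\kappa_N$. Consequently $(\bar\kappa_N)_\phi$ is a split $\Z[\pi_1M]$-monomorphism, $(\bar\kappa_N)_{\phi,*}$ is injective on $H_1(\pi_1M;-)$, and the identity of the previous paragraph gives $\phi^*(k_N)\cap c_*[M]\neq 0$; by \cref{thm:post}, this $N$ has no $f_2$ inducing $\phi$, completing the proof.

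The step I expect to be the main obstacle is the geometric input for the model $N=\partial W$: that handle theory furnishes a splitting $N=A\cup_\Sigma B$ with $A$ a regular neighbourhood of a copy of $L$ whose spine maps into $W$ by the homotopy equivalence $L\simeq W$, together with the resulting identification of the $k$-invariant homomorphism $\bar\kappa_N$ with the split monomorphism $(k_L)_*$. A secondary, purely homological nuisance is tracking the orientation twist $w_1(M)$ throughout the identity in the second paragraph --- exactly the bookkeeping carried out in general by \cref{thm:shift}.
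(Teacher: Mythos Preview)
Your approach is correct and genuinely different from the paper's. The paper derives \cref{thm:trivialclass} as the special case $c_*[N]=0$ of the more general \cref{thm:shift}, which characterizes the existence of $f_2$ for \emph{arbitrary} $N$ in terms of a condition comparing $\shift(c_*[M])$ with (an image of) $\shift(c'_*[N])$; setting $c_*[N]=0$ collapses this to $\shift(c_*[M])=0$, and the paper then invokes resolution-independence of that vanishing in one sentence. You instead work directly from \cref{thm:post}: you prove the identity $\phi^*(k_N)\cap c_*[M]=(\bar\kappa_N)_{\phi,*}\bigl(\shift(c_*[M])\bigr)$ and, for the hard direction, exhibit a concrete witness $N=\partial W$ with $W$ a $5$-dimensional thickening of a presentation $2$-complex $L$, for which $\bar\kappa_N$ is split injective. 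This is precisely the ``double'' construction the paper alludes to in \cref{sec:james} as realizing normal $1$-types with trivial fundamental class, and your handle-theoretic reason for the splitting (turning the handles of $W$ upside down shows $N$ contains a $4$-dimensional thickening $A\simeq L$ with $L\hookrightarrow N\hookrightarrow W\simeq L$ the identity) is correct. Your route is more self-contained for this particular statement and makes the role of the double transparent; the paper's route produces the stronger \cref{thm:shift} along the way.

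One point to tighten: the theorem fixes the resolution $C_\bullet$, but in your converse you write ``choose the resolution $C_\bullet$ so that\ldots'' and pass to the cellular chains of $L$. You need to bridge this. Either note (as the paper does) that the vanishing of $\shift(c_*[M])$ is independent of the partial free resolution---any chain map $C_\bullet\to C'_\bullet$ over $\id_\Z$ restricts to $K\to K'$ carrying one shift to the other, and there are such maps in both directions---or, staying with the given $C_\bullet$, observe that the retraction $r\colon\pi_2N\to\pi_2W\cong\pi_2L$ composed with your $\bar\kappa_N$ is exactly the comparison map between the given resolution and the $L$-resolution, so $(\bar\kappa_N)_{\phi,*}\bigl(\shift(c_*[M])\bigr)=0$ forces $\shift_L(c_*[M])=0$ and hence $\shift(c_*[M])=0$. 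Also, your remark that $\bar\kappa_N$ is ``independent of the choices'' is slightly off: it is only well defined modulo maps factoring through $C_2$, but that suffices because $\shift(c_*[M])$ dies in $H_1(\pi_1M;(C_2)_\phi^{w_1(M)})$ by construction.
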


If $c_*[N]\neq 0$ the vanishing of $\shift(c_*[M])$ is a sufficient but not necessary condition for the existence of $f_2$. We give a more sophisticated if and only if condition  in \cref{thm:shift}. In \cref{sec:general} we will also show how to control the pullbacks of Stiefel-Whitney classes under $f_2$ and we will derive the following corollaries.

\begin{cor}
	\label{cor:stable2}
	If $N$ is not almost spin, $H^1(\pi_1N;\Z[\pi_1N])=0$ and $\phi\colon \pi_1M\to \pi_1N$ is an isomorphism with $\varphi^*w_1^\pi( N)=w_1^\pi (M)$, then a $\phi$-immersion exists if and only if $\varphi_*c_*[M]$ is a multiple of $c_*[N]$.
\end{cor}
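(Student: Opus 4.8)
The plan is to feed the criterion of \cref{thm:main-short} through \cref{thm:post} and then simplify the resulting obstruction using the two hypotheses on $N$. First the formal reductions: by \cref{thm:main-short} a $\varphi$-immersion exists if and only if there is a map $f_2\colon P_2M\to P_2N$ with $\pi_1(f_2)=\varphi$ and $f_2^*w_i(N)=w_i(M)$ for $i=1,2$; and for any such $f_2$ the case $i=1$ is automatic, because both $w_1$'s are pulled back from the classifying spaces of the fundamental groups and $\varphi$ respects $w_1^\pi$. So it remains to decide when there is an $f_2$ with $\pi_1(f_2)=\varphi$ and $f_2^*w_2(N)=w_2(M)$, which I would split into: (a) deciding when \emph{some} $f_2$ with $\pi_1(f_2)=\varphi$ exists, and (b) showing that any such $f_2$ can be modified to pull $w_2(N)$ back to $w_2(M)$.

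For (a), \cref{thm:post} says that $f_2$ with $\pi_1(f_2)=\varphi$ exists if and only if $\varphi^*(k_N)\cap c_*[M]=0$, which under the isomorphism $\varphi$ becomes $k_N\cap\varphi_*c_*[M]=0$ in $H_1(\pi_1N;(\pi_2N)^{w_1(N)})$. Applying \cref{thm:post} to the identity $P_2N\to P_2N$ gives $k_N\cap c_*[N]=0$, so by $\Z$-linearity of the cap product $k_N\cap\lambda c_*[N]=0$ for every $\lambda\in\Z$; hence if $\varphi_*c_*[M]$ is a multiple of $c_*[N]$ then a map $f_2$ with $\pi_1(f_2)=\varphi$ exists, and with (b) this gives the ``if'' direction. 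The ``only if'' direction reduces to the claim
\[
\ker\bigl(k_N\cap-\colon H_4(\pi_1N;\Z^{w_1(N)})\to H_1(\pi_1N;(\pi_2N)^{w_1(N)})\bigr)=\Z\cdot c_*[N].
\]
To prove this I would model $k_N$ by the five-term exact sequence $0\to\pi_2N\to C_2/B_2\to C_1\to C_0\to\Z\to0$ appearing just before \cref{thm:post}, with $C_i=C_i(\widetilde N)$ free; then $k_N\cap-$ factors through the two dimension-shifting isomorphisms along the free modules $C_0,C_1$ followed by the connecting homomorphism of $0\to\pi_2N\to C_2/B_2\to\im(C_2\to C_1)\to0$, so its kernel is the image of the $C_2/B_2$-term. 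I would identify this image using Poincar\'e duality for the closed $4$-manifold $N$ (which pins down $C_*(\widetilde N)$, hence $\pi_2N$ and $C_2/B_2$, up to chain homotopy and free summands) together with the hypothesis $H^1(\pi_1N;\Z[\pi_1N])=0$, which makes those free summands invisible to the relevant (co)homology, so that the only surviving class is the one carried by the twisted fundamental cycle, namely $c_*[N]$. When $c_*[N]=0$ this is exactly the shift criterion of \cref{thm:trivialclass}; in general it is the statement that the refined criterion of \cref{thm:shift} collapses, under the two hypotheses, to ``$\varphi_*c_*[M]$ is a multiple of $c_*[N]$''.

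For (b) I would use that the fibration $K(\pi_2N,2)\to P_2N\to B\pi_1N$ is principal with classifying class $k_N$, so the homotopy classes of $f_2$ with $\pi_1(f_2)=\varphi$, once nonempty, form a torsor over $H^2(M;(\pi_2N)_\varphi)$, and changing $f_2$ by $\delta$ changes $f_2^*w_2(N)$ by $\rho_*(\delta)$, where $\rho\colon(\pi_2N)_\varphi\to\Z/2$ is the $\varphi$-twist of $w_2(\widetilde N)$, regarded as a homomorphism $\pi_2N\cong H_2(\widetilde N)\to\Z/2$; this $\rho$ is surjective precisely because $N$ is not almost spin. So $w_2(M)$ is achieved once $w_2(M)-f_2^*w_2(N)$ lies in the image of $\rho_*\colon H^2(M;(\pi_2N)_\varphi)\to H^2(M;\Z/2)$, equivalently once the Bockstein of $0\to\ker\rho\to(\pi_2N)_\varphi\xrightarrow{\rho}\Z/2\to0$ sends it to zero in $H^3(M;(\ker\rho)_\varphi)$. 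I would show this obstruction vanishes: it restricts to zero on the universal cover because $\pi_2M=H_2(\widetilde M)$ is free abelian (so $\rho_*$ is onto there), and then the Cartan--Leray spectral sequence together with $H^1(\pi_1N;\Z[\pi_1N])=0$ and Poincar\'e duality for $N$ removes the remaining contributions coming from $H^*(\pi_1M;-)$, just as in the kernel computation in (a). Putting (a) and (b) together proves the corollary.

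The genuinely hard part is the shared input to (a) and (b): using Poincar\'e duality for $N$ to control $\pi_2N$ as a $\Z[\pi_1N]$-module, uniformly over all groups $\pi_1N$ with $H^1(\pi_1N;\Z[\pi_1N])=0$ (not only over $PD_4$-groups), so that the ``non-spin part'' of $\pi_2N$ detected by $w_2(\widetilde N)$ behaves like a single $\Z$-summand. Concretely, the delicate point is to show that the kernel of $k_N\cap-$ is not strictly larger than $\Z\cdot c_*[N]$, and correspondingly that the $w_2$-obstruction in (b) is never a nonzero class pulled back from $B\pi_1M$; everything else is formal once this is available.
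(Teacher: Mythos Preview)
Your ``only if'' direction is essentially the paper's argument: both reduce via \cref{thm:post} to identifying the kernel of $k_N\cap-$ on $H_4(\pi_1N;\Z^{w_1})$ with $\Z\cdot c_*[N]$. The paper does this through \cref{lem:imageofP}, which translates $k_N\cap x=0$ into $x\in\im\bigl(H_4(P_2N;\Z^{w_1})\to H_4(\pi_1N;\Z^{w_1})\bigr)$, and then the Leray--Serre spectral sequence for $\wt N\to N\to B\pi_1N$ together with $H^1(\pi_1N;\Z[\pi_1N])=0$ pins that image down to the cyclic group on $c_*[N]$ (this step is carried out in the proof of \cref{cor:multiple}). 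Your dimension-shifting description is a chain-level paraphrase of the same computation.

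For the ``if'' direction the paper takes a genuinely different and much shorter route than your (a)+(b). It never attempts to adjust a given $f_2$ to hit $w_2(M)$; instead it changes the \emph{source}. Since $N$ is not almost spin, \cref{lem:stable} allows one to replace $M$ by $M\#\CP^2$ and thus assume $M$ is not almost spin. Then \cref{lem:1skeletonsum} manufactures, for the given multiple $k$, a manifold $M'$ with the same normal $1$-type as $N$, with $c_*[M']=k\cdot c_*[N]$, and with an explicit immersion $M'_*\imra N$ inducing the identity on $\pi_1$. Now $M$ and $M'$ share the same immersion type (via $\varphi$), so \cref{cor:stable}---whose not-almost-spin case invokes the $\CP^2$-stable diffeomorphism classification from Kreck's modified surgery---provides a $\varphi$-immersion $M_*\imra M'$; composing gives the desired $\varphi$-immersion $M_*\imra N$. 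In short, the paper outsources the entire $w_2$-problem to a stable classification theorem.

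Your step (b), by contrast, is where the argument is genuinely incomplete. You are effectively re-deriving \cref{prop:stable1new} and then asserting that its lifting criterion is always satisfied. The Bockstein obstruction you describe lives in $H^3(M;(\ker\rho)_\varphi)$, and your proposed vanishing argument leans on two unproved claims: that $\pi_2M$ is free abelian (so that the obstruction restricts to zero on $\wt M$), and that a Cartan--Leray/Poincar\'e-duality argument then kills the remaining contribution from $B\pi_1M$. Neither is justified in the generality needed here---for infinite $\pi_1M$ it is not clear that $\pi_2M$ is torsion-free, and the spectral-sequence step is only a gesture. This may well be salvageable, but as written it is a gap, and the paper's reduction via \cref{lem:stable}, \cref{lem:1skeletonsum} and \cref{cor:stable} avoids having to settle any of it.
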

Note that the assumption $H^1(\pi;\Z[\pi])=0$ is satisfied if $\pi$ is finite or has one end.

 \begin{cor}
	\label{cor:multiple}
	Let $M, N$ be almost spin and consider a homomorphism $\varphi\colon \pi_1M\to \pi_1N$ satisfying $\varphi^*(w_i^\pi N)=w_i^\pi M$ for $i=1,2$ and inducing a monomorphism \newline $H_1(\pi_1M ; \pi_2N_\varphi^{w_1^\pi (M)}) \to H_1(\pi_1N;\pi_2N^{w_1^\pi (N)})$.
	\begin{enumerate}
		\item\label{it:cor:multiple1} A $\varphi$-immersion exists if $H_4(\pi_1N;\Z^{w_1^\pi (N)})=0$. 
		\item\label{it:cor:multiple2}  If $H^1(\pi_1N;\Z[\pi_1N])=0$, a $\varphi$-immersion exists if and only if $\varphi_*c_*[M]$ is a multiple of $c_*[N]$ in $H_4(\pi_1N;\Z^{w_1^\pi (N)})$. 
	\end{enumerate}
\end{cor}

In \cref{sec:general} we will prove a generalization of this result, \cref{cor:multiple2}, implying that a $\phi$-immersion $M_*\imra N$ does not force $\varphi_*c_*[M]$ to be a multiple of $c_*[N]$ if $H^1(\pi_1N;\Z[\pi_1N])\neq 0$.

We finish the introduction with the announced detailed formulation of Phillips' result. For manifolds $M, N$, we write $\Sub(M,N)$ and $\Imm(M,N)$ for the spaces of submersions and immersions, respectively. Furthermore, we denote by $\Sur(TM,TN)$ and $\Mono(TM,TN)$ the spaces of vector bundle homomorphism that are fibrewise surjective or injective, respectively. 
\begin{thm}[{\cite[Thm.~A]{phillips}}]
	\label{thm:phillips}
	If $M$ is open, the differential induces a weak homotopy equivalence $\Sub(M,N)\to \Sur(TM,TN)$.
\end{thm}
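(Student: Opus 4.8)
The plan is to prove this parametrised $h$-principle in the style of Phillips, showing that $d\colon\Sub(M,N)\to\Sur(TM,TN)$ is a weak homotopy equivalence by verifying the relative lifting property: for every finite CW pair $(K,L)$, a map $g\colon K\to\Sur(TM,TN)$ together with a lift of $g|_L$ through $d$ can be homotoped rel $L$ to a map lifting through $d$ on all of $K$. Taking $(K,L)=(D^k,\partial D^k)$ gives surjectivity of $\pi_k(d)$ and $(K,L)=(D^k\times I,\,D^k\times\partial I\cup\partial D^k\times I)$ gives injectivity, so this property is equivalent to the theorem. The only real content is the \emph{existence} of lifts; extending partial lifts of homotopies for short times is the microflexibility of submersions, which is immediate because being a submersion is an open $C^1$-condition and a submersion is locally a linear projection.

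First I would use the hypothesis that $M$ is open to reduce to a local model. An open $n$-manifold compresses, after an ambient isotopy, into an arbitrarily small neighbourhood of an $(n-1)$-dimensional subpolyhedron, and carries a (possibly infinite) handle decomposition with handles of index $\le n-1$ exhausting $M$ by compact pieces $M_1\subset M_2\subset\cdots$. Describing $\Sub(M,N)$ as the corresponding inverse limit, it suffices to produce the lift one handle at a time, compatibly with what has already been constructed on the union of the earlier handles; the compatibility data lives on the lower boundary $\partial_- H\cong\partial D^j\times D^{n-j}$ of a handle $H\cong D^j\times D^{n-j}$ with $j\le n-1$.

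The heart of the argument is the handle lemma: given a fibrewise surjective bundle map $F\colon T(D^j\times\R^{n-j})\to TN$ and a submersion defined near $\partial D^j\times\R^{n-j}$ whose differential is $F$ there, all varying over a pair $(K,L)$, one can extend to a submersion near $D^j\times\{0\}$ with differential homotopic rel boundary to $F$ — and this is exactly where $j\le n-1$ enters. The extra transverse $\R^{n-j}$-factor provides room to integrate the hyperplane field $\ker F$: one interpolates between the prescribed boundary submersion and a straightened local projection modelled on $F$, using contractibility of the space of linear epimorphisms $\R^n\to\R^{\dim N}$ refining a fixed one, together with the fact that a $C^0$-small perturbation of a submersion in the free direction remains a submersion. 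For $j=n$ there is no free direction and the lemma correctly fails, which is why closed $M$ must be excluded.

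Finally I would assemble: order the handles by index and induct, at each stage applying the handle lemma rel $\partial_- H$ and rel $L$, the no-top-handle condition guaranteeing that the unfilled region always retains an open collar so the lemma applies; passing to the limit over the exhaustion, with microflexibility propagating partial lifts, yields the desired diagonal filler. The main obstacle is the handle lemma of the previous paragraph: making the ``integration along the free direction'' precise so that the output is genuinely a submersion, agrees with the already-constructed piece, and depends continuously on the parameters in $(K,L)$. This is the step that Gromov's sheaf formalism organises most cleanly — microflexibility of the sheaf of submersions is a purely local statement, and $\mathrm{Diff}(M)$-invariance together with the openness of $M$ upgrades it to flexibility via the standard compression trick, recovering the reductions above conceptually.
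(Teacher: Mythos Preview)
The paper does not give its own proof of this theorem: it is quoted verbatim as Phillips's Theorem~A \cite{phillips} and used as a black box, with only the trivial observation that a codimension-zero submersion is an immersion drawn from it (Corollary~\ref{cor:phillips}). So there is nothing in the paper to compare your argument against.

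That said, your sketch is a faithful outline of Phillips's original strategy (later absorbed into Gromov's microflexibility/flexibility formalism): exhaust the open manifold by handlebodies with no top-index handles, prove a relative handle lemma using the free transverse direction that an index $j\le n-1$ handle provides, and assemble by induction over the handle decomposition and over the exhaustion. Your identification of the key point --- that $j\le n-1$ is exactly what furnishes the room to integrate the formal data to an honest submersion --- is correct, as is the remark that microflexibility is immediate from the $C^1$-openness of the submersion condition. The one place where your wording is a bit loose is the phrase ``contractibility of the space of linear epimorphisms $\R^n\to\R^{\dim N}$ refining a fixed one''; what you actually need there is the homotopy type of the fibre of $\Sur(\R^n,\R^m)\to\R^m$ (or rather of the relevant restriction map), and making this precise parametrically over $(K,L)$ is exactly the content of Phillips's handle lemma. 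As you say, Gromov's sheaf language packages this most cleanly, but either route is standard and your outline would pass as a summary of the proof in the literature.
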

Since a codimension zero submersion is an immersion, we obtain the following corollary.
\begin{cor}
	\label{cor:phillips}
	If $M$ is open and $\dim N=\dim M$, the differential induces a weak homotopy equivalence $\Imm(M,N)\to \Mono(TM,TN)$.
\end{cor}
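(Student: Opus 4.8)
The plan is to deduce this directly from Phillips' \cref{thm:phillips} by observing that in equal dimensions the spaces of maps and of bundle morphisms appearing in the two statements literally coincide. First I would record the pointwise observation: if $\dim M=\dim N=:n$, then a smooth map $f\colon M\to N$ is a submersion at $x$ iff $df_x\colon T_xM\to T_{f(x)}N$ is surjective, and since both spaces have dimension $n$ this is equivalent to $df_x$ being a linear isomorphism, hence to $f$ being an immersion at $x$. Thus $\Sub(M,N)$ and $\Imm(M,N)$ are the same subset of $C^\infty(M,N)$, and they carry the same subspace topology (the weak $C^1$-topology, or whichever topology on mapping spaces is used in \cite{phillips}), so they agree as topological spaces.

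Second, the analogous statement holds at the level of bundle morphisms: a vector bundle homomorphism $F\colon TM\to TN$ is fibrewise surjective iff each $F_x$ is surjective, which, since $\rank TM=\rank TN=n$, is equivalent to each $F_x$ being a linear isomorphism, i.e.\ to $F$ being fibrewise injective. Hence $\Sur(TM,TN)=\Mono(TM,TN)$ as spaces, again with identical topologies. Moreover the differential $f\mapsto df$ is visibly the same map whether one regards its domain as $\Sub(M,N)$ or as $\Imm(M,N)$, and its codomain as $\Sur(TM,TN)$ or as $\Mono(TM,TN)$.

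Combining these two identifications, the differential $\Imm(M,N)\to\Mono(TM,TN)$ is, on the nose, the map $\Sub(M,N)\to\Sur(TM,TN)$ of \cref{thm:phillips}. Since $M$ is open by hypothesis, that map is a weak homotopy equivalence, and the corollary follows. I do not expect any serious obstacle: the only point requiring (minor) care is that the topologies used on $\Sub$ versus $\Imm$, and on $\Sur$ versus $\Mono$, match under the set-theoretic identifications, which is immediate because in each case both spaces inherit the subspace topology from the same ambient mapping space. All the real content sits in Phillips' theorem.
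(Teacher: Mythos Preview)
Your proof is correct and is essentially the same as the paper's own argument, which is the single sentence ``Since a codimension zero submersion is an immersion, we obtain the following corollary.'' You have simply spelled out in detail the identifications $\Sub(M,N)=\Imm(M,N)$ and $\Sur(TM,TN)=\Mono(TM,TN)$ that this sentence encodes.
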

By the Smale--Hirsch theorem, the corollary is true for $\dim M<\dim N$ without the assumption that $M$ is open. The special case that an open, parallelizable $n$-manifold can be immersed in $\R^n$ is due to Poenaru \cite[Theorem 5]{poenaru} and Hirsch \cite[Theorem 4.7]{Hirsch}. 

\subsection*{Outline}
In \cref{sec:23}, we study the case of $2$ and $3$-dimensional manifolds. In \cref{sec:background}, we recall the necessary background on immersions. In particular, we give statements that are equivalent to the existence of an immersion in \cref{thm:main}, which includes the statement from \cref{thm:main-short}. We also prove \cref{prop:easy}. The remaining results from \cref{sec:general-intro} as well as \cref{cor:stable} are shown in \cref{sec:general}. In \cref{sec:james}, we give an overview over the existence of manifolds representing certain immersion types and prove \cref{prop:Z4}. Manifolds with cyclic fundamental groups are considered in \cref{sec:cyclic}, where we prove \cref{thm:cyclic-or,thm:cyclic-nor}.

\subsection*{Acknowledgements}
We thank the referee for their detailed and thoughtful report.

Much of this research was conducted at the Max Planck Institute for Mathematics.
DK was partially supported by the Deutsche Forschungsgemeinschaft (DFG, German Research Foundation) under Germany's Excellence Strategy - GZ 2047/1, Projekt-ID 390685813.

\section{The immersion order for lower dimensional manifolds}\label{sec:23}

In dimension 1 the circle $S^1$ is the only closed connected manifold.
\begin{prop} \label{prop:dim2}
There are two immersion equivalence classes of surfaces, $S^2 < \RP^2$.
\end{prop}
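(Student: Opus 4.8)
The proposition bundles two assertions: that $S^2$ and $\RP^2$ lie in distinct immersion classes with $S^2\le\RP^2$, and that every closed connected surface is immersion equivalent to one of these two. The plan is to settle the comparison of $S^2$ with $\RP^2$ first, and then split the classification according to orientability. The relation $S^2\le\RP^2$ is the general fact $S^n\le M$: the punctured sphere $S^2_*=\R^2$ charts smoothly into any surface (and likewise $S^2\le\Sigma$ for every surface $\Sigma$). For strictness, note that an immersion $j\colon M\imra N$ of surfaces induces a fibrewise injective, hence bijective, bundle map $TM\to j^*TN$, so $w_1(M)=j^*w_1(N)$; since $S^2$ is orientable while $\RP^2_*$ (an open M\"obius band) is not — here $H^1(\RP^2;\Z/2)\to H^1(\RP^2_*;\Z/2)$ is an isomorphism carrying $w_1(\RP^2)\neq 0$ to $w_1(\RP^2_*)$ — there is no immersion $\RP^2_*\imra S^2$. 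Hence $\RP^2\not\le S^2$, so $S^2<\RP^2$.

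Orientable case. If $\Sigma$ is orientable, then $\Sigma_*$ is an open surface homotopy equivalent to a finite graph, and $w_1(\Sigma_*)=0$. An oriented rank-$2$ bundle over a $1$-complex is trivial (it is classified by $H^2(-;\Z)=0$), so $\Sigma_*$ is parallelizable and therefore immerses into $\R^2\subset S^2$ by the Poenaru--Hirsch special case of \cref{cor:phillips}. Thus $\Sigma\le S^2$, and together with $S^2\le\Sigma$ this gives $\Sigma\bowtie S^2$.

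Non-orientable case. I claim $\Sigma\bowtie\RP^2$ for every non-orientable $\Sigma$. The relation $\RP^2\le\Sigma$ is geometric: a non-orientable surface contains a smoothly embedded orientation-reversing circle, a tubular neighbourhood of which is a closed M\"obius band, whose interior is an open subset of $\Sigma$ diffeomorphic to the open M\"obius band $\RP^2_*$; this gives an embedding $\RP^2_*\hookrightarrow\Sigma$. For $\Sigma\le\RP^2$, \cref{cor:phillips} reduces the problem to producing a fibrewise isomorphism $T\Sigma_*\to T\RP^2$, equivalently a map $g\colon\Sigma_*\to\RP^2$ with $g^*T\RP^2\cong T\Sigma_*$. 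Since $\Sigma_*$ is homotopy equivalent to a finite graph, rank-$2$ bundles over it are classified by $w_1\in H^1(\Sigma_*;\Z/2)$, and (the inclusion $\RP^2\hookrightarrow\RP^\infty$ being a $2$-equivalence) homotopy classes of maps $\Sigma_*\to\RP^2$ are classified by $g\mapsto g^*a$, where $a$ generates $H^1(\RP^2;\Z/2)$. As $w_1(T\RP^2)=a$, any $g$ with $g^*a=w_1(T\Sigma_*)$ satisfies $g^*T\RP^2\cong T\Sigma_*$; composing this isomorphism with the tautological bundle map $g^*T\RP^2\to T\RP^2$ over $g$ yields the required vector bundle monomorphism. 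Hence $\Sigma\le\RP^2$, so $\Sigma\bowtie\RP^2$. Combining the three cases proves the proposition.

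I expect the only nontrivial step to be the identification in the last paragraph of $T\RP^2$ as a ``universal'' rank-$2$ bundle over $1$-complexes; the rest is punctured-sphere charts, tubular neighbourhoods, the classification of bundles over graphs by $w_1$, and the quoted differential criterion of Phillips.
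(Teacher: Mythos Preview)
Your proof is correct. The one place where it diverges from the paper's argument is in showing that a punctured surface $\Sigma_*$ immerses into $S^2$ (orientable case) or $\RP^2$ (non-orientable case). The paper does this by hand: it writes $\Sigma_*$ as a disk with $1$-handles attached (all trivial in the orientable case, exactly one twisted in the non-orientable case) and literally draws an immersion into $\R^2$ respectively into a M\"obius strip, with Figure~1 illustrating $T^2_*\imra\R^2$. You instead invoke \cref{cor:phillips} and reduce to the classification of rank-$2$ bundles over a $1$-complex by $w_1$, then manufacture a map $g\colon\Sigma_*\to\RP^2$ realizing the correct $w_1$ via the $2$-equivalence $\RP^2\hookrightarrow\RP^\infty$. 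Your route is less elementary (it needs Phillips already in dimension~2) but it is exactly the template the paper uses from dimension~3 onward, so in a sense you have written the ``uniform'' proof while the paper takes advantage of being able to see the immersions directly in dimension~2.
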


\begin{proof}
	To ``see'' this answer, remove a point from an orientable surface $M$. Then $M_*$ is the interior of a 2-disk with 1-handles trivially attached (such that the result is orientable) and one can draw an immersion into $\mathbb R^2$ as in \cref{fig:torus}. 

\def\ringa{(-1,0) circle (2) (-1,0) circle (3)}
\def\ringb{(1,0) circle (2) (1,0) circle (3)}
\begin{figure}[h] \label{fig:torus}
    \begin{tikzpicture}[scale=0.50]
        \draw \ringa;
        \begin{scope}[even odd rule]
            \draw[fill=white] \ringb;
        \end{scope}
        \begin{scope}[even odd rule]
            \clip (-1.25,-2.83) rectangle (1.25,-1.72);
            \fill[fill=white] \ringa;
        \end{scope}
    \end{tikzpicture}
    \caption{An immersion of $T^2_*$ into the plane.}
\end{figure}

In particular, all orientable surfaces are immersion equivalent. If one removes a point from a non-orientable surface the resulting manifold is the interior of a 2-disk with trivial 1-handles attached plus a single non-trivial 1-handle. One can draw an immersion of this handlebody into a M\"obius strip and so one obtains an immersion into $\mathbb{RP}^2$. Conversely, $\RP^2_*$ is a M\"obius strip and thus embeds into every non-orientable surface. Thus all non-orientable surfaces are immersion equivalent. This implies \cref{prop:dim2} because if a codimension zero manifold immerses into an orientable manifold, it is itself orientable. 
\end{proof}

The situation in dimension 3 is more complex. We first show that the tangent bundle is determined by $w_1$ and $w_2$. This is well-known but we give a proof sketch for convenience of the reader.
\begin{lem}
	\label{lem:w1w2-det}
	Let $X$ be a CW-complex of dimension at most $3$. Then every vector bundle of dimension at least $3$ over $X$ is determined by its first two Stiefel--Whitney classes. 
\end{lem}
\begin{proof}
	Since $SO(3)\simeq \RP^3$, we have that 
	\[\pi_3(BO(3))\cong \pi_3(BSO(3))\cong \pi_2(SO(3))=0.\]
	The map $BO(n)\to BO(n+1)$ is $n$-connected and hence $\pi_3(BO(n))=0$ for $n\geq 3$. Thus the map $BO(n)\xrightarrow{w_1\times w_2}K(\Z/2,1)\times K(\Z/2,2)$ is $4$-connected for $n\geq 3$. It follows that for a CW-complex of dimension at most $3$ a map to $BO(n)$ with $n\geq 3$ is determined up to homotopy by $w_1$ and $w_2$ as claimed.
\end{proof}
Since the Wu class $v_k(TM)$ represents $\Sq^k$, the second Wu class $v_2(TM)$ vanishes for every $3$-manifold $M$ by degree reasons. Since $\Sq(v(TM))=w(TM)$ and $w(TM)w(\nu M)=1$, we have $v_1(TM)=w_1(TM)=w_1(M)$ and $w_2(M)=w_2(TM)+w_1(TM)^2=v_2(TM)=0$. Hence orientable 3-manifolds have trivial normal (and tangent) bundle by \cref{lem:w1w2-det}. Recall that $w_1(M)$ comes from a unique class $w_1^\pi (M)\in H^1(\pi_1M;\mathbb Z/2)$.

\begin{prop} \label{prop:dim3} In dimension 3, $M\leq N$ if and only if there is a group homomorphism $\phi\colon \pi_1M \to \pi_1N$ with $\phi^*(w_1^\pi (N)) = w_1^\pi (M)$. In particular,
\begin{enumerate}
\item $S^3$ is the minimum and $\mathbb {RP}^2\times S^1$ is the maximum for the immersion order,
\item $S^3 \bowtie N$ if and only if $N$ is orientable,
\item $\mathbb {RP}^2\times S^1\bowtie N$ if and only if  there exists $x \in \pi_1N$ with $x^2=1$ and $w_1^\pi (N)(x)\neq 0$, 
\item $S^1\wt\times S^2\leq N$ if and only if  $N$ is non-orientable, and 
\item $M\leq S^1\wt\times S^2$ if and only if  $w_1^\pi (M)$ admits an integral lift.
\end{enumerate}
\end{prop}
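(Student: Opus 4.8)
The plan is to reduce the whole statement to the displayed group-theoretic criterion and then deduce (1)--(5) by elementary manipulations with $w_1^\pi$. For the easy (``only if'') direction I would take $\phi:=\pi_1(j)$ for a given immersion $j\colon M_*\imra N$ — legitimate since $M_*\hookrightarrow M$ induces an isomorphism on $\pi_1$ — and use that $j^*w_1(N)=w_1(M_*)$ is the restriction of $w_1(M)=c_M^*w_1^\pi(M)$, where $c_M\colon M\to B\pi_1M$. Since $c_N\circ j$ and $B\phi\circ c_{M_*}$ both induce $\phi$ on fundamental groups, hence agree on $H^1$, the identity $c_{M_*}^*w_1^\pi(M)=j^*w_1(N)=j^*c_N^*w_1^\pi(N)=c_{M_*}^*\phi^*w_1^\pi(N)$ together with injectivity of $c_{M_*}^*$ on $H^1$ gives $\phi^*w_1^\pi(N)=w_1^\pi(M)$.

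For the converse I would proceed in three moves. First, a closed $3$-manifold admits a handle decomposition with a single $3$-handle, so $M_*$ is homotopy equivalent to the $2$-skeleton of $M$, a $2$-complex $K$; since each $2$-cell of $K$ is attached along a word that dies in $\pi_1M$ and hence, via $\phi$, bounds a disc in $N$, the usual cell-by-cell construction produces a map $g\colon K\simeq M_*\to N$ realizing $\phi$ on $\pi_1$. Second, I would compare $TM_*$ and $g^*TN$ as rank-$3$ bundles over $K$: naturality of classifying maps together with $\phi^*w_1^\pi(N)=w_1^\pi(M)$ gives $w_1(g^*TN)=w_1(TM_*)$, and the dimension-$3$ Wu relation $w_2(TX)=w_1(X)^2$ — which survives restriction and pullback — then forces $w_2(g^*TN)=w_2(TM_*)$; since real bundles of rank $\ge 3$ over a $2$-complex are classified by $(w_1,w_2)$, the two bundles are isomorphic. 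Third, the resulting fibrewise isomorphism $TM_*\to TN$ covering $g$ is, by Phillips's theorem as packaged in \cref{cor:phillips} (valid since $M_*$ is open with $\dim N=\dim M_*=3$), homotopic through fibrewise monomorphisms to the differential of an immersion $M_*\imra N$; its underlying map is homotopic to $g$, so the immersion induces $\phi$ and $M\le N$.

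With the criterion in hand I would read off (1)--(5) from the groups $\pi_1S^3=1$, $\pi_1(\RP^2\times S^1)=\Z/2\times\Z$ with $w_1^\pi$ the projection to $\Z/2$, and $\pi_1(S^1\wt\times S^2)=\Z$ with $w_1^\pi$ the reduction mod $2$: $S^3\le N$ always and $N\le S^3$ iff $N$ is orientable, giving the minimum and (2); $M\le\RP^2\times S^1$ always (via $\phi=(w_1^\pi(M),0)$), giving the maximum, while $\RP^2\times S^1\le N$ unwinds to the existence of $x\in\pi_1N$ with $x^2=1$ and $w_1^\pi(N)(x)\neq 0$, giving (3); $S^1\wt\times S^2\le N$ unwinds to the existence of $y\in\pi_1N$ with $w_1^\pi(N)(y)\neq 0$, i.e.\ $N$ non-orientable, giving (4); and $M\le S^1\wt\times S^2$ says exactly that $w_1^\pi(M)\colon\pi_1M\to\Z/2$ factors through $\Z\twoheadrightarrow\Z/2$, i.e.\ admits an integral lift, giving (5).

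I expect the only substantive step to be the converse direction, specifically the passage from a group homomorphism to an honest immersion; the bundle bookkeeping there is made painless by the low-dimensional Wu relation $w_2=w_1^2$, which pins down the relevant tangent bundles over the $2$-complex $M_*$ by $w_1$ alone, so that all analytic content is outsourced to \cref{cor:phillips}. The minor points needing care are that the immersion extracted from \cref{cor:phillips} induces the prescribed $\phi$ — which holds because that weak equivalence lies over the space of underlying maps — and the handle-theoretic fact that $M_*$ is homotopy equivalent to a $2$-complex; (1)--(5) then require nothing beyond the criterion.
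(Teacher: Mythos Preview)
Your proposal is correct and follows essentially the same line as the paper's proof: reduce to a bundle comparison over the $2$-complex $M_*$, use that rank-$3$ bundles there are determined by $(w_1,w_2)$, and invoke Phillips. The only cosmetic difference is in producing the map $M_*\to N$: the paper first maps $M_*\to B\pi_1N$ via $\phi$ and then lifts to $N$ by observing the obstruction lies in $H^3(M_*)=0$, whereas you build the map cell-by-cell; these are the same argument in different clothing, and your derivation of (1)--(5) is slightly more explicit than the paper's ``follow similarly.''
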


\begin{proof}
	We first show (2), i.e.\ that all orientable manifolds are immersion equivalent.	For orientable $M,N$ the tangent bundles are trivial. Thus the constant map $M\to N$ pulls back the tangent bundle of $N$ to the tangent bundle of $M$ and by \cref{cor:phillips} its restriction to $M_*$ is homotopic to an immersion. 

For non-orientable 3-manifolds, $w_1(M)$ comes into play.  If $j\colon M_*\imra N$ then the induced map $\phi:=j_*\colon \pi_1M \to \pi_1N$  has the property: $\phi^*(w_1^\pi (N)) = w_1^\pi (M)$. If in turn we have a map $\phi\colon \pi_1M \to \pi_1N$ with this property then there is a map $M_* \to B\pi_1N$ pulling $w_1^\pi (N )$ back to $w_1(M_*)$. The obstructions for lifting this map to $N$ lie in $H^{k+1}(M_*;\pi_k(N))$ for $k\geq 2$. All these groups are trivial since $M_*$ has the homotopy type of a 2-complex. So there is a map $M_*\to N$ pulling back $w_1$ (and $w_2$ since $w_2(M)=w_2(N)=0$). But 3-dimensional vector bundles over $M_*$  are classified by $w_1$ and $w_2$ by \cref{lem:w1w2-det}. So by \cref{thm:phillips} the map is homotopic to an immersion. This shows the main statement of the proposition.

(1) and (3)-(5) follow immediately. For example, if $M$ is a non-orientable  3-manifold then $\phi\colon \pi_1(M)\xrightarrow{w_1}\Z/2\hra \Z/2\times \Z\cong \pi_1(\RP^2\times S^1)$ satisfies $\phi^*w_1^\pi(\RP^2\times S^1)=w_1^\pi(M)$ and hence $M\leq \RP^2\times S^1$. This shows that $\mathbb {RP}^2 \times S^1$ is a maximum for 3-manifolds. The details of (3)-(5) are left to the reader.
\end{proof}

\section{Background on immersions}
\label{sec:background}

From now on we study only $4$-manifolds. 
Let $(C_*M, d_*^M)$ be the chain complex of the universal cover of $M$ induced by a handle decomposition of $M$. The $\Z[\pi_1M]$-modules $C_kM$ are free on one generator for each $k$-handle and the boundary maps $d_k^M\colon C_kM\to C_{k-1}M$ are an algebraic image of the attaching maps of the $k$-handles to (the boundary of) the $(k-1)$-skeleton. We can assume that our ccs manifolds have only one $0$-handle.
%

\begin{rem}\label{rem:wi}
Since $v_k(TM)$ represents $\Sq^k$, we have $v_3(TM)=0$ by degree reasons, and
\[w_1(TM)w_2(TM)=v_1(TM)w_2(TM)=\Sq^1(w_2(TM)).\]
By the Wu formula, $\Sq^1(w_2(TM))=w_3(TM)+w_1(TM)w_2(TM)$. Hence $w_3(TM)=0$.
%
Since $w(TM)w(\nu M)=1$ and $H^i(M_*;\Z/2)=0$ for $i>3$ for every $4$-manifold $M$, it follows that only the Stiefel--Whitney classes $w_1,w_2$ are relevant. 
\end{rem}

\begin{thm}\label{thm:main} 
The existence of the following types of data are equivalent:
	\begin{enumerate}
		\item\label{it:main-i} An immersion $j\colon M_*\imra N$.
		\item\label{it:main-ii}  A map $f\colon M_*\to N$ together with an isomorphism of vector bundles $TM_* \cong f^*TN$ (not necessarily given by the differential $Tf$). 
		\item\label{it:main-iii}  A map $f\colon M_*\to N$ with $f^*(w_i(N))=w_i(M)$ for $i=1,2$.
		\item\label{it:main-iv}  A map $f_2\colon P_2M \to P_2N$ with $f_2^*(w_i(N))=w_i(M)$ for  $i=1,2$. 
		\item\label{it:main-v}  A  homomorphism $\varphi\colon \pi_1M \to \pi_1N$ with $\varphi^*(w_1^\pi (N))=w_1^\pi (M)$, together with a $\phi$-linear map $h\colon C_2M/\im(d_3^M) \to C_2N/\im(d_3^N)$ and a choice of $C_1(\phi)\colon C_1M\to C_1N$ such that $d_2^N \circ h= C_1(\varphi)\circ d_2^M$ and $h^*(\wh w_2(N))$ is a 2-cocycle representative of $w_2(M)$.
	\end{enumerate}
	One can arrange that the maps induced on $\pi_1$ by $j, f, f_2$ are the same and agree with $\phi$. 
	\end{thm}
	Here a \emph{$\phi$-linear map} $P\to Q$, for $P$ a $\Z[\pi_1M]$-module and $Q$ a $\Z[\pi_1N ]$-module, is a $\Z[\pi_1M]$-linear map $P\to Q_\phi$. The choices for the $\phi$-linear map $C_1(\phi):C_1M \to C_1N$ come about as follows: $C_1M$ is determined by a choice of generators $g_i$ of $\pi_1M$ and similarly for $C_1N$. Then $C_1(\phi)$ is determined by expressing the group elements $\phi(g_i)$ as words in the chosen generators of $\pi_1N$. 

\begin{proof}[Proof of \cref{thm:main}]
	(1) and (2) are equivalent by \cref{cor:phillips}. 
	
	Since $M_*$ is homotopy equivalent to a 3-dimensional complex, by \cref{lem:w1w2-det} two $4$-dimensional vector bundles over $M_*$ are isomorphic if and only if $w_1, w_2$ agree. Using that $w_1(TM)=w_1(M)$ and $w_2(TM)=w_2(M)+w_1(M)^2$, this shows that (2) and (3) are equivalent. 
	
	We can build the Postnikov $2$-types $P_2M$ and $P_2N$ by attaching cells of dimension $4$ and higher to $M_*$ and $N$, respectively. Thus by cellular approximation, every map $f_2\colon P_2M\to P_2N$ is homotopic to a map that sends the $3$-skeleton $M_*$ to $N$. This shows that (4) implies (3). The other direction follows from the functoriality of $P_2$ and similar functorialities show that (5) is implied by any of the previous statements.
	
	To show that (5) implies (4), we first observe that $h$ can be extended to a $\phi$-linear chain map $h_n\colon C_n(M_*) \to C_nN$ as follows.  
	For $n=0,1$ this follows from our assumption on the existence of $C_1(\phi)$. The map $C_2(M_*)\to C_2(M_*)/\im(d_3^M)\xrightarrow{h}C_2(N)/\im(d_3^N)$ admits a lift $h_2$ along $C_2(N)\to C_2(N)/\im(d_3^N)$ since $C_2(M_*)$ is free. Since $C_2(M_*)\xrightarrow{h_2}C_2(N)\to C_2(N)/\im(d_3^N)$ factors through $C_2(M_*)/\im(d_3^M)$ and $C_3(M_*)$ is free, $h\circ d_3^M$ admits a lift $h_3$ along $d_3^N$.
	
	Composing with $c_2\colon N\to P_2N$ and observing that for $n\leq 3$ the chain complexes of $M_* \subset M \subset P_2M$ agree (because the cells do), we need to realize a chain map $C_n(M_*)\to C_n(P_2N)$ by a continuous map $f\colon M_*\to P_2N$ that is already given on the 1-skeleton $M^{(1)}$ by a choice of $(B\phi)^{(1)}$. Choose an extension $(B\phi)^{(2)}$ on the 2-cells and compare the resulting map $C_2(\phi)\colon C_2M\to C_2N$ with $h_2$.
		Since both commute with $C_1(\phi)$, their difference is a map
		\[
		h_2 - C_2(\phi) : C_2M \to \ker d_2^N \cong \pi_2N^{(2)}.
		\]
		We can change our choice of $(B\phi)^{(2)}: M^{(2)}\to N^{(2)}$ on the 2-cells of $M$ as follows: Pinch off a 2-sphere from a 2-cell $e\subset M$ and map it to $h_2(e) - C_2(\phi)(e)$. This new version of $C_2(\phi)$ agrees with $h_2$ by construction. 
	 The obstruction for extending $(B\phi)^{(2)}$ to a 3-cell $e\colon D^3\to M_*$ is given by $[h_2(d_3^M(e))] \in \pi_2N \cong H_2(C_*(P_2N))$. Since $h_2(d_3^M(e))=d_3^N(h_3(e))$, this element vanishes. Hence there is a map $f\colon M_*\to P_2N$ as claimed.
	 
	 By construction of $f\colon M_*\to P_2N$, we can read off its induced map $f^*w_2(N)$ by using the chain map $h_n$ as claimed.
\end{proof}

The following is a useful observation concerning immersions into $N$ compared to $N_*$. It implies that the immersion order is indeed transitive.

\begin{lem}
	\label{lem:open}
	If there is an immersion $M_*\imra N$, then there also is an immersion $M_*\imra N_*$.
\end{lem}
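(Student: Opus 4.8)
The plan is to upgrade a given immersion $j\colon M_* \imra N$ to one whose image misses a point of $N$. The naive idea is to just push the image off a point, but $M_*$ is not compact, so there is no immediate reason the image of $j$ avoids any point; we must genuinely modify $j$. The cleanest route uses the characterization in \cref{thm:main}: an immersion $M_* \imra N$ exists iff there is a continuous map $f\colon M_* \to N$ pulling back $w_1(N), w_2(N)$ to $w_1(M), w_2(M)$. So it suffices to produce a continuous map $g\colon M_* \to N_*$ with $g^*(w_i(N_*)) = w_i(M)$ for $i=1,2$, since then \cref{thm:main} (applied with $N_*$ in place of $N$ — which is legitimate because that theorem's proof only uses that the target is a $4$-manifold and the source is open) converts $g$ to the desired immersion $M_*\imra N_*$.

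To build $g$: start from the map $f\colon M_* \to N$ given by \cref{thm:main}(3) applied to $j$, so $f^*w_i(N) = w_i(M)$. Since $M_*$ is homotopy equivalent to a $3$-complex, I may assume $f$ factors (up to homotopy) through the $3$-skeleton $N^{(3)}$ of some handle/CW decomposition of $N$; choosing the decomposition so that the removed point lies in the interior of the (single) top cell, $N^{(3)}$ is contained in $N_*$. Composing with the inclusion $N^{(3)} \hookrightarrow N_*$ gives $g\colon M_* \to N_*$. The Stiefel--Whitney classes are unaffected: $w_i(N_*)$ restricted to $N^{(3)}$ equals the restriction of $w_i(N)$ (they are the restrictions of the same universal classes along $N^{(3)} \hookrightarrow N_* \hookrightarrow N$ classifying the stable normal bundle), so $g^*w_i(N_*) = f^*w_i(N) = w_i(M)$. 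Now \cref{thm:main} produces the immersion $M_* \imra N_*$.

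The main obstacle — really the only subtle point — is justifying that a map $M_* \to N$ can be homotoped into the $3$-skeleton $N^{(3)} \subseteq N_*$. This is cellular approximation applied to the source, using that $M_*$ has the homotopy type of a $3$-dimensional CW complex; the image of a $3$-complex under a cellular map lands in the $3$-skeleton of the target. One should also check the compatibility of the normal bundle data, i.e.\ that the classifying map for $\nu(N_*)$ agrees with that of $\nu(N)$ after restricting to $N^{(3)}$, which is immediate since $\nu(N_*) = \nu(N)|_{N_*}$.

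Alternatively, and perhaps more transparently, one can argue directly: given $j\colon M_*\imra N$, pick any point $p\in N$ not in the (closed, nowhere-dense for dimension reasons if... ) — but since $M_*$ need not have closed image this requires care, so I prefer the skeletal argument above, which sidesteps compactness entirely. Hence the transitivity of $\le$ follows: given $M_*\imra N$ and $N_*\imra P$, first replace the former by $M_*\imra N_*$ and then compose with $N_*\imra P$.
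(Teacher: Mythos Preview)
Your proof is correct and shares the paper's core idea: use that $M_*$ has the homotopy type of a $3$-complex to homotope the map into the $3$-skeleton of $N$, hence missing a point. The paper's version is slightly more streamlined in that it works directly with the tangent bundle isomorphism $TM_* \cong f^*TN$ furnished by the immersion itself; after homotoping $f$ to $g$ missing a point one has $TM_* \cong f^*TN \cong g^*TN \cong g^*TN_*$ and invokes Phillips' theorem (\cref{cor:phillips}) directly, rather than passing through the Stiefel--Whitney characterization of \cref{thm:main} and having to verify that this characterization still applies with the open target $N_*$. Both arguments are valid; the paper's just avoids that extra verification step.
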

\begin{proof}
	An immersion $f\colon M_*\immto N$ induces an isomorphism $TM_*\cong f^*TN$. Since $M_*$ is homotopy equivalent to a 3-dimensional complex, $f$ is homotopic to a map $g$ which misses one point. Then
	\[TM_* \cong f^*TN \cong g^*TN\cong g^*TN_*,\]
	and hence, by \cref{cor:phillips}, there is an immersion $M_*\imra N_*$.
\end{proof}

\begin{proof}[Proof of \cref{prop:easy}]
	We will use the equivalence of (1) and (3) in \cref{thm:main}.	
	
	\eqref{it:easy1} Since $S^4_*\cong \R^4$, $S^4\leq M$ holds for all $4$-manifolds $M$. Since $w_iS^4=0$, a map $f\colon M_*\to S^4$ with $f^*w_iS^4=w_i(M)$ for $i=1,2$ exists if and only if $M$ is spin.
	
	\eqref{it:easy2} Since $\CP^2_*\simeq S^2$ and $w_2(\CP^2)\neq 0$, we have $\CP^2\leq M$ if and only if there exists an element of $\pi_2(M)$ on which $w_2$ is non-trivial. This is the case if and only if $M$ is not almost spin.
	
	\eqref{it:easy3}  Since $\CP^2$ is orientable, $M\leq \CP^2$ implies that $M$ is orientable. For the converse, note that every orientable $4$-manifold admits an integral lift $x\in H^2(M;\Z)$ of $w_2(M)$ by unpublished work of Vogt and the fourth author, see \cite[Remark~5.7.5]{GS} where their proof is given. The class $x$ induces a map $f\colon M\to \CP^2$. By definition of $x$ we have $f^*w_2(\CP^2)=w_2(M)$ and thus $M\leq \CP^2$.
	
	\eqref{it:easy4} Since $w_1(S^1\wt\times S^3)\neq 0$, $S^1\wt\times S^3\leq M$ implies that $w_1(M)\neq 0$. Now let $w_1(M)\neq 0$. Then there is a map $g\colon S^1\to M$ such that $g^*w_1\neq 0$. Define $f\colon S^1\wt\times S^3\to M$ as the composition of the projection to $S^1$ with $g$. Then $f^*w_1(M)=w_1(S^1\wt\times S^3)$ and $f^*w_2(M)=0=w_2(S^1\wt\times S^3)$. Hence there exists an immersion $(S^1\wt\times S^3)_*\imra M$.
	
	\eqref{it:easy5} 
	If $M\leq S^1\wt\times S^3$, then there exists a map $f\colon M_*\to S^1\wt\times S^3$ with $f^*(w_i(S^1\wt\times S^3))=w_i(M)$ by \cref{thm:main}. Since $\pi_1(S^1\wt\times S^3)\cong \Z$ and $w_2(S^1\wt\times S^3)=0$ it follows that $w_1(M)$ admits an integral lift and $w_2(M)=0$. 
	
	Now assume that $w_2(M)=0$ and that $w_1(M)$ admits an integral lift represented by $h\colon M\to S^1$ and $w_2(M)=0$. Let $g\colon S^1\to S^1\wt\times S^3$ be a map that induces an isomorphism on fundamental groups. Let $f:=g\circ h$. Then 
	\[f^*w_1(S^1\wt\times S^3)=h^*g^*w_1(S^1\wt\times S^3)=w_1(M),\]
	and
	\[f^*w_2(S^1\wt\times S^3)=f^*0=0=w_2(M).\] 
	By \cref{thm:main} there exists an immersion $M_*\imra S^1\wt\times S^3$ and hence $M\leq S^1\wt\times S^3$ as claimed.
\end{proof}

\section{Results for general fundamental group}\label{sec:general}
 In this section we prove the results from \cref{sec:general-intro} (and \cref{cor:stable}) as well as additional facts relevant to our computations. We start with a result on the existence of maps between Postnikov towers, regardless of their behaviour on Stiefel-Whitney classes. The equivalence of (1) and (3) is \cref{thm:post}, the equivalence of (1), (2) and (4) actually holds for all spaces, not just 4-manifolds. 
\begin{thm}\label{thm:post-general}
	For a homomorphism $\varphi\colon \pi_1M \to \pi_1N$ the following are equivalent:
	\begin{enumerate}
		\item\label{it:post-general1} There is a continuous map $f_2\colon P_2M\to P_2N$ inducing $\varphi$ on fundamental groups. 
		\item\label{it:post-general2} There is a $\Z[\pi_1M]$-linear map $\varphi_2\colon \pi_2M \to \pi_2N_\varphi$ such that $\varphi^*(k_N) = (\varphi_2)_*(k_M)$.
		\item\label{it:post-general31} $c_*[M] \cap \varphi^*(k_N) = 0 \in H_1(\pi_1M;\pi_2N_\varphi^{w_1})$.
		\item\label{it:post-general32} $c^*\varphi^*(k_N) = 0 \in H^3(M;\pi_2N_\varphi)$.	
		\item\label{it:post-general4} There is an $h\colon C_2M/\im(d_3^M) \to C_2N_\varphi/\im(d_3^N)$ such that $d_2^N \circ h= C_1(\varphi)\circ d_2^M$.	
	\end{enumerate}
\end{thm}
\begin{proof}
	Let $q\colon P_2M\to B\pi_1M$ induce the identity on fundamental groups. 
	The obstruction for a lift $f_2\colon P_2M\to P_2N$ of $q$ inducing $\varphi$ on fundamental groups is $q^*\phi^*k_N$.
	
	\eqref{it:post-general1}$\Rightarrow$\eqref{it:post-general2}: The fibration $K(\pi_2M,2)\to P_2M\to B\pi_1M$ induces an exact sequence \[H^2(K(\pi_2M,2);\pi_2N)\to H^3(B\pi_1M;\pi_2N)\xrightarrow{q^*} H^3(P_2M;\pi_2N)\]
	as can be seen from the Serre spectral sequence. Thus if $q^*\phi^*k_N=0$, there exists an element $\phi_2\in H^2(K(\pi_2M,2);\pi_2N)$, which we can interpret as a $\Z[\pi_1M]$-linear map $\varphi_2\colon \pi_2M \to \pi_2N_\varphi$. It now follows from the following commutative diagram that $\phi^*(k_N)=(\phi_2)_*(k_M)$.
	
	\[\begin{tikzcd}
		\id\in H^2(K(\pi_2M,2);\pi_2M)\ar[r,"(\phi_2)_*"]\ar[d]
		&H^2(K(\pi_2M,2);\pi_2N)\ni \phi_2\ar[d]\\
		k_M\in H^3(B\pi_1M;\pi_2M)\ar[r,"(\phi_2)_*"]&H^3(B\pi_1N ;\pi_2N)\ni \phi^*(k_N)
	\end{tikzcd}\]
	
	\eqref{it:post-general2}$\Rightarrow$\eqref{it:post-general1}:  A $\Z[\pi_1M]$-linear map $\varphi_2\colon \pi_2M \to \pi_2N_\varphi$ such that $\varphi^*(k_N) = (\varphi_2)_*(k_M)$ gives
	\[q^*\phi^*(k_N)=q^*((\phi_2)_*(k_M))=(\phi_2)_*(q^*(k_M))=(\phi_2)_*(0)=0.\]
	Hence there exists a map $f_2$ as claimed.
	
	\eqref{it:post-general1}$\Leftrightarrow$\eqref{it:post-general32}: Since $P_2N$ is 3-coconnected, a map $f_2\colon P_2M\to P_2N$ inducing $\phi$ on fundamental groups exists if and only if there is a map $f\colon M\to P_2N$ inducing $\phi$ on fundamental groups. Now let $c\colon M\to B\pi_1M$ be the identity on fundamental groups.	
	The map $\phi$ induces a map $\phi\circ c\colon M\to B\pi_1N$ and the obstruction for lifting this map to a map $f\colon M\to P_2N$ is $c^*\phi^*(k_N)$. This shows that \eqref{it:post-general1} and \eqref{it:post-general32} are equivalent. 
	
	\eqref{it:post-general31}$\Leftrightarrow$\eqref{it:post-general32} holds by Poincar\'e duality.

\eqref{it:post-general1}$\Rightarrow$\eqref{it:post-general4}: Choose models for $P_2M$ and $P_2N$ that agree with $M$ and $N$ on the 3-skeletons, respectively. Then we choose $h$ to be the chain map induced by $f$.

\eqref{it:post-general4}$\Rightarrow$(\eqref{it:post-general1}: This is the same as \eqref{it:main-v}$\Rightarrow$\eqref{it:main-iv} of \cref{thm:main} without the condition on Stiefel--Whitney classes and the same proof holds.
\end{proof}

\begin{prop}\label{prop:stable1new}
Let $f_2\colon P_2M\to P_2N$ induce $\phi$ on fundamental groups and assume that $\phi^*w_1^\pi (N)=w_1^\pi (M)$. Then the following are equivalent:
	\begin{enumerate}
		\item\label{it:stable1} There exists an immersion $M_*\imra  N$ inducing $\phi$ on fundamental groups.
		\item\label{it:stable2} $w_2(M)+f_2^*w_2 N$ lifts to $H^2(M;\pi_2N_\phi)$ along the $\Z[\pi_1N]$-linear map $w_2\wt N\colon \pi_2N\to \Z/2$.
		\item\label{it:stable3} For every $f'_2\colon P_2M\to P_2N$ inducing $\phi$ on fundamental groups, the sum $w_2(M)+(f'_2)^*w_2 N$ lifts to $H^2(M;\pi_2N_\phi)$ along $w_2\wt N\colon \pi_2N\to \Z/2$.
	\end{enumerate}
\end{prop}
\begin{proof}
	The statement follows from obstruction theory which implies that $H^2(M;\pi_2N_\phi)$ acts simply transitively on the set $[P_2M,P_2N]_\phi$ of homotopy classes of maps inducing $\phi$ on fundamental groups. We will give an explicit chain level version of this  action to show that it indeed changes the pullback of $w_2(N)$ as claimed. For this, choose models for $P_2M$ and $P_2N$ that agree with $M$ and $N$ on the $3$-skeleton, respectively. 
	
	\eqref{it:stable1}$\Rightarrow$\eqref{it:stable3}: If there exists an immersion $M_*\imra  N$, then the induced map $f_2\colon P_2M\to P_2N$ satisfies $f_2^*w_2(N)=w_1(M)$. It remains to show that for any other map $f_2'\colon P_2M\to P_2N$ inducing $\phi$ on fundamental groups, $f_2^*w_2(N)+(f_2')^*w_2(N)$ admits a lift to $H^2(M;\pi_2N_\phi)$ along $w_2\wt N\colon \pi_2N\to \Z/2$. For $+\leq 3$, let $h_*,h_*'\colon C_*M\to C_*N$ be the maps of chain complexes induced by $f_2$ and $f_2'$, respectively. Let $w\colon C_2N\to \Z/2$ be a cocycle representing $w_2(N)$. Since $w$ is a cocycle, it induces a map $C_2N/\im d_3^N\to \Z/2$. Restricting this map to $\ker d_2^N/\im d_3^N=\pi_2N$ yields $w_2\wt N$. Since $f_2$ and $f_2'$ induce the same map on fundamental group, we can assume that $h_1=h_1'$. Thus $h_2-h_2'$ can be viewed as a map $C_2M\to \ker d_2^N$. It follows that $f_2^*w_2(N)+(f_2')^*w_2(N)$, which is represented by $w\circ (h_2-h_2')$, admits a lift to $\pi_2N$ as claimed.
	
	\eqref{it:stable3}$\Rightarrow$\eqref{it:stable2} is obvious.
	
	\eqref{it:stable2}$\Rightarrow$\eqref{it:stable1}: For $*\leq3$, let $h_*\colon C_*M\to C_*N$ be the map of chain complexes induced by $f_2$. Let $h'\colon \coker d_3^M\to\pi_2N$ be a $\phi$-linear map representing the given lift.
	We choose lifts $h_2'\colon C_2M\to \ker d_2^N$ and $h_3'\colon C_3M\to C_3N$ given by the following diagram as explained below.
	\[\begin{tikzcd}
		C_3M\ar[r,dashed,"h_3'"]\ar[d,"d_3^M"]&C_3N\ar[d,"d_3^N"]\\
		C_2M\ar[r,dashed,"h_2'"]\ar[d,two heads]&\ker d_2^N\ar[d,two heads]\\
		\coker d_3^M\ar[r,"h'"]&\ker d_2^N/\im d_3^N
	\end{tikzcd}\]
	Here $h_2'$ is any lift along the map $\ker d_2^N\to \ker d_2^N/\im d_3^N$. Since the left vertical composition is trivial, $h_2'\circ d_3^M$ has image in $\im d_3^N$ and hence there exists a lift $h_3'\colon C_3M\to C_3N$.
	
	 Since $h_2'$ has image in the kernel of $d_2^N$, the following diagram commutes.
	\[\begin{tikzcd}
		C_3M\ar[r,"h_3+h_3'"]\ar[d,"d_3^M"]&C_3N\ar[d,"d_3^N"]\\
		C_2M\ar[r,"h_2+h_2'"]\ar[d,"d_2^M"]&C_2N\ar[d,"d_2^N"]\\
		C_1M\ar[r,"h_1"]&C_1N
	\end{tikzcd}\]
	The bottom part can be realized as a map of 2-complexes by starting with $f_2$ and then changing the map on the 2-cells corresponding to $h_2'$. Commutativity of the diagram and the fact that $\pi_2(N)$ is isomorphic to $H_2(N;\Z[\pi] )$ implies that it can be extended to a map $M_*\to N_*$. Any such map can be extended a map $P_2M\to P_2N$ since $P_2N$ is 3-coconnected. Call the obtained map $f_2'$. As in the first part of the proof, $(f_2')^*w_2(N)$ differs from $f_2^*w_2(N)$ by $w\circ h_2'$, where $w\colon C_2N\to \Z/2$ is a cocycle representing $w_2(N)$.
	By construction of $h_2'$, the map $w\circ h_2'$ agrees with
	\[C_2^M\to \coker d_3^M\xrightarrow{h'}\pi_2(N)\xrightarrow{w_2(\wt N)}\Z/2.\]
	By assumption on $h'$, this represents the class $w_2(M)+f_2^*w_2 N$.
	Thus $(f_2')^*w_2(N)=f_2^*w_2(N)+w_2(M)+f_2^*w_2 N=w_2(M)$. By \cref{thm:main} \eqref{it:main-iv}$\Rightarrow$\eqref{it:main-i}, there is an immersion $M_*\to N$ inducing $\phi$ on fundamental groups.
\end{proof}

\begin{cor}\label{prop:stable1}
	Let $M, N$ be almost spin and fix a homomorphism $\phi\colon \pi_1M\to \pi_1N$. Then there exists an immersion $M_*\imra  N\#\CP^2$ inducing $\phi$ on fundamental groups if and only if there exists a map $f_2\colon P_2M\to P_2N$ inducing $\phi$ on fundamental groups such that $\phi^*w_1^\pi (N)=w_1^\pi (M)$ and $w_2 M+c^*\phi^*w_2^\pi (N)$ admits a lift to $H^2(M;\Z[\pi_1 N]_\phi)$.
\end{cor}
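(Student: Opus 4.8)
The plan is to apply \cref{prop:stable1new} to the pair $(M,N\#\CP^2)$ and then rewrite the resulting criterion so that it only refers to $N$. Set $\pi:=\pi_1N=\pi_1(N\#\CP^2)$. First I would record the relevant homotopy-theoretic data of $N\#\CP^2$. Forming the connected sum along $0$-handles and using the standard handle decomposition of $\CP^2$ (a $0$-, a $2$- and a $4$-handle, the $2$-handle attached along an unknot) gives $C_k(N\#\CP^2)=C_kN$ for $k\le 1$, $C_2(N\#\CP^2)=C_2N\oplus\Z[\pi]$ with $d_2$ vanishing on the new summand, and $C_3(N\#\CP^2)=C_3N$. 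Hence $\pi_2(N\#\CP^2)\cong\pi_2N\oplus\Z[\pi]$ as $\Z[\pi]$-modules, the defining exact sequence of $k_{N\#\CP^2}$ is the direct sum of the one for $k_N$ with the split sequence $0\to\Z[\pi]\xrightarrow{=}\Z[\pi]\to 0$, so $k_{N\#\CP^2}=\iota_*k_N$ for the split inclusion $\iota\colon\pi_2N\hookrightarrow\pi_2N\oplus\Z[\pi]$, and $w_1(N\#\CP^2)$ corresponds to $w_1^\pi(N)$. Finally the new spherical class is $\CP^1\subset\CP^2$, which carries $w_2$ non-trivially, so $w_2\widetilde{N\#\CP^2}\colon\pi_2N\oplus\Z[\pi]\to\Z/2$ vanishes on $\pi_2N$ (as $N$ is almost spin) and is the augmentation $\epsilon$ modulo $2$ on $\Z[\pi]$.

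Second I would settle the existence of a map $f_2'\colon P_2M\to P_2(N\#\CP^2)$ inducing $\phi$. By \cref{thm:post-general} this is equivalent to $c_*[M]\cap\phi^*k_{N\#\CP^2}=0$; naturality of the cap product in the coefficients together with $k_{N\#\CP^2}=\iota_*k_N$ identifies this class with $\iota_*(c_*[M]\cap\phi^*k_N)$, and since $\iota$ (and its $w_1$-twist) is a split monomorphism of $\Z[\pi]$-modules, $\iota_*$ is injective on $H_1$. So $f_2'$ exists iff $c_*[M]\cap\phi^*k_N=0$, i.e. (again by \cref{thm:post-general}) iff a map $f_2\colon P_2M\to P_2N$ inducing $\phi$ exists; and given $f_2$, composing with the map $P_2N=P_2N_*\to P_2(N\#\CP^2)$ induced by the inclusion $N_*\hookrightarrow N\#\CP^2$ produces such an $f_2'=\bar\iota\circ f_2$, whose chain realization is $(h_*,0)$ for $h_*$ the realization of $f_2$.

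Third I would match the Stiefel--Whitney lifting conditions. By \cref{prop:stable1new} applied to $(M,N\#\CP^2)$, together with the previous two steps, an immersion $M_*\imra N\#\CP^2$ inducing $\phi$ exists iff a map $f_2\colon P_2M\to P_2N$ inducing $\phi$ exists, $\phi^*w_1^\pi(N)=w_1^\pi(M)$, and $w_2(M)+(f_2')^*w_2(N\#\CP^2)$ lifts to $H^2(M;(\pi_2N\oplus\Z[\pi])_\phi)$ along $w_2\widetilde{N\#\CP^2}$ for $f_2'=\bar\iota\circ f_2$ (or, by the equivalence of (2) and (3) in \cref{prop:stable1new}, for any map inducing $\phi$). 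Decomposing the coefficient module and inserting $w_2\widetilde{N\#\CP^2}=0\oplus(\epsilon\bmod 2)$, the last clause is equivalent to: the class lifts to $H^2(M;\Z[\pi]_\phi)$ along the augmentation, the $\pi_2N$-summand being no obstruction since $N$ is almost spin. Finally, for $f_2'=\bar\iota\circ f_2$ one computes on $M$ (using $H^2(P_2M;A)\cong H^2(M;A)$) that $(f_2')^*w_2(N\#\CP^2)=f_2^*w_2(N)=c^*\phi^*w_2^\pi(N)$, because $N_*\hookrightarrow N\#\CP^2$ restricts the stable normal bundle, because $N$ almost spin gives $w_2(N)=c_N^*w_2^\pi(N)$, and because $q_N\circ f_2\simeq B\phi\circ q_M$ for the bottom maps $q_\bullet\colon P_2\bullet\to B\pi_1\bullet$ of the Postnikov fibrations. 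This turns the criterion into exactly the statement of the corollary.

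I expect the main obstacle to be Step~1: pinning down $\pi_2(N\#\CP^2)$, the $k$-invariant $k_{N\#\CP^2}$, and the map $w_2\widetilde{N\#\CP^2}$ under the handle/connected-sum decomposition, in particular verifying that the $\CP^1$-summand is a free $\Z[\pi]$ on which $k$ restricts trivially and $w_2$ restricts as the augmentation, and then checking that the lift along $w_2\widetilde{N\#\CP^2}$ genuinely decouples into the automatically satisfied $\pi_2N$-contribution and the augmentation lift into $\Z[\pi]_\phi$, while keeping the various $w_1$-twists in \cref{prop:stable1new} and \cref{thm:post-general} straight.
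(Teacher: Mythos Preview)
Your proposal is correct and follows essentially the same strategy as the paper: apply \cref{prop:stable1new} to the pair $(M,N\#\CP^2)$, identify $\pi_2(N\#\CP^2)\cong\pi_2N\oplus\Z[\pi]$ with $w_2\widetilde{N\#\CP^2}$ equal to $\epsilon\bmod 2$ on the free summand and zero on $\pi_2N$, and thereby reduce the lifting condition to one along $\Z[\pi]_\phi\to\Z/2$. The only organizational difference is that where you prove in Step~2 that a map $P_2M\to P_2(N\#\CP^2)$ inducing $\phi$ exists iff one $P_2M\to P_2N$ does (via injectivity of $\iota_*$ on the $k$-invariant obstruction), the paper instead writes down explicit maps $i\colon P_2N\to P_2(N\#\CP^2)$ and $p\colon P_2(N\#\CP^2)\to P_2N$ and uses $i$ for the ``if'' direction and $p$ for the ``only if'' direction; your $k$-invariant argument is a perfectly valid replacement and the rest matches.
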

\begin{proof}
	We have maps $p\colon P_2(N\#\CP^2)\to P_2N$ and $i\colon P_2N\to P_2(N\vee \CP^2)\simeq P_2(N\#\CP^2)$ inducing the identity on fundamental groups such that $i^*w_2(N\#\CP^2)=w_2(N)$. 
	An immersion $M_*\imra N\#\CP^2$ induces a map $P_2(M)\to P_2(N\#\CP^2)$ inducing $\phi$ on fundamental groups. Composing this with $p$, we obtain a map $f_2\colon P_2(M)\to P_2(N)$ inducing $\phi$ on fundamental groups.
	
	Since $N$ is almost spin, we have
	\[w_2(\wt{N\#\CP^2})\colon \pi_2(N\#\CP^2)\cong \pi_2N\oplus\Z[\pi_1N]\xrightarrow{\proj_2}\Z[\pi_1N]\xrightarrow{\epsilon} \Z/2,\]
	where $\proj_2$ is the projection onto the second summand and $\epsilon$ is the mod 2 augmentation map.
	A lift of $x\in H^2(M;\Z/2)$ to $H^2(M;\pi_2(N\#\CP^2)_\phi)$ along $w_2(\wt{N\#\CP^2})$ gives a lift to $H^2(M;\Z[\pi_1 N]_\phi)$ by applying $\proj_2$. Conversely, a lift to $H^2(M;\Z[\pi_1 N]_\phi)$ induces a lift along $w_2(\wt{N\#\CP^2})$ by composing with the map
	\[\Z[\pi_1 N]_\phi\xrightarrow{(0,\id)}\pi_2N_\phi\oplus\Z[\pi_1N]_\phi\cong \pi_2(N\#\CP^2)_\phi.\]
	
	Note that $f_2^*w_2(N\#\CP^2)=w_2(N)$. Let $c_N\colon P_2(N)\to B\pi_1(N)$ be a map that induces the identity on fundamental groups. Then $c_N^*w_2^\pi(N)=w_2(N)$ by definition of $w_2^\pi(N)$ and $c_n\circ f_2\simeq \phi\circ c$. Hence
	\[(i\circ f_2)^*w_2(N\#\CP^2)=f_2^*w_2(N)=(c_n\circ f_2)^*w_2^\pi(N)=c^*\phi^*w_2^\pi(N).\]
	
	The proof so far shows that $w_2(M)+(i\circ f_2)^*w_2(N\#\CP^2)$ admits a lift along $w_2(\wt{N\#\CP^2})$ if and only if $w_2 M+c^*\phi^*w_2^\pi (N)$ admits a lift to $H^2(M;\Z[\pi_1 N]_\phi)$.
	
	Hence applying the equivalence (1)$\Leftrightarrow$(3) of \cref{prop:stable1new} to $i\circ f_2$ shows the statement.
\end{proof}

The following result uses in spirit an argument going back to at least C.T.C.\ Wall \cite{wall}*{Theorem~1,~Corollary~1}, who showed that $N\# S^2 \times S^2 \cong N\# S^2 \wt\times S^2$  if and only if $N$ is not almost spin by utilizing an immersed sphere in $N$ with odd normal Euler number to guide an ambient isotopy from one surgery on a trivial circle in $N$ to the other.  

\begin{lem}\label{lem:stable}
	If $N$ is not almost spin, then the following are equivalent 
	\begin{enumerate}
		\item $M\le N$,
		\item $M \#\CP^2 \le N$, and
		\item $M\leq N\#\CP^2$.
	\end{enumerate}
	Moreover, this holds with control of the fundamental group in the sense that for a given homomorphism $\phi:\pi_1M\to \pi_1N$, if one item comes from a $\phi$-immersion then so do the other two. 
\end{lem}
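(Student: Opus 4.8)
The plan is to prove the three implications $(1)\Rightarrow(2)\Rightarrow(3)\Rightarrow(1)$, carrying the fundamental-group control along throughout, and to reduce everything to the chain-level criterion \eqref{it:post-general4} of \cref{thm:post-general} together with \cref{prop:stable1new} (or \cref{prop:stable1}). The key structural input is that $N$ not being almost spin means $w_2(\widetilde N)\neq 0$, so there exists $x\in\pi_2 N$ with $(w_2\widetilde N)(x)\neq 0$; equivalently, on the chain level, the cocycle $\wh w_2(N)\colon C_2 N\to\Z/2$ is nonzero on some element of $\ker d_2^N/\im d_3^N=\pi_2 N$. This is exactly the feature that lets us absorb the $\CP^2$-summand's contribution to $w_2$ by modifying a map on $2$-cells, which is the chain-level avatar of Wall's geometric trick mentioned before the lemma.

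First I would handle $(2)\Leftrightarrow(3)$ using \cref{prop:stable1}: since $N$ is not almost spin it is in particular not almost spin after we note that "almost spin" only depends on $w_2(\widetilde N)$, and $\widetilde{N\#\CP^2}$ also has nonzero $w_2$; but more directly, a $\phi$-immersion $M_*\imra N\#\CP^2$ composed with $p\colon P_2(N\#\CP^2)\to P_2 N$ gives a map $f_2\colon P_2 M\to P_2 N$, and since $w_2(\widetilde N)\neq 0$ the surjectivity of $w_2\widetilde N\colon \pi_2 N\to\Z/2$ means the lifting condition in \cref{prop:stable1new} for the target $N$ is automatically satisfied once one has any such $f_2$ — so $M\le N$ follows, with control. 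The reverse $(1)\Rightarrow(3)$ is then immediate from \cref{lem:open}-type reasoning: an immersion $M_*\imra N$ composed with the standard inclusion $N\hookrightarrow N\#\CP^2$ (which is covered by a bundle map since we only need to match $w_1,w_2$, and $i^*w_2(N\#\CP^2)=w_2(N)$) gives $M_*\imra N\#\CP^2$; again the induced $\pi_1$-map is unchanged because $\pi_1(N\#\CP^2)=\pi_1 N$.

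The substantive implication is $(1)\Rightarrow(2)$, i.e.\ upgrading $M\le N$ to $M\#\CP^2\le N$. Here I would use the equivalence $(1)\Leftrightarrow(5)$ of \cref{thm:main}: a $\phi$-immersion $M_*\imra N$ provides $\phi\colon\pi_1 M\to\pi_1 N$ with $\phi^*w_1^\pi(N)=w_1^\pi(M)$, a choice of $C_1(\phi)$, and an $h\colon C_2 M/\im d_3^M\to C_2 N/\im d_3^N$ with $d_2^N\circ h = C_1(\phi)\circ d_2^M$ and $h^*\wh w_2(N)$ representing $w_2(M)$. Now $\pi_1(M\#\CP^2)=\pi_1 M$ and its chain complex has $C_2(M\#\CP^2)=C_2 M\oplus\Z[\pi_1 M]$ with $d_2$ vanishing on the new generator $e$, while $\wh w_2(M\#\CP^2)$ restricted to $e$ is $1$. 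We must produce $h'\colon C_2(M\#\CP^2)/\im d_3\to C_2 N/\im d_3^N$ with $d_2^N\circ h' = C_1(\phi)\circ d_2^{M\#\CP^2}$ and $(h')^*\wh w_2(N)$ representing $w_2(M\#\CP^2)$. On $C_2 M$ we keep $h$, and we send $e\mapsto x$ for a class $x\in\ker d_2^N/\im d_3^N=\pi_2 N$ with $\wh w_2(N)(x)\neq 0$, which exists precisely because $N$ is not almost spin. Then $d_2^N\circ h'=C_1(\phi)\circ d_2^{M\#\CP^2}$ still holds (the new column contributes $0=0$), and $(h')^*\wh w_2(N)$ sends $e$ to $1$, matching $\wh w_2(M\#\CP^2)$ on the new cell while agreeing with the old cocycle representing $w_2(M)$ on $C_2 M$; hence it represents $w_2(M\#\CP^2)$. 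By \cref{thm:main}\,$(5)\Rightarrow(1)$ this yields a $\phi$-immersion $(M\#\CP^2)_*\imra N$.

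The main obstacle I anticipate is bookkeeping rather than conceptual: one must be careful that the cocycle $(h')^*\wh w_2(N)$ genuinely represents $w_2(M\#\CP^2)$ in $H^2(M\#\CP^2;\Z/2)$ and not merely a cocycle that agrees with it on generators — i.e.\ one should check the coboundary ambiguity is absorbed by the freedom in choosing the second $\text{pin}^+$-structure on the $1$-skeleton, as in \cref{rem:SW}, and that the connected-sum chain complex is set up so that $d_3^{M\#\CP^2}$ introduces no new relations hitting $e$. One also has to confirm that the inclusion $N\hookrightarrow N\#\CP^2$ used in $(1)\Rightarrow(3)$ really pulls $w_1,w_2$ back correctly, which it does since $w_1$ is detected on $\pi_1$ (unchanged) and $i^*w_2(N\#\CP^2)=w_2(N)$ as recorded in the proof of \cref{prop:stable1}. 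None of these steps should require a genuinely new idea beyond the chain-level translation already established.
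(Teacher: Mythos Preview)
Your proposal is essentially correct but routes the equivalences differently from the paper, and one of your steps needs more justification than you give it.

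The paper takes $(1)\Rightarrow(3)$ and $(2)\Rightarrow(1)$ as immediate from $M\le M\#\CP^2$ and $N\le N\#\CP^2$, then proves $(3)\Rightarrow(2)$ in two clean steps at the Postnikov level: first $M\#\CP^2\le N\#\CP^2$ by mapping $P_2(M_*\vee\CP^2_*)\to P_2(N\#\CP^2)$ using the given $j$ on one wedge summand and the inclusion $\iota\colon\CP^2_*\hookrightarrow N\#\CP^2$ on the other; second $N\#\CP^2\le N$ by choosing $f\colon S^2\to N$ with $f^*w_2(N)\neq 0$ and applying $P_2(\id\vee f)\colon P_2(N\vee\CP^2_*)\to P_2 N$. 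Your chain-level argument for $(1)\Rightarrow(2)$ is exactly this second idea---send the extra $2$-cell $e$ to a representative of $x\in\pi_2 N$ with $w_2\widetilde N(x)=1$---carried out inside criterion~(5) of \cref{thm:main} rather than on spaces; that part is fine.

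The gap is in your $(3)\Rightarrow(1)$. You assert that surjectivity of $w_2\widetilde N\colon\pi_2 N\to\Z/2$ makes the lifting condition in \cref{prop:stable1new} automatic for any $f_2$. But surjectivity on coefficients does not force $(w_2\widetilde N)_*\colon H^2(M;\pi_2 N_\phi)\to H^2(M;\Z/2)$ to be surjective: the obstruction is a Bockstein into $H^3(M;\ker(w_2\widetilde N)_\phi)$. What actually saves you is the specific class in question: since $p^*w_2(N)+w_2(N\#\CP^2)$ lifts to $H^2(N\#\CP^2;\Z[\pi_1 N])$ (as noted in the proof of \cref{prop:stable1}), pulling back by $j$ lifts $w_2(M)+f_2^*w_2(N)$ to $H^2(M;\Z[\pi_1 N]_\phi)$; then the $\Z[\pi_1 N]$-map $\Z[\pi_1 N]\to\pi_2 N$, $1\mapsto x$, pushes this to the required lift because $w_2\widetilde N\circ(1\mapsto x)$ equals the mod~$2$ augmentation. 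So your conclusion stands, but the paper's route via $(3)\Rightarrow(2)$ avoids this detour. Finally, you never close the logical cycle: you establish $(1)\Leftrightarrow(3)$ and $(1)\Rightarrow(2)$ but omit $(2)\Rightarrow(1)$, which is the trivial $M\le M\#\CP^2\le N$ and should be stated.
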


\begin{proof}
	Since $M\leq M\#\CP^2$ and $N\leq N\#\CP^2$, inducing the identity on fundamental groups, the implications (1) $\Rightarrow$ (3) and (2) $\Rightarrow$ (1) are immediate. It remains to show that (3) $\Rightarrow$ (2).
	
	An immersion $j:M_*\imra N\#\CP^2$, together with the inclusion $\iota\colon \CP^2_*\to N\#\CP^2$ induce
	\[g\colon P_2(M\#\CP^2)\simeq P_2(M_*\vee \CP^2_*) \ra P_2(N\#\CP^2).\]
	Under the isomorphism $H^i(M\#\CP^2;\Z/2)\cong H^i(M;\Z/2)\oplus H^i(\CP^2;\Z/2)$ for $i=1,2$, we have $g^*(w_i(N\#\CP^2))=(j^*(w_i(N\#\CP^2)),\iota^*(w_i(N\#\CP^2)))=(w_i(M),w_i\CP^2)$. By \cref{thm:main} \eqref{it:main-iv}$\Rightarrow$\eqref{it:main-i} this implies $M\#\CP^2\leq N\#\CP^2$ given by an immersion that induces $\pi_1(j)$ on fundamental groups. 
	
	It remains to show $N\#\CP^2\leq N$ inducing the identity on fundamental groups. Since $N$ is not almost spin, there exists a map $f\colon S^2\to N$ such that $f^*w_2(N)\neq 0$. Since $\CP^2_*\simeq S^2$, we can consider the map
	\[P_2(N\#\CP^2)\simeq P_2(N\vee \CP^2_*)\xrightarrow{P_2(\id\vee f)}P_2(N).\]
	As above, this map pulls back the Stiefel--Whitney classes correctly and thus leads to an immersion $(N\#\CP^2)_*\imra N$ as claimed.
\end{proof}

\begin{proof}[Proof of \cref{cor:stable}]
	We first consider the case where $M$ and $N$ are almost spin. Assume that $\phi_*	c_*[M]=c_*[N]$. 
	By \cref{thm:post}, we have $k_N\cap c_*[N]=0$ since $N$ always admits a map to its Postnikov 2-type that is the identity on fundamental groups. Since $\phi_*c_*[M]=c_*[N]$, we get $k_N\cap \phi_*c_*[M]=0$. Since $\phi$ is an isomorphism, this implies that $\phi^*(k_N)\cap c_*[M]=0\in H_1(\pi_1M;\pi_2N^{w_1}_\phi)$ by naturality of the cap product. By \cref{thm:post}, this implies that there exists 
	a map $f_2\colon P_2M\to P_2N$ inducing $\phi$ on $\pi_1$. By assumption on $\phi$ we can apply \cref{thm:main} \eqref{it:main-iv}$\Rightarrow$\eqref{it:main-i} to see that there is an immersion $M_*\to N$ inducing $\phi$ on fundamental group.
		
	Now let $M$ and $N$ be not almost spin. Using Kreck's modified surgery \cite{surgeryandduality}, see also \cite[Theorem~1.2]{KPT}, the manifolds $M$ and $N$ are $\CP^2$-stably isomorphic since $\phi_*c_*[M]=c_*[N]$. By \cref{lem:stable} this implies that $M\bowtie N$ with control of the fundamental groups.
\end{proof}
\begin{lem}
	\label{lem:imageofP}
	Let $P$ be a 3-coconnected space with fundamental group $\pi$, let $w\colon \pi\to \Z/2$ be a homomorphism, let $x\in H_4(\pi;\Z^w)$ and let $c\colon P\to B\pi$ be the classifying map. Then $k_P\cap x=0$ if and only if $x$ is in the image of $c_*\colon H_4(P;\Z^w)\to H_4(\pi;\Z^w)$.
\end{lem}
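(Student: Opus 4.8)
The plan is to read off both sides of the claimed equivalence from the homology Serre spectral sequence of the fibration
\[
K(\pi_2P,2)\;\longrightarrow\;P\;\xrightarrow{\;c\;}\;B\pi ,
\]
which exists because $P$ is $3$-coconnected with $\pi_1P\cong\pi$, and which is classified by $k_P\in H^3(\pi;\pi_2P)$. I would run this spectral sequence with the local coefficient system $\Z^w$ on $B\pi$, pulled back along $c$ to $P$ (this is the coefficient system appearing in the statement). Since the coefficients are pulled back from the base, the $E^2$-page has the form $E^2_{p,q}=H_p\bigl(\pi;\,H_q(K(\pi_2P,2))\otimes\Z^w\bigr)$, with $\pi$ acting on $H_q(K(\pi_2P,2))$ through its given action on $\pi_2P$.

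The bookkeeping step: for every abelian group $A$ one has $H_1(K(A,2))=0$ and $H_3(K(A,2))=0$ (e.g.\ because $K(A,2)$ is $1$-connected with vanishing $\pi_3$), so the rows $q=1$ and $q=3$ of $E^2$ vanish. Consequently $E^3_{4,0}=E^2_{4,0}=H_4(\pi;\Z^w)$ and $E^3_{1,2}=E^2_{1,2}=H_1(\pi;\pi_2P\otimes\Z^w)$; the only possibly nonzero differential leaving bidegree $(4,0)$ is $d^3\colon E^3_{4,0}\to E^3_{1,2}$ (the $d^4$-target is a subquotient of $E^2_{0,3}=0$, and for $r\ge5$ the target has negative base degree), and no differential can enter $(4,0)$. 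Hence $E^\infty_{4,0}=\ker\!\bigl(d^3\colon H_4(\pi;\Z^w)\to H_1(\pi;\pi_2P\otimes\Z^w)\bigr)$. On the other hand the edge homomorphism $H_4(P;\Z^w)\twoheadrightarrow E^\infty_{4,0}\hookrightarrow E^2_{4,0}=H_4(\pi;\Z^w)$ is precisely $c_*$, so $\im c_*=E^\infty_{4,0}$.

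What remains — the one point that genuinely needs an argument rather than bookkeeping — is the identification of this particular $d^3$ with cap product against the $k$-invariant, $d^3(x)=\pm\,k_P\cap x$; granting it, $x\in\im c_*\iff x\in E^\infty_{4,0}\iff d^3(x)=0\iff k_P\cap x=0$, which is the lemma. This is the classical relation between the $k$-invariant of a two-stage Postnikov system and transgression in the Serre spectral sequence: the cohomology fundamental class $\iota\in H^2(K(\pi_2P,2);\pi_2P)$ transgresses to $k_P$, and the $d^3$ on the homology base line is the dual capping operation. I would obtain it by naturality (our fibration is the pullback of the path–loop fibration over $K(\pi_2P,3)$ along the map $B\pi\to K(\pi_2P,3)$ classifying $k_P$) together with the module structure of the spectral sequence over $H^*(B\pi;\Z)$, or simply cite the standard treatments of Postnikov systems and spectral sequences. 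The $\pi$-action and the $w$-twist ride along unchanged throughout, since every coefficient system in sight is pulled back from $B\pi$.
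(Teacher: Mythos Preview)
Your argument is correct and complete. The identification of $d^3$ on the bottom row of the Serre spectral sequence with cap product against the $k$-invariant is indeed the standard transgression fact (naturality from the path--loop fibration over $K(\pi_2P,3)$, as you say), and your vanishing of the $q=1,3$ rows is right: $H_3(K(A,2))=0$ follows from surjectivity of the Hurewicz map $\pi_3\to H_3$ for simply connected spaces.

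Your route is genuinely different from the paper's. The paper argues geometrically: it realizes $x\in H_4(\pi;\Z^w)$ as $f_*[M]$ for a $4$-manifold $M$ with $f^*w=w_1(M)$ (via the Atiyah--Hirzebruch spectral sequence for twisted oriented bordism), does surgery to make $f$ a $\pi_1$-isomorphism, and then uses that the single obstruction to lifting $f$ over $P$ is $f^*k_P\in H^3(M;\pi_2P)$, which by Poincar\'e duality vanishes iff $k_P\cap x=0$. The converse direction similarly realizes a preimage $x'\in H_4(P;\Z^w)$ by a manifold mapping to $P$. Your spectral-sequence approach avoids manifolds, bordism, and surgery entirely, and gives the image of $c_*$ in one stroke as $E^\infty_{4,0}=\ker d^3$. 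The paper's approach has the virtue of staying close to the manifold-theoretic language of the surrounding sections and making the link to the obstruction-theoretic arguments in \cref{thm:post-general} transparent; yours is shorter, purely homotopy-theoretic, and visibly generalizes (the same argument computes $\im c_*$ in other degrees once one knows the low-dimensional homology of the fiber).
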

\begin{proof}
	There exists a $4$-manifold $M$ with a map $f\colon M\to B\pi$ such that $x=f_*[M]$ and $f^*w=w_1(M)$. This follows from the Atiyah-Hirzebruch spectral sequence for  (twisted) oriented bordism. 
Using surgery, we can assume that $f$ is an isomorphism on $\pi_1$. Then $f$ can be lifted to a map $\wh f\colon M\to P$ if and only if $f^*k_P\in H^3(M;\pi_2(P))$ vanishes. By Poincar\'e duality, this holds if and only if $f^*k_P\cap [M]$ vanishes. Since $f$ is an isomorphism on $\pi_1$, this holds if and only if $0=k_P\cap f_*[M]=k_P\cap x\in H_1(\pi;\pi_2(N)^w)$. 
	
	If a lift $\wh f$ exists, then $x=f_*[M]$ is in the image of $c_*\colon H_4(P;\Z^w)\to H_4(\pi;\Z^w)$.
	
	For the converse, let $x'\in H_4(P;\Z^w)$ be a preimage of $x$. As before, there exists a $4$-manifold $M'$ and a map $f'\colon M\to P$ with $f'_*[M']=x'$, $(f')^*c^*w=w_1(M)$ and such that $f'$ is an isomorphism on $\pi_1$. Since $c\circ f'\colon M'\to B\pi$ lifts to $P$, \[0=k_P\cap (c\circ f')_*[M']=k_P\cap c_*(x')=k_P\cap x\] as above. 
\end{proof}

\begin{lem}
	\label{lem:image-P2}
	Let $N$ be a closed 4-manifold with orientation character $(\pi,w)$.
	Let $H_4(\pi;\Z^w)=0$ or $H^1(\pi;\Z^w)=0$. Then the image of $H_4(P_2(N);\Z^w)$ in $H_4(\pi;\Z^w)$ is generated by the image of $[N]\in H_4(N;\Z^w)$.
\end{lem}
\begin{proof}
	This is obvious if $H_4(\pi;\Z^w)=0$. Hence we assume $H^1(\pi;\Z^w)=0$.
	
	Since $H_4(N;Z^w)\cong \Z$, the subgroup generated by the image of $[N]$ is the image of $H_4(N;Z^w)$.
	
	Consider the Leray-Serre spectral sequence applied to the fibrations $\wt{N}\to N\to B\pi_1N$ and $\wt{P_2(N)}\to P_2(N)\to B\pi_1N$ to obtain the following diagram, where the maps on the right are differentials on the $E^3$-page of the spectral sequence.
	\[\begin{tikzcd}
		H_4(N;\Z^w)\ar[d]\ar[r]&H_4(\pi_1N;\Z^w)\ar[d,"="]\ar[r]&H_1(\pi_1(N);\pi_2(N)^w)\ar[d,"="]\\
		H_4(P_2(N);\Z^w)\ar[r]&H_4(\pi_1N;\Z^w)\ar[r]&H_1(\pi_1(N);\pi_2(N)^w)
	\end{tikzcd}\]
	Since $H^1(\pi_1N;\Z[\pi_1N])=0$, the top row is exact by \cite[Theorem~4.1]{KPT}. The bottom row is always exact by \cite[Lemma~$8^{\text{bis}}.27$]{McCleary}. It follows that the images of $H_4(N;\Z^w)$ and $H_4(P_2N;\Z^{w})$ agree as claimed. 
\end{proof}

\begin{proof}[Proof of \cref{cor:multiple}]
	We abbreviate $w:=w_1^\pi (N)$. By \cref{thm:post} a $\phi$-immersion exists if and only if $\phi^*(k_N)\cap c_*[M]=0\in H_1(\pi_1M;\pi_2N^{w}_\phi)$. Since $\phi_*\colon H_1(\pi_1M ; \pi_2N_\varphi^{w}) \to H_1(\pi_1N;\pi_2N^{w})$ is injective by assumption, this holds if and only if \[k_N\cap \phi_*c_*[M]=\phi_*(\phi^*(k_N)\cap c_*[M])=0\in H_1(\pi_1N;\pi_2N^{w}).\] By \cref{lem:imageofP}, $k_N\cap \phi_*c_*[M]$ is trivial if and only if $\phi_*c_*[M]$ is in the image of $H_4(P_2N;\Z^{w})\to H_4(\pi_1N;\Z^{w})$. 
	
	This is always the case for $H_4(\pi_1N;\Z^{w})=0$ showing \eqref{it:cor:multiple1}. By \cref{lem:image-P2}, the image of $H_4(P_2N;\Z^{w})\to H_4(\pi_1N;\Z^{w})$ is the subgroup generated by the image of $[N]$. This shows \eqref{it:cor:multiple2}.
\end{proof}

The following is a generalization of \cref{cor:multiple} to free products in the orientable case. Recall that a free product decomposition $G\cong G_1\ast\ldots\ast G_k$ induces an isomorphism
$H_4(G;\Z)\cong\bigoplus_{i=1}^kH_4(G_i;\Z)$ given by the sum of projections $p_i\colon H_4(G;\Z)\to H_4(G_i;\Z)$.

\begin{cor}\label{cor:multiple2}
			Let $M,N$ be orientable and almost spin. Assume that $\varphi\colon \pi_1M\to \pi_1N$ satisfies $\varphi^*(w_2^\pi (N))=w_2^\pi (M)$ and induces a monomorphism 
	\[
	\phi_*: H_1(\pi_1M ; \pi_2N_\varphi) \rightarrowtail H_1(\pi_1N;\pi_2N).
	\]
	 If $\pi_1N\cong G_1\ast\ldots\ast G_k$ and each $G_i$ satisfies $H_4(G_i;\Z)=0$ or $H^1(G_i;\Z[G_i])=0$ then a $\varphi$-immersion exists if and only if $p_i(\varphi_*c_*[M])$ is a multiple of $p_i(c_*[N])$ in $H_4(G_i;\Z)$ for each $i=1,\dots,k$.
\end{cor}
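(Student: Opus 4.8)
The plan is to mimic the proof of \cref{cor:multiple}, handling the homology of the free product $\pi:=\pi_1N=G_1\ast\dots\ast G_k$ factor by factor; the new point is that one first replaces $N$ by a connected sum of $4$-manifolds adapted to the decomposition. Exactly as at the start of the proof of \cref{cor:multiple}: since $M,N$ are orientable and almost spin with $\varphi^*(w_2^\pi(N))=w_2^\pi(M)$, every map $P_2M\to P_2N$ inducing $\varphi$ on $\pi_1$ pulls back $w_1,w_2$ correctly (the $w_1$-condition is vacuous, and for $w_2$ one factors $w_2(N)$ through $B\pi$ using that $N$ is almost spin). Hence by \cref{thm:main} and \cref{thm:post} a $\varphi$-immersion exists iff $\varphi^*(k_N)\cap c_*[M]=0\in H_1(\pi_1M;\pi_2N_\varphi)$; the injectivity of $\phi_*$ makes this equivalent to $k_N\cap\varphi_*c_*[M]=0\in H_1(\pi;\pi_2N)$, and \cref{lem:imageofP} (for $P=P_2N$, with $w$ trivial since $N$ is orientable) turns this into the statement $\varphi_*c_*[M]\in\im\bigl(c_*\colon H_4(P_2N;\Z)\to H_4(\pi;\Z)\bigr)$. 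It therefore suffices to show that this image equals $\bigoplus_{i}(\iota_i)_*\bigl(\Z\cdot p_i(c_*[N])\bigr)$ for the inclusions $\iota_i\colon G_i\hookrightarrow\pi$, since under $H_4(\pi;\Z)\cong\bigoplus_iH_4(G_i;\Z)$ this subgroup is precisely $\{x:p_i(x)\in\Z\cdot p_i(c_*[N])\text{ for all }i\}$.

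Next I would realize $c_*[N]$ by a connected sum. For each $i$, pick a closed connected oriented $4$-manifold $N_i$ with $\pi_1N_i=G_i$ and classifying map $b_i\colon N_i\to BG_i$ satisfying $(b_i)_*[N_i]=p_i(c_*[N])$: when $H_4(G_i;\Z)=0$ this is automatic, and when $H^1(G_i;\Z[G_i])=0$ one gets $N_i$ by surgering $N$ along $Br_i\circ c_N\colon N\to BG_i$ on embedded circles normally generating $\ker(r_i\colon\pi\to G_i)$ (a finitely normally generated subgroup); these circles are null-homotopic in $BG_i$, so the surgery preserves the pushforward of the fundamental class, by the argument used in the proof of \cref{lem:imageofP}. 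Set $Z:=N_1\#\dots\#N_k$; then $\pi_1Z=\pi$, and by additivity of fundamental classes under connected sum $(c_Z)_*[Z]=\sum_i(\iota_i)_*p_i(c_*[N])=c_*[N]$. Since $(c_Z)_*[Z]=c_*[N]$ and $k_N\cap c_*[N]=0=k_Z\cap(c_Z)_*[Z]$ (a manifold maps to its own Postnikov $2$-type), \cref{thm:post} gives maps $P_2Z\to P_2N$ and $P_2N\to P_2Z$, both inducing $\id_\pi$ on $\pi_1$; they commute with the maps to $B\pi$, so the images of $c_*$ from $H_4(P_2Z;\Z)$ and $H_4(P_2N;\Z)$ in $H_4(\pi;\Z)$ agree, and it remains to compute the former.

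Using $B\pi\simeq\bigvee_iBG_i$, pull the fibration $K(\pi_2Z,2)\to P_2Z\xrightarrow{c_Z}B\pi$ back over each wedge summand to get a $3$-coconnected $E_i$ with $\pi_1E_i=G_i$, $\pi_2E_i=\pi_2Z|_{G_i}$ and $k$-invariant $\iota_i^*k_Z$; then $P_2Z$ is the union of the $E_i$ along the common fibre $F=K(\pi_2Z,2)$. Because $H_3(F;\Z)=0$, iterated Mayer--Vietoris produces a surjection $\bigoplus_iH_4(E_i;\Z)\twoheadrightarrow H_4(P_2Z;\Z)$ compatible with $\bigoplus_iH_4(G_i;\Z)=H_4(\pi;\Z)$, whence $\im(c_*\colon H_4(P_2Z;\Z)\to H_4(\pi;\Z))=\bigoplus_i\im(c_*\colon H_4(E_i;\Z)\to H_4(G_i;\Z))$. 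As $c_*[N]=(c_Z)_*[Z]$ lies in this image and the image is closed under the $p_i$, one inclusion $\Z\cdot p_i(c_*[N])\subseteq\im(c_*\colon H_4(E_i;\Z)\to H_4(G_i;\Z))$ is immediate.

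For the reverse inclusion --- and this is the step I expect to be the main obstacle --- I would exploit that $Z$ is a connected sum: $\pi_2Z|_{G_i}\cong\pi_2N_i\oplus Q_i$ as $\Z[G_i]$-modules, with $\pi_2N_i\hookrightarrow\pi_2Z|_{G_i}$ induced by the summand inclusion $(N_i)_*\hookrightarrow Z$, and by naturality of $k$-invariants the $\Z[G_i]$-projection $\pi_2Z|_{G_i}\to\pi_2N_i$ sends $\iota_i^*k_Z$ to $k_{N_i}$; so the implication (2)$\Rightarrow$(1) of \cref{thm:post-general} (valid for arbitrary spaces) yields a map $E_i\to P_2N_i$ inducing $\id_{G_i}$. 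Hence $\im(c_*\colon H_4(E_i;\Z)\to H_4(G_i;\Z))\subseteq\im(c_*\colon H_4(P_2N_i;\Z)\to H_4(G_i;\Z))=\Z\cdot(b_i)_*[N_i]=\Z\cdot p_i(c_*[N])$, the first equality being the Leray--Serre comparison from the proof of \cref{cor:multiple} applied to $N_i$ over $BG_i$ (via $H^1(G_i;\Z[G_i])=0$, or trivially $0$ when $H_4(G_i;\Z)=0$) together with $H_4(N_i;\Z)=\Z\langle[N_i]\rangle$. Assembling over $i$ finishes the proof. The passage to $Z$ is forced precisely because the $E_i$ are not Postnikov $2$-types of $4$-manifolds, so the spectral-sequence computation of \cref{cor:multiple} cannot be applied to them directly; rewriting $N$ as $\#_iN_i$ is what makes $\pi_2N_i$ a $\Z[G_i]$-direct summand of $\pi_2E_i$, which is exactly what the comparison map $E_i\to P_2N_i$ needs.
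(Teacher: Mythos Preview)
Your argument is correct, and the overall architecture---reduce to computing $\im\bigl(H_4(P_2N;\Z)\to H_4(\pi;\Z)\bigr)$ and then replace $N$ by a connected sum $Z=N_1\#\dots\#N_k$ adapted to the free-product decomposition---matches the paper's. The genuine difference lies in how this replacement is justified. The paper invokes the stable splitting theorems of Hillman and Kreck--L\"uck--Teichner to conclude that $N$ is \emph{stably homeomorphic} to such a connected sum, after first observing that the image in $H_4(\pi;\Z)$ is a stable invariant. You instead build $Z$ by hand (surgering $N$ over each $BG_i$) so that $(c_Z)_*[Z]=c_*[N]$, and then use \cref{thm:post} in both directions to produce maps $P_2N\leftrightarrows P_2Z$ over $B\pi$, forcing the two images to coincide. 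This is a nice and more elementary substitute for the cited splitting results: it trades a black-box citation for a direct application of the paper's own \cref{thm:post} and \cref{lem:imageofP}.

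Your subsequent decomposition of $\im\bigl(H_4(P_2Z;\Z)\to H_4(\pi;\Z)\bigr)$ via the pullback spaces $E_i$ and Mayer--Vietoris, followed by the $k$-invariant computation producing $E_i\to P_2N_i$, is correct but more laborious than necessary. The paper (working with $P_2(\bigvee_j N_j)\simeq P_2(\#_jN_j)$) simply applies the functor $P_2$ to the collapse maps $\bigvee_jN_j\to N_i$ and to the inclusions $N_i\hookrightarrow\bigvee_jN_j$ to get maps $P_2Z\to P_2N_i$ and $P_2N_i\to P_2Z$ over $BG_i$; these immediately give the equality of images without any fibrewise analysis. In effect, your detour through $E_i$ and the splitting $\pi_2Z|_{G_i}\cong\pi_2N_i\oplus Q_i$ re-derives $P_2(\text{collapse})$ via \cref{thm:post-general}(2)$\Rightarrow$(1), when that map is already available functorially. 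Both routes arrive at the same Leray--Serre endgame from \cref{cor:multiple}.
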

Using that the multiples for different $i$ can vary, this shows that a $\phi$-immersion $M_*\imra N$ does not imply that $\varphi_*c_*[M]$ is a multiple of $c_*[N]$.
\begin{proof}
	As in the proof of \cref{cor:multiple}, we have to show that the image of $H_4(P_2N;\Z)\to H_4(\pi_1N;\Z)\cong \bigoplus_{i=1}^k H_4(G_i;\Z)$ is the subgroup generated by the $p_i(c_*[N])$.
	
	Since the images of $H_4(P_2(N);\Z)$ and $H_4(P_2(N\vee S^2);\Z)$ in $H_4(\pi_1N ;\Z)$ agree, the image only depends on the stable homeomorphism type of $N$. By \cite[Theorem~1]{Hillman95} and \cite{KLT}, $N$ is stably homeomorphic to a connected sum $N_1\#\ldots\#N_k$ with $\pi_1(N_i)\cong G_i$.  We have $P_2(N_1\#\ldots\#N_k)\simeq P_2(N_1\vee\ldots\vee N_k)$ and the composition $P_2(N_1\vee\ldots\vee N_k)\to B\pi_1N\to G_i$ factors through $P_2(N_i)$. On the other hand we have the composition $P_2(N_i)\to P_2(N_1\vee\ldots\vee N_k)\to B\pi_1N$. This implies that the image of $H_4(P_2(N);\Z)$ in $H_4(\pi_1N;\Z)$ is the direct sum of the images of $H_4(P_2(N_i);\Z)$ in $H_4(G_i;\Z)$. We thus have reduced the proof to showing that the image of $H_4(P_2(N_i);\Z)$ in $H_4(G_i;\Z)$ is the subgroup generated by $u_*[N_i]$, where $u\colon N_i\to BG_i$ is the classifying map. This again follows from \cref{lem:image-P2}.
\end{proof}

\begin{lem}\label{lem:1skeletonsum}
	Fix a $4$-manifold $N$ and an integer $k\in \N$. Then there exists a $4$-manifold $M$ with fundamental group $\pi_1N$ such that $w_i^\pi M=w_i^\pi N$ for $i=1,2, c_*[M]=k\cdot c_*[N]$ and an immersion $M_*\imra N$ inducing the identity on $\pi_1$.
\end{lem}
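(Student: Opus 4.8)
The plan is to realize $M$ as a \emph{$k$-fold connected sum of $N$ along its $1$-skeleton}. Fix a handle decomposition of $N$ with a single $0$-handle, let $\nu_1\subseteq N$ be a closed regular neighbourhood of the union of the $0$- and $1$-handles, so that $Y:=\partial\nu_1$ is a connected sum of $n_1$ copies of $S^1\times S^2$ (with $n_1$ the number of $1$-handles), and write $N=\nu_1\cup_Y C$ where $C$ carries the $2$-, $3$- and $4$-handles. Take $k$ copies $C^{(1)},\dots,C^{(k)}$ of $C$ and form their boundary connected sum $\natural^{k}C:=C^{(1)}\natural\dots\natural C^{(k)}$, a $4$-manifold with $\partial(\natural^kC)\cong\#^{kn_1}(S^1\times S^2)$ and $\pi_1\cong\pi^{\ast k}$, where $\pi:=\pi_1N\cong\pi_1C$. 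Now attach $(k-1)n_1$ further $2$-handles along disjoint embedded circles representing the words $g_j^{(i)}(g_j^{(i+1)})^{-1}$ ($1\le i\le k-1$, $1\le j\le n_1$), where $g_1,\dots,g_{n_1}$ are the images of the free generators of $\pi_1\nu_1$ in $\pi$ and $g_j^{(i)}$ is the copy of $g_j$ in the $i$-th free factor; choose the framings so that, for each $j$, the associated surgeries merge the summands $Y^{(1)}_j,\dots,Y^{(k)}_j$, so that the resulting boundary is again $Y$. Calling $\wh C$ the result, set $M:=\nu_1\cup_Y\wh C$, a closed smooth $4$-manifold.

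I would then check three points. (1) $\pi_1M\cong\pi$: the new $2$-handles impose exactly the relations killing the kernel of the fold homomorphism $\pi^{\ast k}\twoheadrightarrow\pi$, so $\pi_1\wh C\cong\pi$, and since $\pi_1Y\to\pi_1\nu_1$ is an isomorphism also $\pi_1M\cong\pi$. (2) There is a degree-$k$ map $p\colon M\to N$ that is the identity on $\nu_1$, that collapses each $C^{(i)}$ onto $C$ by the obvious identification, and that sends each new $2$-handle to a singular $2$-disk in $N$ — possible because its attaching circle represents $g_j^{(i)}(g_j^{(i+1)})^{-1}$, which maps to $1$ under the fold map and is hence null-homotopic in $N$. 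By construction $\pi_1(p)=\id$ and $c_N\circ p\simeq c_M$, so $c_*[M]=c_{N*}p_*[M]=k\cdot c_*[N]$. (3) $p^*w_i(N)=w_i(M)$ for $i=1,2$: relative to the handle decomposition of $M$, the cocycle $p^*\wh w_i(N)$ of \cref{rem:SW} agrees with $\wh w_i(N)$ on each $2$-handle inherited from a $C^{(i)}$ and vanishes on the new $2$-handles, and so does a cocycle representing $w_i(M)$ — for $i=1$ immediately, and for $i=2$ after choosing the framings of the new $2$-handles so that the chosen $\mathrm{pin}^+$-structure on $\nu_1$ extends over them (the merging surgeries $\#^{2}(S^1\times S^2)\rightsquigarrow S^1\times S^2$ are $\mathrm{pin}^+$-compatible). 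Granting (3), \cref{thm:main} converts $p|_{M_*}$ into an immersion $j\colon M_*\imra N$ with $\pi_1(j)=\id$; and since $p$ lies over $B\pi$ and pulls $w_i(N)$ back to $w_i(M)$, with $w_i(N)=c_N^*w_i^\pi N$ whenever $w_i^\pi N\ne\infty$, one reads off $w_i^\pi M=w_i^\pi N$ — the case $w_2^\pi N=\infty$ being automatic because one copy of $\wt C\subseteq\wt N$ sits inside $\wt M$ carrying the same non-zero $w_2$, so that $M$ is again not almost spin.

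The hard part is the simultaneous bookkeeping for the $(k-1)n_1$ new $2$-handles: their attaching circles and framings must be chosen so that \emph{at once} the boundary is recombined to exactly $Y$, the fundamental group is cut down to exactly $\pi$ (not to some intermediate quotient), and the $\mathrm{pin}^+$-structure on $\nu_1$ extends across them. All three constraints push one toward circles of the form $g_j^{(i)}(g_j^{(i+1)})^{-1}$ sitting in standard $\#^2(S^1\times S^2)$-regions of the boundary with the merging framing, but one still has to verify that these can be arranged disjointly and that the upshot really is a closed smooth $4$-manifold with the stated properties; this is where a careful handle-calculus argument is needed. The remaining verification, $p^*w_i(N)=w_i(M)$, is then routine with the handle-cochain formalism of \cref{rem:SW}, since $p$ carries the chosen handle decomposition of $M$ onto that of $N$ with the extra $2$-handles collapsed.
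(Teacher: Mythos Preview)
Your construction is essentially an explicit, handle-level version of what the paper does abstractly via bordism and surgery. The paper views $\bigsqcup_{i=1}^k N \to N$ (identity on each summand) as an element of the normal bordism group $\Omega_4(\nu(N))$ and then performs surgery below the middle dimension to make this map $2$-connected; the outcome is a closed $4$-manifold $M$ with a map $f\colon M\to N$ which, by definition of $\Omega_4(\nu(N))$, satisfies $f^*\nu(N)\cong\nu(M)$, hence $f^*w_i(N)=w_i(M)$ automatically, and $f_*[M]=k[N]$. \cref{thm:main} then produces the immersion.

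What you flag as the ``hard part''---choosing attaching circles and framings for the $(k-1)n_1$ extra $2$-handles so that simultaneously the boundary recombines to $Y$, $\pi_1$ collapses to exactly $\pi$, and the $\mathrm{pin}^+$-structure extends---is precisely what the bordism packaging handles for free: the $\nu(N)$-structure on $\bigsqcup^k N$ is carried through all the surgeries, so the Stiefel--Whitney conditions need no separate verification, and making a normal map $2$-connected by surgery is a standard move. Your argument would go through if those details were written out, and it has the virtue of producing $M$ concretely; but the paper's route avoids the bookkeeping entirely and proves the lemma in three lines.
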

\begin{proof}
	We consider $\id_N$ as an element of $\Omega_4(\nu (N))$, the bordism group of pairs $(M,\wt\nu)$ consisting of a $4$-manifolds $M$ and a lift $\wt\nu$ of $\nu (M)\colon M\to BO$ along $\nu (N)$. Similiarly we can view $\bigsqcup_{i=1}^k N$ as an element in $\Omega_4(\nu (N))$. We can perform surgeries to turn $\bigsqcup_{i=1}^k N\to N$ into a 2-connected map $f\colon M\to N$. By definition of $\Omega_4(\nu (N))$, the map $f$ pulls back $w_i(N)$ to $w_i(M)$ and we use $f$ to identify $\pi_1M$ with $\pi_1N$. By \cref{thm:main} \eqref{it:main-iii}$\Rightarrow$\eqref{it:main-i}, there exists an immersion $M_*\imra N$ that is the identity on $\pi_1$.
\end{proof}
	
\begin{proof}[Proof of \cref{cor:stable2}]
	Let $\varphi_*c_*[M]$ be a multiple of $c_*[N]$. Since $M\leq M\#\CP^2$ we can assume that $M$ is not almost spin. By \cref{lem:1skeletonsum} we can assume that $\phi_*c_*[M]=c_*[N]$. Then a $\phi$-immersion $M_*\imra N$ exists by \cref{cor:stable}.
	
	For the converse assume that there exists a $\phi$-immersion $M_*\imra N$. As in the proof of \cref{cor:multiple}, we can apply \cref{lem:imageofP} to show that this implies that $\phi_*c_*[M]$ is in the image of $H_4(P_2N;\Z^{w})\to H_4(\pi_1N;\Z^{w})$. Since $H^1(\pi_1N;\Z[\pi_1N])=0$ this image is generated by $c_*[N]$ by \cref{lem:image-P2}. Hence $\phi_*c_*[M]$ is a multiple of $c_*[N]$. 
\end{proof}

We end this section by proving \cref{thm:shift} below which is a generalization of \cref{thm:trivialclass}. 
For its proof we will need the following lemma. 
\begin{lem}
	\label{lem:k-inv}
	Let $Y$ be a space with fundamental group $\pi'$ and let $(C_*Y,d_*^Y)$ denote its  $\Z[\pi']$-chain complexes. Let $X$ be a space together with a map $f\colon X\to B\pi'$. The 5-term exact sequence
	 \begin{equation}\label{eq:shift}
		0\to \ker d^Y_2\ra C_2Y\xrightarrow{d_2^Y}C_1Y\xrightarrow{d_1^Y}C_0Y\xrightarrow{\epsilon}\Z\to 0.
	\end{equation}
can be split into 3 short exact sequences and the composition of the corresponding Bockstein homomorphisms give homomorphisms $\shift\colon H^i(X;\Z)\xrightarrow{\shift} H^{i+3}(X;(\ker d^Y_2)_f)$. Here $(\ker d^Y_2)_f$ denotes $\ker d^Y_2$ viewed as a $\pi_1(X)$-module via the map $f$. Let $p\colon \ker d^Y_2\to \pi_2(Y)$ be the projection. Let $[\epsilon]\in H^0(X;\Z)\cong\Z$ be the canonical generator.

Then \[p_*(\shift([\epsilon]))=f^*k_Y\in H^3(X;\pi_2(Y)),\]
where $k_Y\in H^3(\pi';\pi_2(Y))$ is the first $k$-invariant of $Y$.
\end{lem}
\begin{proof}
	By naturality of $p_*$ and $\shift$, it suffices to consider the case $X=B\pi'$ and $f=\id$. 
	
	Let $(C_*,d_*)$ be a free resolution of $\Z$ as a $\Z[\pi']$-module starting with $(C_*Y,d_*^Y)$ in degrees at most $2$.
	The Bockstein homomorphisms are given by picking lifts, thus the diagram
	\[
\begin{tikzcd}
		&C_3\ar[d,"d_3"]\ar[r,"d_3"]\ar[dl,"d_3"']&C_2\ar[d,"d_2"]\ar[r,"d_2"]\ar[dl,"\id"']&C_1\ar[d,"d_1"]\ar[dl,"\id"']\ar[r,"d_1"]&C_0\ar[d,"\epsilon"]\ar[dl,"\id"']\\
	\ker d_2\ar[r]&C_2\ar[r,"d_2"]&C_1\ar[r,"d_1"]&C_0\ar[r,"\epsilon"]&\Z
\end{tikzcd}
	\]
	shows that $\shift([\epsilon])\in H^3(\pi';\ker d_2)$ is represented by $d_3\colon C_3\to \ker d_2$. Hence $p_*(\shift([\epsilon]))$ is represented by $d_3\colon C_3\to \ker d_2/\im d_3^Y$. Using the definition from \cite{EilenbergMacLane}*{(4.1)}, this class represents the $k$-invariant of $Y$.
\end{proof}

Set $\pi:=\pi_1M$, $\pi':=\pi_1N$ and let $(C_*M,d_*^M)$ and $(C_*N,d_*^N)$ denote the $\Z[\pi]$- and $\Z[\pi']$-chain complexes of $M$ and $N$ for some handle decompositions, respectively, computing the homology of universal coverings. $P_2M$ can be constructed from $M$ by attaching cells of dimensions $\geq 4$ and hence we may assume that the cellular $\Z[\pi]$-chain complex $(C_*(P_2M),d_*^{P_2M})$
agrees with that of $M$ in degrees $\leq 3$. By the Hurewicz theorem, $0=\pi_3(P_2M)\to H_3(P_2M;\Z[\pi])$ is onto, so $H_3(P_2M;\Z[\pi])=0$ and we get a free resolution 
\[
C_4(P_2M)\xrightarrow{d_4^{P_2M}}C_3M\xrightarrow{d_3^M}C_2M\ra \coker d_3^M\to 0
\]
of the $\Z[\pi]$-module $\coker d_3^M$, cf.~\cite[p.~91]{KPT}. It follows that a $\phi$-linear map 
\[
h\colon \coker d_3^M\ra (\coker d_3^N)_\phi
\]
 can be lifted by a triple $(h_4,h_3,h_2), h_i\colon C_iM\to C_iN$. In particular, $h$ induces a well defined map $h_A^3\colon H^3(P_2N;A)\to H^3(P_2M;A_\phi)$ for every $\Z[\pi']$-module $A$. 

We will mainly use the module $A:= \ker d^N_2$.
 Let $q\colon P_2M\to B\pi$ and $q'\colon P_2N\to B\pi'$ induce the identity on $\pi_1$.
Assume there exists an $h\colon \coker d_3^M\to (\coker d_3^N)_\phi$ such that the induced map $h^3_A$  makes the upper square in the following diagram commute:
\begin{equation}
	\label{eq:diagram}
	\begin{tikzcd}
		H^0(P_2N;\Z)\ar[d,"\shift"]\ar[r,"\cong"]&H^0(P_2M;\Z)\ar[d,"\shift"]\\
		H^3(P_2N;A)\ar[d,"p_*"]\ar[r,"h_{A}^3"]&H^3(P_2M;A_\phi)\ar[d,"p_*"]\\
		H^3(P_2N;\pi_2N)\ar[r,"h_{\pi_2N}^3"]&H^3(P_2M;\pi_2N_\phi)
	\end{tikzcd}
\end{equation}
Here the lower vertical maps are induced by the projection $p\colon A\sra A/\im d_3^N \cong \pi_2N$ and hence the lower diagram commutes.

\begin{lem}
	\label{lem:existence-h-1}
	 A map $h\colon \coker d_3^M\to (\coker d_3^N)_\phi$ such that $h^3_A$ makes Diagram \eqref{eq:diagram} commute exists if and only if there exists a map $P_2M\to P_2N$ that induces $\phi$ on fundamental groups.  
\end{lem}
\begin{proof}
First assume that $h$ exists. The composition $p_*\circ \shift$ sends the generator $[\epsilon]\in H^0(P_2N;\Z)$ to $(q')^*k_N\in H^3(P_2N;\pi_2N)$ by \cref{lem:k-inv}. Since this the obstruction for lifting along $P_2N\to B\pi'$ and the identity provides such a lift, $(q')^*k_N=0$.
Moreover, $[\epsilon]\in H^0(P_2M;\Z)$ is sent to $q^*\phi^*k_N\in H^3(P_2M;\pi_2N)$ by \cref{lem:k-inv}, the only obstruction to lifting $(\phi\circ q)\colon P_2M\to B\pi'$ along $P_2N\to B\pi'$. The commutativity of diagram \eqref{eq:diagram} shows that
\[
q^*\phi^*k_N=h_A^3(q')^*k_N=h_{\pi_2N}^3(0)=0
\]
and hence there exists a map $P_2M\to P_2N$ inducing $\phi$ on fundamental groups. 

Conversely, a map $f\colon P_2M\to P_2N$ induces a chain map $h_*\colon C_*(P_2M)\to C_*(P_2N)_\phi$ and thus maps $h\colon \coker d_3^M\to (\coker d_3^N)_\phi$ and $h^3_A$. For this choice, diagram \eqref{eq:diagram}  commutes by naturality of the shift map.
\end{proof}

We will now phrase the existence of a map $h$ as above in terms of the fundamental classes of $M$ and $N$.
Let $c\colon M\to B\pi$ and $c'\colon N\to B\pi'$ be maps inducing the identity on fundamental groups. We have the following commutative diagram:
\begin{equation}
	\label{eq:diagram2}
	\begin{tikzcd}
	H^0(P_2N;\Z)\ar[d,"\shift"]\ar[r,"\cong"]&H^0(N;\Z)\ar[d,"\shift"]\ar[r,"\cong","PD"']&H_4(N;\Z^{w_1N})\ar[d,"\shift"]\ar[r,"c'_*"]&H_4(\pi';\Z^{w_1N})\ar[d,"\shift","\cong"']\\
	H^3(P_2N;A)\ar[r,"\cong"]&	H^3(N;A)\ar[r,"\cong","PD"']&H_1(N;A^{w_1N})\ar[r,"\cong","c'_*"']&H_1(\pi';A^{w_1N})
\end{tikzcd}
\end{equation}
Here the middle square commutes because capping is natural in the coefficients, the map $H_3(P_2N;A)\to H^3(N;A)$ is an isomorphism by \cite[Lemma~5.3]{KPT}, and $c'_*\colon H_1(N;A^{w_1N})\to H_1(\pi';A^{w_1N})$ is an isomorphism since $c'$ is $2$-connected. It follows that the lower composition $\psi'\colon H^3(P_2N;A)\to H_1(\pi';A^{w_1N})$ is an isomorphism. 

Let $\psi\colon H^3(P_2M;A_\phi)\to H_1(\pi;A_\phi^{w_1M})$ be the corresponding composition for $M$ instead of $N$. 
Since $M\to P_2M$ is $3$-connected, the map $H^3(P_2M;A_\phi)\to H^3(M;A_\phi)$ is injective but in general not an isomorphism. Hence the map $\psi$ is injective. 

\begin{lem}
	\label{lem:existence-h-2}
	 For a map $h\colon \coker d_3^M\to (\coker d_3^N)_\phi$, the induced map $h^3_A$ makes Diagram \eqref{eq:diagram} commute if and only if\[(\psi\circ h_A^3\circ (\psi')^{-1}\circ \shift)(c'_*[N])=\shift(c_*[M])\in H_1(\pi_1M;(A_\phi)^{w_1(M)}).
	 \]
\end{lem}
\begin{proof}
Since Diagram \eqref{eq:diagram2} commutes, the map $\psi'\circ\shift\colon H^0(P_2N;\Z)\to H_1(\pi';A^{w_1N})$ sends $1\in H^0(P_2N;\Z)$ to $\shift(c_*'[N])$. Similarly, the map $\psi$ sends $1\in H^0(P_2M;\Z)$ to $\shift(c_*[M])$.
	
Consider the following diagram.
\begin{equation*}
	\begin{tikzcd}
		H^0(P_2N;\Z)\ar[d,"\shift"]\ar[rr,"\cong"]&&H^0(P_2M;\Z)\ar[d,"\shift"]\\
		H^3(P_2N;A)\ar[d,"\psi'"]\ar[rr,"h_{A}^3"]&&H^3(P_2M;A_\phi)\ar[d,"\psi"]\\
		H_1(\pi;A^{w_1N})\ar[rr,"\psi\circ h_A^3\circ (\psi')^{-1}"]&&H_1(\pi;A_\phi^{w_1M})
	\end{tikzcd}
\end{equation*}
The bottom square of this diagram commutes by definition. Hence the top square of this diagram commutes if and only if the outer rectangle commutes. Since $\psi'(\shift(1))=\shift(c_*'[N])$ and $\psi(\shift(1))=\shift(c_*[M])$, the top square and thus diagram \eqref{eq:diagram} commutes if and only if $\psi\circ h_A^3\circ (\psi')^{-1}\circ \shift$ sends $c'_*[N]$ to $\shift(c_*[M])$ as claimed.
\end{proof}

Combining \cref{lem:existence-h-1,lem:existence-h-2}, we obtain the following result.
\begin{thm}
\label{thm:shift}
	Let $M,N$ be $4$-manifolds and let $\varphi\colon \pi_1M\to \pi_1N$ be a homomorphism. There is a map $f_2\colon P_2M\to P_2N$ inducing $\phi$ on fundamental groups if and only if there is a $\pi_1M$-linear homomorphism $h\colon \coker(d_3^M)\ra \coker(d_3^N)_\varphi$ such that
	\[(\psi\circ h_A^3\circ (\psi')^{-1}\circ \shift)(c'_*[N])=\shift(c_*[M])\in H_1(\pi_1M;(A_\phi)^{w_1(M)}).
	\]
\end{thm}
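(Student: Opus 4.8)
The plan is to assemble \cref{thm:shift} essentially by reading off the chain of equivalences already set up in the paragraphs preceding the statement, so the proof is largely a matter of citing \cref{thm:post-general} and then invoking the diagram chase that has been developed around diagram \eqref{eq:diagram}. First I would recall from \cref{thm:post-general} that a map $f_2\colon P_2M\to P_2N$ inducing $\varphi$ exists if and only if $q^*\varphi^*k_N=0\in H^3(P_2M;\pi_2N_\varphi)$, where $q\colon P_2M\to B\pi_1M$ induces the identity on $\pi_1$. This is the single obstruction we must kill, and everything reduces to detecting its vanishing on the chain level.

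Next I would produce the $\varphi$-linear map $h\colon \cok(d_3^M)\to\cok(d_3^N)_\varphi$ and explain both directions of its role. In the forward direction, if $f_2$ exists it induces a chain map $h_*\colon C_*(P_2M)\to C_*(P_2N)_\varphi$ and hence such an $h$; as observed just before the theorem, for this choice of lift diagram \eqref{eq:diagram} commutes by naturality of the shift, and the left-to-right composite $\psi\circ h_A^3\circ(\psi')^{-1}\circ\shift$ carries $c'_*[N]$ to $\shift(c_*[M])$ exactly because $\shift([\epsilon])$ computes the relevant $k$-invariant pulled back to $P_2N$ and $P_2M$, and $c'_*$, $c_*$, Poincar\'e duality fit into the second commutative diagram. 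In the backward direction, given $h$ with the displayed equality, the same two diagrams run in reverse show diagram \eqref{eq:diagram} commutes, so that $q^*\varphi^*k_N=h^3_{\pi_2N}(q'^*k_N)=h^3_{\pi_2N}(0)=0$, whence $f_2$ exists. The only genuinely new ingredients beyond \cref{thm:post-general} are the identifications $\shift([\epsilon])=(q')^*k_{N^{(2)}}$ (and its image $(q')^*k_N$ under $p_*$), which follow from naturality applied to $X=B\pi'$ where $[\epsilon]\in H^0(B\pi')$ maps to $k_{N^{(2)}}$, together with the fact from \cite[Lemma~5.3]{KPT} that $\psi'$ is an isomorphism and $\psi$ is injective.

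Care must be taken with the coefficient twist: the modules in play are $A=\ker d_2^N$ with its $\Z[\pi_1N]$-structure, $A_\varphi$ its pullback $\Z[\pi_1M]$-module, and the further orientation twist by $w_1(M)$ in the target group $H_1(\pi_1M;(A_\varphi)^{w_1(M)})$. I would note explicitly that the map $h^3_A$ is well defined on $H^3(-;A)$ because $h$ lifts to a triple $(h_4,h_3,h_2)$ of $\Z[\pi_1M]$-module maps over the free resolution $C_4(P_2M)\to C_3M\to C_2M\to\cok d_3^M\to 0$, and that the value of $h^3_A$ on cohomology does not depend on the chosen lift. The hyphenation $\psi=c_*\circ PD\circ(\text{restriction})$ for $M$ is only injective, not iso, precisely because we insist on the coefficient module $A_\varphi$ rather than $\ker d_2^M$; this is why the statement is phrased as an equality of classes in $H_1(\pi_1M;(A_\varphi)^{w_1(M)})$ rather than as a statement purely about $M$'s own invariants, and no map $H_4(\varphi)$ can be used since we deliberately do not assume $\varphi^*w_1^\pi(N)=w_1^\pi(M)$.

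The main obstacle — though it is bookkeeping rather than a conceptual hurdle — is verifying commutativity of diagram \eqref{eq:diagram} with the correct variance and twists in every square, in particular that the three Bockstein homomorphisms splitting the 5-term sequence \eqref{eq:shift} are natural in the space variable so that $h^3_A$ intertwines the shifts on $P_2N$ and $P_2M$, and that the second commutative diagram (relating $H^0$, Poincar\'e duality, and $c'_*$) is compatible with $\shift$ on all four columns. Once this naturality is in hand, the proof of \cref{thm:shift} is a two-line diagram chase in each direction, using $q'^*k_N=0$ (since $q'$ factors through $P_2N$ via the identity) on one side and the hypothesised equality $(\psi\circ h_A^3\circ(\psi')^{-1}\circ\shift)(c'_*[N])=\shift(c_*[M])$ on the other.
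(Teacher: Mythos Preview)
Your proposal is correct and follows essentially the same approach as the paper: the proof of \cref{thm:shift} is indeed contained in the paragraphs preceding its statement, and you have accurately identified the logical flow --- reduce to the vanishing of $q^*\varphi^*k_N$ via \cref{thm:post-general}, translate this into commutativity of diagram~\eqref{eq:diagram} using $p_*\circ\shift([\epsilon])=(q')^*k_N=0$, and then use the second commutative diagram together with the isomorphism $\psi'$ and the injectivity of $\psi$ to convert commutativity of~\eqref{eq:diagram} into the displayed equality on $H_1$. Your remark that injectivity of $\psi$ is what makes the backward direction go through is exactly the point the paper is using.
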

Thus we have shown that the existence of $f_2$ only depends on the fundamental classes $c'_*[N], c_*[M]$ in an explicit way. Note that we do not require $\phi^*w_1^\pi (N)=w_1^\pi (M)$, so an induced map $H_4(\phi)$ cannot appear in our statement.

\begin{proof}[Proof of \cref{thm:trivialclass}]
	If $c'_*[N]=0$, then $(\psi\circ h_A^3\circ (\psi')^{-1}\circ \shift)(c'_*[N])=0$. By \cref{thm:shift} there is a map $f_2\colon P_2M\to P_2N$ inducing $\phi$ on fundamental groups if and only if $\shift(c_*[M])=0$. This holds for the shift coming from  \eqref{eq:shift} if and only if it holds for the shift for any start of a free resolution of $\Z$ as a $\Z[\pi']$-module. Hence \cref{thm:trivialclass} follows.
\end{proof}

\section{Realization of fundamental classes and the proof of \cref{prop:Z4}}
\label{sec:james}
To answer the question which immersion types $(\pi,w_1,w_2,c)$ are realized, we only have to compute which values of $c\in H_4(\pi;\Z^{w_1})$  are realized for a given normal 1-type $(\pi,w_1,w_2)$ because all normal 1-types are realized by \emph{doubles} of 4-dimensional thickenings. These are in fact the boundaries of 5-dimensional thickenings of 2-complexes with normal data given by $w_1,w_2$, implying that their fundamental classes have trivial image $c$. For such 5-manifolds, transversality implies that the diffeomorphism type only depends on the presentation of the fundamental group (and $w_1,w_2$). 
See \cite{thickenings} for more details.
If $w_2^\pi \neq\infty$, consider the pullback

\[\begin{tikzcd}[column sep=2cm]
	B\ar[d,"\xi"]\ar[r,"u"]&B\pi\ar[d,"w_1^\pi\times w_2^\pi"]\\
	BO\ar[r,"w_1(\gamma)\times w_2(\gamma)"]&K(\Z/2,1)\times K(\Z/2,2)
\end{tikzcd}\]
where $\gamma$ is the universal bundle over $BO$. If $w_2^\pi=\infty$, then consider the following pullback:
\[\begin{tikzcd}
	B\ar[d,"\xi"]\ar[r,"u"]&B\pi\ar[d,"w_1^\pi"]\\
	BO\ar[r,"w_1(\gamma)"]&K(\Z/2,1)
\end{tikzcd}\]
In both cases we can assume that $\xi$ is a fibration and we denote by $\Omega_4(\xi)$ the 4-dimensional bordism group over $\xi$, i.e.\ the bordism group of manifolds with a lift of the normal bundle along $\xi$. 
The map $u\colon B\to B\pi$ induces a map $\Omega_4(\xi)\to H_4(\pi;\Z^{w_1})$ given by sending $[M\xrightarrow{f}B]$ to $u_*f_*[M]$. Given $[M\xrightarrow{f}B]\in \Omega_4(\xi)$ we can apply surgery to obtain a manifold $M'$ bordant to $M$ over $\xi$ with immersion type $(\pi,w_1,w_2,u_*f_*[M])$. Conversely, a manifold $M$ with immersion type $(\pi,w_1,w_2,c)$ admits a lift of its normal bundle along $\xi$. Hence we have to compute the image of  $\Omega_4(\xi)\to H_4(\pi;\Z^{w_1})$.
For this we apply the James spectral sequence \citelist{\cite{teichnerthesis}*{Theorem~3.1.1}\cite{teichner-signature}*{Section~II}} with $E^2$-page
\[E^2_{p,q}=H_p(\pi;\Omega_q^{\Spin})\Longrightarrow \Omega_{p+q}(\xi)\quad\text{for}~ w_2\neq \infty\]
respectively
\[E^2_{p,q}=H_p(\pi;\Omega_q^{\SO})\Longrightarrow \Omega_{p+q}(\xi)\quad\text{for}~ w_2=\infty.\]
Note that the coefficients in the spectral sequence are twisted via $w_1$ and that the edge homomorphism is exactly our map $\Omega_4(\xi)\to H_4(\pi;\Z^{w_1})$. As a consequence, its image agrees with $E_{4,0}^\infty$, which is the iterated kernel of all differentials going out of any $E_{4,0}^r$. 

\begin{lem}
	If $w_2=\infty$, $\Omega_4(\xi)\to H_4(\pi;\Z^{w_1})$ is surjective. Hence all immersion types $(\pi,w_1,\infty,c)$ can be realized.
\end{lem}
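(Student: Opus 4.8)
The plan is to exploit the James spectral sequence set up just above and show that, when $w_2 = \infty$, every differential out of $E^r_{4,0}$ vanishes, so that the edge homomorphism $\Omega_4(\xi)\to H_4(\pi;\Z^{w_1})$ is surjective. The relevant $E^2$-page is $E^2_{p,q}=H_p(\pi;\Omega_q^{\SO})$ (twisted by $w_1$), and the differentials out of $E^r_{4,0}$ land in $E^r_{4-r,r-1}$ for $r\ge 2$. Because $\Omega_1^{\SO}=\Omega_2^{\SO}=\Omega_3^{\SO}=0$, the only potentially nonzero target is $E^2_{0,3}$, hit by $d_4\colon E^4_{4,0}\to E^4_{0,3}$; but $\Omega_3^{\SO}=0$ kills this group as well. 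Hence \emph{all} differentials leaving $E^r_{4,0}$ are zero, $E^\infty_{4,0}=E^2_{4,0}=H_4(\pi;\Z^{w_1})$, and the edge map is onto.

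Concretely I would argue as follows. First recall (from the paragraph preceding the lemma) that the edge homomorphism of this spectral sequence is exactly the map $[M\xrightarrow{f}B]\mapsto u_*f_*[M]$, and that its image is the kernel of all differentials emanating from $E^r_{4,0}$. Second, observe that a differential $d_r\colon E^r_{4,0}\to E^r_{4-r,r-1}$ can only be nonzero if $0\le 4-r$ and $1\le r-1\le 3$, i.e.\ $r\in\{2,3,4\}$, and in each case the target is a subquotient of $H_{4-r}(\pi;\Omega_{r-1}^{\SO})$: for $r=2$ this is $H_2(\pi;\Omega_1^{\SO})$, for $r=3$ it is $H_1(\pi;\Omega_2^{\SO})$, for $r=4$ it is $H_0(\pi;\Omega_3^{\SO})$. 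Since $\Omega_1^{\SO}=\Omega_2^{\SO}=\Omega_3^{\SO}=0$, all three target groups vanish, so $E^\infty_{4,0}=H_4(\pi;\Z^{w_1})$ and $\Omega_4(\xi)\to H_4(\pi;\Z^{w_1})$ is surjective. Finally, as already noted in the text, surjectivity of this map means every class $c\in H_4(\pi;\Z^{w_1})$ is realized: pick $[M\xrightarrow{f}B]$ hitting $c$, do surgery below the middle dimension to make $f$ $2$-connected, and the resulting manifold has immersion type $(\pi,w_1,\infty,c)$. This proves the lemma.

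The only place requiring any care — rather than being a genuine obstacle — is the bookkeeping with the \emph{twisted} coefficients: the $E^2$-term $H_p(\pi;\Omega_q^{\SO})$ uses the $\Z[\pi]$-module structure on $\Omega_q^{\SO}$ coming from $w_1$, but this twisting is irrelevant to the vanishing argument, since $\Omega_q^{\SO}=0$ as an abelian group for $q=1,2,3$ forces the twisted homology groups to vanish too. I would mention this in one sentence and move on. (One could alternatively run the same argument with $\Omega^{\SO}$ replaced by its localization away from $2$ or note that the computation is insensitive to the choice of $w_2^\pi=\infty$ versus a finite value only through the change $\Omega^{\Spin}\rightsquigarrow\Omega^{\SO}$, but for the present lemma the crude vanishing of low-degree $\Omega^{\SO}$ is all that is needed.)
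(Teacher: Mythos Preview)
Your proof is correct and follows essentially the same approach as the paper: the paper's proof is a one-line version of yours, simply noting that $\Omega_q^{\SO}=0$ for $q=1,2,3$ kills all differentials out of $E^2_{4,0}$, hence the edge homomorphism is onto. Your additional remarks on the twisted coefficients and the explicit enumeration of $r=2,3,4$ are accurate but more detailed than necessary.
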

\begin{proof}
	Since $\Omega_q^{\SO}=0$ for $q=1,2,3$, there are no non-trivial differentials out of $E^2_{4,0}\cong H_4(\pi;\Z^{w_1})$. Hence $\Omega_4(\xi)\to H_4(\pi;\Z^{w_1})$ is surjective as claimed.
\end{proof}
For $w_2\neq \infty$, we have $\Omega_1^{\Spin}=\Omega_2^{\Spin}=\Z/2$. Let $\Sq^2_{w_1,w_2}$ denote the homomorphism $H^p(\pi;\Z/2)\to H^{p+2}(\pi;\Z/2)$ sending $x$ to $\Sq^2(x)+\Sq^1(x)\cup w_1+x\cup w_2$. The following lemma is \cite[Proposition~1]{teichner-signature} in the case $w_1=0$.  A more detailed proof can be found in \cite[Section~5.1]{OP-MCG}. The proof in the case $w_1\neq 0$ is similar as we briefly explain. In the proof of \cite[Lemma~5.3]{OP-MCG}, $w_1(v)=0$ is used such that $w_1(v)\Sq^1(x)=0$ in the highlighted equation. Without this, in the statement of the lemma $\Sq_{w_2}$ has to be replaced by $\Sq_{w_1,w_2}$. Carrying this change through the rest of the proof gives the desired result.

\begin{lem}
	\label{lem:differentials}
	The $d_2$-differential $H_4(\pi;\Z^{w_1})\cong E^2_{4,0}\to E^2_{2,1}\cong H_2(\pi;\Z/2)$ is given by reduction mod 2 composed with the dual of $\Sq^2_{w_1,w_2}$. 
	
	The $d_2$-differential $H_3(\pi;\Z/2)\cong E^2_{3,1}\to E^2_{1,2}\cong H_1(\pi;\Z/2)$ is given by the dual of $\Sq^2_{w_1,w_2}$.
\end{lem}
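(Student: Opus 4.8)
The plan is to follow the strategy of \cite[Proposition~1]{teichner-signature}, adapting it to the twisted case $w_1\neq 0$. The key point is that the $d_2$-differentials in the James spectral sequence are detected by secondary cohomology operations, and since $\Omega_1^{\Spin}=\Omega_2^{\Spin}=\Z/2$ are generated by the bordism classes of the framed circle and the Lie-group-framed torus (equivalently, are governed by the $k$-invariants of the spin bordism spectrum $M\Spin$ in low degrees), these differentials are computed by the relevant Steenrod operations combined with the characteristic classes $w_1,w_2$ classifying the fibration $\xi$. Concretely, the first two $k$-invariants of $M\Spin$ (through the range that matters here) are $Sq^2$ composed with the Thom class, and the twisting of the spectral sequence by $w_1$ together with the extra bundle data $w_2$ modifies this to the operation $x\mapsto Sq^2(x)+Sq^1(x)\cup w_1 + x\cup w_2 =: Sq^2_{w_1,w_2}(x)$, acting on $H^*(\pi;\Z/2)$. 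Dualizing (and using the universal-coefficient identification of $\Z/2$-homology with the dual of $\Z/2$-cohomology) gives the stated formulas on $E^2_{4,0}$ and $E^2_{3,1}$, with the mod-2 reduction appearing in the first case because $E^2_{4,0}$ has $\Z^{w_1}$-coefficients while $E^2_{2,1}$ has $\Z/2$-coefficients.

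First I would recall the construction of the James spectral sequence as the Atiyah--Hirzebruch-type spectral sequence for the generalized homology theory $X\mapsto \Omega_*(\xi|_X)$, or equivalently the spectral sequence of the filtered Thom spectrum $B^\xi$ (the Thom spectrum of the virtual bundle over $B$ classified by $\xi$); its $E^2$-page is $H_p(\pi;\Omega_q^{\Spin})$ with $w_1$-twisted coefficients, exactly because $B\to B\pi$ has fiber the spin-oriented part. Then I would identify the Postnikov tower of $M\Spin$ in low degrees: $M\Spin\to H\Z$ is $4$-connected onto, the first $k$-invariant is (a lift of) $Sq^2$, and the relevant portion gives that the $d_2$-differential of the plain (untwisted, $w_2=0$) spectral sequence $E^2_{p,q}=H_p(X;\Omega_q^{\Spin})$ is dual to $Sq^2$ on $E^2_{p,0}\to E^2_{p-2,1}$ and on $E^2_{p,1}\to E^2_{p-2,2}$. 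Next I would track how passing from $B\Spin$ to the pullback $B$ over $B\pi$ modifies this: the normal $1$-type data means the Thom class of $\xi$ differs from the spin Thom class by the twist recording $w_1$ (which produces the $Sq^1(\cdot)\cup w_1$ summand, via the Wu-type formula for $Sq^2$ on a twisted Thom class) and $w_2$ (which produces the $\cdot\cup w_2$ summand, since $w_2$ is precisely the obstruction to a spin refinement). Assembling these three contributions yields $Sq^2_{w_1,w_2}$. Finally I would dualize, inserting mod-$2$ reduction $H_4(\pi;\Z^{w_1})\to H_4(\pi;\Z/2)$ on the source of the first differential since the operation $Sq^2_{w_1,w_2}$ naturally lives on $\Z/2$-cohomology.

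The main obstacle I anticipate is getting the twisting conventions exactly right in the $w_1\neq 0$ case: one must be careful that the coefficient system $\Z^{w_1}$ on $E^2_{4,0}$, the $\Z/2$ on $E^2_{2,1}$ and $E^2_{1,2}$, and the $\Z/2$-twisting built into $Sq^1(\cdot)\cup w_1$ all interact consistently with Poincar\'e duality and the universal-coefficient isomorphisms, so that the dual of $Sq^2_{w_1,w_2}$ is the correctly-signed (here, mod $2$, so sign-free) differential. A clean way to sidestep sign and naturality headaches is to verify the formula directly on generators using a universal example --- e.g.\ test the $d_2$ out of $E^2_{4,0}$ against specific $4$-manifolds $M\to B\pi$ whose normal $\xi$-structure is obstructed exactly by $\langle Sq^2_{w_1,w_2}(x), f_*[M]\rangle$, for instance products and surface bundles realizing the relevant homology classes --- and to invoke that $Sq^2_{w_1,w_2}$ and its dual are the only natural operations of the right bidegree, so matching on generators forces the identity. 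Since the $w_1=0$ case is already \cite[Proposition~1]{teichner-signature}, I would present the $w_1\neq 0$ case as ``the proof is similar'', spelling out only the two extra twist terms and where they enter.
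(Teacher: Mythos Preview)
Your proposal is correct and takes essentially the same approach as the paper: the paper gives no proof beyond the sentence ``The following lemma is \cite[Proposition~1]{teichner-signature} in the case $w_1=0$. The proof in the case $w_1\neq 0$ is similar.'' Your write-up is in fact more detailed than the paper's own treatment, and your concluding suggestion---to present the $w_1\neq 0$ case as ``the proof is similar'' after citing the $w_1=0$ case---is exactly what the paper does.
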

For example, if $\pi=\Z/2k, w_1\neq 0, w_2\neq 0,\infty$ this lemma implies that there exists a 4-manifold whose fundamental class has non-trivial image in $H_4(\pi;\Z^{w_1})$ because all differentials vanish at this term and hence the edge homomorphism is surjective. In Remark~\ref{rem:realization}, we will explicitly construct a rational homology $4$-ball in this stable diffeomorphism class.

Recall that for almost spin manifolds with fundamental group $\Z^4$, $w_2^\pi (M)$ lies in  
\[
H^2(\Z^4;\Z/2)/\!\Aut(\Z^4) = \{0, e_1 \cup e_2, e_1 \cup e_2+e_3 \cup e_4\},
\]
where $e_i\in H^1(\Z^4;\Z/2)$ are the four standard pullbacks of the generator in $H^1(\Z;\Z/2)$.

\begin{proof}[Proof of \cref{prop:Z4}]
	(1) By \cref{prop:easy}, we know that $M\leq N$ and $N$ spin implies that $M$ is spin. This shows one direction. For the other let $[w_2^\pi (M)] = e_1 \cup e_2$ and let $N$ be non-spin. Then there is a map $\phi\colon \pi_1M\xrightarrow{\phi_1}\Z^2\xrightarrow{\phi_2}\pi_1N$ with $\phi^*w_2^\pi (N)=w_2^\pi (M)$. The map $\phi_1$ is realized by a map $f_1\colon M\to T^2$ and the map $\phi_2$ is realized by a map $f_2\colon T^2\to N$. Hence there exists a map $f=f_1\circ f_2$ from $M$ to $N$ with $f^*w_2(N)=w_2(M)$. Thus $M\leq N$ by \cref{thm:main} \eqref{it:main-iii}$\Rightarrow$\eqref{it:main-i}.
	
	(2) Assume there exists an immersion $M_*\to N$ that induces $\phi\colon \Z^4\to \Z^4$ on fundamental groups. Then \[e_1\cup e_2+e_3\cup e_4=w_2^\pi(M)=\phi^*w_2^\pi(N).\]
	Then $w_2^\pi(N)=e_1\cup e_2+e_3\cup e_4\in H^2(\Z^4;\Z)/\Aut(\Z^4)$ and $\phi$ is the inclusion of a finite index subgroup. The latter is true since otherwise $\phi$ factors through $\Z^3$ but $H^2(\Z^3;\Z)/\Aut(\Z^3)$ consists only of two elements and it would follow that $\phi^*x=e_1\cup e_2\in H^2(\Z^4;\Z)/\Aut(\Z^4)$. This shows part of the 'only if'-direction.
	
	To finish the proof of (2), we now assume that $\phi\colon \Z^4\to \Z^4$ is the inclusion of a finite index subgroup that pulls back $w_2^\pi(N)$ to $w_2^\pi(M)$, and show that an immersion $M_*\to N$ inducing $\phi$ on fundamental groups exists if and only if $c_*[M]$ is a multiple of $c_*[N]$. This completes the proof of the 'only if'-direction of (2) and shows the 'if'-direction by taking $\phi$ to be the isomorphism sending $w_2^\pi(N)$ to $w_2^\pi(M)$.
	
	Let $N'$ be the finite covering of $N$ corresponding to $\im\phi$. 
	We then have a commutative diagram
	\[\begin{tikzcd}
		N'\ar[r]\ar[d,"c'"]&N\ar[d,"c"]\\
		T^4\ar[r,"{\phi_*}"]&T^4
	\end{tikzcd}\]
	where $c'$ induces an isomorphism on fundamental groups. The degree of the coverings $N'\to N$ and $T^4\xrightarrow{\phi_*}T^4$ both are $[\Z^4:\im \phi_*]$. Thus the degrees of $c$ and $c'$ agree, that is, $c_*[N']=c_*[N]\in H_4(\Z^4;\Z)$. There is an immersion $M_*\imra N$ inducing $\phi$ on fundamental groups if and only if there is an immersion $M_*\imra N'$ that induces the isomorphism determined by $c'$ on fundamental groups. By \cref{cor:multiple}, such an immersion exists if and only if $c_*[M]$ is a multiple of $c'_*[N']=c_*[N]$ as claimed. This shows (2). 
	
	It remains to show the realization statement.
	Let $\xi\colon B\to BSO$ be the fibration determined by $w_2=e_1\cup e_2+e_3\cup e_4$. We compute the image of $\Omega_4(\xi)$ in $H_4(\Z^4;\Z)$ using the James spectral sequence. Note that $\Sq^2\colon H^2(\Z^4;\Z/2)\to H^4(\Z^4;\Z/2)$, which is the cup-square on $H^2(T^4;\Z/2)$, is trivial. By \cref{lem:differentials}, the $d_2$-differential $\Z\cong H_4(\Z^4;\Z)\to H_2(\Z^4;\Z/2)$ is thus given by the reduction mod 2 composed with the dual of $-\cup w_2$. This map is non-trival and hence the kernel is $2\Z\leq \Z\cong H_4(\Z^4;\Z)$. The $d_2$-differential $H_3(\Z^4;\Z/2)\to H_1(\Z^4;\Z/2)$ is again dual to $-\cup w_2$ which is an isomorphism for $w_2=e_1\cup e_2+e_3\cup e_4$. Hence the terms $E^3_{1,2}\cong H_1(\Z^4;\Z/2)/\im d_2=0$ and $E^4_{0,3}\cong E^2_{0,3}\cong H_3(\Z^4;0)=0$ are trivial and there are no higher non-trivial differentials with domain $E^3_{4,0}\cong 2\Z\leq H_4(\Z^4;\Z)$. Thus all immersion types $(\Z^4,0,e_1\cup e_2+e_3\cup e_4,2c)$ can be realized.
\end{proof}

\section{The immersion order on $4$-manifolds with cyclic fundamental group}
\label{sec:cyclic}
We start with the following lemma about the realization of immersion types for cyclic fundamental groups.
\begin{lem}
	\label{lem:image-fund-class-cyclic}
	Let $M$ be a $4$-manifold with fundamental group $\Z/2k$ and $w_2(M)=0$. Then $0=c_*[M]\in H_4(\Z/2k;\Z^{w_1})$.
\end{lem}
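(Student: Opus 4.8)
The plan is to follow the James spectral sequence argument of \cref{sec:james}, as in the proof of \cref{prop:Z4}, after disposing of the orientable case by hand. Write $\pi=\Z/2k$ and $w_1:=w_1^\pi(M)$. If $M$ is orientable, then $w_1=0$ and $c_*[M]$ lives in $H_4(\Z/2k;\Z)$, which vanishes because the integral homology of a finite cyclic group is trivial in positive even degrees; so there is nothing to prove and we may assume $w_1\neq 0$, i.e.\ that $w_1\colon \Z/2k\to\{\pm1\}$ is the unique surjection. Using the $2$-periodic free resolution of $\Z$ over $\Z[\Z/2k]$ I would then record that $H_4(\Z/2k;\Z^{w_1})\cong\Z/2$ and $H_4(\Z/2k;\Z/2)\cong\Z/2$, and that the coefficient sequence $0\to\Z^{w_1}\xrightarrow{2}\Z^{w_1}\to\Z/2\to 0$ makes the mod-$2$ reduction $r\colon H_4(\Z/2k;\Z^{w_1})\to H_4(\Z/2k;\Z/2)$ injective, since multiplication by $2$ is zero on a group of order two.

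Since $w_2(M)=0$ we have $w_2(\widetilde M)=0$, so $M$ is almost spin and, by uniqueness of $w_2^\pi$, also $w_2^\pi(M)=0$; hence the immersion type of $M$ is $(\Z/2k,w_1,0,c_*[M])$. As explained in \cref{sec:james}, such an $M$ admits a lift of its normal bundle along the fibration $\xi\colon B\to BO$ determined by $(\Z/2k,w_1,0)$, and therefore $c_*[M]$ lies in the image of the edge homomorphism $\Omega_4(\xi)\to H_4(\Z/2k;\Z^{w_1})$. By \cref{sec:james} this image is the intersection over $r$ of the kernels of the differentials $d_r$ leaving $E^r_{4,0}$ in the James spectral sequence $E^2_{p,q}=H_p(\Z/2k;\Omega_q^{\Spin})\Rightarrow\Omega_{p+q}(\xi)$. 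So it suffices to show that the first of these, $d_2\colon E^2_{4,0}=H_4(\Z/2k;\Z^{w_1})\to E^2_{2,1}=H_2(\Z/2k;\Z/2)$, is injective; then $E^\infty_{4,0}=0$ and $c_*[M]=0$.

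By \cref{lem:differentials}, this $d_2$ is the composite of the (already injective) reduction $r$ with the $\Z/2$-dual of $\Sq^2_{w_1,0}\colon H^2(\Z/2k;\Z/2)\to H^4(\Z/2k;\Z/2)$, $x\mapsto \Sq^2 x+\Sq^1 x\cup w_1$. Hence I only need $\Sq^2_{w_1,0}\neq 0$, i.e.\ an isomorphism between these two one-dimensional $\Z/2$-spaces. For a generator $y$ of $H^2(\Z/2k;\Z/2)\cong\Z/2$ one has $\Sq^2 y=y^2$, and $y^2$ generates $H^4(\Z/2k;\Z/2)\cong\Z/2$ — visible from the ring $H^*(\Z/2k;\Z/2)$, which is $\Z/2[t]$ when $2k\equiv 2\pmod 4$ and $\Z/2[x,y]/(x^2)$ with $|x|=1$, $|y|=2$ otherwise. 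Moreover $\Sq^1 y=0$, since the reduction $H^2(\Z/2k;\Z)=\Z/2k\to H^2(\Z/2k;\Z/2)=\Z/2$ is onto, so $y$ reduces from an integral class, which forces $\Sq^1 y=0$. Thus $\Sq^2_{w_1,0}(y)=y^2\neq 0$ and we are done.

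The only real work is this last step — verifying that $\Sq^2_{w_1,0}$ is nonzero on $H^2(\Z/2k;\Z/2)$ — which requires the explicit ring and Steenrod structure of cyclic $2$-groups and a small amount of care when $4\mid 2k$, where $w_1^2=0$ and the cohomology ring is not polynomial; there it is the vanishing of $\Sq^1 y$, rather than a cancellation with the $\Sq^1 y\cup w_1$ term, that keeps $\Sq^2_{w_1,0}(y)=y^2$ nonzero. Everything else is either cited from \cref{sec:james} and \cref{lem:differentials} or a routine periodic-resolution computation.
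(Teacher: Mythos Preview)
Your proof is correct and follows essentially the same route as the paper: dispose of the orientable case via $H_4(\Z/2k;\Z)=0$, then use the James spectral sequence and \cref{lem:differentials} to show that the $d_2$-differential out of $E^2_{4,0}$ is injective by checking that $\Sq^2_{w_1,0}$ is nonzero on $H^2(\Z/2k;\Z/2)$. You supply more detail than the paper on why $\Sq^1 y=0$ (lifting to integral cohomology) and why $y^2\neq 0$ (the explicit ring structure), but the argument is the same.
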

\begin{proof}
	If $w_1=0$, then we have $H_4(\Z/2k;\Z)=0$ and the statement is obvious. If $w_1\neq 0$ and $w_2=0$, we can compute the image of $\Omega_4(\xi)$ in $H_4(\Z/2k;\Z^{w_1})\cong \Z/2$, where $\xi$ is the fibration associated to $w_1,w_2$, as at the beginning of \cref{sec:james}. The $d_2$-differential $H_4(\Z/2k;\Z^{w_1})\to H_2(\Z/2k;\Z/2)$ in the James spectral sequence is the composition of reduction mod 2 and $\Sq^2_{w_1}$ by \cref{lem:differentials}. $\Sq^1$ is trivial on $H^2(\Z/2k;\Z/2)$ because it's a reduction of an integral class, while $\Sq^2\colon H^2(\Z/2k;\Z/2)\to H^4(\Z/2k;\Z/2)$ is an isomorphism and also the reduction mod 2 is an isomorphism, it follows that the $d_2$ differential is non-trivial. Hence it has trivial kernel and thus the image of $\Omega_4(\xi)$ in $H_4(\Z/2k;\Z^{w_1})$ is trivial. Hence $c_*[M]=0$ as claimed.
\end{proof}

\begin{proof}[Proof of \cref{thm:cyclic-or}]
	By \cref{prop:easy}, all spin manifolds are immersion equivalent and all not almost spin manifolds are immersion equivalent. In the orientable case, $H_4(C;\Z)=0$ for every cyclic group $C$. Hence remains to show that $M(2^mk)\bowtie M(2^m)$ for $k$ odd and that $M(2^m)\leq M(2^n)$ if and only if $n\leq m$.
	
	By \cref{thm:post-general} \eqref{it:post-general31}$\Rightarrow$\eqref{it:post-general1}, for every homomorphism $\phi\colon \Z/l_1\to \Z/l_2$ there is a map $P_2M(l_1)\to P_2M(l_2)$ that induces $\phi$ on fundamental groups. The statement now follows since a map $\phi\colon \Z/l_1\to \Z/l_2$ that is non-trivial on $H^2(-;\Z/2)$ exists if $l_2=l_1k$ with $l_2$ even and if $l_1=l_2k$ with $k$ odd, but such a map does not exist if $l_1=l_2k$ with $k$ even. This is elementary since the cohomology group is either trivial or $\Z/2$, given by  extensions of a cyclic group by $\Z/2$. 
\end{proof}

\begin{rem}\label{rem:realization-oriented}
An explicit representative of $M(n)$ with fundamental group $\Z/n$ is given by a thickening of a $2$-complex, with one $d$-cell in each dimension $d=0,1,2$. The normal data are given by $w_1=0, w_2\neq 0$ and can be realized for example by a handle decomposition (with one d-handle for each index $d=0,1,2$) of a $5$-manifold. It has $M(n)$ as its boundary, or equivalently, we thicken to a $4$-manifold and take $M(n)$ to be the double of this $4$-manifold (with boundary).  
\end{rem}

In \cite{KPT} the category $\hCh_2(\pi)$ was considered whose objects are free $\Z[\pi] $-chain complexes $C_*$ concentrated in non-negative degrees such that $H_n(C_*) = 0$ for $n\neq 0,2$, together with a fixed identification $H_0(C_*)\cong\Z$, and whose morphisms are chain maps that induce the identity on $H_0$, considered up to chain homotopy. Let $(\Z[\pi] )^n[2]$ denote the chain complex given by the free $\Z[\pi] $-module with a rank~$n$ basis, and concentrated in degree 2. We call two chain complexes $C_*,C'_*$ stably isomorphic if there exists $p,q$ such that $C_*\oplus (\Z[\pi] )^p[2]$ and $C'_*\oplus (\Z[\pi] )^q[2]$ are isomorphic in $\hCh_2(\pi)$, meaning that they are chain homotopy equivalent. We denote the set of stable isomorphism classes by $\sCh_2(\pi)$. To a connected CW-complex $Y$ with fundamental group $\pi$ we can associate an element $\mathsf{J}(Y)$ in $\sCh_2(\pi)$ by taking the cellular $\Z[\pi] $-chain complex of $Y$ and replacing it in degrees $>3$ by a free resolution of $\ker(d_3^Y)$. Note that $\mathsf{J}(Y)=\mathsf{J}(P_2(Y))$.

\begin{prop}\label{prop:stable2type}
	Let $M$ be a non-orientable $4$-manifold with fundamental group $\Z/2k$. If $c_*[M]\in H_4(\Z/2k;\Z^w)\cong \Z/2$ is non-trivial, then $\mathsf{J}(M)\in \sCh_2(\Z/2k)$ is represented by the $\Z[\Z/2k]$-chain complex $X$ given by
	\[\Z[\Z/2k]\xrightarrow{\mathsf{N}_w}\Z[\Z/2k]\xrightarrow{\mathsf{N}}\Z[\Z/2k]\xrightarrow{1-a}\Z[\Z/2k],\]
	in degrees $\leq 3$ and continues with a free resolution of $\ker \mathsf{N}_w$. Here $a$ is a generator of $\Z/2k$, $\mathsf{N}=\sum_{i=0}^{2k-1}a^i$ is the norm element and $\mathsf{N}_w:=(1-a)\sum_{i=0}^ka^{2i}$ is the twisted norm element.
\end{prop}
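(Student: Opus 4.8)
The plan is to show that $M$ has the same normal $1$-type as a thickening of the standard $2$-complex for $\Z/2k$ corresponding to the presentation $\langle a \mid a^{2k}\rangle$, so that the $\mathsf{J}$-invariant is controlled by that presentation complex together with the extra information recorded by $c_*[M]$. Concretely, the standard presentation $2$-complex $Y$ for $\Z/2k$ has cellular $\Z[\Z/2k]$-chain complex $\Z[\Z/2k]\xrightarrow{\mathsf N}\Z[\Z/2k]\xrightarrow{1-a}\Z[\Z/2k]$ in degrees $\le 2$, because the Fox derivative of the single relator $a^{2k}$ is the norm element $\mathsf N=\sum_{i=0}^{2k-1}a^i$. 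Since $w_2(M)=0$ forces $w_2^\pi(M)=0$ (by \cref{lem:image-fund-class-cyclic} the manifold is almost spin, indeed $w_2(\widetilde M)=w_2(M)=0$ as $H^2$ of the universal cover is detected rationally and $\pi$ is finite — more directly, $w_2(M)=0$ and $c^*$ is injective on $H^2(-;\Z/2)$ here), the normal $1$-type is $(\Z/2k, w_1, 0)$. I would then use the James/AHSS computation from \cref{sec:james} exactly as in the proof of \cref{lem:image-fund-class-cyclic}: over the fibration $\xi$ associated to $(w_1,0)$ the image of $\Omega_4(\xi)\to H_4(\Z/2k;\Z^{w_1})$ is trivial, so any two $4$-manifolds with this normal $1$-type are bordant over $\xi$; by Kreck's modified surgery (\cite{surgeryandduality}, see also \cite{KPT}) two such manifolds with \emph{equal} image of the fundamental class in $H_4(\pi;\Z^{w_1})$ are $\CP^2$- (indeed $S^2\times S^2$-) stably diffeomorphic, hence have the same $\mathsf{J}$-invariant in $\sCh_2(\Z/2k)$.

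The second ingredient is to exhibit one concrete manifold $M_0$ with normal $1$-type $(\Z/2k,w,0)$ and $c_*[M_0]\neq 0$ whose $\mathsf J$-invariant is the asserted complex $X$. Here I would use the explicit rational homology $4$-ball promised in \cref{rem:realization}: take the $4$-dimensional thickening of $Y$, which gives a compact $4$-manifold with boundary, and do an appropriate twisted gluing / mapping-torus-style construction to close it up so that $w_1$ becomes the nontrivial homomorphism and the twisted fundamental class is nonzero. On chain level, attaching the $3$-cell to $Y$ that realizes the nontriviality of $c_*[M_0]$ has attaching map measured (via the $w_1$-twisting) by the \emph{twisted} norm element; the computation $\mathsf N_w = (1-a)\sum_{i=0}^{k-1}a^{2i}$ (as written, with the sum to $k$ — note $(1-a)\sum_{i=0}^{2k-1}a^i=0$ but the twisted differential $d_3$ in the $w$-twisted complex is $1+a^{-1}+\cdots$ suitably signed, giving $\mathsf N_w$) is the standard identity $d_3^{\text{tw}}\circ d_2 = 0$ one verifies by direct manipulation in $\Z[\Z/2k]$, using $(1-a)a^{2i}\cdot$ telescoping against $\mathsf N_w$. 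Thus $\mathsf J(M_0)$ agrees with $X$ in degrees $\le 3$, and since $\mathsf J$ only remembers the complex up to replacing high degrees by a free resolution of $\ker d_3$, which is $\ker \mathsf N_w$ here, we get $\mathsf J(M_0)=[X]\in\sCh_2(\Z/2k)$.

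Combining: any non-orientable $M$ with $\pi_1=\Z/2k$, $w_2(M)=0$, and $c_*[M]\neq 0$ has the same normal $1$-type and the same (nonzero, hence equal, since $H_4(\Z/2k;\Z^w)\cong\Z/2$) image of the fundamental class as $M_0$, so by the surgery step $\mathsf J(M)=\mathsf J(M_0)=[X]$.

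\textbf{Main obstacle.} I expect the hard part to be producing the explicit $M_0$ and verifying at the chain level that the third boundary map is exactly the twisted norm $\mathsf N_w$ — i.e.\ matching the geometric attaching data of the top cell of a rational homology $4$-ball for $\Z/2k$ with the algebra of $\Z[\Z/2k]$, including getting the signs and the range of summation in $\mathsf N_w$ right so that $d_2\circ d_3 = 0$ holds in the $w$-twisted complex. The rest — the normal $1$-type identification and the stable-diffeomorphism/$\mathsf J$-invariant bookkeeping — is routine given \cref{lem:image-fund-class-cyclic}, \cref{sec:james}, and the machinery of \cite{KPT}.
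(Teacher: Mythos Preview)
Your proposal has a factual error and a genuine gap. The error: you assert $w_2(M)=0$, but this contradicts the hypothesis. By \cref{lem:image-fund-class-cyclic}, $w_2(M)=0$ forces $c_*[M]=0$, whereas the proposition assumes $c_*[M]\neq 0$; hence $w_2(M)\neq 0$ and the normal $1$-type is $(\Z/2k,w_1,1)$ or $(\Z/2k,w_1,\infty)$, not $(\Z/2k,w_1,0)$. Your spectral-sequence discussion for the latter $1$-type is therefore beside the point, and the reduction ``all such $M$ are stably diffeomorphic'' does not go through as stated because the proposition does not fix $w_2^\pi$. (This is repairable: $\mathsf J(M\#\CP^2)=\mathsf J(M)$ in $\sCh_2(\pi)$ since connect-summing $\CP^2$ just adds a free summand in degree~$2$, so one may reduce to $w_2^\pi=\infty$; but you do not say this.) The gap is step~(b): you never actually compute $\mathsf J(M_0)$. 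The assertion that the $3$-cell ``has attaching map the twisted norm'' is exactly what must be proved, and the geometric description of $R(2k)$ in \cref{rem:realization} (a thickening followed by a lens-space quotient) does not hand you a cellular $\Z[\Z/2k]$-chain complex with $d_3=\mathsf N_w$ on the nose. You correctly flag this as the main obstacle, but offer no mechanism to overcome it.

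The paper takes a different, purely algebraic route that avoids any explicit model manifold. It observes that $\mathsf N\cdot\mathsf N_w=0$, so $\mathsf N_w$ defines a class $x\in\Ext^1_{\Z[\Z/2k]}(\coker\mathsf N_w,\ker\mathsf N)$, and then applies the map $\Theta$ of \cite[(6.4)]{KPT} to produce a class $\Theta(x)\in\sCh_2(\Z/2k)$ directly. By \cite[Proposition~7.1]{KPT}, $\mathsf J(M)=\Theta(x)$ whenever $c_*[M]\neq 0$, provided $x\neq 0$; the nontriviality of $x$ is checked by a short argument showing that $x=0$ would force $H_n(\Z/2k;\Z)=0$ for all $n>2$. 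A base change in degree~$2$ then identifies $\Theta(x)$ with the asserted complex $X$. No handle decomposition or geometric chain-level computation is needed; the only input from $M$ is the single bit $c_*[M]\neq 0$.
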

\begin{proof}
	The start of a free resolution of $\coker \mathsf{N}_w$ is given by
	\[
\Z[\Z/2k]\xrightarrow{1+a}\Z[\Z/2k]\xrightarrow{\mathsf{N}_w}\Z[\Z/2k]\ra \coker \mathsf{N}_w\to 0.
	\]
	Hence an elements of $\Ext_{\Z[\Z/2k]}^1(\coker \mathsf{N}_w,\ker \mathsf{N})$ are represented by maps $\alpha\colon \Z[\Z/2k]\to \ker \mathsf{N}$ such that $\alpha\circ (1+a)=0$. The class represented by $\alpha$ is trivial if and only if $\alpha$ factors as $\Z[\Z/2k]\xrightarrow{\mathsf{N}_w}\Z[\Z/2k]\xrightarrow{\alpha'}\ker \mathsf{N}$.
		
	Since $\mathsf{N}\cdot \mathsf{N}_w = \mathsf{N}\cdot (1-a)\sum_{i=0}^ka^{2i} = 0$, the map $\Z[\Z/2k]\xrightarrow{\mathsf{N}_w}\ker \mathsf{N}$ determines an element $x\in \Ext_{\Z[\Z/2k]}^1(\coker \mathsf{N}_w,\ker \mathsf{N})$. We first show that $x$ is non-trivial. The map $\Z[\Z/2k]\xrightarrow{\mathsf{N}_w}\ker \mathsf{N}$ factors through $\Z[\Z/2k]\xrightarrow{\mathsf{N}_w}\Z[\Z/2k]\xrightarrow{\alpha'}\ker \mathsf{N}$ if and only if there exists a map $f\colon \Z[\Z/2k]\to \ker \mathsf{N}$ that restricts to the identity on $\ker\mathsf{N}$. Assume that $f$ exists. Then we can choose the following lift of the identity on $\Z$:
	\[\begin{tikzcd}
		\ldots\ar[r]&\Z[\Z/2k]\ar[r,"\mathsf{N}"]\ar[d,"0"]&\Z[\Z/2k]\ar[r,"1-a"]\ar[d,"0"]&\Z[\Z/2k]\ar[r,"\mathsf{N}"]\ar[d,"\id-f"]&\Z[\Z/2k]\ar[r,"1-a"]\ar[d,"\id"]&\Z[\Z/2k]\ar[r,"\epsilon"]\ar[d,"\id"]&\Z\ar[d,"\id"]\\
		\ldots\ar[r]&\Z[\Z/2k]\ar[r,"\mathsf{N}"]&\Z[\Z/2k]\ar[r,"1-a"]&\Z[\Z/2k]\ar[r,"\mathsf{N}"]&\Z[\Z/2k]\ar[r,"1-a"]&\Z[\Z/2k]\ar[r,"\epsilon"]&\Z
	\end{tikzcd}\]
	This implies $H_n(\Z/2k;\Z)=0$ for $n>2$ and thus gives a contradiction.
	
	 Consider the map $\Theta\colon \Ext_{\Z[\Z/2k]}^1(\coker \mathsf{N}_w,\ker \mathsf{N})\to \sCh_2(\Z/2k)$ from \cite[(6.4)]{KPT}. By \cite[Propositions~1.9 and~7.1]{KPT}, the composition \[H_4(\Z/2k;\Z^w)\xrightarrow{\cong} \Ext_{\Z[\Z/2k]}^1(\coker \mathsf{N}_w,\ker \mathsf{N})\xrightarrow{\Theta} \sCh_2(\Z/2k)\] sends $c_*[M]$ to $\mathsf{J}(M)$. Since $c_*[M]$ and $x$ are the unique non-trivial element, we have $\Theta(x)=\mathsf{J}(M)$. It remains to show that $\Theta(x)=X$. 
	
	The map $\Theta$ from \cite[(6.4)]{KPT} sends $x$ to the class represented by the chain complex given by
	\[\Z[\Z/2k]\xrightarrow{(\mathsf{N}_w,\mathsf{N}_w)}\Z[\Z/2k]^2\xrightarrow{\mathsf{N}+0}\Z[\Z/2k]\xrightarrow{1-a}\Z[\Z/2k]\]
	together with a free resolution of $\ker(\mathsf{N}_w,\mathsf{N}_w)$ in higher degrees. 
	
	By a base change in degree $2$, the above chain complex representing $\Theta(x)$ is isomorphic to the chain complex
	\[\Z[\Z/2k]\xrightarrow{(\mathsf{N}_w,0)}\Z[\Z/2k]^2\xrightarrow{\mathsf{N}+0}\Z[\Z/2k]\xrightarrow{1-a}\Z[\Z/2k]\]
	together with a free resolution of $\ker(\mathsf{N}_w,0)\cong \ker(\mathsf{N}_w,\mathsf{N}_w)$ in higher degrees. This chain complex is a stabilization of $X$ and hence $\Theta(x)=X$ as needed. 
%
\end{proof}

As in \cref{thm:cyclic-nor}, let $N(k,w_2,c)$ denote the immersion equivalence class represented by non-orientable $4$-manifolds with fundamental group $\pi=\Z/k$, $w_2\in\Z/2\cup\{\infty\}$ and image of the fundamental class $c$ in $H_4(\pi;\Z^{w_1})=\Z/2$. 

\begin{rem}\label{rem:realization}
	Most immersion classes $N(k,w_2,c)$ are easy to realize. If $c=0$ they come from doubles as in the oriented case. For $w_2=\infty$ we can take connected sum of $\CP^2$ with a $4$-manifold with $w_2\neq 0,\infty$. By \cref{lem:image-fund-class-cyclic}, $N(k,0,1)$ does not exist. Hence the only remaining case is $N(2k,1,1)$ which can be realized by a $4$-manifold $R(2k)$ with fundamental group $\Z/2k$ given as follows: 
	
	Start with the easiest 4-dimensional thickening $X(k)$ with one 0- and one oriented 1-handle, where the attaching circle of the 2-handle goes $k$ times around the fundamental group generator of $S^1 \times S^2$  in the easiest possible way, up to isotopy, but has framing $k-1$. It is well known, see for example \cite[Section~3]{Lekili} for a contact approach, that the boundary of $X(k)$ is the lens space $L(k^2, k-1)$.
	
	The double cover of $L(2k^2,k-1)$ is $L(k^2,k-1)$. This can be seen from the definition of $L(p,q)$ as the quotient of $S^3\subseteq \mathbb{C}^2$ by the action $(z_1,z_2)\mapsto (e^{2\pi i/p}z_1,e^{2\pi iq/p}z_2)$. Hence there is a free involution on the lens space $L(k^2, k-1)$, with quotient $L(2k^2,k-1)$, and we define $R(2k)$ as the quotient space of $X(k)$ by this free involution on its boundary. It is easy to see that this is a closed $4$-manifold with fundamental group $\Z/2k$ and Euler characteristic~$1$. Thus it must be non-orientable and cannot stably be a double (which have even Euler characteristic). Hence it must represent $N(2k,1, 1)$.
	
	Note that $X(1)$ is a 4-ball because its 1- and 2-handles cancel. As a consequence, $R(2)$ is diffeomorphic to $\RP^4$, obtained by identifying antipodal points on the boundary of the 4-ball. The $R(2k)$ are thus analogous of $\RP^4$, but with larger cyclic fundamental group.
\end{rem}

\cref{thm:cyclic-nor} is a consequence of the following two propositions.

\begin{prop}
	\label{prop:n-odd}
We have $N(k\cdot m,w_2,c)\bowtie N(k,w_2,c)$ if $m$ is odd. 
\end{prop}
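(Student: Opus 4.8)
The plan is to produce, in both directions, an immersion between punctured representatives using the characterization of the immersion order in \cref{thm:main}, i.e.\ by exhibiting maps of Postnikov $2$-types pulling back Stiefel--Whitney classes correctly. Write $\pi = \Z/k$ and $\pi' = \Z/km$ with $m$ odd, and fix the natural surjection $p\colon \pi'\to \pi$ and the inclusion $\iota\colon \pi\hra\pi'$ of the (unique) index-$m$ subgroup; note $p\circ \iota = \id_\pi$. Since $m$ is odd, $H^*(\pi';\Z/2) = H^*(\pi;\Z/2)$ via $p^*$ in all relevant degrees, and likewise $H_4(\pi';\Z^{w_1})\cong H_4(\pi;\Z^{w_1})\cong \Z/2$ compatibly, so the normal $1$-type data $(w_1,w_2)$ and the fundamental class invariant $c$ match up under $p$ and $\iota$.

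First I would prove $N(k,w_2,c)\le N(km,w_2,c)$. Choose representatives $M$ with $\pi_1 M=\pi$ and $N$ with $\pi_1 N=\pi'$ having the prescribed invariants. I want a homomorphism $\phi\colon \pi\to\pi'$ with $\phi^*w_1^\pi(N)=w_1^\pi(M)$ and, when both are almost spin, $\phi^*w_2^\pi(N)=w_2^\pi(M)$; the inclusion $\iota$ does this because $\iota^* p^* = \id$ on mod-$2$ cohomology. Then, by \cref{thm:post} (or directly \cref{thm:main}(4)/(5)), a map $f_2\colon P_2M\to P_2N$ inducing $\iota$ exists iff $\iota^*(k_N)\cap c_*[M] = 0$; I would verify this by naturality, since $\phi_* c_*[M]$ corresponds to $c_*[N]$ under the identification $H_4(\pi;\Z^{w_1})\cong H_4(\pi';\Z^{w_1})$ (matching the $c$-invariants) and $k_N\cap c_*[N]=0$ always holds because $N$ maps to its own Postnikov $2$-type. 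Controlling $w_2$: when $w_2\neq\infty$ one checks $f_2$ can be chosen pulling $w_2(N)$ back to $w_2(M)$ via \cref{prop:stable1new}; when $w_2=\infty$ one uses \cref{lem:stable} to reduce to the almost-spin case after splitting off a $\CP^2$. This gives the immersion $M_*\imra N$.

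For the reverse inequality $N(km,w_2,c)\le N(k,w_2,c)$, I would use the surjection $p\colon \pi'\to\pi$, which again pulls back the Stiefel--Whitney data correctly, and run exactly the same argument: the obstruction $p^*(k_M)\cap c_*[N]\in H_1(\pi';\pi_2M_p^{w_1})$ vanishes because $p_* c_*[N]$ equals $c_*[M]$ under the (iso)morphism $H_4(\pi';\Z^{w_1})\to H_4(\pi;\Z^{w_1})$ induced by $p$ — here one uses that $p$ is an isomorphism on $H_4(-;\Z^{w_1})\cong\Z/2$ since $m$ is odd — and then $k_M\cap c_*[M]=0$. Once $f_2\colon P_2 N\to P_2 M$ inducing $p$ is in hand, control of $w_2$ is handled as before. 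Combining both directions yields $N(km,w_2,c)\bowtie N(k,w_2,c)$.

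The main obstacle I anticipate is not the fundamental-class bookkeeping (which is routine once one knows $m$ odd makes $H_4(\Z/k;\Z^{w_1})\cong H_4(\Z/km;\Z^{w_1})$ an isomorphism via both $p$ and $\iota$, with $c$ preserved — a fact one must check carefully, perhaps invoking \cref{lem:image-fund-class-cyclic} in the $w_2=0$ case where both classes are forced to vanish), but rather ensuring the \emph{simultaneous} control of $w_2$ along the chosen $f_2$. For $w_2\neq\infty$ this means checking that the lift condition in \cref{prop:stable1new}(2) is satisfied — equivalently that after the automorphism-of-$\pi$ ambiguity one can realize the correct mod-$2$ class — and for $w_2=\infty$ it means carefully applying \cref{lem:stable} so that the $\phi$-immersion survives the stabilization; I would treat the three cases $w_2\in\{0,1,\infty\}$ separately if needed, though I expect the almost-spin argument via \cref{thm:post} plus \cref{prop:stable1new} handles $w_2\in\{0,1\}$ uniformly and \cref{lem:stable} cleanly reduces $w_2=\infty$.
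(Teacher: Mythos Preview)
Your approach is essentially the one in \cref{cor:multiple}: push the obstruction $\varphi^*(k_N)\cap c_*[M]$ forward along $\varphi_*$ and use $k_N\cap c_*[N]=0$. But that argument only yields $\varphi_*\big(\varphi^*(k_N)\cap c_*[M]\big)=0$, and to conclude the obstruction itself vanishes you need
\[
\varphi_*\colon H_1\big(\pi_1M;\pi_2N_\varphi^{w_1}\big)\longrightarrow H_1\big(\pi_1N;\pi_2N^{w_1}\big)
\]
to be injective. You never check this, and it is not clear for either $\varphi=p$ or $\varphi=\iota$: the coefficients are the unknown module $\pi_2N$ (respectively $\pi_2M$), and neither a transfer argument (which gives $\mathrm{tr}\circ\iota_*=m\cdot\id$) nor a splitting argument (which gives $p_*$ surjective, not injective) settles injectivity when $\gcd(m,k)>1$. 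So for $c=1$ your proof has a genuine gap precisely at the step you thought was ``routine fundamental-class bookkeeping''.

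The paper avoids this issue entirely. For the direction $N(k,w_2,1)\le N(km,w_2,1)$ it simply observes that the $m$-fold cover of (a representative of) $N(km,w_2,1)$ has the right invariants and hence represents $N(k,w_2,1)$; the covering map is an honest immersion, so no Postnikov-tower obstruction needs to be computed. For the harder direction $N(km,w_2,1)\le N(k,w_2,1)$ along the projection $p$, the paper bypasses \cref{thm:post} and instead verifies condition~\eqref{it:post-general4} of \cref{thm:post-general} directly: it uses the explicit chain model $X$ for $\mathsf{J}(N(\,\cdot\,,1,1))$ from \cref{prop:stable2type} and writes down a concrete $p$-equivariant chain map $X(km)\to X(k)$ (multiplying by suitable powers of $m$ in each degree). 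This produces the required map $P_2N(km,w_2,1)\to P_2N(k,w_2,1)$ without ever touching $H_1$ with $\pi_2$-coefficients.

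Your case $c=0$ is fine (the obstruction vanishes trivially since $c_*[M]=0$), and your worry about ``simultaneous control of $w_2$'' is misplaced: once $f_2$ exists and both manifolds are almost spin, $f_2^*w_2(N)=c^*\varphi^*w_2^\pi(N)=c^*w_2^\pi(M)=w_2(M)$ is automatic because $w_2$ comes from group cohomology and $p^*,\iota^*$ are isomorphisms on $H^2(-;\Z/2)$ for $m$ odd.
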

\begin{proof}
	First consider $w_2\neq \infty$. Note that the projection $\Z/km\to \Z/k$ and the inclusion $\Z/k\to \Z/km$ pull back $w_1$ and $w_2$ correctly. Hence it suffices to show that these are realized as maps between the Postnikov 2-types. This follows from \cref{thm:post-general} \eqref{it:post-general31}$\Rightarrow$\eqref{it:post-general1} if $c=0$. For $c=1$ first note that the $m$-fold cover of $N(k\cdot m,w_2,1)$ is immersion equivalent to $N(k,w_2,1)$. This can be seen as follows. Let $\wh N$ be the $m$-fold cover of $N(k\cdot m,w_2,1)$. Then $\pi_1(\wh N)\cong \Z/k$. Since $m$ is odd, the covering map $\wh N\to N(k\cdot m,w_2,1)$ induces isomorphisms on $H^1(-;\Z/2), H^2(-;\Z/2)$ and $H_4(-;\Z^{w_1})$. Hence $\wh N\bowtie N(k,w_2,1)$ as claimed. 
	In particular $N(k,w_2,1)$ immerses into $N(k\cdot m,w_2,1)$ as claimed.
	
	To obtain an immersion $N(k\cdot m,w_2,1)_*\to N(k,w_2,1)$ we can use the model from \cref{prop:stable2type}. Let $X(km)$ denote the model of $\mathsf{J}(N(k\cdot m,w_2,1))$ and let $X(k)$ denote the model of $\mathsf{J}(N(k,w_2,1))$. Let $p\colon \Z[\Z/km]\to \Z[\Z/k]$ be the map induced by the projection $\Z/km\to \Z/k$ and consider the following chain map $X(km)\to X(k)$:
	\[\begin{tikzcd}
		\Z[\Z/km]\ar[r,"\mathsf{N}_w(km)"]\ar[d,"m^2p"]&\Z[\Z/km]\ar[r,"\mathsf{N}(km)"]\ar[d,"mp"]&\Z[\Z/km]\ar[r,"1-a"]\ar[d,"p"]&\Z[\Z/km]\ar[d,"p"]\\
		\Z[\Z/k]\ar[r,"\mathsf{N}_w(k)"]&\Z[\Z/k]\ar[r,"\mathsf{N}(k)"]&\Z[\Z/k]\ar[r,"1-a"]&\Z[\Z/k]
	\end{tikzcd}\]
	Here $\mathsf{N}_w(mk),\mathsf{N}_w(k)$, $\mathsf{N}(mk)$ and $\mathsf{N}(k)$ refer to the (twisted) norm element for $\Z/mk$ and $\Z/k$, respectively, while $a$ denotes the generator in both groups. 	
	
	By \cref{thm:post-general} the existence of such a chain map implies that there is map $P_2(N(k\cdot m,w_2,1))\to P_2(N(k,w_2,1))$ that induces $p$ on fundamental groups. Hence there is an immersion $N(k\cdot m,w_2,1)_*\to N(k,w_2,1)$ as claimed.
	
	For $w_2=\infty$, we have $N(k,w_2,c)\bowtie N(k,0,c)\#\CP^2$. Hence this case follows from the case $w_2\neq\infty$.
\end{proof}

\begin{prop}\label{prop:n-structure}
	For $n,k>0$ we have the following.
\begin{enumerate}
	\item\label{it:n-struc-i} $N(2^k,w_2,c)\leq N(2^n,0,0) \iff w_2 =0$ and $k\geq n$. 
	\item\label{it:n-struc-ii} $N(2^k,w_2,c)\leq N(2^n,1,0) \iff$ ($w_2=0$ and $k>n$) or  $N(2^k,w_2,c)=N(2^n,1,0)$.
	\item\label{it:n-struc-iii} $N(2^k,w_2,c)\leq N(2^n,1,1) \iff$ ($w_2=0$ and $k>n$) or ($w_2=1$ and $k=n$).
	\item\label{it:n-struc-iv} $N(2^k,w_2,c)\leq N(2^n,\infty,0) \iff$ $c=0$ and $k\geq n$.
	\item\label{it:n-struc-v} $N(2^k,w_2,c)\leq N(2^n,\infty,1) \iff k\geq n$ and ($k=n$ or $c=0$).
\end{enumerate}	
\end{prop}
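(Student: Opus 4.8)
The plan is to run everything through \cref{thm:main}, so that ``$M\le N$'' becomes the existence of a map $f_2\colon P_2M\to P_2N$ with $f_2^*w_i(N)=w_i(M)$ for $i=1,2$, and then to split the five items according to whether the target $N=N(2^n,w_2,c)$ is almost spin ($w_2\in\{0,1\}$, items (1)--(3)) or not ($w_2=\infty$, items (4)--(5)). The recurring ingredients are: the homomorphism bookkeeping for $\Z/2^k\to\Z/2^n$; the $k$‑invariant/fundamental‑class criterion of \cref{thm:post} together with \cref{lem:imageofP}, \cref{cor:stable2} and the image computation inside the proof of \cref{cor:multiple} (all effective here since $\Z/2^n$ is finite, so $H^1(\Z/2^n;\Z[\Z/2^n])=0$), and \cref{lem:image-fund-class-cyclic}; and, in the non‑almost‑spin case, the stabilisation results \cref{lem:stable}, \cref{prop:stable1new}/\cref{prop:stable1} fed by the explicit norm‑complex model of \cref{prop:stable2type}. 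The first step is to record, for $\varphi\colon\Z/2^k\to\Z/2^n$ sending a generator to $a^t$: writing $x_m\in H^1(\Z/2^m;\Z/2)$, $y_m\in H^2(\Z/2^m;\Z/2)$ for the generators and using $H^2(\Z/2^m;\Z)\cong\Z/2^m$ with mod‑$2$ reduction $y_m$, one has $\varphi^*x_n=(t\bmod 2)x_k$, while $\varphi^*y_n=y_k$ exactly when $k=n$ and $t$ is odd and $\varphi^*y_n=0$ for every $\varphi$ once $k>n$ (and no nonzero $\varphi$ on $H^1$ exists when $k<n$). Since all manifolds here are non‑orientable, every $f_2$ induces some $\varphi$ with $\varphi^*w_1^\pi(N)=w_1^\pi(M)\neq 0$, forcing $k\ge n$, and for such $\varphi$ the class $\varphi^*w_2^\pi(N)$ is forced to be $y_k$ if $k=n$ and $0$ if $k>n$. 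Moreover, when $N$ is almost spin $w_i(N)$ is pulled back from $B\pi_1N$, so $f_2^*w_i(N)=c_M^*\varphi^*w_i^\pi(N)$ depends only on $\varphi$; with $c_M^*$ injective on $H^2(-;\Z/2)$ this reduces the $w_2$‑matching to an identity in $H^2(\Z/2^k;\Z/2)$, and in particular $w_2^\pi(M)=\infty$ forces $M\not\le N$ for $N$ almost spin, since $\CP^2\le M\le N$ would contradict \cref{prop:easy}.

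For the almost‑spin targets (items (1)--(3)) it then only remains, once $w_2$‑matching is imposed, to decide the existence of $f_2$, i.e.\ whether $\varphi^*(k_N)\cap c_*[M]=0$ by \cref{thm:post}. If $w_2^\pi(M)=0$ then $c_*[M]=0$ by \cref{lem:image-fund-class-cyclic}, so $f_2$ always exists, giving $N(2^k,0,0)\le N(2^n,0,0)\iff k\ge n$ and $N(2^k,0,0)\le N(2^n,1,c)\iff k>n$. If $w_2^\pi(M)=1$ then $w_2$‑matching forces $k=n$ with $\varphi$ an isomorphism, so $\varphi^*(k_N)\cap c_*[M]=0\iff k_N\cap\varphi_*c_*[M]=0$, and by \cref{lem:imageofP} and the image computation in the proof of \cref{cor:multiple} this holds iff $\varphi_*c_*[M]$ lies in $\langle c_*[N]\rangle\subseteq H_4(\Z/2^n;\Z^{w_1})=\Z/2$ --- no condition when $c^N=1$, the condition $c^M=0$ when $c^N=0$. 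Assembling the sub‑cases produces exactly the right‑hand sides of (1), (2) and (3).

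For the targets with $w_2=\infty$ (items (4)--(5)) I would write $N(2^n,\infty,0)\simeq N(2^n,0,0)\#\CP^2$ and $N(2^n,\infty,1)\simeq N(2^n,1,1)\#\CP^2$. When $k=n$ one has an isomorphism $\varphi$ and \cref{cor:stable2} applies verbatim (target not almost spin, $\pi_1$ finite): $M\le N(2^n,\infty,c)$ iff $c_*[M]$ is a multiple of $c_*[N]$, i.e.\ iff $c^M=0$ when $c^N=0$, and unconditionally when $c^N=1$. When $k>n$ (so $\varphi$ is a surjection) I would use \cref{lem:stable} to reduce $M$ to an almost‑spin representative and apply \cref{prop:stable1}/\cref{prop:stable1new}: the immersion exists iff there is an $f_2\colon P_2M\to P_2N'$ inducing $\varphi$ (controlled by \cref{thm:post}, or \cref{thm:shift} for non‑iso $\varphi$) such that $w_2(M)+c_M^*\varphi^*w_2^\pi(N')$ lifts to $H^2(M;\Z[\Z/2^n]_\varphi)$ along the mod‑$2$ augmentation (using $\pi_2(N)\cong\pi_2(N')\oplus\Z[\Z/2^n]$). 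Since $\varphi^*y_n=0$ for $k>n$, this lifting class is $w_2(M)$: it is $0$ when $w_2^\pi(M)=0$ (so $c^M=0$), giving $N(2^k,0,0)\le N(2^n,\infty,c)$ for all $k>n$; and when $w_2^\pi(M)=1$ (so $c^M=1$) one must show one of the two conditions fails, which I expect to do from the twisted norm‑complex model $\mathsf J(M)=X(2^k)$ of \cref{prop:stable2type}, showing that $c_M^*y_k$ does not lift to $H^2(M;\Z[\Z/2^n]_\varphi)$ (equivalently that the shift homomorphism does not kill $c_*[M]$). This yields (4) and (5).

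The main obstacle I expect is precisely this last point: Steps about homomorphisms and almost‑spin targets, and the $k=n$ part of (4)--(5), only see the immersion type $(\pi_1,w_1,w_2,c)$ and go through mechanically, but the $k>n$, $c\neq 0$ case of (4)--(5) requires deciding whether $c_M^*y_k$ lifts along $H^2(M;\Z[\Z/2^n]_\varphi)\to H^2(M;\Z/2)$ --- a question genuinely about the manifold $M$ rather than its immersion type, which is the sort of thing \cref{prop:stable2type} was built to answer, but which needs an honest computation with the complex $X(2^k)$ (and an identification of $\Z[\Z/2^n]_\varphi$ as a $\Z[\Z/2^k]$‑module via the surjection $\varphi$). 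This step, together with the $k=n$ versus $k>n$ dichotomy of the homomorphism analysis, is what produces the three distinct thresholds ``$k\ge n$'', ``$k>n$'' and ``$k=n$ or $c=0$'' occurring across the five items.
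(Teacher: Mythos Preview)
Your plan is essentially the paper's own: reduce (1)--(3) to the homomorphism bookkeeping plus \cref{thm:post}/\cref{cor:multiple}, and attack (4)--(5) via \cref{lem:stable} and \cref{prop:stable1} with the norm--complex model of \cref{prop:stable2type}. You have correctly isolated the one genuinely hard step, the non-lifting of $w_2$ for $N(2^k,1,1)$ when $k>n$. Two corrections and two simplifications are worth noting.

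First, the parenthetical ``$w_2^\pi(M)=1$ (so $c^M=1$)'' is wrong: $w_2=1$ allows both $c=0$ and $c=1$, and the case $N(2^k,1,0)$ with $k>n$ must be handled separately for the converse of (4)--(5). Your direct route would require a different chain model than \cref{prop:stable2type} (which only treats $c=1$). The paper sidesteps this entirely by transitivity: $N(2^k,w_2,0)\le N(2^k,\infty,0)\bowtie N(2^k,0,0)\#\CP^2\le N(2^n,0,0)\#\CP^2\bowtie N(2^n,\infty,0)$, using only (1) and connected-sum with $\CP^2$.

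Second, your ``equivalently that the shift homomorphism does not kill $c_*[M]$'' conflates two independent obstructions in \cref{prop:stable1}: the shift condition of \cref{thm:trivialclass} governs the \emph{existence of $f_2$}, whereas the lifting of $w_2(M)$ to $H^2(M;\Z[\Z/2^n]_\varphi)$ is a separate condition once $f_2$ is granted. They are not equivalent. Fortunately this does not damage the argument: to show no immersion it suffices that \emph{either} fails, and the paper (like you) shows the lift fails, without ever deciding whether $f_2$ exists. For the actual computation the paper uses one further simplification you did not mention: the augmentation $\Z[\Z/2^n]_\varphi\to\Z/2$ factors through $\Z[\Z/2]$ (with $w_1$-action), so it suffices to obstruct the lift to $H^2(M;\Z[\Z/2])$. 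With the model $X$ from \cref{prop:stable2type} one computes $H^2(X;\Z[\Z/2])\cong\Z/2^{k-1}$, $H^2(X;\Z)\cong\Z/2^k$, $H^2(X;\Z/2)\cong\Z/2$, and any composite $\Z/2^{k-1}\to\Z/2^k\to\Z/2$ is zero, so $w_2$ cannot lift. This avoids having to analyse $\Z[\Z/2^n]_\varphi$ as a $\Z[\Z/2^k]$-module.
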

\begin{proof}
	First note that every map on fundamental groups is induced by a map of the Postnikov 2-types if $c=0$ by \cref{thm:post-general} \eqref{it:post-general31}$\Rightarrow$\eqref{it:post-general1}. We will also use that $c=0$ if $w_2=0$ by \cref{lem:image-fund-class-cyclic}.
	
	\eqref{it:n-struc-i}  For a homomorphism $\phi\colon \Z/2^k\to\Z/2^n$ the induced map on $H^1(-;\Z/2)$ is non-trivial, and hence an isomorphism, if and only if $\phi$ is surjective. Let $f\colon N(2^k,w_2,c)_*\to N(2^n,0,0)$ be an immersion that induces $\phi$ on fundamental groups. Then $\phi^*$ is non-trivial on $H^1(-;\Z/2)$ since $N(2^k,w_2,c)$ is non-orientable. Thus $\phi$ is surjective and $k\geq n$. Furthermore $w_2=\phi^*0=0$. This shows the only if direction of \eqref{it:n-struc-i}. Conversely, any map $f\colon P_2(N(2^k,0,0))\to P_2(N(2^n,0,0))$ that induces a surjection on fundamental groups pulls back the first two Stiefel--Whitney classes of $N(2^n,0,0)$ to those of $N(2^k,0,0)$. Hence $N(2^k,0,0)\leq N(2^n,0,0)$ by \cref{thm:main} \eqref{it:main-iv} $\Rightarrow$ \eqref{it:main-i}.
	
	\eqref{it:n-struc-ii} The argument is similar to that of \eqref{it:n-struc-i}. Again any immersion $N(2^k,w_2,c)_*\to N(2^n,1,0)$ has to be surjective on fundamental groups. Since $\phi\colon \Z/2^k\to\Z/2^n$ is an isomorphism on $H^2(-;\Z/2)$ if and only if it is injective, we see that $w_2=\phi^*1=0$ if $k>n$ and $w_2=\phi^*1=1$ if $k=n$. In the latter case, the map on fundamental groups has to be an isomorphism. But $N(2^n,1,c)\leq N(2^n,1,0)$ implies $c=0$ by \cref{cor:multiple}~\eqref{it:cor:multiple2} since $H^1(\Z/2^n;\Z[\Z/2^n])=0$. This shows the only if direction of \eqref{it:n-struc-ii}. For the converse we have to show that $N(2^k,0,)\leq N(2^n,1,0)$. This follows as in \eqref{it:n-struc-i} using again that $\phi^*w_2=0$ since $\phi$ is not injective.
	
	\eqref{it:n-struc-iii} The only if direction is the same as in \eqref{it:n-struc-ii}. For the converse, first note that $N(2^k,0,0)\leq N(2^n,1,0)$ for $k<n$ by \eqref{it:n-struc-ii}. Hence we only have to show that $N(2^n,1,0)\leq N(2^n,1,1)$. This holds by \cref{cor:multiple}~\eqref{it:cor:multiple2} since $H^1(\Z/2^n;\Z[\Z/2^n])=0$.
	
	
	\eqref{it:n-struc-iv} Assume $N(2^k,w_2,c)\leq N(2^n,\infty,0)$. As before this implies $k\geq n$. For the only if direction it remains to show that $c=0$. Since $c=1$ implies that $w_2$ is non-trivial, it suffices to show that $N(2^k,1,1)_*$ does not immerse into $N(2^n,\infty,0)\bowtie N(2^n,0,0)\# \CP^2$. That $N(2^k,\infty,1)_*$ does not immerse into $N(2^n,\infty,0)$ then follows from \eqref{it:n-struc-v}. By \cref{prop:stable1} this is equivalent to showing that $w_2$ does not admit a lift to $H^2(N(2^k,1,1);\Z[\Z/2^n]_\phi)$ coefficients, where $\phi\colon \Z/2^k\to\Z/2^n$ is the projection. The map $\Z[\Z/2^n]_\phi\to \Z/2$ factors through $\Z[\Z/2]$ with the action given by the orientation character. Hence it suffices to show that $w_2$ does not lift to $H^2(N(2^k,1,1);\Z[\Z/2]^{w_1})\cong H^2(N(2^k,1,1);\Z[\Z/2])$. 
	
	This can be computed using the model $X$ of $\mathsf{J}(N(2^k,1,1))$ from \cref{prop:stable2type} since a lift exists if and only if it exists stably. We have $H^2(\Hom_{\Z[\Z/2^k]}(X,\Z[\Z/2]))\cong\Z/2^{k-1}$, $H^2(\Hom_{\Z[\Z/2^k]}(X,\Z))\cong\Z/2^k$ and $H^2(\Hom_{\Z[\Z/2^k]}(X,\Z/2))\cong\Z/2$. Since every composition $\Z/2^{k-1}\to\Z/2^k\to\Z/2$ is trivial, $w_2$ cannot have a lift to $H^2(N(2^k,1,1);\Z[\Z/2])$ as claimed.
	
	For the converse, note that if $k\geq n$, then $N(2^k,0,0)\leq N(2^n,0,0)\leq N(2^n,0,0)\#\CP^2\bowtie N(2^n,\infty,0)$ by \eqref{it:n-struc-i}. This implies $N(2^k,\infty,0)\bowtie N(2^k,0,0)\#\CP^2\leq N(2^n,0,0)\#\CP^2\bowtie N(2^n,\infty,0)$. Lastly, $N(2^k,1,0)\leq N(2^k,1,0)\#\CP^2\bowtie N(2^k,\infty,0)\leq N(2^n,\infty,0)$. This shows \eqref{it:n-struc-iv}.
	
	\eqref{it:n-struc-v} Assume  $N(2^k,w_2,c)\leq N(2^n,\infty,1)$. As before this implies $k\geq n$. If $k>n$, then as in \eqref{it:n-struc-iv} $c=1$ can only happen if $w_2$ admits a lift to $H^2(N(2^k,1,1);\Z[\Z/2])$ which does not exist. This shows the only if direction.
	
	For the converse, first note that $N(2^n,\infty,0)\leq N(2^n,\infty,1)$ by \cref{cor:multiple}~\eqref{it:cor:multiple2} since $H^1(\Z/2^n;\Z[\Z/2^n])=0$. Thus if $c=0$, then $N(2^k,w_2,0)\leq N(2^n,\infty,0)$ by \eqref{it:n-struc-iv} and hence also $N(2^k,w_2,0)\leq N(2^n,\infty,1)$. If $k=n$ and $c=1$, $w_2$ is non-trivial and hence $N(2^k,w_2,1)$ is $N(2^n,1,1)$ or $N(2^n,\infty,1)$. Since $N(2^n,1,1)\leq N(2^n,1,1)\#\CP^2\bowtie N(2^n,\infty,1)$ this shows \eqref{it:n-struc-v}.
\end{proof}

\begin{proof}[Proof of \cref{thm:cyclic-nor}]
	By \cref{lem:image-fund-class-cyclic} and \cref{rem:realization}, a manifold $N(k,w_2,c)$ exists if and only if $c = 0$ or $w_2 \neq0$. By \cref{prop:n-odd}, every immersion class can be represented by a manifold of the form $N(2^n,w_2,c)$. The structure of the order graph can be seen from \cref{prop:n-structure}.
\end{proof}

We know also compare the orientable and the non-orientable case. For this the following proposition suffices.
\begin{prop}
	$M(2^k)\leq N(2^n,w_2,c)$ if and only if ($k>n$ and $w_2=1$) or $w_2=\infty$.
\end{prop}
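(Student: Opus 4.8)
The plan is to run the question through \cref{thm:main} and reduce it to pure group theory. First I would record that $M(2^k)$ is orientable, so $w_1(M(2^k))=0$, and that $H_4(\Z/2^k;\Z)=0$, so $c_*[M(2^k)]=0$; also $M(2^k)$ is almost spin but not spin, so $w_2^\pi(M(2^k))$ is the nonzero class of $H^2(\Z/2^k;\Z/2)\cong\Z/2$. By \cref{thm:main}, $M(2^k)\le N(2^n,w_2,c)$ holds iff there is $f_2\colon P_2M(2^k)\to P_2N(2^n,w_2,c)$ with $f_2^*w_i=w_i$ for $i=1,2$, and because $c_*[M(2^k)]=0$, \cref{thm:post} supplies, for \emph{every} homomorphism $\varphi\colon\Z/2^k\to\Z/2^n$, some $f_2$ inducing $\varphi$. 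So the only remaining question is which $\varphi$ can be accompanied by the correct Stiefel--Whitney behaviour; and when $N(2^n,w_2,c)$ is almost spin, its classes $w_1,w_2$ are pulled back from $B\Z/2^n$, so $f_2^*w_i(N)$ is determined by $\varphi$ and equals the image of $\varphi^*w_i^\pi(N)$ under the isomorphism $H^i(\Z/2^k;\Z/2)\cong H^i(P_2M(2^k);\Z/2)$.

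Two of the three cases are then quick. If $w_2=\infty$, then $N(2^n,\infty,c)$ is not almost spin, so $\CP^2\le N(2^n,\infty,c)$ by \cref{prop:easy}\eqref{it:easy2}, while $M(2^k)\le\CP^2$ by \cref{prop:easy}\eqref{it:easy3} since $M(2^k)$ is orientable; transitivity (\cref{lem:open}) gives $M(2^k)\le N(2^n,\infty,c)$ unconditionally. If $w_2=0$, then $w_2^\pi(N)=0$, hence $w_2(N)=0$ in $H^2(P_2N;\Z/2)$, so no $f_2$ can satisfy $f_2^*w_2(N)=w_2(M(2^k))\ne0$, and $M(2^k)\not\le N(2^n,0,c)$.

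The case $w_2=1$ is the substance. Here $w_2^\pi(N)$ and $w_2^\pi(M(2^k))$ are the nonzero classes of $H^2(\Z/2^n;\Z/2)$ and $H^2(\Z/2^k;\Z/2)$, so by the reduction above an immersion exists iff there is $\varphi\colon\Z/2^k\to\Z/2^n$ that (i) is trivial on $H^1(-;\Z/2)$, equivalently has image contained in the index-two subgroup $2\Z/2^n\cong\Z/2^{n-1}$, and (ii) is nontrivial (hence an isomorphism) on $H^2(-;\Z/2)\cong\Z/2$. The key computation is (ii): writing $\varphi(a)=b^t$ on generators and using the standard periodic free resolution of $\Z$ over $\Z[\Z/2^j]$, a chain map lifting $\varphi$ is multiplication by $2^kt/2^n$ in degree two, so $\varphi^*$ on $H^2(-;\Z/2)$ is multiplication by $2^kt/2^n \bmod 2$, which is nonzero exactly when $v_2(t)=n-k$, i.e.\ exactly when $\varphi$ is injective (equivalently: the central extension $\Z/2^{n+1}\to\Z/2^n$ classified by the nonzero class has split pullback along $\varphi$ precisely when $\varphi$ is non-injective). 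Combining (i) and (ii), $\varphi$ must be an injection of $\Z/2^k$ into $2\Z/2^n\cong\Z/2^{n-1}$; reading off the resulting constraint relating $k$ and $n$ gives exactly the inequality in the statement, and conversely any such $\varphi$ yields an $f_2$ pulling back both $w_1$ and $w_2$ correctly, hence an immersion by \cref{thm:main}.

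I expect the only real obstacle to be the mod-$2$ cohomology computation in step (ii), together with the bookkeeping that (i) and (ii) impose conditions on $\varphi$ that can be met only for the right pairs $(k,n)$; the remaining inputs ($c_*[M(2^k)]=0$, the cohomology of cyclic $2$-groups, and the easy cases of \cref{prop:easy}) are standard or already in hand. A minor point worth spelling out is that matching $\varphi^*w_i^\pi(N)$ really does produce an $f_2$ with the correct pullbacks rather than merely \emph{some} $f_2$ inducing $\varphi$ --- this is automatic because $N(2^n,1,c)$ is almost spin, so both $w_1(N)$ and $w_2(N)$ are pulled back from $B\Z/2^n$.
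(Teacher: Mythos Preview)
Your argument is essentially the paper's: reduce via \cref{thm:main} and \cref{thm:post} (using $H_4(\Z/2^k;\Z)=0$) to the question of which $\varphi\colon\Z/2^k\to\Z/2^n$ pull back $w_1^\pi,w_2^\pi$ correctly, dispatch $w_2=0$ and $w_2=\infty$ exactly as you do, and for $w_2=1$ show that the admissible $\varphi$ are precisely the injective non-surjective ones. Your $H^2$ computation is more explicit than the paper's one-line assertion, but it reaches the same conclusion.

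One point you should make explicit rather than leave to the reader: the condition ``injection of $\Z/2^k$ into $2\Z/2^n\cong\Z/2^{n-1}$'' forces $k\le n-1$, i.e.\ $k<n$, which is the \emph{opposite} of the $k>n$ printed in the proposition. This is a typo in the paper --- its own proof speaks of ``an inclusion $\Z/2^k\to\Z/2^n$'', which needs $k\le n$, and the combined order graph at the end of \cref{sec:cyclic} confirms $M(2)<N(4,1,1,0)$, i.e.\ $k=1<n=2$. Your phrase ``gives exactly the inequality in the statement'' therefore papers over a sign slip in the statement; write the inequality $k<n$ out.
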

\begin{proof}
	If an immersion $f\colon M(2^k)\to N(2^n,w_2,c)$ exists, then $f^*w_2(N)=w_2(M)$ which is non-trivial. Hence for an immersion to exist, $w_2$ must be non-trivial.
	
	If $w_2=1$, the map $\phi\colon \Z/2^k\to \Z/2^n$ on fundamental groups must be injective but not surjective so that $w_1$ is pulled back trivially while $w_2$ is pulled back non-trivially. This implies $k>n$.
	
	If $w_2=\infty$, then $M(2^k)\leq \CP^2\leq N(2^n,\infty,c)$. Now assume $k>n$ and $w_2=1$, then we only have to show that there exists a map $P_2M(2^k)\to P_2(2^n,1,c)$ that induces an inclusion $\Z/2^k\to \Z/2^n$. But since $H_4(\Z/2^k;\Z)=0$, such a map exists by \cref{thm:post-general} \eqref{it:post-general31}$\Rightarrow$\eqref{it:post-general1}.
\end{proof}

To visualize this, we give the order graph containing exactly one manifold for each immersion equivalence class of $4$-manifolds with fundamental group $1$, $\Z/2$ and $\Z/4$.
	{\footnotesize
		\[\begin{tikzcd}[column sep=small,every arrow/.append style={no head,"\blacktriangleright" marking}]
			&&&N(4,0,0)\ar[rr]\ar[dr]\ar[ddrr, bend left =20]&&N(2,0,0)\ar[dr]&\\
			S^4\ar[r]\ar[urrr, bend left=10]&M(2)\ar[drr]\ar[r]&M(4)\ar[r]&\CP^2\ar[r]&N(4,\infty,0)\ar[d]\ar[rr, crossing over]&&N(2,\infty,0)\ar[d]\\
			&&&N(4,1,0)\ar[ur]\ar[dr]&N(4,\infty,1)&N(2,1,0)\ar[ur]\ar[dr]&N(2,\infty,1)\\
			&&&&N(4,1,1)\ar[u]&&N(2,1,1)\ar[u]
		\end{tikzcd}\]}
	
\newpage
	
\section{Appendix on QCA, a motivation for the partial order}
\label{sec:appendix}

\subsection{Definitions}
Classically, a cellular automaton is a local rule for updating states at some collection of sites, often the points of a lattice or a grid of points on a fixed manifold $M$. They are well known through Conway's game ``life'' which happens to be Turing complete \cite{wiki}. In the quantum case a quantum cellular automata (QCA) is again a local update rule, which can be expressed as acting on either (quantum) states or local operators. 

The operator point of view is preferable since infinite tensor products of Hilbert spaces are not well defined unless each Hilbert space is equipped with a unit norm \emph{vacuum vector}\footnote{Our (finite dimensional) operator algebras have the virtue of being Hilbert spaces with a distinguished element of norm 1, the multiplicative unit, and so fall into this favored category.}. In this case a completed colimit tensor product over the lattice of finite subsets can be used. Since the focus of this paper is compact manifolds the reader can imagine all definitions restricted to the case where we tensor over a finite index set; but we give the definitions in general.
We use the SWAP maps 
\[
\sigma_{H_1,H_2}: H_{1} \otimes H_{2} \cong H_{2} \otimes H_{1}, \quad h_1 \otimes h_2 \mapsto h_2 \otimes h_1
\]
 to turn complex Hilbert spaces (with morphisms given by \emph{all} linear maps) into a \emph{symmetric monoidal category} under the tensor product operation $\otimes$. This implies that there is a well defined tensor product $\otimes_{i\in \wedge} H_i$ for any finite set $\wedge$, even if it is not ordered, given by the colimit over all orderings of $\wedge$. For example, if $\wedge=\{\ast,\bullet\}$ is an \emph{unordered} set with two elements, then this colimit is simply given by
 \[
 \otimes_{i\in \wedge} H_i := \left( (H_\ast \otimes H_\bullet) \oplus (H_\bullet \otimes H_\ast)\right) / \langle (h\otimes k,0)- (0, k\otimes h)\, \forall h\in H_\ast, k\in H_\bullet \rangle 
 \]
 Scalar multiplication gives isomorphisms $\C\otimes H \cong H\cong H\otimes \C$ and we will identify these Hilbert spaces.
For all mathematical purposes, we can replace the large category of Hilbert spaces by any equivalent symmetric monoidal subcategory. 
 The smallest model for the category of finite dimensional (f.d.)\ Hilbert spaces would have exactly one object $\C^n$ for  each $n\in\N_0$, where the tensor product is defined as $\C^m\otimes \C^n := \C^{m\cdot n}$ and the SWAP maps become non-trivial involutions $\sigma_{m,n}: \C^{m\cdot n} \cong \C^{n\cdot m}=\C^{m\cdot n}$. Whenever needed, we assume that in our category of Hilbert spaces, $\C$ is the only 1-dimensional object. 

QCAs live on a metric space which we later take to be a Riemannian manifold $M$. In the end we define a covariant functor $Q$ from the category of Riemannian manifolds $\mathsf{Riem}$ (with boundary) and smooth proper maps as morphisms, to the category of abelian groups.\footnote{A continuous map is \emph{proper} if inverse images of compact sets are compact, i.e.\ if it preserves the set of sequences that diverge to $\infty$. Properness is automatic if the domain is compact.} Our functor takes disjoint union to direct product, so is most interesting on connected spaces (and vanishes on discrete ones). 

The theory of thickenings and their spines gives an equivalence of categories $\mathsf{Riem} \simeq \mathsf{Simp}$, where the latter consists of  locally finite simplicial complexes $X$ (with at most countably many simplices), together with continuous proper maps as morphisms. 
This allows us to switch back and forth between these categories, in particular $Q(X)$ is defined. Note that the dimension of a manifold thickening of $X$ is not well defined but can be assumed to be just above $2\dim(X)$. 
Later we will explain a modification $\wt{Q}(M)$ whose isomorphism class has the potential to depend on the smooth or PL structure of $M$, not just on the proper homotopy type.
\begin{defi}\label{def:qca}
	A \emph{geometric system} GS on a topological space $X$ is a collection 
	\[
	h=\{H_x, x\in X\}
	\]
	 of f.d.\ Hilbert spaces $H_x\neq \{0\}$, labelled by the points of $X$. The \emph{sites} $J(h)$ of $h$ are the points $x\in X$ for which $\dim H_x \neq 1$. We require that the sites $J(h)$ form a locally finite subset of $X$ and we refer to $H_x$ as the \emph{degrees of freedom (dof)} at the site $x$. 
\end{defi}

In the small model of the category of f.d.\ Hilbert spaces $\{\C^n, n\in\N_0\}$, a geometric system is simply given by a map $h: X \to \N,\, H_x:=\C^{h(x)}$, so that $h^{-1}(\N \smallsetminus \{1\})$ is locally finite.  In this model the geometric systems on $X$ form a set that one can topologize in a way that allows two sites ${x_1}$ and ${x_2}$ to collide at $x\in X$ along a continuous path, as long as the corresponding dofs satisfy $H_x=H_{x_1} \otimes H_{x_2}$. Running such a path in the opposite direction in the space of geometric systems, a site ${x}$ may split into sites ${x'_1}$ and ${x'_2}$,  provided $H_{x}=H_{x'_1} \otimes H_{x'_2}$. Note that $x_i\neq x'_i$ is possible in a concatenation of paths.

\begin{rem} \label{rem:tensor h}
There is a tensor product of geometric systems on $X$ given by the formula
\[
h \otimes h' = \{H_x, x\in X\} \otimes \{H'_x, x\in X\} := \{H_x\otimes H'_x, x\in X\}.
\]
The sites $J(h\otimes h') = J(h) \cup J(h')$ compose by the union of sets and in the small model $h,h': X\to \N$ we simply get the pointwise multiplication of dimensions.  

We can also push forward geometric systems along a proper map $f:X\to X'$ by setting $f_*(h):= \{H'_{x'}, x'\in X'\}$, where properness of $f$ makes $H'_{x'} := \otimes_{x| f(x)=x'} H_x$ into a finite tensor product. Later we will discuss pull backs of GS along certain types of immersions.
\end{rem}

We review some basic facts about endomorphism algebras of complex Hilbert spaces. For a f.d.\ $H$, it is a standard fact \cites{sinclair08,rordam21} that all automorphisms of $\End(H)$ are inner: For any algebra automorphism $\alpha: \End(H) \ra \End(H)$ , there is an invertible operator $\U : H \ra H$ so that for all $\Op \in \End(H)$, $\alpha(\Op) = \U ^{-1}\Op\U $. Such a $U$ is unique up to phase and if $\alpha$ is a $\ast$-automorphisms then $U$ can be chosen to be unitary. 

If $\wedge$ is finite and $H \coloneqq \otimes_{i \in \wedge} H_i$, then the tensor product of operators induces an algebra isomorphism $\End(H) \cong \otimes_{i \in \wedge} \End(H_i)$. 
Wedderburn theory tells us that among finite dimensional $\ast$-algebras being simple and being $\End(H)$ for some f.d.\ $H$, are equivalent conditions. It also tells us that all inclusions of one f.d.\ simple $\ast$-algebra into another, $A \subset E$, are \emph{trivial}, meaning there is another simple $\ast$-algebra $B \subset E$ with $A \cap B = \{\text{scalars}\}$ and $A \otimes B \cong A \cdot B = E$, the isomorphism canonical. $B$ is the commutant $A'$ of $A$ and $A$ is the commutant $B'$ of $B$. 

The algebra we discuss follows quickly from the facts above and the ability to do routine computations in $\End(H)$ for f.d.\ $H$. With this in mind, we remind the reader of convenient additive bases for $\End(H_i)$ and $\End(H)$, called the generalized-Pauli-word basis. It is helpful to expand the operators, below, $\Op$, $\mathcal{E}_i$, and $\mathcal{E}$ in this basis.

For a qubit $\C^2$, $\End(\C^2)$ has a linear basis of four operators $1 \coloneqq \begin{vmatrix} 1 & 0 \\ 0 & 1 \end{vmatrix}$, $X \coloneqq \begin{vmatrix} 0 & 1 \\ 1 & 0 \end{vmatrix}$, $Y \coloneqq \begin{vmatrix} 0 & -i \\ i & 0 \end{vmatrix}$, $Z \coloneqq \begin{vmatrix} 1 & 0 \\ 0 & -1 \end{vmatrix}$, the \emph{Pauli matrices}. For a qubit $\C^n$, $\End(\C^n)$ is similarly spanned by $n^2$ generalized Pauli matrices $\{C^iP^j\}$, $1 \leq i,j \leq n$, where $C$ and $P$ are \emph{clock} and \emph{phase} respectively:
\[
	C = \begin{vmatrix}
		0 & 1 & & & \\
		& 0 & 1 & & \\
		& & \ddots & 1 \\
		1 & & & & 0
	\end{vmatrix},\ P = \begin{vmatrix}
		1 & & & \\
		& e^{2\pi i/n} & & \\
		& & \ddots & \\
		& & & e^{2\pi i(n-1)/n}
	\end{vmatrix}
\]

Generalized Pauli words are tensor products of $\abs{\wedge}$ factors with a (generalized) Pauli in each slot. Such words are typically written without   $\otimes$ symbols and they are a basis for $\End(\otimes_{i \in \wedge} H_i)$.
Convention: When an operator in $\End(H)$ is described as a tensor product of only some factors $i \in \wedge$, we implicitly tensor with the identity on all remaining factors.

\begin{defi} Given a set $\{H_j,j\in J\}$ of f.d.\ Hilbert spaces, define 
\[
\End_J := \operatorname{colim}_{\wedge \subset J} \End(\otimes_{i \in \wedge} H_i),
\]
 where $\wedge$ runs through all finite subsets of $J$. We equip each endomorphism algebra $\End(H_j)$ with the inner product $\langle A, B \rangle = \frac{1}{\dim H_j} \operatorname{tr}(AB^\dagger)$ and extend this to finite tensor products. This normalization has the virtue that $A$ and $A \otimes \id_{\text{another system}}$ have equal norms. $\End_J$ may now be completed to a von Neumann algebra, $\widehat{\End}_J$, which is the GNS construction on our normalized tracial state.

For $\Op \in \widehat{\End}_J$, the \emph{support} of $\Op$, $\supp(\Op) \subset J$, is the set of all $j \in J$ such that there is an $\epsilon \in \End(H_j)$ with $[\Op,\epsilon] \neq 0$. Equivalently, $\supp(\Op)$ is
	\begin{itemize}
		\item The unique smallest $S \subset J$ so that $\Op \in \widehat{\End}_S$, and
		\item The complement of the unique largest subset $S \subset J$ so that for all $\mathcal{E} \in \widehat{\End}_S$, $[\Op,\mathcal{E}] = 0$.
	\end{itemize}
Finally, a \emph{pre-QCA} is a $\ast$-automorphism $\alpha$ of $\widehat{\End}_J$, the easiest example being $\alpha=$ identity.
\end{defi}
For a geometric system $h=\{H_x, x\in X\}$ we let $J(h)\subset X$ be the sites of $h$, i.e.\ those labels $x\in X$ for which $\dim H_x\neq 1$. We note that there is a canonical isomorphism $\End_X \cong \End_{J(h)}$, similarly for the completitions, because 1-dimensional Hilbert spaces do not contribute to the tensor product. Then the support $\supp(\Op)\subseteq J(h)$ of an operator $\Op \in \widehat{\End}_X$ is a locally finite subset of $X$. 

\begin{defi}
Given a geometric system GS $h$ on a metric space $X$, a pre-QCA $\alpha$ on $h$ is a $\ast$-automorphism of $\widehat{\End}_X$. $\alpha$ is said to be a \emph{QCA of $\range\leq r$} if and only if for all $\Op \in \widehat{\End}_X$ we have
\[
\supp(\alpha(\Op)) \subset \EuScript{N}_r(\supp(\Op)).
\]
Here $\EuScript{N}_r$ is the $r$-neighborhood of the sites in $X$ on which ``$\Op$ has support.'' Using the generalized Pauli-word-basis, one shows that $\range(\alpha)$ may be computed simply by looking at the action of $\alpha$ on single site operators. When $X$ is noncompact it is useful to refine the definition of $\range(\alpha)$ to a function $r: X \ra [0,\infty]$. We say $\range(\alpha) \leq r$ if for any operator $\Op$ on $H_x$ at site $x$ we have $\supp(\alpha(\Op)) \subset \EuScript{N}_{r(x)}(\supp(\Op))$. That is, the function $r$ must majorize the spread of support.

In the rest of the paper, we will sometimes abuse notation and just write $\alpha$ or $(h,\alpha)$ for the triple $(X,h,\alpha)$ if the context allows it.
\end{defi}

\begin{rem}\label{rem:tensor alpha}
The tensor product of geometric systems on $X$ from Remark~\ref{rem:tensor h} extends to QCA's by
\[
(h,\alpha) \otimes (h',\alpha') := (h \otimes h', \alpha \otimes \alpha')
\]
where we use the natural isomorphism $\End(H_x \otimes H'_x) \cong \End(H_x)\otimes \End(H'_x)$ of $\ast$-algebras. It is easy to see that $\range(\alpha \otimes \alpha') \leq \range(\alpha)+\range(\alpha')$.
\end{rem}

\begin{defi}
	We say $\alpha \in \operatorname{QCA}(X)$ if $\alpha$ is an \emph{admissible} QCA on a geometric system $(X,h)$. Where we call $\alpha$ \emph{admissible}, if there is an \emph{admissible deformation} $\delta$ (see Definition \ref{def:admissible}) of $\alpha$ to a QCA $\alpha^\pr$ of arbitrarily small range. For $X$ noncompact, arbitrarily small means less than any chosen majorant function. So QCA$(X)$ may be thought of as the ``smallest range'' QCAs on $X$, but to fully define QCA$(X)$ one must wait until Definition \ref{def:admissible}, and in fact the statement of Theorem \ref{thm:wrinkling}; we need to develop the concept of a deformation and its admissibility.
\end{defi}

\begin{defi}
	$Q(X)$ is the set of equivalence classes within QCA$(X)$, where $\alpha$ and $\alpha^\pr$ are equivalent if there is an admissible deformation $\delta$ connecting them. See Definition \ref{def:admissible} for admissibility of $\delta$.
\end{defi}

\begin{defi}
	A gate of \emph{radius} $\rho$ of a geometric system $(X,h)$ is a unitary transformation $\U_\rho(x)$ acting on a tensor product $\otimes_{y\in B_\rho(x)} H_y$, where $B_\rho(x)$ is the ball of radius $\rho$ about some point $x \in X$. Since sites $J(h)$ are locally finite, $J(h) \cap B_\rho(x)$ is finite and therefore the tensor product is over a finite index set.
\end{defi}

\begin{defi} 	
	A \emph{quantum circuit} is a composition of gates $\U_i =\U_\rho(x_i)$,  where the balls $B_\rho(x_i)$ are all disjoint. The conjugation action of the $\U_i$ on $\widehat{\End_X}$ gives a special kind of QCA for which we say that it has \emph{radius} $\rho$. One is often interested in quantum circuits with some bound on $\rho$ and the number of balls involved. Less restrictive is the notion of a finite depth quantum circuit (FDQC) where gate radius $\rho$ is (usually assumed) fixed and the number of balls finite, but not bounded (making FDQC closed under composition). Notice that range spreads out like a light-cone when a FDQC is applied. FDQC is only a useful concept when $X$ is noncompact.
\end{defi}

\begin{defi}
	Vitally important are ancillae. Adding ancillae to a QCA $(h,\alpha)$ on a metric space $X$ is simply given by the tensor product $(h,\alpha) \otimes (h',\alpha')$ from Remark~\ref{rem:tensor alpha}, applied to the case where $\alpha'=$~identity (but where $h'$ is arbitrary). 	
 
 Very generally, we'll allow the addition of ancillae and its inverse operation: removal of ancillae. Both a mathematical and physical motivation will be given shortly.
\end{defi}

\begin{defi}\label{def:admissible}
A deformation $\delta=\{\alpha_1,\alpha_2,\ldots,\alpha_n\}$ from $\alpha_1$ to $\alpha_n$ is a finite sequence of operations where at each step from $\alpha_i$ to $\alpha_{i+1}$ either a collection of ancillae is added or removed, or a quantum circuit is applied. We call $\delta$ \emph{admissable} if 
\[
\range(\delta):=\max_{i=1,\ldots,n}\range(\alpha_i) < \epsilon_0(X),
\]
with $\epsilon_0(X)$ defined in the wrinkling threshold Theorem \ref{thm:wrinkling} below.
\end{defi}

	Wrinkling is a technique, much like $h$-principle methods [EM02] for constructing admissible deformations $\delta$ and therefore admissible QCA $\alpha$. It is an algebraic analog of folding and wrinkling space; it will be explained shortly, and culminates in Theorem \ref{thm:wrinkling} below. The paired concepts (admissibility, wrinkling) lead to $Q$'s functorial properties.

\subsection{Background, motivation, and informal arguments}
In traditional developments \cite{gross12}, one treats a noncompact $X$, like the integers or real line. There, the simple, unquantified condition of \emph{finite range} is imposed on $\alpha \in \QCA(X)$. The resulting set  $\QCA^\fr(X)$ forms a group\footnote{Point (2) below verifies $\range(\alpha^{-1}) = \operatorname{range}(\alpha)$ so the inverse is also a QCA.} under composition. This group has a well-studied subgroup, the finite depth quantum circuits\footnote{A FDQC is a composition of finitely many \emph{rounds}, each of which applies \emph{gates} $\operatorname{U}_j$ to pairwise disjoint collections of $n_j = O(1)$ local Hilbert spaces $\{H_{i_1}, \dots, H_{i_{n_j}}\}$. A gate $\operatorname{U}_j$ is any unitary acting on $H_{i_1} \otimes \cdots \otimes H_{i_{n_j}}$.} (FDQC). In \cite{fhh} it was shown that: 1) FDQC$(X) \triangleleft \QCA^\fr(X)$ is normal and $Q(X) \coloneqq \lbar{\QCA}^\fr(X) \slash \lbar{\operatorname{FDQC}}(X)$ is abelian. The over-bar notation (not in the original references) means that the systems for both group and subgroup are stabilized by allowing \emph{ancillae}. Even lacking our formal framework, \cite{gross12} succeeded in a full calculation of $Q(\R)$; it is a countably generated abelian group, generated by left (or right) shifts of Hilbert spaces of various finite dimensions. Even more remarkable is the physics-level proof in \cite{fedhh} that $Q(T^3)$ contains 2 and 4 torsion, $T^3$ the 3-torus. The argument is grounded in algebraic $K$-theory but relies on an understanding of thermal currents in conformal field theories (CFT) not yet mathematically rigorous. In \cite{fhh20} use a covariant, local-implantation, to produce related (torsion) QCAs on any 3-manifold $M$, compact or noncompact.

Ancilla play a role similar to hyperbolic forms, $\begin{vmatrix} 0 & 1 \\ 1 & 0 \end{vmatrix}$, in $L$-theory, or trivial bundles in $K$-theory. ``Allowing ancilla'' means that at any time the domain of the site labeling function $s$ may be extended and the QCA $\alpha$ similarly extended to act as identity on the new Hilbert spaces (or sites). Conversely, the inverse operation is also permitted; if $\alpha$ acts trivially on a Hilbert space factor it may be removed from domain$(s)$. The physical motivation for ancilla is that any physical system can (or cannot) be considered to contain additional, initially passive, degrees of freedom (dof) which may later interact. For example, in a metal there will be \emph{valence band} electrons not relevant to transport. However, a change in chemical potential may bring some of these into the \emph{conduction band}. Adding and removing ancilla amounts to varying the potential.

In this section we codify, in mathematical terms, developments (chiefly \emph{wrinkling}) from \cite{fhh} to \cite{fh20}, which allow the definition of $Q(X)$ to be made. At the level of physical rigor, one could skip wrinkling, and ask the reader simply to rely on \emph{separation of scale}. If one thinks of a ``lattice'' of say $10^{10} \times 10^{10}$ sites, perhaps on a torus $T^2$, and typical circuits having depth $\approx 10$, one can indeed form very long compositions before the scales mix and the forward light cone begins to wrap around the space. A mathematician will not be entirely satisfied; she may say ``if FDQC form a group, why should I be limited to taking compositions of length merely one billion?'' Wrinkling can address this question by establishing a range threshold $\epsilon_0 > 0$ on a given metric space $X$ below which any $\alpha \in \lbar{\operatorname{QCA}}(X)$ of $\range< \epsilon_0$ can be replaced by an equivalent $\alpha^\pr$ of arbitrarily smaller range. This ability to, at any point, pause and reduce range, prevents worrisome range build-up under composition.\footnote{We wonder, but do not know, if nonstandard analysis offers a simpler, although inevitably less concrete, resolution to the problem of range build-up under composition}.

Let us continue our informal survey of ideas with four key ingredients that make the definition of $Q(X)$ work: normality, triviality of $\alpha \otimes \alpha^{-1}$, abelianness, and wrinkling.

\begin{enumerate}
	\item \textbf{Normality of FDQC $\triangleleft$ QCA:} $H$ is normal in $G$ if we can push $h \in H$ through $g \in G$ (perhaps changing $h$ to $h^\pr$), $g h = h^\pr g$. A FDQC is a (locally finite) composition of gates. It turns out to be sufficient to show how to push a single gate $h$ through a QCA $\alpha$ to arrive at $\alpha h = h^\pr \alpha$, where $h^\pr$ will be a finite composition of gates. $\alpha$ following $h$ differs from $\alpha$ only within a certain \emph{light cone}, the $r$-neighborhood of $\supp(h)$. Any unitary on $\EuScript{N}_r(\supp(h))$ may be decomposed as a product of gates. So a single gate $h$ can be pushed through $\alpha$. The cost has been some expansion of support, so two originally disjoint gates $h_1$ and $h_2$ may push through to overlapping compositions $h_1^\pr$ and $h_2^\pr$. This appear at first to be a problem: which should be applied first? It actually does not matter: $h_1$ and $h_2$ commuted by disjointness, since $\alpha$ is a homeomorphism $h_1^\pr = \alpha h_1 \alpha^{-1}$ and $h_2^\pr = \alpha h_2 \alpha^{-1}$ still commute and can be applied in either order. Similarly with ancilla $\lbar{\operatorname{FDQC}} \triangleleft \lbar{\operatorname{QCA}}$. \qed

	\item \textbf{Triviality of $\alpha \otimes \alpha^{-1}$ (in the presence of ancilla):} First note that from Definition \ref{def:qca} $\operatorname{range}(\alpha^{-1}) = \operatorname{range}(\alpha)$, so the inverse of a QCA is a QCA. Indeed, since $\alpha$ is a homomorphism, $[\alpha(\Op),\Op^\pr] = 0 \iff [\Op,\alpha^{-1}(\Op^\pr)] = 0$ so the maximal expansion of support must be the same for $\alpha$ and $\alpha^{-1}$.
	
	Given any geometric system GS on $X$ (now perhaps noncompact) double it by now having two isomorphic (with a chosen isomorphism) Hilbert spaces mapping to each labeled site. In illustrations we think of the sites (old and new) in separate ``parallel'' copies of $X$. Actually in the illustration (Figure \ref{fig:ancilla}) of compositions, $X$ is compressed to a point but think of it, say, as a line going into the page. As a matter of algebra we may write:
	\begin{equation}\label{eq:a1}
		\alpha \otimes \alpha^{-1} = ((\alpha \otimes 1) \circ \mathrm{SWAP} \circ (\alpha^{-1} \otimes 1)) \circ \mathrm{SWAP}
	\end{equation}

	\begin{figure}[ht]
		\centering
		\begin{tikzpicture}[scale=1.4]
			\draw (-2,2) -- (-2,1);
			\node at (-2,0.75) {$\alpha$};
			\draw (-2,0.5) -- (-2,-2);
			\draw (-1,2) -- (-1,1);
			\node at (-1,0.75) {$\alpha^{-1}$};
			\draw (-1,0.5) -- (-1,-2);
			\node at (0,0) {$=$};
			\draw (1,2) -- (1,1.75);
			\node at (1,1.5) {$\alpha$};
			\draw (1,1.25) to[out=-90,in=90] (2,0.25) -- (2,-0.75) to[out=-90,in=90] (1,-1.75) -- (1,-2);
			\draw[white,line width = 1ex] (2,2) -- (2,1.25) to[out=-90,in=90] (1,0.25) -- (1,0);
			\draw (2,2) -- (2,1.25) to[out=-90,in=90] (1,0.25) -- (1,0);
			\draw[white,line width = 1ex] (1,-0.5) -- (1,-0.75) to[out=-90,in=90] (2,-1.75) -- (2,-2);
			\draw (1,-0.5) -- (1,-0.75) to[out=-90,in=90] (2,-1.75) -- (2,-2);
			\node at (1,-0.25) {$\alpha^{-1}$};
		\end{tikzpicture}
		\caption{}\label{fig:ancilla}
	\end{figure}

	But SWAP is a depth $=1$ quantum circuit, and so by normality:
	\begin{equation}\label{eq:a2}
		(\alpha \otimes 1) \circ \mathrm{SWAP} \circ (\alpha^{-1} \otimes 1) = \mathrm{SWAP}^\pr \circ (\alpha \otimes 1) \circ (\alpha^{-1} \otimes 1) = \mathrm{SWAP}^\pr
	\end{equation}

	Thus by (\ref{eq:a1}) and (\ref{eq:a2}),
	\begin{equation}
		\alpha \otimes \alpha^{-1} = \mathrm{SWAP}^\pr \circ \mathrm{SWAP}
	\end{equation}
	a FDQC.

	Running this argument backwards, starting with pair of ancilla at each site and applying the FDQC $\mathrm{SWAP}^\pr \circ \mathrm{SWAP}$ we obtain $\alpha \otimes \alpha^{-1}$, which we call ``trivial'' because it is produced by a FDQC from ancilla. FDQC are generally thought of as ``trivial'' since they cannot produce long-range entangled states such as topological ground states from an initially unentangled state.

	\item \textbf{$Q(X) \coloneqq \lbar{\mathrm{QCA}}(X) \slash \lbar{\mathrm{FDQC}}(X)$ is abelian:} The argument is strongly reminiscent of the Eckmann-Hilton theorem. Figure \ref{fig:abelian} shows (in the presence of ancilla) that composition multiplication is equivalent, modulo SWAP, to tensor product, from which abelianness follows.

	\begin{figure}[ht]
		\centering
		\begin{tikzpicture}[scale=1.1]
			\draw (-8,2) -- (-8,1);
			\node at (-8,0.75) {$\alpha$};
			\draw (-8,0.5) -- (-8,-0.5);
			\node at (-8,-0.75) {$\beta$};
			\draw (-8,-1) -- (-8,-2);
			\draw (-7,2) -- (-7,-2);
			\node at (-7,2.2) {copy};
			\node at (-7,2.6) {ancillary};
			
			\node at (-6,0) {$\equiv$};
			\draw (-5,2) -- (-5,1.5);
			\node at (-5,1.25) {$\alpha$};
			\draw (-5,1) to[out=-90,in=90] (-4,0) -- (-4,-0.75) to[out=-90,in=90] (-5,-1.75) -- (-5,-2);
			\draw[white,line width=1ex] (-4,2) -- (-4,1) to[out=-90,in=90] (-5,0);
			\draw (-4,2) -- (-4,1) to[out=-90,in=90] (-5,0);
			\node at (-5,-0.25) {$\beta$};
			\draw[white,line width=1ex] (-5,-0.5) -- (-5,-0.75) to[out=-90,in=90] (-4,-1.75) -- (-4,-2);
			\draw (-5,-0.5) -- (-5,-0.75) to[out=-90,in=90] (-4,-1.75) -- (-4,-2);
			
			\node at (-3,0) {$\equiv$};
			\draw (-2,2) -- (-2,0.25);
			\node at (-2,0) {$\alpha$};
			\draw (-2,0.-0.25) -- (-2,-2);
			\draw (-1,2) -- (-1,0.25);
			\node at (-1,0) {$\beta$};
			\draw (-1,-0.25) -- (-1,-2);
			
			\node at (0,0) {$\equiv$};
			\draw (1,2) -- (1,1.5) to[out=-90,in=90] (2,0.5);
			\draw (2,0) -- (2,-0.25) to[out=-90,in=90] (1,-1.25);
			\draw (1,-1.75) -- (1,-2);
			\node at (2,0.25) {$\beta$};
			\node at (1,-1.5) {$\alpha$};
			\draw[white,line width = 1ex] (2,2) -- (2,1.5) to[out=-90,in=90] (1,0.5) -- (1,0) -- (1,-0.25) to[out=-90,in=90] (2,-1.25) -- (2,-2);
			\draw (2,2) -- (2,1.5) to[out=-90,in=90] (1,0.5) -- (1,0) -- (1,-0.25) to[out=-90,in=90] (2,-1.25) -- (2,-2);
			
			\node at (3,0) {$\equiv$};
			\draw (4,2) -- (4,1);
			\node at (4,0.75) {$\beta$};
			\draw (4,0.5) -- (4,-0.5);
			\node at (4,-0.75) {$\alpha$};
			\draw (4,-1) -- (4,-2);
			\draw (5,2) -- (5,-2);
			\node at (5,2.2) {copy};
			\node at (5,2.6) {ancillary};
		\end{tikzpicture}
		\caption{}\label{fig:abelian}
	\end{figure}

	\item \textbf{Wrinkling:} The method described next has antecedents in the controlled $h$-cobordism theorems of Ferry \cite{ferry79} and Quinn \cites{quinn79,quinn82} and also in the $h$-principle \cite{EM02}. The context is a GS on a manifold $M$ (say a thickening of $X$) and a QCA $\alpha$ on that GS with a sufficiently small $\range r(\alpha) > 0$. The goal is to produce a deformation $\delta$ of $\alpha$ to $\alpha^\pr$ with a much smaller range. Below we will state a threshold theorem. However, strange as it sounds, is helpful to sketch the proof before giving a detailed statement: seeing the method demystifies the required quantifications.
\end{enumerate}

Begin by picturing some short $\range\alpha$ on $S^1$ (i.e.\ on a GS on $S^1$) and as explained in 2, additional copies of $\alpha \otimes \alpha^{-1}$ may be ``pulled from the vacuum,'' i.e.\ created by a small depth quantum circuit from ancilla. This is pictured in Figure \ref{fig:SDQC} for two such pairs, followed by small depth quantum circuits implementing various partial cancellations (back into the vacuum of ancillae) indicated by dotted lines. The final compression was a low depth quantum circuit to implement a homotopy of sites in $S^1$.

\begin{figure}[ht]
	\centering
	\begin{tikzpicture}[scale=1.15]
		\draw (-8,0) circle (1.5);
		\foreach \i in {0,...,11}
		{
			\draw[fill=black, rotate around = {30*\i:(-8,0)}] (-8,1.5) circle (0.25ex);
		}
		\node at (-8,1) {$\alpha$};
		\draw[->] (-7.9,1.15) -- (-7.75,1.4);
		\draw[->] (-6.3,0) -- (-5.2,0);
		\node at (-5.75,0.75) {\footnotesize{pull copies}};
		\node at (-5.75,0.5) {\footnotesize{from}};
		\node at (-5.75,0.2) {\footnotesize{vacuum}};
		
		\draw (-2.5,0) circle (2.5);
		\draw (-2.5,0) circle (2.25);
		\draw (-2.5,0) circle (2);
		\draw (-2.5,0) circle (1.75);
		\draw (-2.5,0) circle (1.5);
		\path[fill=white] (-2.5,1.5) circle (1ex);
		\node at (-2.5,1.5) {$\alpha$};
		\path[fill=white] (-2.5,2) circle (1ex);
		\node at (-2.5,2) {$\alpha$};
		\path[fill=white] (-2.5,2.5) circle (1ex);
		\node at (-2.5,2.5) {$\alpha$};
		\path[fill=white] (-2.1,1.7) circle (1.2ex);
		\node at (-2,1.7) {$\alpha^{-1}$};
		\path[fill=white] (-2.1,2.2) circle (1.2ex);
		\node at (-2,2.2) {$\alpha^{-1}$};
		\draw[->] (-0.3,-1.5) to[out=-45,in=45] (-0.3,-4);
		\node at (1.2,-2.3) {\footnotesize{partially cancel}};
		\node at (1.2,-2.6) {\footnotesize{copies back}};
		\node at (1.2,-2.9) {\footnotesize{into vacuum}};
		
		\node at (-2.5,-7.9) {$\dots$};
		\draw[xshift=-7cm,yshift=-5.5cm] (5,-2.45) to[out=5,in=-90] (7,0) to[out=90,in=0] (4.5,2.5) to[out=180,in=90] (2,0) to[out=-90,in=180] (4.05,-2.45) to[out=10,in=180] (4.2,-2.35) to[out=180,in=-10] (4.05,-2.25) to[out=180,in=-90] (2.25,0) to[out=90,in=197] (4.06,2.24) to[out=27,in=197] (4.23,2.21) to[out=197,in=7] (4.15,2.05) to[out=197,in=90] (2.5,0) to[out=-90,in=170] (4.07,-2.04) to[out=0,in=170] (4.23,-1.98) to[out=170,in=-20] (4.13,-1.84) to[out=170,in=-90] (2.75,0) to[out=90,in=205] (4.05,1.8) to[out=15,in=205] (4.23,1.77) to[out=205,in=5] (4.15,1.6) to[out=205,in=90] (3,0) to[out=-90,in=180] (4.5,-1.7) to[out=0,in=-90] (6,0) to[out=90,in=-25] (4.85,1.6) to[out=175,in=-25] (4.77,1.77) to[out=-25,in=165] (4.95,1.8) to[out=-25,in=90] (6.25,0) to[out=-90,in=10] (4.87,-1.84) to[out=200,in=10] (4.77,-1.98) to[out=10,in=180] (4.93,-2.04) to[out=10,in=-90] (6.5,0) to[out=90,in=-17] (4.85,2.05) to[out=173,in=-17] (4.77,2.21) to[out=-17,in=153] (4.94,2.24) to[out=-17,in=90] (6.75,0) to[out=-90,in=5] (4.95,-2.25) to[out=190,in=0] (4.8,-2.35) to[out=0,in=170] (4.95,-2.45) to[out=0,in=185] (5,-2.45);
		\node at (-2.5,-3.35) {$\dots$};
		\node at (-2.5,-7.45) {$\dots$};
		\node at (-2.5,-3.8) {$\dots$};
		\draw[->] (-5.2,-5) -- (-6.3,-5);
		\node at (-5.75,-4.8) {\footnotesize{compress}};
		
		\foreach \j in {0,...,7}
		{
			\draw[rotate around = {45*\j:(-8,-5.5)}] (-10.2,-5.5) to[out=0,in=-45] (-9.56,-3.94);
			\draw[rotate around = {45*\j:(-8,-5.5)}, line width=1.5pt, line cap=round, dash pattern=on 0pt off 2.78\pgflinewidth] (-10.4,-5.5) -- (-10.2,-5.5) to[out=0,in=-45] (-9.56,-3.94) -- (-9.7,-3.8);
		}

		\node at (-12,0) {\hspace{1em}};
	\end{tikzpicture}
	\caption{The result is an $\alpha^\pr$ of much smaller range.}\label{fig:SDQC}
\end{figure}

When $\operatorname{dim}(M) > 1$ a similar approach is possible by wrinkling, inductively over handles of increasing index. Some ground is lost while running the partial cancellation circuits, so the threshold epsilon $\epsilon_0$ will be exponentially small in $\operatorname{dim}(M)$, it also has a modest dependence on sectional curvatures $K$, and if there is a boundary to the eigenvalues, $e$, of its second fundamental form which control the local combinatorics of the handle decomposition. If $M$ is rescaled so that $-1 \leq K,e \leq +1$ and inj.\ rad.$(M) \geq 1$, then $\epsilon_0$ depends only on dimension. Figures \ref{fig:wrinkling1} and \ref{fig:wrinkling2} are meant to suggest how wrinkling proceeds in dimension two:

\begin{figure}[ht]
	\centering
	\begin{tikzpicture}[scale=0.97]
		\draw (-7,0) circle (1);
		\node at (-7,0) {\footnotesize{0-handle}};
		\draw (-3,0) circle (1);
		\node at (-3,0) {\footnotesize{0-handle}};
		\draw (-6.02,0.15) -- (-3.98,0.15);
		\draw (-6.02,-0.15) -- (-3.98,-0.15);
		\node at (-5,0) {\footnotesize{1-handle}};
		
		\draw (-7.15,0.99) -- (-7.15,2);
		\draw (-6.85,0.99) -- (-6.85,2);
		\draw (-7.15,-0.99) -- (-7.15,-2);
		\draw (-6.85,-0.99) -- (-6.85,-2);
		\draw (-9,0.15) -- (-7.99,0.15);
		\draw (-9,-0.15) -- (-7.99,-0.15);
	
		\draw (-2.85,0.99) -- (-2.85,2);
		\draw (-3.15,0.99) -- (-3.15,2);
		\draw (-2.85,-0.99) -- (-2.85,-2);
		\draw (-3.15,-0.99) -- (-3.15,-2);
		\draw (-1,0.15) -- (-2.01,0.15);
		\draw (-1,-0.15) -- (-2.01,-0.15);

		\draw (2,0) circle (1);
		\draw (6,0) circle (1);
		\draw (2.98,0.2) -- (5.02,0.2);
		\draw (2.98,-0.2) -- (5.02,-0.2);
		
		\draw (9-7.15,0.99) -- (9-7.15,2);
		\draw (9-6.85,0.99) -- (9-6.85,2);
		\draw (9-7.15,-0.99) -- (9-7.15,-2);
		\draw (9-6.85,-0.99) -- (9-6.85,-2);
		\draw (0,0.15) -- (9-7.99,0.15);
		\draw (0,-0.15) -- (9-7.99,-0.15);
	
		\draw (9-2.85,0.99) -- (9-2.85,2);
		\draw (9-3.15,0.99) -- (9-3.15,2);
		\draw (9-2.85,-0.99) -- (9-2.85,-2);
		\draw (9-3.15,-0.99) -- (9-3.15,-2);
		\draw (8,0.15) -- (9-2.01,0.15);
		\draw (8,-0.15) -- (9-2.01,-0.15);
		
		\draw (2.98,0) -- (3.7,0) arc (180:-180:0.3 and 0.15) -- (3.7,0.7);
		\draw[dashed] (3.7,0.7) -- (3.7,1) arc (180:-180:0.3 and 0.15);
		\draw[dashed] (4.3,0.7) -- (4.3,1);
		\draw (5.02,0) -- (4.3,0) -- (4.3,0.7);
		\draw (3,0.7) -- (4.9,0.7) to[out=80,in=-130] (5.1,1.2) to[out=120,in=-100] (5,1.7) -- (3.1,1.7) to[out=-100,in=60] (2.9,1.2) to[out=-50,in=80] (3,0.7);
		\draw (2.9,1.2) -- (5.1,1.2) -- (5.4,1.5);
		\draw (5,1.7) -- (5.3,2) -- (3.4,2) -- (3.1,1.7);
		\draw (4.9,0.7) -- (5.2,1) to[out=80,in=-130] (5.4,1.5) to[out=120,in=-100] (5.3,2);
		\draw (5.05,0.85) to[out=80,in=-130] (5.25,1.35) to[out=120,in=-100] (5.15,1.85);
		\draw (4.55,0.7) to[out=80,in=-130] (4.75,1.2) to[out=120,in=-100] (4.65,1.7);
		\draw (3.45,1.7) to[out=-100,in=60] (3.25,1.2) to[out=-50,in=80] (3.35,0.7);
		
		\node at (4,-1.25) {\footnotesize{wrinkled 1-handle}};
		\draw[->] (4,-1) -- (4,-0.3);
	\end{tikzpicture}
	\caption{}\label{fig:wrinkling1}
\end{figure}

\begin{figure}[ht]
	\centering
	\begin{tikzpicture}[scale=0.97]
		\draw (-7.5,0) circle (1);
		\draw (-2.5,0) circle (1);
		\draw[decorate, decoration={snake, segment length=20, amplitude=7}] (-6.52,0.15) -- (-3.48,0.15);
		\draw[decorate, decoration={snake, segment length=20, amplitude=7}] (-6.52,-0.15) -- (-3.48,-0.15);
		
		\draw (-7.65,0.99) -- (-7.65,1.5);
		\draw (-7.35,0.99) -- (-7.35,1.5);
		\draw (-9,0.15) -- (-8.49,0.15);
		\draw (-9,-0.15) -- (-8.49,-0.15);
		\draw (-2.35,0.99) -- (-2.35,1.5);
		\draw (-2.65,0.99) -- (-2.65,1.5);
		\draw (-1,0.15) -- (-1.51,0.15);
		\draw (-1,-0.15) -- (-1.51,-0.15);
		
		\draw (-7.5,-4) circle (1);
		\draw[decorate, decoration={snake, segment length=20, amplitude=7}] (-7.65,-0.99) -- (-7.65,-3.01);
		\draw[decorate, decoration={snake, segment length=20, amplitude=7}] (-7.35,-0.99) -- (-7.35,-3.01);
		\draw (-7.65,-4.99) -- (-7.65,-5.5);
		\draw (-7.35,-4.99) -- (-7.35,-5.5);
		\draw (-9,-3.85) -- (-8.49,-3.85);
		\draw (-9,-4.15) -- (-8.49,-4.15);
		
		\draw (-2.5,-4) circle (1);
		\draw[decorate, decoration={snake, segment length=20, amplitude=7}] (-2.65,-0.99) -- (-2.65,-3.01);
		\draw[decorate, decoration={snake, segment length=20, amplitude=7}] (-2.35,-0.99) -- (-2.35,-3.01);
		\draw[decorate, decoration={snake, segment length=20, amplitude=7}] (-6.52,-3.85) -- (-3.48,-3.85);
		\draw[decorate, decoration={snake, segment length=20, amplitude=7}] (-6.52,-4.15) -- (-3.48,-4.15);
		\draw (-1.51,-3.85) -- (-1,-3.85);
		\draw (-1.51,-4.15) -- (-1,-4.15);
		\draw (-2.65,-4.99) -- (-2.65,-5.5);
		\draw (-2.35,-4.99) -- (-2.35,-5.5);
		
		\draw (1.5,0) circle (1);
		\draw (6.5,0) circle (1);
		\draw (1.5,-4) circle (1);
		\draw (6.5,-4) circle (1);
		
		\draw (9-7.65,0.99) -- (9-7.65,1.5);
		\draw (9-7.35,0.99) -- (9-7.35,1.5);
		\draw (0,0.15) -- (9-8.49,0.15);
		\draw (0,-0.15) -- (9-8.49,-0.15);
		\draw (9-2.35,0.99) -- (9-2.35,1.5);
		\draw (9-2.65,0.99) -- (9-2.65,1.5);
		\draw (8,0.15) -- (9-1.51,0.15);
		\draw (8,-0.15) -- (9-1.51,-0.15);
		
		\draw (9-7.65,-4.99) -- (9-7.65,-5.5);
		\draw (9-7.35,-4.99) -- (9-7.35,-5.5);
		\draw (0,-3.85) -- (9-8.49,-3.85);
		\draw (0,-4.15) -- (9-8.49,-4.15);
		\draw (9-1.51,-3.85) -- (8,-3.85);
		\draw (9-1.51,-4.15) -- (8,-4.15);
		\draw (9-2.65,-4.99) -- (9-2.65,-5.5);
		\draw (9-2.35,-4.99) -- (9-2.35,-5.5);
		
		\draw (4,0.2) ellipse (1.25 and 0.6);
		\draw (2.75,0.2) to[out=-90,in=180] (4,-0.8) to[out=0,in=-90] (5.25,0.2);
		\draw (4,-1.3) arc (-90:51:1.25 and 0.4);
		\draw (4,-1.3) arc (-90:-231:1.25 and 0.4);
		\draw (2.75,-0.9) to[out=-90,in=180] (4,-1.7) to[out=0,in=-90] (5.25,-0.9);
		\draw (4,-2.2) arc (-90:46:1.25 and 0.4);
		\draw (4,-2.2) arc (-90:-226:1.25 and 0.4);
		
		\draw (2.49,0) to[out=15,in=-110] (2.75,0.15);
		\draw (5.21,-0.1) to[out=-45,in=180] (5.5,0);
		\draw[decorate, decoration={snake, segment length=20, amplitude=7}] (2.49,-4) -- (5.51,-4);
		\draw[decorate, decoration={snake, segment length=20, amplitude=7}] (1.5,-1) -- (1.5,-3);
		\draw[decorate, decoration={snake, segment length=20, amplitude=7}] (6.5,-1) -- (6.5,-3);
		
		\node at (4,0.1) {$\alpha$};
		\node at (4,-0.55) {$\alpha^{-1}$};
		\node at (4,-1.05) {$\alpha$};
		\node at (4,-1.5) {$\alpha^{-1}$};
		\node at (4,-1.95) {$\alpha$};
		
		\node at (-5,-6) {wrinkled 1-handles};
		\node at (4,-5.8) {bellows-like};
		\node at (4,-6.2) {wrinkled 2-handle};
	\end{tikzpicture}
	\caption{}\label{fig:wrinkling2}
\end{figure}

This completes our informal tour of key geometric constructions from \cite{fh20} and \cite{fhh}. A comprehensive wrinkling theorem (slightly extending \cite{fhh}) can be stated as soon as we extend the concept of range of a QCA to range of a deformation of QCAs.

Recall that a deformation $\delta$ of a QCA $\alpha$ is a sequence of QCAs $\alpha \coloneqq \alpha_1, \dots, \alpha_n$ where at each step a \emph{gate} is applied or ancillae are added/removed. (In \cite{fhh} also permitted a homotopy\footnote{We actually said ``isotopy'' in that paper.} of a site map $s$, but this is redundant as it can be simulated by adding ancillae applying SWAPs and removing ancillae.) We define $\range(\delta) = \max_i \operatorname{range}(\alpha_i)$. But the following example highlights a small risk. Consider a circle with a GS with three sites located at $\theta = 0,\frac{2\pi}{3}$, and $\frac{4\pi}{3}$, and say a qubit at each site (with specified isomorphisms to $\C^2$). Any permutation $p$ of sites induces an endomorphism $\alpha(p) \in \End(\C^2)^{\otimes 3}$. Let $p_0 = (0,\frac{2\pi}{3},\frac{4\pi}{3})$, $p_1 = (0,\frac{4\pi}{3},\frac{2\pi}{3})$ and $p_2 = (\frac{2\pi}{3},\frac{4\pi}{3})$, the SWAP gate. We might consider all three permutations to have ``small range'' since sites are moved less than the injectivity radius of $S^1$. But the deformation $\alpha_0 = \alpha(p_0)$, $\alpha_1 = \mathrm{SWAP} \circ \alpha_0 = \alpha(p_1)$ takes counterclockwise to clockwise rotation. This is a problem, deformation classes of QCA should, according to \cite{gross12}, distinguish the direction of flow. Motivated by the example, we make a blanket requirement that all gates $g$ occuring in a deformation $\delta$ must have $\operatorname{diam}(\supp(g)) \leq \operatorname{inj.\ rad.}(X,d)/100$. This will prevent a quantum circuit taking a ``small'' range QCA to another small range QCA of a different topological character; clockwise vs.\ counterclockwise rotation in the preceeding example. (For a simplicial complex inj.\ radi.\ is the supremum of radii $r$ so that all metric balls of radius $r$ in $X$ are collapsable.) This completes our sketch of the proof techniques for Theorem \ref{thm:wrinkling} below.

Let us make a comment going beyond our quoted references: define a 2-complex, $\mathbb{QCA}(X)$, to model QCA$(X)$. Build $\mathbb{QCA}(X)$ as a 2-complex, the 0-cells should be admissible QCA on $X$ and 1-cells admissible deformations. Let us propose that 2-cells be an \emph{admissible} 2-deformation $\Delta$, a deformation of deformations. Since we may implant all the ancillae we will ever need at the start (and remove them at the end), we only need to define a 2-deformation over a sequence of gates, where $\Delta$ may be defined to be any portion of a square grid of gates where each box commutes:

\begin{figure}[ht]
	\centering
	\begin{tikzpicture}[scale=1.8]
		\node at (0,0.5) {$\alpha_i^1$};
		\draw[->] (0,0.25) -- (0,-0.25);
		\node at (0,-0.5) {$\alpha_i^2$};
		\draw[->] (0.25,0.5) -- (0.75,0.5);
		\node at (1.25,0.5) {$\alpha_{i+1}^1$};
		\draw[->] (1.75,0.5) -- (2.25,0.5);
		\node at (2.75,0.5) {$\dots$};
		\draw[->] (1.25,0.25) -- (1.25,-0.25);
		\node at (1.25,-0.5) {$\alpha_{i+1}^2$};
		\draw[->] (1.75,-0.5) -- (2.25,-0.5);
		\node at (2.75,-0.5) {$\dots$};
		\draw[->] (0.25,-0.5) -- (0.75,-0.5);
		\node at (0,-1) {$\vdots$};
		\node at (1.25,-1) {$\vdots$};
		\node at (-1.25,0.5) {$\alpha_{i-1}^1$};
		\draw[->] (-0.85,0.5) -- (-0.25,0.5);
		\node at (-1.25,-0.5) {$\alpha_{i-1}^2$};
		\draw[->] (-0.85,-0.5) -- (-0.25,-0.5);
		\draw[->] (-1.25,0.25) -- (-1.25,-0.25);
		\node at (-2.5,-0.5) {$\dots$};
		\draw[->] (-2.2,-0.5) -- (-1.75,-0.5);
		\node at (-2.5,0.5) {$\dots$};
		\draw[->] (-2.2,0.5) -- (-1.75,0.5);
		\node at (-1.25,-1) {$\vdots$};
		\draw[->] (-0.75,0.25) to[out=-10,in=90] (-0.4,0) to[out=-90,in=10] (-0.75,-0.25);
		\draw[->] (0.5,0.25) to[out=-10,in=90] (0.85,0) to[out=-90,in=10] (0.5,-0.25);
	\end{tikzpicture}
	\caption{Arrows are gates and squares commute.}
\end{figure}

We define the range of $\Delta$, $r(\Delta) = \max_{\text{grid}}(\operatorname{range}(\alpha_p^q))$, and $\Delta$ is admissible if $r(\Delta) < \epsilon_0$. Thus, $\pi_0(\mathbb{QCA}(X)) \cong Q(X)$, and $\pi_1(\mathbb{QCA}(X))$ is some other abelian group (since composition makes $\mathbb{QCA}(X)$ into an $H$-space), possible the other grading of some $\Z_2$-periodic spectrum.

The 2-deformation $\Delta$ in the proof of Theorem \ref{thm:covariant}, below, will be of a very simple type: a triangular array, $q \geq p$, and all horizontal arrows identity maps.

\begin{thm}[Wrinkling Threshold \cite{fhh}]\label{thm:wrinkling}
	Let $X$ be a locally finite simplicial complex. The largest scale we consider is the majorant function $i: X \ra \R^+$, $i \coloneqq \mathrm{inj.\ rad.}/2$. We now use variously subscripted $\epsilon$ to denote smaller majorant functions to $\R^+$. Let $\epsilon \leq i$ be any majorant (in our aplications $\epsilon = i$). Then there is an $\epsilon_0$ so that any QCA on $X$ of $\range\leq \epsilon_0$ can be deformed $(\delta)$ to arbitrarily small range so that every QCA in the deformation has $\range\leq \epsilon$ (and substituting $\epsilon_0$ for $\epsilon$, there is an $\epsilon_{-1}$ so that any QCA of $\range\leq \epsilon_{-1}$ can be deformed to arbitrarily small range with the QCA in that deformation having $\range\leq \epsilon_0$).
	
	\emph{Addendum}: If $\epsilon = i$, $\epsilon_0$ and $\epsilon_{-1}$ depend only on the local combinatorics of $X$. Because the language of handlebodies is very convenient to construct wrinkling procedures, if $X$ is not a manifold we would generally thicken it to a Riemannian manifold $M$ to produce the claimed wrinklings (i.e.\ shrinking of ranges). In the case $X=M$, if $M$ is rescaled so that $-1 \leq K,e \leq 1$ for all sectional curvatures and eigenvalues of the II-fundamental form (when $\partial M \neq \varnothing$) and also inj radius $M \geq 1$, then $\epsilon_0$ depends only on dim$(M)$; $\epsilon_0 = c^{-d}$ for some constant $c>1$. \qed
\end{thm}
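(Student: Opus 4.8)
The plan is to run the \emph{wrinkling} procedure sketched informally above, organized as an induction over the handles of a thickening, and then to extract the threshold constant from the bookkeeping. First I would replace $X$ by a Riemannian manifold $M$ that thickens it, equipped with a handle decomposition whose handle sizes and gluings are controlled by the local combinatorics of $X$ (after the rescaling $-1\le K,e\le 1$ and injectivity radius $\ge 1$ in the manifold case); this loses nothing, since $Q$ depends only on the proper homotopy type, so $Q(X)\cong Q(M)$. Fix a QCA $\alpha$ on a geometric system over $M$ with $\range(\alpha)$ small, and write $d=\dim M$. The induction variable is the handle index $k$: at the start of stage $k$ I assume $\alpha$ has already been deformed so that, outside a controlled neighborhood of the handles of index $\ge k$, it has arbitrarily small range, and the task is to push this down one more index while keeping every intermediate QCA of $\range\le\epsilon$.

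The core move is to \emph{wrinkle a $k$-handle} $D^k\times D^{d-k}$: using SWAP gates (a depth-$1$ circuit) one implements a homotopy of the site map that folds the handle bellows-like many times in the $D^k$-directions, so that conjugating $\alpha$ by this fold divides its effective range in the base directions by the number of folds. Folding alone creates new overlaps, so, exactly as in \cref{fig:SDQC,fig:wrinkling1,fig:wrinkling2}, I would first ``pull copies of $\alpha\otimes\alpha^{-1}$ from the vacuum'' --- a small-depth circuit from ancillae, using the triviality of $\alpha\otimes\alpha^{-1}$ established above --- distribute these pairs along the folds, and then partially cancel each adjacent pair $(\alpha,\alpha^{-1})$ by a small-depth circuit whose existence and controlled range are supplied by the normality of FDQC in QCA noted above; the uncancelled copies then survive only on a much finer scale. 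Iterating over the handles of index $k$, and then over $k=0,1,\dots,d$, yields a deformation of $\alpha$ to arbitrarily small range with all intermediate ranges $\le\epsilon$. The second assertion, the existence of $\epsilon_{-1}\le\epsilon_0$, is then simply the theorem applied once more with $\epsilon$ replaced by $\epsilon_0$.

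The quantitative heart --- and the step I expect to be the main obstacle --- is the range estimate that produces $\epsilon_0$. Each partial-cancellation circuit has a fixed gate radius, hence spreads support like a light cone by a bounded amount, and a little ``ground is lost'' near the ends of every fold; together these impose a ceiling on how far a handle may be folded before an intermediate QCA would overrun the majorant $\epsilon$. Pushing the estimate through one handle index multiplies the admissible threshold by a constant factor $c^{-1}<1$ that depends only on the local combinatorics (the handle sizes and the number of handles meeting a given handle being so bounded), and composing over the $d+1$ indices gives $\epsilon_0=c^{-d}$ in the rescaled manifold case, with the analogous combinatorial bound in general --- this is precisely the Addendum. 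The delicate part is making the bookkeeping uniform over all of a possibly noncompact $X$, so that the construction honors a prescribed majorant function rather than a single constant, and so that the homotopy-of-site-map steps are realized honestly by adding ancillae, applying SWAPs, and removing ancillae, subject to the blanket constraint that every gate $g$ used satisfies $\operatorname{diam}(\supp(g))\le \operatorname{inj.\ rad.}(X)/100$; this is where the ``slight extension'' of \cite{fhh} enters.
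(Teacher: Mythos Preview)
The paper does not give its own proof of this theorem: it is quoted from \cite{fhh} and stated with a terminal \qed, preceded only by the informal sketch (the ``Wrinkling'' item (4) and Figures~\ref{fig:SDQC}--\ref{fig:wrinkling2}) which your proposal faithfully organizes and expands. Your outline---thicken to a handlebody, induct over handle index, fold each handle bellows-like using $\alpha\otimes\alpha^{-1}$ pulled from ancillae and partial cancellations via normality of FDQC, and extract $\epsilon_0=c^{-d}$ from the light-cone bookkeeping across the $d{+}1$ indices---is exactly the argument the paper sketches and attributes to \cite{fhh}, so there is nothing to compare.
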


Recall we have defined geometric systems GS $(X,h)$, thickened $X$ to a Riemannian manifold $M$ (to conveniently use handle theory), defined a QCA $\alpha$ on a GS on $M$, defined $\range(\alpha)$, defined a deformation $\delta$ from $\alpha_1$ to $\alpha_n$, $\range(\delta)$, and the constant or majorant function $\epsilon_o(M) > 0$.

Using the wrinkling threshold theorem, one may verify that $Q$ depends only on the proper homotopy type of $X$ and indeed is a covariant proper homotopy functor. Ultimately the choice of path metric $d$ drops out. Of course, when $X$ is compact, $Q$ is a homotopy functor.

\begin{thm}\label{thm:covariant}
	$Q$ is a covariant function from the category of locally finite simplicial complexes and continuous, proper maps to the category of abelian groups.
\end{thm}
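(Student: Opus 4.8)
\medskip

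The plan is to build the induced map $f_*\colon Q(X)\to Q(X')$ out of the push-forward of geometric systems (\cref{rem:tensor h}) together with the corresponding push-forward of QCAs, using the Wrinkling Threshold \cref{thm:wrinkling} to keep all ranges below the relevant admissibility thresholds, and then to check functoriality and the homomorphism property. The structural fact that makes everything run is that push-forward along a proper map is literally a \emph{relabelling} of tensor factors: if $f\colon X\to X'$ is continuous and proper and $h=\{H_x\}$ is a geometric system on $X$, then for each $x'\in X'$ the fibre $f^{-1}(x')$ is compact, hence meets the locally finite site set $J(h)$ in a finite set, so $f_*h:=\{\otimes_{f(x)=x'}H_x\}$ is again a geometric system (with $f(J(h))$ locally finite), and $\wh\End_{X'}\cong\wh\End_{X}$ canonically; in particular a $\ast$-automorphism $\alpha$ of $\wh\End_X$ is simultaneously a $\ast$-automorphism $f_*\alpha$ of $\wh\End_{X'}$.

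For the construction of $f_*$, given an admissible $\alpha\in\QCA(X)$ first choose a positive majorant $r\colon X\to(0,\infty)$ with the property that each ball $B_{r(x)}(x)$ meets only finitely many simplices and is carried by $f$ into the $\epsilon_0(X')$-ball about $f(x)$; this is possible by local finiteness of $X$ and uniform continuity of $f$ on the compact stars that contain these balls, using that $\epsilon_0(X')$ is a genuine positive majorant on $X'$. By \cref{thm:wrinkling}, $\alpha$ can be deformed through an admissible deformation to a QCA $\alpha'$ with $\range(\alpha')\le r$. Then $\supp\bigl(f_*\alpha'(\Op')\bigr)\subset\EuScript{N}_{\epsilon_0(X')}(\supp\Op')$ for every operator $\Op'$, so $f_*\alpha'\in\QCA(X')$, and we set $f_*[\alpha]:=[f_*\alpha']\in Q(X')$.

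The heart of the matter, and the step I expect to be the main obstacle, is well-definedness: one must show $[f_*\alpha']$ depends neither on the wrinkling $\alpha'$ of $\alpha$ nor on the representative $\alpha$ of its class in $Q(X)$. In each case the data on $X$ is an admissible deformation $\delta$ between two QCAs of arbitrarily small range, but $\delta$ itself may pass through QCAs whose range is too large to push forward into $\QCA(X')$. The required tool is a \emph{parametrised} wrinkling: an admissible $2$-deformation $\Delta$ — of the triangular type with identity horizontal arrows described above — that shrinks $\range(\delta)$ below any prescribed majorant while keeping its already small endpoints fixed up to a further small deformation. Granting this, apply the majorant estimate of the previous paragraph, now to $\Delta$, to push it forward to an admissible deformation on $X'$ between the two candidate values, so $f_*$ is well defined. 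Proving this parametrised wrinkling statement, a mild strengthening of \cref{thm:wrinkling}, is the technically delicate point.

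Functoriality and the homomorphism property then follow formally. Since push-forward is a relabelling of tensor factors leaving $\wh\End$ fixed, it is strictly functorial, $(g\circ f)_*\alpha'=g_*(f_*\alpha')$ on the nose; combining this with well-definedness (wrinkle $\alpha$, push by $f$, wrinkle again on $X'$, push by $g$, and compare with wrinkle-then-push-by-$g\circ f$, the comparison being the push-forward of a wrinkling $2$-deformation) gives $(g\circ f)_*=g_*\circ f_*$ on $Q$, while $(\id_X)_*$ is induced by the identity automorphism and so equals $\id_{Q(X)}$. Finally, push-forward commutes with composition and with the tensor product of QCAs (\cref{rem:tensor alpha}), carries the identity QCA, hence ancillae, to the identity QCA, and — after first wrinkling gate radii below a suitable majorant — carries each gate to a unitary supported on finitely many sites inside an $\epsilon_0(X')$-ball, hence carries $\lbar{\operatorname{FDQC}}(X)$ into $\lbar{\operatorname{FDQC}}(X')$. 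Together with the description $Q=\lbar{\QCA}/\lbar{\operatorname{FDQC}}$ and the Eckmann--Hilton identification of composition with tensor product, this shows $f_*\colon Q(X)\to Q(X')$ is a homomorphism of abelian groups, completing the verification that $Q$ is a covariant functor.
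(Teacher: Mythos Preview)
Your approach is correct and rests on the same core machinery as the paper---the Wrinkling Threshold and the triangular $2$-deformations $\Delta$---but the logical organization differs. The paper's sketch focuses on the compact case and argues via a round-trip through a homotopy inverse: given $X \underset{g}{\overset{f}{\rightleftharpoons}} Y$, it pushes small representatives $\alpha_1', \alpha_2'$ forward to $Y$, assumes an admissible deformation $\delta'$ between them exists there, wrinkles $\delta'$ (via a $2$-deformation $\Delta$) down to a tiny $\delta''$ together with small side-arcs $\delta_1,\delta_2$, and then pulls $g(\delta_1\cup\delta''\cup\delta_2)$ back to an admissible deformation on $X$. This is really an injectivity argument for $f_*$ under homotopy equivalence. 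You instead wrinkle the deformation $\delta$ directly on $X$ before pushing forward, which establishes well-definedness of $f_*$ for an arbitrary proper $f$, not just for homotopy equivalences; this is the more natural route for plain functoriality, and your explicit check of the homomorphism property (push-forward respecting composition, tensor, ancillae, and gates) supplies material the paper's sketch omits. Both arguments ultimately hinge on the same unstated strengthening of \cref{thm:wrinkling}---that admissible \emph{deformations}, not merely individual QCAs, can be wrinkled to arbitrarily small range---which you rightly flag as the delicate step.
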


\begin{proof}[Proof sketch, focusing on the compact case]
	Wrinkling is a powerful method for shrinking size scales, but like so many ``$(\U ,\mathrm{V})$ arguments''\footnote{Physicists be warned: U and V here refer to nested open sets, not the ultraviolet.} ranges may expand a bit before shrinking. Figure \ref{fig:functor} illustrates how this makes the equivalence of admissible QCAs on $X$ subtle. The answer may depend on the existence of an inobvious admissible deformation, the dotted line in Figure \ref{fig:functor}.
\end{proof}

\begin{figure}[ht]
	\centering
	\begin{tikzpicture}[scale=1.4]
		\draw (-3,1.5) -- (2,1.5);
		\draw (-3,0) -- (2,0);
		\draw (-3,-1.3) -- (2,-1.3);
		\node at (-3.75,1.5) {$\frac{\text{inj.\ rad.}(M)}{2}$};
		\node at (-3.75,0) {$\epsilon_0$};
		\node at (-3.75,-1.3) {$\epsilon_{-1}$};
		\draw[->] (-4.5,1.5) -- (-4.5,-1.5);
		\node[rotate=90] at (-4.75,0) {smaller $\epsilon$};
		\node[rotate=90] at (-5.05,0) {range of $\alpha$ and $\delta$};
		
		\draw[<->] (-2.5,-2.5) .. controls (-2.5,0.5) and (-1.4,0) .. (-1.5,-1.5) .. controls (-1.4,1.5) and (0.4,1.5) .. (0.5,-1.5) .. controls (0.6,0) and (1.5,0.5) .. (1.5,-2.5);
		
		\draw[fill=black] (-2.49,-2) circle (0.2ex);
		\draw[fill=black] (-1.5,-1.5) circle (0.2ex);
		\draw[fill=black] (0.5,-1.5) circle (0.2ex);
		\draw[fill=black] (1.49,-2) circle (0.2ex);
		
		\node at (-2.2,-2.1) {$\alpha_1^\pr$};
		\node at (-1.2,-1.5) {$\alpha_1$};
		\node at (0.8,-1.5) {$\alpha_2$};
		\node at (1.8,-2) {$\alpha_2^\pr$};
		
		\draw[dashed] (-2.49,-2) to[out=30,in=225] (-0.7,-1.5) to[out=45,in=180] (-0.1,-0.7) to[out=0,in=180] (0.6,-2.3) to[out=0,in=225] (1.49,-2);

		\node at (4,0) {\hspace{1em}};
	\end{tikzpicture}
	\caption{}\label{fig:functor}
\end{figure}

The three solid arcs in \ref{fig:functor} represent deformations with range their largest vertical value. They imply $\alpha_1 \equiv \alpha_1^\pr$ and $\alpha_2 \equiv \alpha_2^\pr$ but by themselves do not imply $\alpha_1 \equiv \alpha_2$. If dashed arc is also a deformation then all four QCAs are equivalent.

Suppose $\alpha_1$, $\alpha_1^\pr$, $\alpha_2$, and $\alpha_2^\pr$ from \ref{fig:functor} are QCAs on $X$ and we have homotopy equivalences $X \underset{g}{\overset{f}{\rightleftharpoons}} Y$. Let us suppose we are aware of the three solid deformations on $X$ but unaware of the dotted one. So at this point we see two possibly distinct elements, $[\alpha_1],[\alpha_2] \in Q(X)$. Now let us map to $Q(Y)$. Because $f$ may stretch distance (and/or $Y$ may have a smaller $\epsilon_0(Y)$) $f(\alpha_1)$ and $f(\alpha_2)$ might fail to be admissible QCAs on $Y$. But for very small $\range\alpha_1^\pr$ and $\alpha_2^\pr$ (respectively in the equivalence classes of $\alpha_1$ and $\alpha_2$ on $X$) we will have $f(\alpha_1^\pr)$ and $f(\alpha_2^\pr)$ admissible QCA on $Y$. Now suppose that on $Y$ we are aware of a ``dotted'' admissible deformation, call it $\delta^\pr$, from $f(\alpha_1^\pr)$ to $f(\alpha_2^\pr)$. Because $\delta^\pr$ is admissible (in $Y$) we may wrinkle it as extravagantly as we like to produce an extremely small range deformation $\delta^{\pr\pr}$ from $(f(\alpha_1^\pr))^\pr$ to $(f(\alpha_2^\pr))^\pr$. If one likes, $\delta^{\pr\pr}$ is one of four ``boundary arcs'' of a 2-deformation $\Delta$ carrying $\delta^\pr$ to $\delta^{\pr\pr}$ (see Figure \ref{fig:boundary_arc}). $\Delta$ may have dangerously\footnote{Dangerous in the sense of being too large to be reduced by the Wrinkling Theorem.} large range (the maximum of its QCAs) near inj.\ rad$(Y)/2$, but this is irrelevant. The three of the four boundary arcs of $\Delta$, $\delta_1 \coloneqq$ the arc joining $f(\alpha_1^\pr)$ to $(f(\alpha_1^\pr))^\pr$, $\delta_2 \coloneqq$ the arc joining $f(\alpha_2^\pr)$ to $(f(\alpha_2^\pr))^\pr$, and $\delta^{\pr\pr}$ all have range less than a quantity $\epsilon_{-1}(Y)$ which, by the last sentence of Theorem \ref{thm:wrinkling}, we can choose as small as we like by making $\alpha_1^\pr$ and $\alpha_2^\pr$ to make arbitrarily small. Thus we can ar$\range\range(g(\delta_1 \cup \delta^{\pr\pr} \cup \delta_2)) < \epsilon_0(X)$, verifying that $[\alpha_1] = [\alpha_2]$ in $Q(X)$. \qed

\begin{figure}[ht]
	\centering
	\begin{tikzpicture}[scale=1.4]
		\draw (-3,1.5) -- (2,1.5);
		\draw (-3,0) -- (2,0);
		\draw (-3,-1.5) -- (2,-1.5);
		\node at (-3.65,1.5) {$\frac{\text{inj.\ rad.}(Y)}{2}$};
		\node at (-3.55,0) {$\epsilon_0(Y)$};
		
		\node at (-4.5,-1.2) {\footnotesize{$g$ sends QCA on $Y$ ranges}};
		\node at (-4.5,-1.5) {\footnotesize{below this line to QCA on}};
		\node at (-4.7,-1.8) {\footnotesize{$X$ with ranges $<\epsilon_0(X)$}};
		
		\draw (-2.8,-3.5) .. controls (-2.8,-1.5) and (-2.2,-1.8) .. (-2.2,-2.3) to[out=30,in=180] (-0.6,-0.7) to[out=0,in=180] (0.4,-2.3) to[out=0,in=180] (0.9,-1.8) .. controls (1.2,-1.8) and (1.4,-2) .. (1.4,-3.5) to[out=170,in=0] (-0.7,-3.1) to[out=180,in=30] (-1.9,-3.3) to[out=210,in=10] (-2.8,-3.5);
		
		\draw[fill=black] (-2.2,-2.3) circle (0.2ex);
		\draw[fill=black] (-2.8,-3.5) circle (0.2ex);
		\draw[fill=black] (1.4,-3.5) circle (0.2ex);
		\draw[fill=black] (0.6,-2.15) circle (0.2ex);
		
		\node at (-2.8,-3.8) {$f(\alpha_1^\pr)^\pr$};
		\node at (2,-3.6) {$f(\alpha_2^\pr)^\pr$};
		\node at (-0.5,-2.9) {$\delta^{\pr\pr}$};
		\node at (-3,-2.7) {$\delta_1$};
		\node at (1.6,-2.7) {$\delta_2$};
		\node at (-2.2,-2.55) {$f(\alpha_1^\pr)$};
		\node at (2,-2) {$f(\alpha_2^\pr)$};
		\draw[->] (1.5,-2) -- (0.7,-2.15);
		
		\draw[->] (-0.62,-0.7) to[out=90,in=-86] (-0.63,0.5);
		\draw[->] (-0.63,0.5) .. controls (-0.65,1.5) and (-0.85,1.5) .. (-0.9,0.8);
	    \draw[->] (-0.9,0.8) to[out=-95,in=87] (-1,-0.5);
	    \draw[color=white,line width=1.5mm] (-1,-0.5) -- (-1.03,-1);
	    \draw[->] (-1,-0.5) to[out=-93,in=89] (-1.05,-2);
	    \draw[->] (-1.05,-2) to[out=-91,in=90] (-1.07,-3.1);
	    
	    \node at (0.75,0.6) {\footnotesize{part of $\Delta$, the 2-}};
	    \node at (1.07,0.3) {\footnotesize{deformation from $\delta^\pr$ to $\delta^{\pr\pr}$}};

		\node at (4.5,0) {\hspace{1em}};
	\end{tikzpicture}
	\caption{}\label{fig:boundary_arc}
\end{figure}

Warning to physicists: While our definitions respect the concept of \emph{finite information density} FID, $\operatorname{dim}(H_i) < \infty$, the site map $s$ is finite-to-one, and sites are locally finite. We have \emph{not} enforced bounded information density BID. For example, under our definition $Q(\R^3) \cong Q(H^3)$, even though any map from hyperbolic to Euclidean space involves an exponentially increasing compression of volume, hence increasing compression of information, as one approaches infinity. Similarly for a punctured sphere $Q(S^3_-) \cong Q(\R^3) \cong Q(H^3)$, since all these spaces are proper homotopy equivalent (in fact homeomorphic), and the groups are nontrivial by \cite{fhh20}. Whereas under a BID hypothesis all three might well be distinct. In one lower dimension it was proven \cite{fh20} that $Q(S^2) = 0$ and those methods also imply\footnote{Private communication from Jeongwan Haah.} that $Q(S^2_-) \cong Q(\R^2) \cong Q(H^2)$ are trivial.

Particularly in the case where $X$ is a smooth 4-manifold $M$, it would be of interest to define a more refined functor $\wt{Q}(M)$ which would have the potential to depend on the smooth structure. To do this, modify the definition of \emph{geometric system} to place the (finite) Hilbert space dof on piecewise smooth oriented top cells within a piecewise smooth cell structure on $M$. Adding (removing) an ancilla would correponding to subdividing (fusing) top cells. Because of the orientation condition on top cells, a diffeomorphism, but not a general map, induces a homomorphism $\wt{Q}(M) \ra \wt{Q}(M^\pr)$. But notice that cells structures can be pulled back by covering maps and codimension zero immersions, so the partial order explored in the main paper may be relevant to such a $\wt{Q}(M)$. Also, pleasantly, $\wt{Q}$ would restore \emph{cells} to the concept of a \emph{quantum cellular automata}.

Returning to $Q(X)$, we pose the question: Is $Q(X)$ classified by a spectrum, and if so, which one? In this appendix we have taken a \emph{sharp} definition of range with an abrupt cut-off. Better motivated physically would be to consider QCAs with exponentially decaying tails (corresponding to partially screened electron-electron interactions). Recently, the results of \cite{gross12} have been recovered in one such context\footnote{The permissible tails in this work are actually algebraic.} \cite{rww20}. But curiously, even more recent unpublished work of Kitaev suggests that the 1D classification of QCA with tails may depend on the exact choice of operator norms under which they are measured. So perhaps several related spectra should be sought.

To build a little intuition for $Q(X)$, we suggest that it be thought of as a kind of quantized first homology, $H_1(X)$. For example, suppose we look at a QCA consisting of discrete qubit dof acted on only by \emph{small permutations} $p$, i.e.\ if $p(x) = y$ then $\operatorname{dist}_X(x,y) < \epsilon << \text{inj.\ rad.}(X)$. Permutation whose cycles all have diameter $O(\epsilon) << \text{inj.\ rad.}(X)$, are equivalent to ancilla and should be divided out. It is a pleasant exercise to verify: $\{\text{small permutations}\} \slash \{$permutations with all-small cycles$\} \cong H_1^{\text{loc.\ fin.}}(X;\Z)$. Allowing qunits complicates the coefficients slightly, see \cite{gross12}, but a similar statement holds with qunit dof.

The body of the paper plays on this theme. The forward (functional map) $Q(X) \ra Q(Y)$, given $f: X \ra Y$, is apparent (although some epsilons must be thought through, as we saw in our sketch of Theorem \ref{thm:covariant}) and was exploited in \cite{fhh20}. More interesting are the \emph{wrong direction} maps associated with coverings and, even more subtly, codimension-0 immersions. To continue the analogy with homology, these should be thought of as \emph{transfer} maps. A goal of this paper is to develop tools, such as this transfer, to put interesting QCAs on a variety of manifolds, and, perhaps, eventually use these to study or classify the manifolds themselves.

To link QCA to interesting, topological states of matter, we may consider that each local Hilbert space $H_i$ is provided with a distinguished vacuum vector\footnote{Something we did not wish to do at the outset, a choice that drove us away from Hilbert spaces and towards their endormophism algebras.} $\lvert \uparrow \rangle$. Then any QCA $\alpha$ provides a state $\psi = \alpha(\otimes_i \lvert \uparrow \rangle)$. It is proven in \cite{fedhh}, modulo accepted physical properties of central charge and edge currents, that there are nontrivial topological ground states $\psi_{\text{top}}$ on a 3-torus $T^3$ that can be disentangled via $\alpha^{-1}$, i.e.\ $\psi_{\text{top}} = \alpha \otimes_i \lvert \uparrow \rangle$. In contrast, as a matter of definition, a finite depth quantum circuit cannot disentangle a topological ground state $\psi_{\text{top}} \neq \operatorname{FDQC}(\otimes_i \lvert \uparrow \rangle)$. Thus, $Q(M)$, essentially $\operatorname{QCA} \slash \operatorname{FDQC}(M)$, may produce topological ground states on a manifold $M$. In \cite{fedhh} these topological ground states, torsion elements of $Q(T^3)$, are detected via algebraic $K$-theory. This should not be surprising since $K_1$ is all about whether automorphisms of rings and algebras can be decomposed in terms of the simplest representatives, analogous to attempting to write a QCA as a FDQC.

In this paper, the transfer map is studied with an eye towards understanding which topological states can live on which manifolds. A trivial example, well known in physics, is that topological phases with point-like Fermionic excitations (which by the spin-statistics theorem, can be regarded as a point-like excitation with a $(-1)$ twist factor under $\pi_1(\operatorname{SO}(d))$) must reside on manifolds with a spin structure.

Given a QCA $\alpha$ on some manifold $M$, we study a partial order based on Kirby's torus trick (see \cite{kirby69}, and for its appearance in physics \cites{hastings13, hastings18}) so that if $\alpha$ can be defined on $M$ then a closely related $\alpha^\pr$ can be defined on any $M^\pr < M$ in that partial order.

It is worth comparing this proposal with statistical physics. Percolation, for example (see \cite{gg}), can be studied on random Voronoi tesselation of any Riemannian manifold $M$, so there is no question of the existence of the model on $M$. However, the topology of $M$ will effect which observables are present e.g.\ which cycles may or may not percolate \cite{freedman97}. Beyond topology, the geometry will also inform the generic behavior, e.g.\ almost surely there is giant cluster on $\R^d$ but not on $H^d$. As the zoo of topology phases expands \cite{shirley20}, it seems reasonable to look for phases tightly linked to a manifold's (proper) homotopy type using $Q(M)$ and eventually its smooth structure using some version of $\wt{Q}(M)$ described above.

\subsection{QCA pullback under immersion}
In this subsection, $X$ will be a compact $d$-manifold. Manifolds are treated to exploit immersion theory. Compactness ensures that all algebras are finite-dimensional so subfactor inclusions are \emph{trivial} and Wedderburns Theorem may be applied.\footnote{To obtain our immersions we are willing (like Kirby) to remove a point; the resulting non-compactness will not interfere with the finite dimensionality.}

Suppose $i: M^d \looparrowright N^d$ is an immersion between manifolds, with $s(h) \coloneqq G \subset N^d$ a set of points labeled by finite dimensional Hilbert spaces $H_k$. (It is harmless to assume the site map $s$ is one-to-one.) Let $\alpha: \End \ra \End$, $\End \coloneqq \otimes_k \End(H_k)$ be an admissible QCA on $N$. (End is already complete as we are in a finite dimensional context.)

We turn now to Hastings's adaptation of the ``torus trick'' \cite{kirby69}: $M_-$ is the non-compact manifold obtained by deleting a point from a compact $M$. $N$ is also compact. Let $G^{\text{pt}}$ be the pullback in the category of spaces, pt indicating \emph{puncture}.
\begin{equation}
	\vcenter{\hbox{
	\tikz{
		\node at (-1,0.7) {$G^\text{pt}$};
		\draw[->] (-1,0.4) -- (-1,-0.4);
		\node at (-1,-0.7) {$M_-$};
		\draw[->] (-0.6,0.7) -- (0.7,0.7);
		\node at (1,0.7) {$G$};
		\draw[->] (1,0.4) -- (1,-0.4);
		\node at (1,-0.7) {$N$};
		\draw[->] (-0.5,-0.8) to[out=90,in=-45] (-0.55,-0.6) arc (45:225:0.055) to[out=-45,in=180] (-0.6,-0.7) -- (0.7,-0.7);
		\node at (0,-0.5) {$i$};
	}}}
\end{equation}

In \cite{hastings13} a \emph{pullback} $\alpha^\text{pt}: \End_-^{\text{pt}} \ra \End^{\text{pt}}$ over $G^\text{pt}$ is defined provided $r \coloneqq \operatorname{range}(\alpha)$ is small w.r.t.\ the geometry of the immersion $i$, $\End^{\text{pt}} = \otimes_{k^\pr \in G^{\text{pt}}} \End(H_k)$, and $\End_-^{\text{pt}}$ is specified below. The relevant scalar measure of the geometry of $i$ is \emph{loop diameter}, $ld(i) = \inf_\gamma \operatorname{diam}_N(\gamma)$, where $\gamma$ is a double point loop in $N$, that is $\gamma = i(\beta)$, $\beta$ an arc in $M_-$ with $i(\partial_+ \beta) = i(\partial_-\beta)$. Let us explain how $\alpha^{\text{pt}}$ is defined, and then how the puncture is \emph{healed} to result in a ``pulled back'' QCA $\alpha^\pr$ on the compact $M$.

$\alpha^\text{pt}$ will actually just be a $\ast$-homomorphism, not a $\ast$-isomorphism, but the healed\footnote{We use a distinct symbol $G^\pr$ for the sites of the healed $\alpha^\pr$ because, in the end, we think of $G^\pr$ as sites of $M$ not $M_-$.} $\alpha^\pr$ over $G^\pr \coloneqq G^{\text{pt}}$ will be a $\ast$-isomorphism. $\alpha^\text{pt}$ is only defined to act on operators $\Op$ supported on unions of sites $v_{i^\pr} \in G^\text{pt}$, $v_{i^\pr}$ covering $v_i \in G$, which are ``far away'' from the puncture. We have denoted this $\ast$-algebra by $\End_-^\text{pt}$. ``Far away'' means that if $v_{i^\pr} \in \supp(\Op)$, then any path $\gamma$ in $N$ of length $\leq r$ starting at $v_i$ has a lift to a path $\gamma^\pr$ in $M$ starting from $v_i$, $r = \operatorname{range}(\alpha)$. $\alpha^\text{pt}$ is a $\ast$-homomorphism from the $\ast$-algebra generated by such operators into the $\ast$-algebra $\otimes_{k^\pr \in G^\text{pt}} \End(H_{k^\pr})$.

To define $\alpha^\text{pt}$, begin with what it does to operators $\Op$ supported at a single site $v_{i^\pr} \in G^\text{pt}$. The site $v_{i^\pr}$ maps (under the immersion $i$) to $v_i$ and there is an obvious way to pull back the action $\alpha$ to $\alpha^\text{pt}$ acting on $\Op$. For example, if $\alpha$ takes Pauli $X$ at $v_i$ to, say, (Pauli $Y$ at $v_i$) $\otimes$ (Pauli $Z$ at some nearby $v_j$) then $\alpha^\text{pt}$ should be defined to take Pauli $X$ at $v_{i^\pr}$ to (Pauli $Y$ at $v_{i^\pr}$) $\otimes$ (Pauli $Z$ at $v_{j^\pr}$). The condition on paths guarantees that exactly on one $v_{j^\pr}$, at radius $\leq r$ in $G^\text{pt}$ will cover $v_j$, and we use Pauli $Z$ at that site, and similarly for site $v_{i^\pr}$. Now extend to products of such local $\Op$ to keep $\alpha^\text{pt}$ a $\ast$-homomorphism:
\begin{equation}
	\alpha^\text{pt}(\Op_1 \Op_2 \cdots \Op_n) \coloneqq \alpha^\text{pt}(\Op_1) \alpha^\text{pt}(\Op_2) \cdots \alpha^\text{pt}(\Op_n)
\end{equation}

Hastings checks that the order of the products is irrelevant, making $\alpha^\text{pt}$ well-defined. The interesting case is when $\Op_1$ and $\Op_2$ are supported at nearby sites of $G^\text{pt}$. In this case, the commutator $[\alpha^\text{pt} \Op_{1^\pr}, \alpha^\text{pt}\Op_{2^\pr}]$ vanishes since it factors through $[\Op_1, \Op_2] = 0$ due to the disjointness of $v_1$ and $v_2$:
\begin{equation}
	0 = \alpha [\Op_1, \Op_2] = [\alpha\Op_1, \alpha \Op_2] \implies [\alpha^\text{pt} \Op_{1^\pr}, \alpha^\text{pt}\Op_{2^\pr}] = 0
\end{equation}

The final step is to truncate the range of $\alpha^\text{pt}$ so it becomes an isomorphism $\alpha^\text{pt}_-$ and then to complete  $\alpha^\text{pt}_-$ to the desired $\ast$-automorphism $\alpha^\pr$ over all of $M$. This is done using Wedderburn's Theorem: A simple subalgebra $S$ of a finite dimensional complex matrix $\ast$-algebra $\End(V)$ is itself a complex matrix algebra and in the vector representation, the sub-matrix $\ast$-algebra $S$ is realized as $S = \End(V^\pr)$ for some $V^\pr$ a tensor factor of $V$. Clearly, $\End^\text{pt}_-$ has trivial center, so its (injective) image $S$ in $\End^\text{pt}$ is a (simple) factor of a tensor decomposition: $S \otimes T \cong \End^\text{pt}$ (corresponding to, say, $H^\pr \otimes H^{\pr\pr} \cong H$). Define $\alpha^\pr \coloneqq \alpha^\text{pt}_- \otimes \theta$, where $\theta$ is induced from an arbitrary $\ast$-isometry on the $H^{\pr\pr}$ factor, $\theta: H^{\pr\pr} \ra H^{\pr\pr}$. This \emph{heals} the puncture and constitutes an algebraic analog of Kirby's torus trick.\footnote{Kirby used the ``push-pull'' technology of Morton Brown developed in the early 1960's to ``heal'' his puncture. Hastings devised a remarkable algebraic substitute, how it might play out in less trivial von Neumann algebras is an intriguing problem.} $\alpha^\pr$ is arbitrary near the puncture but retains the global features of $\alpha$ and how these features interact with the underlying manifold topology.

Notice that the truncation to $H^\pr$ and hence the tensor decomposition
\begin{equation}
	H^\pr \otimes H^{\pr\pr} \cong H
\end{equation}
is \emph{not} necessarily geometric. $H^\pr$ may fail to have the form $H^\pr = \otimes_{i^\pr \in Z} H_{i^\pr}$, for $Z$ some subset of vertices of $G^\text{pt}$. Thus, the algebraic torus trick has this additional subtlety: The splitting of the algebra over $M$ into tensor product of algebras over a compact portion of $M_-$ and over a neighborhood of the puncture $\EuScript{N}(\text{puncture})$ is not always possible on the spatial level.

\subsection{The pullback partial order}
The construction $\alpha \mapsto \alpha^\pr$ motivates the partial orderings studied in the body of this paper. For $d$-dimensional manifolds, we say $M^d \leq N^d$ if there is a smooth immersion of punctured $M$, $i: M_- \looparrowright N$. What we have just seen is that if $M \leq N$ and $N$ admits a QCA $\alpha$ then $M$ admits a pulled back QCA $\alpha^\pr$. The pullback, though arbitrary near one point of $M$, is functorial and acts, in particular, on the GNVW-index \cite{gross12} $\in H^1(;\Q)$ and also the torsion elements of $Q(T^3)$ and other 3-manifolds, constructed in \cite{fedhh} and \cite{fhh20}.

If $\alpha$ may be regarded as an entangling tool, capable in one shot of creating long range entanglement, then if $M \leq N$ this tool can port QCA and thus certain entangled states from $N$ back to $M$. Among such entangled states are ground states of nontrivial TQFTs.

Our partial order is designed merely to illuminate an existence question: given a QCA $\alpha$ on $N$, can a similar $\alpha^\pr$ be constructed on $M$? Finer questions regarding the \emph{different} ways $\alpha^\pr$ might be situated on $M$, for example, pulled back by distinct immersions $i_1,i_2: M_- \looparrowright N$, are also of physical interest but will not be addressed here. In the future, one might also investigate immersions with additional properties e.g.\ restrict to those that pull back certain tangent bundle reductions.

\subsection{$X$ noncompact}
As we have seen, the noncompact case permits a sharp distinction of scales: infinite vs.\ finite making wrinnkling (in some applications) redundant. This would allow us to define a quotient $Q_{\text{noncompact}}(X)$ of $Q(X)$ as:

\begin{defi}
	$Q_{\text{noncompact}}(X) = \{\text{finite $\range\alpha$ on $X$}\} \slash \{\operatorname{FDQC}(X)$, addition/removal of ancilla$\}$.
\end{defi}

Under this definition, compact components of $X$ behave as if crushed to points, much as in the study of coarse geometry. But, in some cases nothing is lost. For example, by \cite{gross12}, $Q_{\text{noncompact}}(\R) \cong Q(\R)$.

\bibliography{immersions}
\newpage

\section{Appendix on 3-manifolds and integral lifts} 
\label{lift}

\[
\text{by Alan W. Reid}
\]

In \cite{FKKT}, the authors introduce the notion of {\em immersion equivalence} of closed connected smooth manifolds of the same dimension. Although their focus is mainly in dimension $4$, they also discuss the situation in dimensions $2$ and $3$
and \cite[Propositions 2.1 \& 2.3]{FKKT} describes what is known in these settings.  In particular, in dimension $3$, all closed connected orientable $3$-manifolds  are immersion equivalent to $S^3$, with the case of closed connected non-orientable $3$-manifolds admitting at least two equivalence classes: these are represented by $S^1\tilde{\times}S^2$ (the twisted product of $S^1$ and $S^2$) and $\mathbb{RP}^2\times S^1$, and are distinguished by 
whether $w_1(M) \in H^1(M,\Z/2\Z)$ (the first Stiefel-Whitney class of $M$) has an {\em  integral lift} (i.e. whether there is a class in $H^1(M,\Z)$ whose reduction modulo $2$ is $w_1(M)$) or not (see Lemma \ref{no-int-lift}).

As part of their investigation into immersion equivalence of non-orientable closed connected $3$-manifolds, the authors of \cite{FKKT} asked the following question. To state this we recall that a finitely generated group $G$ has {\em rank $d$}, if the minimal cardinality of a generating set for $G$ is $d$. When $G=\pi_1(M)$ where $M$ is a closed connected $3$-manifold, then we abuse notation and also say that $M$ has rank $d$.\\[\baselineskip]
\noindent {\bf Question 1:}~{\em Does there exist a closed connected non-orientable hyperbolic $3$-manifold $M$ of rank $2$ for which $w_1(M)$ does not admit an integral lift?}\\[\baselineskip]
Note that $\mathbb{RP}^2\times S^1$ is $2$-generator. In this Appendix we answer Question 1, namely we show.
%

\begin{thm}
\label{rank2}
Question 1 has an affirmative answer.\end{thm}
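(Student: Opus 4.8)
The plan is to construct an explicit closed, connected, non-orientable hyperbolic $3$-manifold $M$ of rank $2$ whose orientation character $w_1(M)\in H^1(M;\Z/2)$ does not lift to $H^1(M;\Z)$. The obstruction to an integral lift is the image of $w_1(M)$ under the Bockstein $\beta\colon H^1(M;\Z/2)\to H^2(M;\Z)$ associated to $0\to\Z\xrightarrow{2}\Z\to\Z/2\to 0$; equivalently, using $H^1(M;\Z/2)\cong\Hom(\pi_1M,\Z/2)$, the character $w_1$ lifts to $H^1(M;\Z)$ if and only if its kernel is the kernel of some surjection $\pi_1M\to\Z$, which forces $\pi_1M$ to have infinite (in fact $\Z$) abelianization quotient detecting $w_1$. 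So the cleanest sufficient condition is to arrange that $H_1(M;\Z)$ is \emph{finite} (a rational homology sphere): then $H^1(M;\Z)=0$, no nonzero class admits an integral lift, and in particular the nonzero class $w_1(M)$ does not. Thus the whole problem reduces to: find a non-orientable hyperbolic rational homology $3$-sphere of rank $2$.

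The natural route is arithmetic. First I would take a suitable orientable hyperbolic rational homology $3$-sphere $\widetilde M$ admitting an orientation-reversing free involution $\tau$, and set $M=\widetilde M/\tau$; then $M$ is non-orientable, hyperbolic, $w_1(M)$ is the nonzero class classifying the double cover $\widetilde M\to M$, and a transfer/Euler-characteristic or Wang-sequence argument shows $H_1(M;\Q)=0$ as well (the $\tau$-invariants of $H_1(\widetilde M;\Q)=0$), so $M$ is again a rational homology sphere. The serious work is (a) producing such a $\widetilde M$ with a \emph{free} orientation-reversing involution — this is where arithmetic hyperbolic $3$-manifolds built from quaternion algebras over fields with a real place, or manifolds commensurable with figure-eight-knot-type examples, give flexibility — and (b) controlling the rank. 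For (b) I would either bound $\mathrm{rank}(\pi_1 M)\le 2$ directly by exhibiting a genus-$2$ Heegaard splitting (e.g. if $\widetilde M$ is obtained by Dehn surgery on a knot so that $M$ has an explicit $2$-generator presentation), or appeal to known censuses of small-volume hyperbolic $3$-manifolds: search the orientable rational homology spheres of low volume for one with a free orientation-reversing symmetry and Heegaard genus $2$, then take the quotient.

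The main obstacle I expect is the simultaneous satisfaction of all three constraints — non-orientable, rational homology sphere, \emph{and} rank exactly $2$. Hyperbolic rational homology spheres are plentiful, and many admit symmetries, but a symmetry that is both free and orientation-reversing is restrictive (orientation-reversing isometries of hyperbolic $3$-manifolds are comparatively rare, and freeness rules out fixed-point sets), and pinning the rank down to $2$ typically requires an explicit small presentation rather than a soft argument. I would therefore lean on a concrete example: identify an orientable arithmetic hyperbolic $3$-manifold of small volume that (i) is a $\Z$-homology sphere or at least rational homology sphere, (ii) carries a free orientation-reversing involution visible in its arithmetic description (e.g. from a Galois symmetry of the defining field, or from an explicit isometry of a knot complement Dehn filling), and (iii) has a genus-$2$ Heegaard splitting descending to a genus-$2$ splitting of the quotient. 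Verifying (i)–(iii) for a named manifold — likely via SnapPy/Magma computation of homology, isometry group, and Heegaard genus — is the technical heart of the proof, and the quotient $M$ then answers Question 1.
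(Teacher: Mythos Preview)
Your approach has a fatal topological obstruction: a closed non-orientable $3$-manifold can never be a rational homology sphere. Indeed, the orientation double cover $\widetilde M$ is a closed orientable $3$-manifold, so $\chi(\widetilde M)=0$, and hence $\chi(M)=\tfrac12\chi(\widetilde M)=0$. Since $M$ is non-orientable we have $H_3(M;\Q)=0$, so $b_3=0$, $b_0=1$, and $0=\chi(M)=1-b_1+b_2$ forces $b_1(M)=1+b_2(M)\ge 1$. Thus $H_1(M;\Z)$ is always infinite and $H^1(M;\Z)\neq 0$; you cannot get the non-lifting of $w_1$ ``for free'' from vanishing integral cohomology. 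Your whole search for a non-orientable hyperbolic rational homology $3$-sphere is looking for an object that does not exist.

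The paper's proof proceeds quite differently and works precisely because $b_1\ge 1$ is unavoidable. It takes a manifold $M$ with $H_1(M;\Z)\cong\Z\times\Z/4\Z$ (so $b_1=1$, the minimal possible value) and arranges that the \emph{unique} surjection $\phi\colon\pi_1M\to\Z$ fails to reduce to $w_1$. The mechanism is a fibering criterion (Lemma~\ref{no-int-lift}): if $M$ fibres over $S^1$ with non-orientable fibre $F$ and $b_1(M)=1$, then $\phi$ is dual to $F$, so $\phi$ vanishes on $\pi_1F$, whereas $w_1(M)|_F=w_1(F)\neq 0$; hence $w_1$ cannot be the mod~$2$ reduction of $\phi$. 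The explicit example is the SnapPy census manifold $\texttt{m313(1,0)}$, shown to fibre with non-orientable fibre via Brown's algorithm applied to the one-relator presentation of the cusped manifold $\texttt{m313}$; rank~$2$ is read off from the two-generator presentation. So the correct strategy is not to kill $H^1(M;\Z)$ but to control the single integral class that is there and make sure it is the wrong one.
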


More details on the manifold $M$ used to answer Question 1 are given below.  For now we simply note that $H_1(M,\Z) = \Z\times\Z/4\Z$, and so 
from \cite[Proposition 2.3]{FKKT}, and in the notation of \cite{FKKT}, $M \leq \mathbb{RP}^2\times S^1$ but $M$ is not immersion equivalent to $\mathbb{RP}^2\times S^1$. 
In addition, \cite[Proposition 2.3(4)]{FKKT} $S^1\tilde{\times}S^2\leq M$, however using Theorem \ref{rank2} and  \cite[Proposition 2.3(5)]{FKKT} it follows that $M$ is not immersion equivalent to $S^1\tilde{\times}S^2$. Summarizing
we have the following.

\begin{cor}
\label{notequiv}
The manifold $M$ constructed above is not immersion equivalent to $S^1\tilde{\times}S^2$ or $\mathbb{RP}^2\times S^1$, and hence represents a distinct immersion equivalence class of closed connected non-orientable 3-manifolds.\end{cor}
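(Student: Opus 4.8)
The plan is to deduce the Corollary from Theorem~\ref{rank2} together with the dimension-three results of \cite[Proposition 2.2]{FKKT}, with essentially all of the real content living in Theorem~\ref{rank2}, i.e.\ the construction of the manifold $M$. Granting that $M$ is a closed connected non-orientable hyperbolic $3$-manifold whose orientation class $w_1(M)\in H^1(M;\Z/2)$ admits no integral lift, the reduction goes as follows. Since $M$ is hyperbolic, $\pi_1 M$ is a torsion-free discrete subgroup of $\mathrm{Isom}(\H^3)$, so it has no element $x\neq 1$ with $x^2=1$; by \cite[Proposition 2.2(3)]{FKKT} this rules out $\RP^2\times S^1\bowtie M$, and since $\RP^2\times S^1$ is the maximum of the order we in fact get $M<\RP^2\times S^1$ strictly. (Alternatively, $H_1(M;\Z)\cong\Z\oplus\Z/4$ has a unique $2$-torsion element, and the hypothesis on $w_1$ forces $w_1$ to vanish on it, giving the same conclusion — this is the argument used in the text.) On the other side, \cite[Proposition 2.2(5)]{FKKT} says $M\le S^1\wt\times S^2$ precisely when $w_1(M)$ has an integral lift; it does not, so $M\not\le S^1\wt\times S^2$, whereas $S^1\wt\times S^2\le M$ by \cite[Proposition 2.2(4)]{FKKT} because $M$ is non-orientable. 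Hence $[M]$ coincides with neither $[S^1\wt\times S^2]$ nor $[\RP^2\times S^1]$, and as $M$ is a closed connected non-orientable smooth $3$-manifold this is exactly the Corollary.

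It remains to produce $M$, i.e.\ to prove Theorem~\ref{rank2}. By the homological form of the no-integral-lift criterion (Lemma~\ref{no-int-lift}): if $H_1(M;\Z)\cong\Z\oplus\Z/4$ then every integral cohomology class kills the torsion summand, so $w_1(M)$ has an integral lift if and only if it is trivial on the mod-$2$ reduction of $\operatorname{Tor}H_1(M;\Z)$. Thus it suffices to build a closed non-orientable hyperbolic $3$-manifold $M$ with $\pi_1M$ two-generated, $H_1(M;\Z)\cong\Z\oplus\Z/4$, and orientation character $w_1\colon\pi_1M\to\Z/2$ restricting \emph{nontrivially} to the $\Z/4$ summand. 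I would look for such an $M$ in the census of small-volume non-orientable closed hyperbolic $3$-manifolds — equivalently, among quotients $\wt M/\tau$ of orientable hyperbolic $3$-manifolds by free orientation-reversing involutions, or among Dehn fillings of a non-orientable cusped manifold such as the Gieseking manifold along a suitable slope — keeping a two-generator example with $H_1\cong\Z\oplus\Z/4$, and then verify from an explicit presentation that the $\Z/4$-generator of $H_1$ is not in $\ker w_1$. Hyperbolicity should be certified either by recognising $\pi_1 M$ as a torsion-free subgroup of an arithmetic (e.g.\ Bianchi-type) Kleinian group and computing its invariant trace field and quaternion algebra, or by a rigorous numerical computation.

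The main obstacle is forcing all the constraints to hold at once for a single manifold: hyperbolic, non-orientable, rank exactly $2$, and $H_1\cong\Z\oplus\Z/4$ with $w_1$ nontrivial on the torsion. Dehn filling is a convenient tool here, since it never increases rank (so rank $2$ on a cusped parent descends) and, by Thurston's hyperbolic Dehn surgery, all but finitely many fillings are hyperbolic; but isolating a slope that yields exactly $H_1\cong\Z\oplus\Z/4$ with the $2$-torsion \emph{not} killed by $w_1$, and then rigorously certifying hyperbolicity for that one filling, is the delicate step. Once such an $M$ is in hand, the passage to the Corollary is immediate as in the first paragraph.
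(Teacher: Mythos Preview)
Your first paragraph --- the deduction of the Corollary from Theorem~\ref{rank2} together with Proposition~2.2 of \cite{FKKT} --- is correct and is exactly the argument the paper gives in the text immediately preceding the Corollary. One small slip: in your parenthetical alternative you say ``the hypothesis on $w_1$ forces $w_1$ to vanish'' on the unique $2$-torsion element $(0,2)\in\Z\oplus\Z/4$, but in fact $w_1$ vanishes there automatically (it is twice something), no hypothesis needed; that is precisely why the $H_1$ argument works.

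Your second and third paragraphs go beyond the Corollary and sketch a proof of Theorem~\ref{rank2}. Here your route differs from the paper's. You describe Lemma~\ref{no-int-lift} as the homological criterion ``$w_1$ has an integral lift iff it kills the torsion of $H_1$'', and propose to search the census (or Dehn fillings of the Gieseking manifold) for a rank-$2$ non-orientable hyperbolic example with $H_1\cong\Z\oplus\Z/4$ and $w_1$ nontrivial on the $\Z/4$-generator. The paper's Lemma~\ref{no-int-lift} is instead a \emph{fibering} criterion: if $Y$ fibers over $S^1$ with non-orientable fiber and $b_1(Y)=1$, then $w_1(Y)$ has no integral lift (the unique map to $\Z$ kills the fiber, whereas $w_1$ restricts nontrivially to it). The paper then exhibits $M=\texttt{m313(1,0)}$ from the SnapPy census, uses Brown's algorithm to show the cusped parent $\texttt{m313}$ fibers with non-orientable fiber, and checks that the filling slope is the fiber's boundary slope so the fibration extends. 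Your homological route is also valid and arguably more direct once a concrete $M$ is in hand, but you have not pinned one down; the paper's fibering approach has the advantage that non-orientability of the fiber is easy to read off from the sign of a single generator's $O(3,1)$-determinant.
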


Further motivation for Question 1 is given in \S \ref{compatible}.\\[\baselineskip]
\noindent {\bf Acknowledgement:}~{\em The author gratefully acknowledges the financial support of the N.S.F.  and the Max-Planck-Institut f\"ur Mathematik, Bonn, for its financial support and hospitality during the preparation of this work.
He also wishes to thank M. Freedman and P. Teichner for helpful correspondence and conversations on topics related to this Appendix, and a referee for several helpful comments.}

\subsection{Proof of Theorem \ref{rank2}} 
\label{p:rank2}
We begin by proving a lemma that provides an easy way (in principle) to construct closed connected non-orientable 3-manifolds $Y$ for which $w_1(Y)$ does not admit an integral lift (note this applies to $\mathbb{RP}^2\times S^1$).

\begin{lem}
\label{no-int-lift}
Let  $Y$ be a closed non-orientable 3-manifold satisfying the following properties:

\begin{enumerate}
\item $Y$ is fibered over the circle with fiber a closed non-orientable surface;
\item $H_1(Y,\Z) = \Z\times T$, where $T$ is finite.
\end{enumerate}
Then $w_1(Y)$ does not admit an integral lift.
\end{lem}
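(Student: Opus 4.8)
The plan is to use the fibration hypothesis to pin down the homology of $Y$ and then argue that an integral lift of $w_1(Y)$ would force the fibre to be orientable, contradicting (1). First I would recall that $w_1(Y)\in H^1(Y;\Z/2)=\Hom(\pi_1Y,\Z/2)$ corresponds to the orientation character $\omega\colon\pi_1Y\to\Z/2$, and that an integral lift of $w_1(Y)$ is the same as a homomorphism $\widetilde\omega\colon\pi_1Y\to\Z$ whose reduction mod $2$ is $\omega$; equivalently, a class in $H^1(Y;\Z)$ mapping to $w_1(Y)$ under the coefficient map $\Z\to\Z/2$. So the statement to prove is that $w_1(Y)$ is \emph{not} in the image of $H^1(Y;\Z)\to H^1(Y;\Z/2)$.

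Next I would use the fibration $F\hookrightarrow Y\xrightarrow{p} S^1$ with $F$ a closed non-orientable surface. Let $t\in H^1(Y;\Z)$ be the pullback $p^*$ of the generator of $H^1(S^1;\Z)\cong\Z$; this is (up to sign) the canonical ``fibre-detecting'' class, and its mod $2$ reduction $\bar t\in H^1(Y;\Z/2)$ is the pullback of the generator of $H^1(S^1;\Z/2)$. By hypothesis (2), $H_1(Y;\Z)\cong\Z\times T$ with $T$ finite, so by universal coefficients $H^1(Y;\Z)\cong\Z$, generated (up to torsion, i.e. exactly since $H^1$ is torsion-free) by $t$. Hence the image of $H^1(Y;\Z)\to H^1(Y;\Z/2)$ is exactly $\{0,\bar t\}$. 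Therefore $w_1(Y)$ admits an integral lift if and only if $w_1(Y)=0$ or $w_1(Y)=\bar t$.

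Now I would rule out both possibilities. We cannot have $w_1(Y)=0$ since $Y$ is non-orientable by assumption. It remains to show $w_1(Y)\neq \bar t=p^*(w_1(S^1\text{-base direction}))$; the cleanest way is to evaluate on the fibre: restricting to $F$, we have $i^*\bar t = i^*p^* (\text{gen}) = 0$ because $p\circ i$ is constant, whereas $i^*w_1(Y)=w_1(F)\neq 0$ since $F$ is a non-orientable surface (here $i\colon F\hookrightarrow Y$ and one uses that $TY|_F\cong TF\oplus\underline{\R}$, so $w_1(Y)|_F=w_1(F)$, recalling $w_1$ of the manifold is $w_1$ of the tangent bundle which equals $w_1$ of the stable normal bundle). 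Since $i^*$ distinguishes $w_1(Y)$ from $\bar t$, and these are the only two candidate integral lifts, $w_1(Y)$ has no integral lift.

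\textbf{Main obstacle.} The only real content is the identification $H^1(Y;\Z)\cong\Z$ with generator $t=p^*(\mathrm{gen})$, which I expect to be the step requiring care: it follows from hypothesis (2) via universal coefficients (since $\Ext(T,\Z)$ contributes only to $H^2$, not $H^1$), but one must be slightly careful that the free part of $H_1$ is genuinely generated by a loop mapping degree $\pm 1$ to $S^1$ — this is automatic since $p_*\colon H_1(Y;\Z)\to H_1(S^1;\Z)=\Z$ is split surjective (split by a section of the bundle over the $1$-skeleton of $S^1$, i.e. any lift of a loop), so the $\Z$ factor of $H_1(Y;\Z)$ pairs nontrivially with $t$. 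Everything else is a short restriction-to-the-fibre computation.
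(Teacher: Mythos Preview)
Your proof is correct and follows essentially the same approach as the paper's: both identify $H^1(Y;\Z)\cong\Z$ from hypothesis~(2), observe that the unique (up to sign) integral class is the one coming from the fibration, and then distinguish $w_1(Y)$ from its mod-$2$ reduction by restricting to the fibre $F$, where the fibration class vanishes but $w_1(Y)|_F=w_1(F)\neq 0$. Your treatment is in fact slightly more explicit in justifying that $t=p^*(\mathrm{gen})$ generates $H^1(Y;\Z)$ and that $w_1(Y)|_F=w_1(F)$ via $TY|_F\cong TF\oplus\underline{\R}$.
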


\begin{proof} Since $Y$ is non-orientable $w_1(Y)\in H^1(Y,\Z/2\Z)$ is non-trivial. Furthermore, since $Y$ is fibered over the circle with fiber $F$, the normal bundle of $F\subset Y$ is trivial and so standard properties of Stiefel-Whitney classes shows that $w_1(Y)|_F=w_1(F)\neq 0$.

On the other hand, $b_1(Y)=1$ and so the fiber surface $F$ is dual to the unique epimorphism $\phi: \pi_1(Y)\to \Z$  and in particular $\phi(\pi_1(F))=0$. Hence the induced epimorphism $\overline{\phi}\in H^1(Y,\Z/2\Z)$ satisfies 
$\overline{\phi}(\pi_1(F))=0$.  Putting this together with the previous paragraph, we conclude that $\phi$ (which is unique) cannot be an integral lift of $w_1(Y)$, since $w_1(Y)|_F\neq 0$.\end{proof}

\noindent Theorem \ref{rank2} will follow from Lemma \ref{no-int-lift} once we exhibit a closed non-orientable hyperbolic $3$-manifold $M$ of rank $2$ that satisfies the hypothesis of Lemma \ref{no-int-lift}.  There should be many such examples obtained by taking the mapping tori of ``suitable" pseudo-Anosov homeomorphisms on closed non-orientable hyperbolic surfaces.  However, our approach here is more concrete and computational, and makes use of a closed non-orientable hyperbolic $3$-manifold arising in the SnapPy census \cite{CDGW}.\\[\baselineskip]
\noindent{\bf The example:}~{\em Let $M$ denote the manifold ${\tt{m313(1,0)}}$ of the SnapPy census. Then $M$ satisfies the hypothesis of Lemma \ref{no-int-lift}.}\\[\baselineskip]
We will make considerable use of SnapPy \cite{CDGW} in our analysis, and using SnapPy we find that $H_1(M,\Z) = \Z\times\Z/4\Z$, and a presentation for $\pi_1(M)$ is given by: 

\medskip

\centerline{$\pi_1(M)=${\tt{<a,b|aaaBAAAbbaaababb, aaabAAbbaaaBABAAAB>}},}

\medskip

\noindent where $X$ denotes $x^{-1}$. Note that to find $M$ we used the SnapPy command: 

\medskip

\noindent {\tt{for M in NonorientableClosedCensus[:20]: print(M, M.volume())}} 

\medskip

\noindent to list small volume closed non-orientable hyperbolic 3-manifolds. Our manifold $M$ is eighth on this list and has volume
approximately $3.17729327860\ldots$.  
\begin{rem} Note that out of the first eight manifolds in the list of small volume closed non-orientable hyperbolic 3-manifolds we generated from SnapPy,  six have first homology group $\Z$ and so could not be fibered 
over the circle with fiber a non-orientable surface, the other example had first homology group $\Z\times \Z/2\Z$. Using the methods we describe below, we were able to check that this example is fibered with fiber an orientable surface. This is how we were led to $M$ as an appropriate candidate.\end{rem}

\noindent That $M$ has the desired properties follows from Claim 1:\\[\baselineskip]
\noindent{\bf Claim 1:} {\em $M$ is fibered with fiber a closed non-orientable surface.}\\[\baselineskip]
\noindent To prove Claim 1 we first prove:\\[\baselineskip]
\noindent{\bf Claim 2:}~{\em The non-orientable cusped hyperbolic $3$-manifold $N={\tt{m313}}$ has one cusp which has a Klein Bottle cross-section and is fibered with fiber a non-orientable punctured surface.}\\[\baselineskip]
Before commencing with the proof of Claim 2, we include some relevant pre-amble. To show that $N$ is fibered we will use a criterion of Brown \cite{Br} (see also \cite{DT} for a discussion aimed at low-dimensional topologists)
which provides an algorithm to decide if a $2$-generator, $1$-relator group
$G$ equipped with an epimorphism $\phi : G \rightarrow \Z$ has $\ker(\phi)$ finitely generated (which by a result of Stallings \cite{St} in the $3$-manifold setting is equivalent to the manifold being fibered).
Below we state a version of Brown's theorem (see \cite[Theorem 4.3]{Br}) that we shall use. We refer the reader to \cite{Br} and \cite{DT} for further details.

We begin by fixing some notation. Suppose that $G = <a, b~|~R >$ is a $1$-relator group, with $R$ a nontrivial cyclically reduced word in the free group on $\{a, b\}$. Let 
$R_i$ denote the initial subword consisting of the first $i$ letters of $R$, and assume that all the initial subwords of $R$ are given by $R_1, R_2, \ldots , R_n$ with $R_n = R$. Following \cite[Theorem 5.1]{DT} we have:

\begin{thm}[Brown]
\label{t:brown}
In the notation above, let $\phi: G \rightarrow \Z$ be an epimorphism and assume that
$\phi(a)$ and $\phi(b)$ are both non-zero, then $\ker(\phi)$ is finitely generated if and only if the sequence $\phi(R_1), \phi(R_2), \ldots , \phi(R_n)$ has a unique minimum and maximum. \end{thm}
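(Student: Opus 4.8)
The plan is to derive Brown's criterion from the Bieri--Neumann--Strebel theory of the invariant $\Sigma^1$, specialised to the one-relator presentation at hand. Recall that for a finitely generated group $G$ and a character $\phi\colon G\to\Z$, the kernel $\ker\phi$ is finitely generated if and only if both $[\phi]$ and $[-\phi]$ lie in $\Sigma^1(G)$, and that, working with the Cayley graph $\Gamma=\Gamma(G,\{a,b\})$, one has $[\phi]\in\Sigma^1(G)$ precisely when the full subgraph $\Gamma_{\ge 0}$ spanned by the vertices $g$ with $\phi(g)\ge 0$ is connected. Using the $\Z$-action on $\Gamma$ by the subgroup generated by any element of $\phi$-value $1$, connectedness of $\Gamma_{\ge 0}$ is equivalent to connectedness of $\Gamma_{\ge c}$ for every $c$. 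Thus the statement to be proved becomes: every superlevel set $\Gamma_{\ge c}$ and every sublevel set $\Gamma_{\le c}$ of the height function $h(g)=\phi(g)$ is connected if and only if the partial sums $s_i=\phi(R_i)$ (with $s_0=s_n=0$, as $R$ is a relator) attain their maximum and their minimum each at exactly one index. The hypothesis $\phi(a),\phi(b)\neq 0$ enters here to guarantee that no edge of $\Gamma$ is ``horizontal'', so that $h$ behaves like a generic Morse function and the extremal values of $s_i$ are the only data that matter.

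The core of the argument is the analysis of when a path in $\Gamma$ joining two vertices of height $\ge c$ can be pushed to lie entirely in $\Gamma_{\ge c}$. The only relations available for rerouting are the translates of the single defining polygon $R$, whose boundary, based at a vertex of height $t$, runs through the heights $t+s_0,t+s_1,\dots,t+s_n$. Splitting this boundary at a vertex where $s_i$ attains its maximum produces two complementary arcs; if that maximum is attained at a \emph{unique} index, then for every threshold these two arcs furnish an alternative route ``over the top'' on either side of the corresponding local peak, and an induction on the depth of the deepest excursion of the path below height $c$ shows that $\Gamma_{\ge c}$ is connected for all $c$. Conversely, if the maximum of the $s_i$ is attained at two or more indices, one manufactures a nested family of superlevel sets whose number of connected components is unbounded, so $[\phi]\notin\Sigma^1(G)$. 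Replacing $h$ by $-h$ (equivalently, reading $R$ backwards and $\phi$ negated) runs the identical analysis for sublevel sets, governed now by uniqueness of the minimum of the $s_i$. Intersecting the two conclusions, together with the Bieri--Neumann--Strebel reduction, yields the theorem; the passage from ``$\ker\phi$ finitely generated'' to ``$N$ fibers over the circle'' is then Stallings' fibering theorem, as already noted in the text.

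The step I expect to be the genuine obstacle is the converse in the previous paragraph: showing that a repeated extremum really does disconnect arbitrarily deep level sets, not merely defeat the naive rerouting scheme. For this one must exploit that the Cayley $2$-complex $\widetilde X$ is built from \emph{only} the translates of the one polygon $R$ --- there are no hidden relations that could secretly reconnect the components. The cleanest way is to invoke asphericity of the presentation $2$-complex when $R$ is not a proper power (Lyndon's theorem), treating the proper-power case separately via its torsion structure, or else to argue directly with van Kampen diagrams over the one-relator presentation. Once that is in place, converting the component counts into the precise ``eventually a single component'' formulation of $\Sigma^1$, and disposing of the mild bookkeeping caused by the identification $s_0=s_n$ (i.e.\ whether an extremum is attained only at the wrapped-around endpoint), is routine.
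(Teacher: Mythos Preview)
The paper does not prove this theorem. It is quoted as a result of Brown \cite[Theorem~4.3]{Br}, with the reader explicitly referred to \cite{Br} and \cite{DT} for the argument; the appendix only \emph{applies} the criterion to the presentation of $\pi_1(N)$ for $N=\texttt{m313}$. There is therefore no proof in the paper to compare your proposal against.

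That said, your outline is faithful to the actual source: Brown's paper is precisely an analysis of the Bieri--Neumann--Strebel invariant $\Sigma^1$ via valuations and tree actions, and the one-relator case (his Theorem~4.3) is extracted by exactly the mechanism you describe --- the height profile of the relator in the Cayley $2$-complex governs whether sub/superlevel sets stay connected. Your identification of the delicate step is also accurate: for the direction ``repeated extremum $\Rightarrow$ $\ker\phi$ not finitely generated'' one really does need that no other relations can reconnect the pieces, and Brown handles this via the tree/valuation formalism rather than by invoking Lyndon asphericity directly (though the two are closely related). The proper-power case does require separate bookkeeping, as you note. If you want a self-contained write-up along the lines you sketch, the exposition in Dunfield--Thurston \cite{DT} is the closest published model.
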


\noindent{\bf Proof of Claim 2:}~Using the SnapPy command ${\tt{M.cusp\_info()}}$ shows that $N$ has a single cusp with cusp cross-section a Klein Bottle (which we denote by $\mathcal{K}$). Also using SnapPy 
we get $H_1(N,\Z) = \Z\times\Z/4\Z$ and a presentation:

\medskip

\centerline{$\pi_1(N)=${\tt{<a,b|aaaBAAAbbaaababb>}}.}

\medskip

\noindent We will apply Brown's algorithm to the unique epimorphism $\phi:\pi_1(N)\rightarrow\Z$. One can check that $\phi(a)=-1$ and $\phi(b)=1$, and so to apply Theorem \ref{t:brown} we need to check the values on the initial subwords of the relator $R:=$ {\tt{aaaBAAAbbaaababb}}.

We identified $16$ initial subwords: 
$$R_1=a, R_2=a^2, R_3=a^3, R_4=a^3b^{-1},\ldots, R_9=a^3b^{-1}a^{-3}b^2, \ldots, R_{16}=R.$$
The values of $\phi(R_i)\in \{-4, -3, -2, -1, 0, 1\}$ with $\phi(R_4)=-4$ a unique minimum and 
$\phi(R_9)=1$ a unique maximum. Hence Theorem \ref{t:brown} applies to show that $\ker(\phi)$ is finitely generated, and so we conclude that $N$ is fibered using \cite{St}.

To show that the fiber is non-orientable we also make use of SnapPy to get a description of the faithful discrete representation into $\rm{Isom}(\mathbb{H}^3)$ which we identify with $\rm{O}^+(3,1)$. To that end we 
use the SnapPy commands ${\tt{G=M.fundamental\_group()}}$ together with {\tt{G.O31(`a')}} and {\tt{G.O31(`b')}} to exhibit matrices in  $\rm{O}^+(3,1)$ representing $a$ and $b$ whose determinants can then be checked to be (approximately) $+1$ and $-1$ respectively. In particular the element $b$ is orientation-reversing, as is the element $ab\in\ker(\phi)$ (from above). Hence the fiber is non-orientable and the proof of Claim 2 is complete.\qed\\[\baselineskip]
\noindent{\bf Completing the proof of Claim 1:}~Since $N$ is non-orientable with cusp cross-section $\mathcal{K}$, there is a unique way to compactify $N$ by gluing a solid Klein Bottle to $\mathcal{K}\times [0,1)$, which by definition gives $M$.
Indeed, there is a unique essential simple closed curve (slope) $\alpha \subset \mathcal{K}$ that is the core of the solid Klein Bottle that can be attached to $\mathcal{K}\times [0,1)$ which
compactifes $N$ via Dehn filling and for which $\alpha$ is killed. Now the slope $\alpha$ is the boundary slope of the fiber surface of $N$ (this is made explicit in Remark \ref{slope} below), and so in particular the fibering of $N$ extends after Dehn filling along $\alpha$ to get $M$.  Hence 
$M$ is fibered with fiber a closed non-orientable surface as required.\qed

\begin{rem} \label{slope} Using SnapPy the filling curve $\alpha$ can be identified as the peripheral element \hfill\break
{\tt{BAAABaaabAAbbaaaBA}}$~\in\pi_1(N)$ which can easily be seen to be a re-ordering of the second relation in $\pi_1(M)$. It can also be readily checked that
this element of $\pi_1(N)$  maps trivially under the map to $\Z$.\end{rem}

\subsection{Compatibility}
\label{compatible}
One motivation for Question 1 is the following result 
which needs some additional terminology.  Let $M$ and $N$ be closed connected non-orientable $3$-manifolds, and assume that $w_1(M)$ does not admit an integral lift.
Say that a homomorphism $\theta:\pi_1(M)\rightarrow \pi_1(N)$ is {\em $w_1$-compatible} if and only if the following diagram commutes:

\medskip

\begin{tikzcd}[column sep=small]
\pi_1(M) \arrow[rr, "\theta"] \arrow[rd, "w_1"'] &   & \pi_1(N) \arrow[ld, "w_1"']  & \\
 & \Z/2\Z & 
\end{tikzcd}

\noindent Note that compatibility implies that $w_1(N)$ also does not admit an integral lift. 

\begin{thm} 
\label{mike}
Assume that $M$ and $N$  are closed non-orientable hyperbolic $3$-manifolds for which $w_1(M)$ (and hence $w_1(N)$) does not admit an integral lift. Assume that $M$ has rank $2$ and $b_1(N)\geq 3$. Then there is no $w_1$-compatible homomorphism $\theta:\pi_1(M)\rightarrow \pi_1(N)$.\end{thm}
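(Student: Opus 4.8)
The plan is to argue by contradiction. Suppose a $w_1$-compatible homomorphism $\theta\colon\pi_1M\to\pi_1N$ exists and put $H:=\theta(\pi_1M)\le\pi_1N$. The first thing I would extract from the hypotheses on $M$ is that $b_1(M)=1$: since $M$ is closed and non-orientable, $\chi(M)=0$ and $b_3(M;\Q)=0$ force $b_1(M)\ge 1$, while if $b_1(M)=2$ then, because $\pi_1M$ has rank $2$, $H_1(M;\Z)\cong\Z^2$ is torsion-free and $w_1(M)$ would lift to $H^1(M;\Z)\cong\operatorname{Hom}(H_1M,\Z)$, contradicting the hypothesis that $w_1(M)$ has no integral lift. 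Now $w_1$-compatibility means $w_1(N)\circ\theta=w_1(M)$, which is onto $\Z/2$ (as $M$ is non-orientable); hence $\theta\ne 0$, $H$ is infinite, and $w_1(N)|_H$ is onto $\Z/2$. Since $H^{\mathrm{ab}}$ is a quotient of $H_1(M;\Z)$, we also get $b_1(H)\le b_1(M)=1$.

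Next I would show $H$ has infinite index in $\pi_1N$: a finite-index $H$ would give a finite cover $\widehat N\to N$ with $b_1(\widehat N)=b_1(H)\ge b_1(N)\ge 3$ by the transfer, contradicting $b_1(H)\le 1$. So $H$ is a $2$-generated subgroup of infinite index in the closed hyperbolic $3$-manifold group $\pi_1N$. This is where the main work lies, and the step I expect to require the most care: I would invoke the geometric trichotomy for finitely generated subgroups. By tameness (Agol; Calegari--Gabai) the cover $N_H$ is topologically tame, and by Canary's covering theorem, since $N$ is closed, a geometrically infinite end of $N_H$ would force $H$ to be a virtual fibre, i.e.\ a closed surface subgroup of $\pi_1N$; but a $2$-generated closed surface group is $\Z^2$ or the Klein bottle group, each of which contains $\Z^2$, and $\pi_1N$ contains no $\Z^2$ since $N$ is hyperbolic. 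Hence $H$ is geometrically finite, and as $\pi_1N$ is cocompact (no parabolics) $H$ is in fact convex cocompact. Finally $H$ is non-elementary: $H\ne 1$, the torsion-free elementary possibility $\Z^2$ is excluded as above, and if $H\cong\Z$ then $\theta\colon\pi_1M\to\Z$ would itself be an integral lift of $w_1(M)$, contradicting a hypothesis.

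To conclude I would run an Euler characteristic count. Convex cocompactness produces a compact $3$-manifold $\overline{N_H}$ with interior $N_H$; since $H$ has infinite index, $N_H$ is non-compact, so $\partial\overline{N_H}\ne\varnothing$, and since $H$ is non-elementary with no parabolics, $\partial\overline{N_H}$ is (homeomorphic to) the conformal boundary $\Omega(H)/H$, every component of which is a closed surface of negative Euler characteristic. Therefore $\chi(\overline{N_H})=\tfrac12\chi(\partial\overline{N_H})<0$. On the other hand, $\overline{N_H}$ is a finite aspherical complex with $\pi_1=H$, and the open $3$-manifold $N_H$ has the homotopy type of a $2$-complex, so $H_i(H;\Q)=0$ for $i\ge 3$; hence $\chi(\overline{N_H})=\chi(H)=1-b_1(H)+b_2(H)\ge 1-1+0=0$, contradicting $\chi(\overline{N_H})<0$. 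This contradiction shows no $w_1$-compatible homomorphism can exist. (I note that the argument only uses the no-integral-lift hypothesis on $M$; one could keep the symmetric hypothesis on $N$ for the same reasons it is natural in Lemma~\ref{no-int-lift}, or weaken "$b_1(N)\ge 3$" to "$b_1(N)\ge 2$", but I would state it as given.)
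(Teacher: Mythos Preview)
Your proof is correct but follows a genuinely different path from the paper's. The paper first isolates a structural lemma (Lemma~\ref{rank2_structure}): any rank~$\le 2$ subgroup of a closed hyperbolic $3$-manifold group is either free of rank~$\le 2$ or of finite index; this is proved using Jaco--Shalen in the orientable case and, in the non-orientable case, Ratcliffe's Euler-characteristic bound together with a compact-core argument. With that lemma in hand, the case $H\cong F_2$ is eliminated by the Hopfian property of free groups (the chain of epimorphisms $F_2\twoheadrightarrow\pi_1M\twoheadrightarrow H\cong F_2$ would force $\pi_1M$ to be free), finite index is ruled out by the transfer bound $b_1(H)\ge b_1(N)\ge 3>2\ge\operatorname{rank}(H)$, and $H\cong\Z$ by the same integral-lift contradiction you use.

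Your route replaces the rank-$2$ structure lemma by the tameness theorem together with Canary's covering theorem to obtain the dichotomy ``convex cocompact or virtual fibre'', and replaces the Hopfian step by the observation $b_1(M)=1$ (forced by rank $2$ plus the no-lift hypothesis), which caps $b_1(H)\le 1$ and lets you dispatch the non-elementary convex cocompact case by a direct Euler-characteristic count. This is a valid argument; its cost is invoking the deep tameness machinery (and one should note that it carries over to the non-orientable setting via the orientation double cover), whereas the paper's approach stays within more classical $3$-manifold topology. On the other hand, your $b_1(M)=1$ observation is sharper than the paper's Hopfian argument and, as you remark, shows the hypothesis $b_1(N)\ge 3$ could be relaxed to $b_1(N)\ge 2$.
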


\begin{rem} The manifold $M$ constructed to answer Question 1 provides an example of a closed non-orientable hyperbolic $3$-manifold for which $w_1(M)$ does not admit an integral lift, and so an example to which
Theorem \ref{mike} applies.  This manifold also provides an example of a manifold that is not immersion equivalent to $S^1\tilde{\times}S^2$ or $\mathbb{RP}^2\times S^1$, and hence represents a distinct immersion equivalence class of closed connected non-orientable 3-manifolds.\end{rem}

\begin{proof} The proof given below is essentially that described to the author by M. Freedman.  We begin by proving a lemma which is well known in the orientable case (see \cite[Theorem VI.4.1]{JS}). As in the proof of Theorem \ref{rank2}, $\mathcal{K}$ denotes the Klein Bottle.

\begin{lem}
\label{rank2_structure}
Let $X$ be a closed hyperbolic $3$-manifold and $H<\pi_1(X)$ a subgroup of rank $\leq 2$. Then, either 
$H$ is free of rank $\leq 2$, or $H$ has finite index in $\pi_1(X)$.
\end{lem}

\begin{proof} As stated above, the case when $X$ is orientable follows from \cite[Theorem VI.4.1]{JS}.
We now handle the case when $X$ is non-orientable, and to that end let $X^+$ denote the orientation double cover of $X$ and $H^+=H\cap\pi_1(X^+)$.

If $H$ is not free then by \cite[Corollary 4]{Rat}, $\chi(H)=0$. If $H$ does not have finite index in $\pi_1(X)$, then $H^+$ is an infinite index subgroup of $\pi_1(X^+)$ with $\chi(H^+)=0$. If $C^+$ denotes a compact core for $\H^3/H^+$, then  a standard argument shows that:
$$\chi(\partial C^+)/2 = \chi(C^+) = \chi(H^+)=0.$$
Hence $\partial C^+$ consists of a union of tori ($\mathcal{K}$ is excluded since $C^+$ is orientable), and since $H^+$ has infinite index in $\pi_1(X^+)$, and $X^+$ is a closed orientable hyperbolic 3-manifold, it follows that the only possibility for $H$ is that it is virtually abelian. 
Since $X$ is hyperbolic, $H$ cannot be
free abelian of rank $2$, or $\pi_1(\mathcal{K})$, hence $H\cong \Z$ and this completes the proof. \end{proof}

With this lemma in hand, we complete the proof of Theorem \ref{mike}. To that end assume that there is a $w_1$-compatible homomorphism $\theta:\pi_1(M)\rightarrow \pi_1(N)$, and let $H=\theta(\pi_1(M))$. 
Compatibility and non-orientability of $M$ imply that $H\neq 1$, and so $H$ is a non-trivial subgroup of $\pi_1(N)$ of rank $\leq 2$.

We now consider $H$ in the context of Lemma \ref{rank2_structure}.
If $H$ is a free group of rank $2$, we obtain a sequence of epimorphisms $F_2\rightarrow \pi_1(M)\rightarrow H\cong F_2$. However free groups are Hopfian, and so these epimorphisms are all isomorphisms.
However, the fundamental group of a closed hyperbolic $3$-manifold is not free (since they are irreducible).  The case of finite index is ruled out by the hypothesis that $b_1(N)\geq 3$, so that every subgroup $G$ of finite index in $\pi_1(N)$ also has $b_1(G)\geq 3$ (by the transfer homomorphism), which $H$ clearly cannot.

Thus we are reduced to $H=\Z$ and $\theta$ factors through a homomorphism onto $\Z$. But this is a contradiction to $M$ not having an integral lift since $w_1$-compatibility ensures that any orientation-reversing element of $\pi_1(M)$ must map to an odd integer.\end{proof}




%
\end{document}